\newcommand{\eps}{\varepsilon}
\newcommand{\R}{\mathbb{R}}
\newcommand{\C}{\mathbb{C}}
\renewcommand{\phi}{\varphi}
\newcommand{\mcl}{\mathcal{L}}
\newcommand{\NL}{\mathrm{F}_\mathrm{res}}
\newcommand{\cNL}{\mathcal{F}_\mathrm{res}}
\renewcommand{\Re}{\mathrm{Re} \,}
\renewcommand{\Im}{\mathrm{Im} \,}
\def\XXint#1#2#3{{\setbox0=\hbox{$#1{#2#3}{\int}$ }
		\vcenter{\hbox{$#2#3$ }}\kern-.6\wd0}}
\newtheorem{thm}{Theorem}
\newtheorem*{thm*}{Theorem}
\newtheorem{prop}{Proposition}
\newtheorem{lemma}[prop]{Lemma}
\newtheorem{corollary}[prop]{Corollary}
\newtheorem{thmlocal}[prop]{Theorem}
\newtheorem{hyp}{Hypothesis}
\newtheorem{defi}{Definition}
\theoremstyle{definition}
\newtheorem{remark}[prop]{Remark}
\numberwithin{equation}{section}
\numberwithin{prop}{section}
\newcommand{\hl}[2][black]{\textcolor{#1}{#2}}
\newcommand\blfootnote[1]{%
	\begingroup
	\renewcommand\thefootnote{}\footnote{#1}%
	\addtocounter{footnote}{-1}%
	\endgroup
}
\begin{document}
\begin{center}
{\fontsize{15}{15}\fontseries{b}\selectfont{Universal selection of pulled fronts}}\\[0.2in]
Montie Avery and Arnd Scheel \\[0.1in]
\textit{\footnotesize 
University of Minnesota, School of Mathematics,   206 Church St. S.E., Minneapolis, MN 55455, USA}
\end{center}


\begin{abstract}
We establish selection of critical pulled fronts in invasion processes as predicted by the marginal stability conjecture. Our result shows convergence to a pulled front with a logarithmic shift for open sets of steep initial data, including one-sided compactly supported initial conditions. We rely on robust, conceptual assumptions, namely existence and marginal spectral stability of a front traveling at the linear spreading speed and demonstrate that the assumptions  hold for open classes of spatially extended systems. Previous results relied on comparison principles or probabilistic tools with implied non-open conditions on initial data and structure of the equation. Technically, we describe the invasion process through the interaction of a Gaussian leading edge with the pulled front in the wake. Key ingredients are sharp linear decay estimates to control errors in the nonlinear matching and corrections from initial data. \blfootnote{2010 Mathematics Subject Classifications: primary 35B40, 35K25, 35B35, 35Q92, 35K55; secondary 35B36.} \blfootnote{Keywords: pulled fronts, traveling waves, pattern formation, marginal stability conjecture, diffusive stability.}
\end{abstract}

\section{Introduction}

The onset of structure formation in spatially extended physical systems is often mediated by an invasion process, in which a pointwise stable state invades a pointwise unstable state. \hl{One observes that an initially localized perturbation to the pointwise unstable background state grows in amplitude and spreads spatially. In its wake, this spreading process may select a spatially constant state, a periodic pattern, or more complicated dynamics; see \cite{vanSaarloosReview} for a thorough review of experimental observations of invasion processes across the sciences.} The fundamental objective then is to describe this process, in particular by predicting \hl{the spreading speed and the selected state in the wake}. 

A mathematical study usually focuses first on a one-sided invasion process, describing  convergence of solutions to a front that connects the unstable state in the leading edge to the selected state in the wake. Existence of such fronts and stability information is often available through a variety of analytical and computational techniques, ranging from comparison methods and monotonicity \cite{FayeHolzerLotKaVolterra,weinberger} to integrability \cite{MR2578681,vshohenberg}, to perturbative techniques in small-amplitude settings \cite{colleteckmann,MR1458064,MR1970238,SchneiderEckmann,MR4097934,MR1631295}  or in the presence of scale separation \cite{cartersch,MR2256842,RottschaferWayne}, to topological techniques \cite{MR2231782,sch} and to rigorous computational approaches \cite{beckjaquette}; see also the review \cite{vanSaarloosReview}. The \emph{marginal stability conjecture} connects such existence and stability information with the physically most interesting question as to which fronts are actually \emph{selected}, that is, observed when starting from \emph{steep}, particularly compactly supported initial conditions; see for instance \cite{deelanger,colleteckmann,vanSaarloosReview}.  In essence, the conjecture predicts that out of a family of fronts, the ``marginally stable'' front is selected and thus reduces, whenever proven, the study of invasion processes to the study of existence and stability of front solutions. Our work here makes this concept of marginal stability precise and establishes the conjecture.

\textbf{Linear marginal stability.}
Motivating this conjecture are predictions based on the linearized problem at the trivial unstable state. Spatially localized disturbances in this linearized problem grow exponentially and spread spatially. Analyzing stability in comoving frames of speed $c$, one finds that perturbations exhibit pointwise exponential decay for all speeds above a critical speed, $c>c_*$. The pointwise decay  or growth is typically encoded in the complex linear dispersion relation through \emph{pinched double roots} $\lambda_\mathrm{dr}(c)$: one finds pointwise exponential behavior $\sim \exp(\lambda_\mathrm{dr}t)$ with $\Re\lambda_\mathrm{dr}(c)<0$ for $c>c_*$ and marginal stability  $\Re\lambda_\mathrm{dr}(c_*)=0$ at the ``critical'' \emph{linear spreading speed} $c_*$  \cite{bers1983handbook,brevdo,HolzerScheelPointwiseGrowth}; \hl{see Section \ref{s: overview} for a brief review}. In addition to predictions for the speed, the linear analysis also offers a prediction for the nature of the invasion process: zero versus nonzero frequency $\Im\lambda_\mathrm{dr}(c_*)$ 
predicts invasion that is
\[
\text{(S) stationary \quad or  \quad (P) time-periodic.}
\]
in the comoving frame.

\textbf{Nonlinear stability.}
Nonlinear saturation of this linear growth process leads to formation of fronts. Describing the evolution towards fronts, one invokes subtle stability arguments. Pointwise stability is mostly intractable in nonlinear equations, and one therefore resorts to  function spaces with exponential weights, penalizing in particular perturbations in the leading edge of the front that would grow due to the inherent instability of the invaded state. In the leading edge, the linearized evolution is indeed close to the linearization at the unstable state and one finds that fronts with speeds $c<c_*$ are unstable in any weighted space. Fronts with speed $c\geq c_*$ however can often be shown to be stable in exponentially weighted spaces, based on properties of the linearization at the front. One therefore analyzes the spectrum of the linearization, which is composed of essential spectrum related to stability in the leading edge, essential spectrum related to stability in the wake, and point spectrum associated with stability of the front interface. In suitably weighted spaces, one establishes or assumes that essential spectrum associated with the leading edge is stable for $c>c_*$ and \emph{marginally stable} for $c=c_*$, while point spectrum and essential spectrum associated with the state in the wake are  stable or marginally stable; see also Figure~\ref{f: spectrum and initial data}. 
%

This perspective recovers some notion of stability but does not address the question of speed selection, since all fronts with speeds $c \geq c_*$ are stable in this sense. It turns out that permissible perturbations of the front in such stability arguments are very localized so that initial conditions given by the sum of front and perturbation  necessarily preserve the exponential decay rate in the leading edge of the front. The arguments do  in particular not describe the behavior of the  most relevant \emph{steep},  for instance step-function like, initial conditions.

\textbf{The marginal stability conjecture.}
The marginal stability conjecture states that, nevertheless,  information on the existence and stability of fronts does yield a selection criterion:
\[
 \text{\emph{Marginally stable fronts are selected by steep initial conditions.}}
\]
The hypothesis is understood to hold \emph{universally} across equations for open classes of steep initial conditions, under appropriate notions of marginal stability; see \cite{vanSaarloosReview,deelanger,SchneiderEckmann,EckmannWayne} for statements of the conjecture in many specific examples. In this paper, we will define marginal stability through spectral properties of the linearization of the front in norms with exponentially growing weights in the leading edge, encoding marginal stability in the leading edge,  absence of unstable or marginally stable spectrum associated with the front interface  or the wake; see Hypotheses \ref{hyp: spreading speed}--\ref{hyp: resonance}, below. Fronts where marginal stability is caused by the leading edge are commonly referred to as \emph{pulled fronts}. In that regard, our assumptions exclude pushed fronts, with instability in the front interface, and staged invasion with instabilities in the wake. Selection of pushed fronts is easier to establish than that of pulled fronts, since for pushed fronts a spectral gap may be recovered with exponential weights; see Section \ref{s: discussion} for a discussion of pushed fronts and other secondary instability mechanisms. We define selection in Definition \ref{d: selection} as allowing for \emph{open} classes of steep initial conditions.
Our main result, precisely stated in Theorem \ref{t: main}, establishes the marginal stability conjecture under these conceptual assumptions. 

\begin{thm*}
	The marginal stability conjecture holds in the case (S) of stationary invasion assuming existence of a rigidly propagating (marginally stable)  pulled front, that is, open classes of steep initial conditions converge to an appropiately shifted front profile with speed $c_*$.
\end{thm*}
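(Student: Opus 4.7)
The plan is to work in the frame moving at the linear spreading speed $c_*$ and to decompose
\[
u(t,x) = u_*\bigl(\xi - \sigma(t)\bigr) + w(t,\xi), \qquad \xi = x - c_* t,
\]
where $u_*$ is the critical pulled front with spatial decay rate $\eta_*$ in the leading edge, $\sigma(t)$ is a time-dependent shift to be determined \emph{a posteriori} with Bramson asymptotics $\sigma(t)\sim \frac{3}{2\eta_*}\log t + \sigma_\infty$, and $w$ is a perturbation. The inherent leading-edge instability is regularized by the exponential weight $\rho(\xi)=e^{\eta_*\xi}$: in the weighted space, the essential spectrum of the linearization $L$ at $u_*$ touches the origin tangentially, marginally stable, through the pinched double root guaranteed by Hypothesis \ref{hyp: spreading speed}, while the remainder of the spectrum is strictly stable by the other hypotheses.

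\textbf{Linear decay estimates.} The technical engine is a sharp expansion of the resolvent $(\lambda - L)^{-1}$ near $\lambda=0$, exploiting absence of embedded eigenvalues and resonances other than the one associated to the double root (Hypothesis \ref{hyp: resonance}). This expansion exhibits a square-root type singularity at the origin which, upon inverse Laplace transformation, yields semigroup estimates of the form
\[
\|e^{Lt} w_0\|_\rho \;\lesssim\; t^{-1/2}\,\|w_0\|_*,
\]
with the improved rate $t^{-3/2}$ once $w_0$ satisfies a zero-mass condition at the double root. The exponent $3/2$ is inherited from the Gaussian self-similar scaling of the reduced heat-kernel dynamics in the conjugated frame and is precisely what produces the coefficient of the Bramson logarithm.

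\textbf{Nonlinear closure and matching.} Next I would refine the decomposition by extracting the Gaussian leading-edge envelope,
\[
w(t,\xi) = \chi_+(\xi)\,\phi_\mathrm{G}(t,\xi) + w_r(t,\xi),
\]
where $\phi_\mathrm{G}(t,\xi)\sim A\,\xi\,t^{-3/2}\,e^{-\xi^2/(4Dt)}$ is the Gaussian self-similar profile in the leading edge, $\chi_+$ a smooth cutoff into $\xi\gg 1$, and $w_r$ a remainder lying in the zero-mass subspace and hence inheriting the improved decay. Fixing the shift $\sigma(t)$ and amplitude $A$ so that the ansatz solves the linearized flow to leading order produces exactly $\sigma(t)=\frac{3}{2\eta_*}\log t + \sigma_\infty$. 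The remainder $w_r$ then satisfies a Duhamel integral equation whose source consists of quadratic nonlinear contributions (of order $t^{-3}$ in the weighted norm), matching errors from gluing the envelope to the front tail, and initial-data corrections encoded in $\sigma_\infty$ and $A$. A contraction mapping in a norm prescribing the $t^{-3/2}$ weighted decay in the leading edge and appropriate decay in the wake closes the argument; openness of admissible steep initial data follows from continuous dependence of $(A,\sigma_\infty)$ on the data.

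\textbf{Main obstacle.} The crux is the nonlinear matching between leading edge and wake. The weighted Gaussian envelope meets the front tail $\sim \xi e^{-\eta_*\xi}$ precisely at the critical $t^{-3/2}$ threshold that the logarithmic shift is designed to absorb, so any logarithmic-in-$t$ loss in estimating the matching integrals would prevent the fixed-point iteration from closing. Overcoming this requires pointwise, rather than just $L^p$-weighted, control of the Green function of $L$ near the front interface, together with the resonance-free structure of Hypothesis \ref{hyp: resonance} to ensure the matching error is strictly subcritical. These sharp linear estimates replace the comparison-principle inputs of classical treatments and are what enable the universal, open-class selection statement of the theorem.
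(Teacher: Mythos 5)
Your overall strategy --- working in the logarithmically delayed frame with an $e^{\eta_*\xi}$ weight, constructing a Gaussian self-similar envelope in the leading edge, matching it to the front tail on an intermediate scale, and closing with sharp resolvent/semigroup estimates --- is the right blueprint and matches the paper's plan. But there is a genuine gap in how you track decay rates, and it shows up precisely where the hard analysis lives.

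The key issue is the non-autonomous \emph{critical} term produced by the logarithmic shift, which you do not address. Passing to the frame moving with speed $c_* - \frac{3}{2\eta_*(t+T)}$ and conjugating with the weight $\omega$ introduces a term of the form $-\frac{3}{2\eta_*(t+T)}\bigl(\omega(\omega^{-1})'w + w_x\bigr)$, and the zeroth-order part $\frac{3}{2(t+T)}w$ is \emph{exactly} critical for the $t^{-3/2}$ diffusive rate: plugging $t^{-3/2}$ into the Duhamel integral against this term produces an $\mathrm{O}(1)$ contribution, so the fixed-point norm you propose does not close. In fact the weighted perturbation $w$ does \emph{not} decay like $t^{-3/2}$; it remains $\mathrm{O}(1)$, consistent with the fact that in the $\omega$-weighted picture the Gaussian envelope is $\sim(x+x_0)e^{-(x+x_0)^2/[4\alpha(t+T)]}$ (i.e.\ $\mathrm{O}(1)$, not $\mathrm{O}(t^{-3/2})$) --- the log shift is chosen precisely to eliminate the $t^{-3/2}$ factor so that the envelope can meet the linearly growing front tail $\omega q_*(x)\sim x$ on the scale $x\sim t^\mu$. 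Your ansatz $\phi_G\sim A\xi t^{-3/2}e^{-\xi^2/(4Dt)}$ with $t^{-3/2}$ decay prescribed for $w_r$ as well would have the envelope vanishing relative to the front tail at the matching scale, so the gluing would fail. The paper's resolution is the substitution $z=(t+T)^{-3/2}w$, which removes the critical zeroth-order term (leaving only the subcritical $(t+T)^{-1}z_x$), and then proves $t^{-3/2}$ decay for $z$, i.e.\ \emph{boundedness} for $w$. Without this step, or some equivalent device, your contraction does not close.

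A secondary point: even after the $z$-substitution, closing the nonlinear iteration is not a straightforward contraction in a single weighted norm. The leftover $(t+T)^{-1}z_x$ term is borderline --- one needs the $t^{-3/2}$ map $L^1_1\to L^\infty_{-1}$ for the semigroup and the complementary $t^{-1/2}$ decay of $\partial_x e^{\mcl t}$ into $L^1_1$, and the endpoint localization $L^\infty_2$ fails by a logarithm. The paper bootstraps between $L^1_1$ and $L^\infty_{2+\mu}$, tracking $T$-dependence throughout since the nonlinearity carries a $(t+T)^{3/2}$ factor after the substitution. Your "main obstacle" paragraph also suggests that one needs pointwise Green function control; the paper deliberately replaces pointwise/Evans-function methods with a farfield-core decomposition of the resolvent, which is an $L^p$-based functional-analytic route. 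So the remedy is not pointwise control but the right interplay of $L^1$ and $L^\infty$ weighted spaces together with the $z$-normalization.
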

In addition, we address the \emph{universality} aspect of the marginal stability conjecture in Theorem \ref{t: robustness}, which shows that our assumptions hold for open classes of equations. Contrasted with previous work, our result does not rely on the structure of the equation or sign conditions on initial data. We rephrase the front selection problem as a stability problem for fronts similar to \cite{Gallay,FayeHolzer,AveryScheel,EckmannWayne},
but, crucially, with supercritical localization of perturbations to the front. In fact, as we shall see later, dynamics near a critical front profile exhibit diffusive decay in suitable norms for localized perturbations of the front. However, again in suitable norms, steep initial data induces   perturbations of fronts that grow linearly in the leading edge and therefore do not decay diffusively. In this respect, our results can be compared to efforts toward establishing diffusive decay in pattern-forming systems, where neutral modes decay diffusively, and where modulation equations attempt to capture dynamics when perturbations are not spatially localized \cite{dsss,GallayScheel,iyersand,jnrz2,jnrz1,zumbruninventiones,uecker}.

\textbf{A brief history and examples.} 
%
Specific to front selection, the mathematical literature originates with work in the 1930s on the Fisher-KPP equation \cite{Fisher,Kolmogorov},%
\begin{equation}
\partial_t u = \partial_{xx}u + u - u^2, \quad x \in \R, \quad t > 0, \label{e: KPP}
\end{equation}
where fronts connecting the stable state $u \equiv 1$ to the unstable state $u \equiv 0$ are linearly stable for speeds $c\geq 2$, the linear spreading speed. Kolmogorov, Petrovskii, and Piskunov \cite{Kolmogorov} proved in 1937 that for step function initial data, $u = 1, x<0$ and  $u = 0$, $x>0$, the solution to \eqref{e: KPP} indeed converges to shifted pulled fronts,
\begin{equation}
\lim_{t \to \infty} u(x + \sigma(t), t) = q(x; 2), \label{e: KPP result}
\end{equation}
for some shift $\sigma(t) = 2t + \mathrm{o}(t)$, uniformly in space, where $q(\cdot; 2)$ is a front solution to \eqref{e: KPP} satisfying $u(x, t) = q(x-2t; 2)$, unique up to spatial translation. In particular, the speed $\sigma'(t)=2+\mathrm{o}(1)$ converges to the linear spreading speed as $t\to\infty$.
The basic idea of the proof relies on using comparison principles, with (unstable) fronts at speeds $c\lesssim 2$ as subsolution building blocks, and was subsequently adapted to a plethora of systems that allow comparison principles, sometimes in a more hidden fashion, on the real line and also in higher space dimensions; see e.g. \cite{aronson,berestyckinirenberg,hamelnadirashvili,heinze,roquejoffre,weinberger}.
In a celebrated series of papers, Bramson \cite{Bramson1, Bramson2} showed that convergence of the speed is quite slow with a universal leading-order correction that induces a $\log$-shift in the position, independent of initial conditions, 
\begin{align}
\sigma(t) = 2t - \frac{3}{2} \log t + \mathrm{O}(1).
\end{align}
The approach there relies on a probabilistic interpretation of \eqref{e: KPP} as an evolution of distributions in a branched random walk. Proofs were greatly simplified  later  using comparison principles with refined subsolutions in  \cite{Lau,Comparison1, Comparison2}. The new techniques introduced also led to refined asymptotics, allowed adaptations to other systems,  and analysis in higher space dimensions; see e.g. \cite{Comparison3, Graham, BouinHendersonRyzhik1,BouinHendersonRyzhik2,roussier}. 



Inspecting the comprehensive review of experimental observations and theoretical studies of front propagation into unstable states \cite{vanSaarloosReview}, almost all experimental settings and associated models including for instance fluid instabilities, crystal growth, and phase separation, do not admit a probabilistic interpretation or comparison principles, nor do they preserve positivity of initial data. In fact key examples in \cite{vanSaarloosReview} are higher-order parabolic equations that do not admit comparison principles. A prototypical case of stationary propagation (S) is the extended Fisher-KPP equation
\begin{align}
\partial_t u = - \delta^2 \partial_{xxxx} u + \partial_{xx} u + f(u), \quad x \in \R, \quad t > 0, \label{e: eKPP}
\end{align}
for $\delta$ small, where $f$ is a smooth function satisfying $f(0) = f(1) = 0$, $f'(0) > 0, f'(1) < 0$, and (for instance) $f''(u) < 0$ for all $u \in (0,1)$.  A basic example for time-periodic propagation, case (P), is the Swift-Hohenberg equation, a prototypical model for pattern formation in contexts such as Rayleigh-B\'enard convection, 
\begin{align}
\partial_t u = -\partial_{xxxx}u - 2\partial_{xx}u + (\mu-1) u - u^3, \quad x \in \R, \quad t > 0, \quad \mu > 0; \label{e: SH}
\end{align}
see \cite{SchneiderEckmann} for a formulation of the marginal stability conjecture in this particular case. Fourth order (and even sixth) order scalar equations such as \eqref{e: eKPP} and \eqref{e: SH} arise in many physical circumstances, including in phenomenological models for convection rolls \cite{sh}, in crystal nucleation and growth \cite{PhysRevB.75.064107,PhysRevE.79.051404}, in models for spatial localization of patterns across many physical systems \cite{knobloch15}, as models for phyllotaxis \cite{phyllo}, as amplitude equations derived from reaction-diffusion or fluid systems \cite{NewellWhitehead, RottschaferDoelman}, and even in Turing's late work on morphogenesis \cite{dawes}. 

Motivated by these examples, we focus on a setting of higher-order parabolic equations in which we establish selection of pulled fronts and the marginal stability conjecture in the case of stationary invasion (S). We believe that the techniques introduced here will also prove useful  in understanding front propagation in the time-periodic case (P), particularly in pattern-forming systems such as \eqref{e: SH}. We further expect the linear theory which we develop here and in \cite{AveryScheel} to be useful in understanding diffusive decay near coherent structures in other contexts. 

\subsection{Setup and main results}\label{s: setup}

We consider scalar, spatially homogeneous parabolic equations of arbitrary order, of the form 
\begin{align}
u_t = \mathcal{P}(\partial_x) u + f(u), \quad u = u (x,t) \in \R,  \quad x \in \R, \quad t > 0, \label{e: eqn}
\end{align}
with $f$ smooth, $f(0) = f(1) = 0, f'(0) > 0$, and $f'(1) < 0$, and polynomial differential operator 
\begin{align}
\mathcal{P}(\nu) = \sum_{k=0}^{2m} p_k \nu^{k}, \quad (-1)^m p_{2m} < 0, \quad p_0 = 0, \label{e: P def}
\end{align}
so that $\mathcal{P}(\partial_x)$ is elliptic of order $2m$, although not necessarily symmetric, that is, we allow nonzero coefficients of odd derivatives.  


We pass to a comoving frame of speed $c$ and linearize at the unstable rest state $u \equiv 0$ to find
\begin{align}
u_t = \mathcal{P}(\partial_x) u + c \partial_x u + f'(0) u. \label{e: lin about 0}
\end{align}
Informally, the \textit{linear spreading speed} $c_*$ is a distinguished speed so that solutions to \eqref{e: lin about 0} with compactly supported initial data grow exponentially pointwise for $c \lesssim c_*$ and decay pointwise for $c \gtrsim c_*$. To characterize $c_*$, we  substitute $u = e^{\nu x + \lambda t}$ into \eqref{e: lin about 0} and find the \emph{dispersion relation},
\begin{align}
d^+_c (\lambda, \nu) := \mathcal{P}(\nu) + c \nu + f'(0) - \lambda. 
\end{align}
The dispersion relation determines the spectrum of the linearization about the unstable state $\mathcal{P}(\partial_x) + c_* \partial_x + f'(0)$ via the Fourier transform, in the sense that the spectrum of this operator (on, for instance, $L^p (\R)$) is given by 
\begin{align}
\Sigma^+ = \{ \lambda \in \C : d^+(\lambda, ik) = 0 \text{ for some } k \in \R \}. \label{e: Sigma plus}
\end{align}
\begin{hyp}[Linear spreading speed] \label{hyp: spreading speed}
	We assume there exists a speed $c_*$ and an exponential rate $\eta_* > 0$ such that 
	\begin{itemize}
		\item[(i)] (Simple pinched double root) For $\nu, \lambda$ near 0, we have
		for some $\alpha > 0$, 
		\begin{align}
		d^+_{c_*} (\lambda, \nu - \eta_*) = \alpha \nu^2 - \lambda + \mathrm{O} (\nu^3); \label{e: right dispersion curve}
		\end{align}
		\item[(ii)] (Minimal critical spectrum) If $d^+_{c_*} (i \kappa, ik - \eta_*) = 0$ for some $k, \kappa \in \R$, then $k = \kappa = 0$;
		\item[(iii)] (No unstable spectrum) $d^+_{c_*} (\lambda, i k - \eta_*) \neq 0$ for any $k \in \R$ and any $\lambda \in \C$ with $\Re \lambda > 0$.
	\end{itemize}
\end{hyp}
\begin{remark}
	\hl{Hypothesis \ref{hyp: spreading speed} implies that a transition from pointwise growth to pointwise decay occurs at $c=c_*$.  }
		\hl{The rate $\eta_*$ naturally arises in this stability computation, and characterizes both the exponential decay rate of pulled fronts and the natural choice of an exponentially weighted space for perturbations. Indeed, assumption (i) implies by Fourier transform in $x$  that, in a space with weight $e^{\eta_* x}$,  the essential spectrum of the linearization about $u \equiv 0$ has a branch that touches the imaginary axis at the origin and is otherwise contained in the left half plane (see Figure \ref{f: spectrum and initial data}). It is then natural to assume that the essential spectrum is otherwise stable, which is captured in (ii) and (iii), so that dynamics are governed by the marginal pointwise stability captured by the simple pinched double root criterion. On the other hand, choosing $\Re\nu\neq 0$, small in  \eqref{e: right dispersion curve} shows that the origin is unstable in weights $\eta \approx \eta_*$, $\eta\neq \eta_*$. Using (ii) and (iii) one can show that instability holds for all weights $\eta\neq \eta_*$. Similarly, for speeds $c>c_*$, we have $\Re\lambda\leq \delta<0$ when $\Re\nu=-\eta_*$, implying exponential decay of perturbations in a weighted norm, while for speeds $c<c_*$, one finds $\Re\lambda>0$ for some $\nu\in -\eta+i\R$, for \emph{any} $\eta$ fixed; see Section \ref{s: overview} for further details. It is in this sense that Hypothesis \ref{hyp: spreading speed} specifies marginal stability at speed $c_*$.}
	
\end{remark}

From now on we fix $c = c_*$ and write $d^+ = d^+_{c_*}$. 
We also define the left dispersion relation
\begin{align}
d^- (\lambda, \nu) = \mathcal{P} (\nu) + c_* \nu + f'(1) - \lambda
\end{align}
which determines the spectrum of the linearization  $\mathcal{P}(\partial_x) + c_* \partial_x + f'(1)$ at $u=1$ through
\begin{align}
\Sigma^- = \{ \lambda \in \C : d^-(\lambda, ik) = 0 \text{ for some } k \in \R \}. \label{e: Sigma minus}
\end{align}
We focus on dynamics in the leading edge and therefore assume (strict) stability in the wake.
\begin{hyp}[Stability in the wake]\label{hyp: stable on left}
	We assume that $\Re (\Sigma^-) < 0$. 
\end{hyp}
According to the marginal stability conjecture, one expects the propagation dynamics to be governed by traveling wave solutions, and we therefore assume existence of such a front.
\begin{hyp}[Existence of a critical front]\label{hyp: front existence}
	We assume there exists a solution to \eqref{e: eqn} of the form
		\begin{align}
	u(x,t) = q_* (x - c_* t),\qquad \lim_{\xi \to -\infty} q_* (\xi) = 1, \quad \lim_{\xi \to \infty} q_*(\xi) = 0. 
	\end{align}
	We refer to $q_*$ as the critical front. Moreover, we assume that 
	for some $a, b \in \R$ with $b \neq 0$ and some $\eta_0 > 0$,
	\begin{align}
	q_*(\xi) = (a + b \xi) e^{-\eta_* \xi} + \mathrm{O} (e^{- (\eta_*+\eta_0) \xi}).\label{e: front asymptotics}
	\end{align}
\end{hyp}
\begin{figure}
	\centering
	\includegraphics[width=1\textwidth]{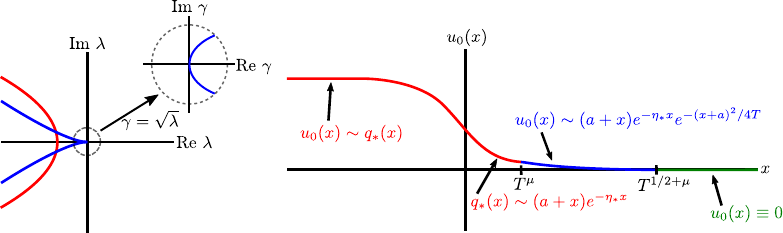}
	\caption{Left: the Fredholm borders of $\mcl$ associated to the asymptotic operators on the right (blue, marginally stable) and on the left (red, stable), together with inset showing an image of the origin under the map $\lambda \mapsto \sqrt{\lambda}$. The Fredholm borders determine the boundary of the essential spectrum; see Section \ref{s: resolvent estimates} for details. Right: schematic of a representative initial datum $u_0$ to which our selection result applies.}
	\label{f: spectrum and initial data}
\end{figure}
Possibly translating in space and reflecting $q_*\mapsto -q_*$ if necessary, we assume $b=1$. 

\begin{remark} 
	\hl{The assumption \eqref{e: front asymptotics} on the asymptotics of the critical front is generic for pulled fronts. Writing the traveling wave ODE as a first-order system, the front connects two equilibria. Eigenvalues at the linearization at $u=0$ are roots of $d_{c_*}^+(0,\nu)=0$. Homotoping from $\lambda=0$ to $\lambda=+\infty$, we notice that roots $\nu$ do not cross $\Re\nu=-\eta_*$ by (iii). Using \eqref{e: right dispersion curve}, one quickly concludes that the linearization at the origin has $m-1$ eigenvalues with real part less than $-\eta_*$, and a Jordan block of length 2 at $-\eta_*$.  ODE theory shows that solutions with decay at most $e^{-\eta_*x}$ then form a smooth $m$-dimensional strong stable manifold, solutions with decay at most  $x e^{-\eta_*x}$ form a smooth $m+1$-dimensional strong stable manifold. Similarly, the equilibrium corresponding to $u=1$ is hyperbolic and possesses an $m$-dimensional unstable manifold.
	Counting dimensions, we find generically a discrete set of 
	heteroclinic orbits with  decay rate $x e^{-\eta_*x}$ but no solutions with decay  $e^{-\eta_*x}$, confirming the asymptotics \eqref{e: front asymptotics} in a generic situation. }
\end{remark}

Finally, we need to exclude the possibility that the fronts are \textit{pushed} in the sense that the nonlinearity accelerates the speed of propagation. Typically, in the presence of pushed fronts, the linearization about the critical front has an unstable eigenvalue. This linearization is given by 
\begin{align}
\mathcal{A} = \mathcal{P}(\partial_x) + c \partial_x + f'(q_*).
\end{align}
The assumption $f'(0) > 0$ implies that the essential spectrum of $\mathcal{A}$ on $L^2$ is unstable \cite{FiedlerScheel,Palmer1, Palmer2}, but Hypotheses \ref{hyp: spreading speed} and \ref{hyp: stable on left} imply that, with a smooth positive exponential weight $\omega$ satisfying 
\begin{align}
\omega(x) = \begin{cases}
e^{\eta_* x}, & x \geq 1, \\
1, & x \leq -1,
\end{cases} \label{e: weight critical}
\end{align}
the essential spectrum of the conjugate operator $\mcl = \omega \mathcal{A} \omega^{-1}$ is marginally stable; see Figure \ref{f: spectrum and initial data} for a schematic, and the beginning of Section \ref{s: resolvent estimates} for further details. To exclude pushed fronts, we assume the following. 
\begin{hyp}[No resonance or unstable point spectrum]\label{hyp: resonance}
	We assume that $\mcl: H^{2m} (\R) \subset L^2(\R) \to L^2(\R)$ does not have any eigenvalues $\lambda$ with $\Re \lambda \geq 0$. We further assume that there is no bounded solution to $\mcl u = 0$. 
\end{hyp}
In the Fisher-KPP setting, fronts with the linear spreading speed $c=2$ satisfy Hypotheses \ref{hyp: spreading speed}--\ref{hyp: resonance}. Absence of a bounded solution to $\mcl u = 0$ in the Fisher-KPP equation is a consequence of the weak exponential decay \eqref{e: front asymptotics} of the critical front. Instabilities as excluded in Hypothesis \ref{hyp: resonance} occur for instance when considering an asymmetric cubic nonlinearity and lead to the selection of \emph{pushed fronts} that propagate at a speed $c_\mathrm{pushed}>c_*$; see \cite{HadelerRothe}. Separating pulled fronts with speed $c=2$ from pushed fronts is the case of of a bounded solution to $\mathcal{L}u=0$, excluded in Hypothesis \ref{hyp: resonance}. In the case that there is such a bounded solution, we say $\lambda = 0$ is a \textit{resonance} of $\mcl$. 

Before we can state our main results, we address the question: \textit{what features should a meaningful front selection result have}? First, such a result should establish that, for an open class of initial data, the front interface is located approximately at $x = c_* t$ for large times, so that the asymptotic speed of propagation is the linear spreading speed. Second, the class of initial data should include some data that is compactly supported in the leading edge, commonly the most interesting case.  Such a setting rules out selection in this sense of faster-traveling, \textit{supercritical fronts} \cite{EbertvanSaarloos}, $c>c_*$, which attract open sets of initial data but only with well-prepared, slowly decaying exponential tails.

\begin{defi}\label{d: selection}
	We say that a front $q_*$ with speed $c_*$  in \eqref{e: eqn} is a \textbf{selected front} if an open class of steep initial conditions propagates with asymptotic speed $c_*$ and stays close to translates of the front $q_*$. More precisely, we require that there exists a non-negative continuous weight $\rho : \R \to \R$ and,  for any $\eps>0$, a set of initial data $\mathcal{U}_\eps \subseteq L^\infty (\R)$ such that:
	\begin{itemize}
		\item[(i)] for any $u_0 \in \mathcal{U}_\eps$, there exists a function $h(t) = \mathrm{o}(t)$  such that the solution $u(x, t)$ to \eqref{e: eqn} with initial data $u_0$ satisfies, for $t$ sufficiently large,
		\begin{align}
		\| u(\cdot + c_* t + h(t), t) - q_* (\cdot) \|_{L^\infty(\R)} < \eps; \label{e: basin propagation}
		\end{align}
		\item[(ii)] there exists $u_0 \in \mathcal{U}_\eps$ such that $u_0 (x) = 0$ for all $x>0$ sufficiently large;
		\item[(iii)]  $\mathcal{U}_\eps$ is open in the topology induced by the norm $\| g \|_{\rho} = \| \rho g \|_{L^\infty}$.
	\end{itemize}
\end{defi}

Our result includes a specific choice of algebraic weight with smooth, positive weight function $\rho_r$, $r\in\R$, satisfying 
\begin{align}
\rho_r (x) = \begin{cases}
\langle x \rangle^r, & x \geq 1, \\
1, & x \leq -1, \label{e: rho one sided}
\end{cases}
\end{align}
where $\langle x \rangle = (1+x^2)^{1/2}$. 
\begin{thm}\label{t: main}
	Assume Hypotheses \ref{hyp: spreading speed} through \ref{hyp: resonance} hold. Then the critical front $q_*$ with speed $c_*$ is a selected front, with weight $\rho = \rho_r \omega$ for $r = 2+\mu$, $0 < \mu < \frac{1}{8}$. Furthermore, for each $\eps>0$ small there exists $\mathcal{U}_\eps \subseteq L^\infty(\R)$ satisfying Definition~\ref{d: selection}, (ii)--(iii), such that  the refined estimate
	\begin{align}
	 \sup_{x \in \R} | \rho_{-1} (x) \omega(x) [u(x + \sigma (t), t) - q_* (x) ]| < \eps
	\end{align}
	holds for solutions with initial data  $u_0 \in \mathcal{U}_\eps$ and $t\geq t_*(u_0)$, sufficiently large, where, for some $x_\infty(u_0)$,
	\begin{align}
	\sigma(t) = c_* t - \frac{3}{2\eta_*} \log t + x_\infty(u_0).
	\end{align}
\end{thm}

\begin{remark}
	\hl{In addition to universal selection of pulled fronts, Theorem \ref{t: main} establishes universality of the logarithmic delay $-\frac{3}{2 \eta_*} \log t$ as identified by Ebert and van Saarloos \cite{EbertvanSaarloos}: the delay is present in all equations satisfying our conceptual assumptions and is independent of the initial data. The matched asymptotics in \cite{vanSaarloosReview} suggest that there are further universal terms in the expansion of the position of the front: in particular, $u(x + \tilde{\sigma}(t), t) \to q_* (x)$, where
	\begin{align*}
		\tilde{\sigma}(t) = c_* t - \frac{3}{2 \eta_*} \log t + \tilde{x}_\infty + \frac{\sigma_2}{\sqrt{t}} + \mathrm{o}(t^{-1/2})
	\end{align*}
	for a constant $\sigma_2$ with explicit, universal form depending only on the leading order expansion of the dispersion relation. Such expansions, including to higher orders, have been verified for the Fisher-KPP equation in \cite{Comparison3, Graham}. We do not pursue such higher expansions in the more general setting here, instead focusing on the most relevant questions of selection of the speed and the state in the wake.
}
\end{remark}
%


We emphasize that we do not require any structure of the equation beyond Hypotheses \ref{hyp: spreading speed} through \ref{hyp: resonance} --- in particular, our results apply to equations without comparison principles. The first author together with Gar\'enaux recently proved \cite{AveryGarenaux} that the extended Fisher-KPP equation \eqref{e: eKPP} satisfies Hypotheses \ref{hyp: spreading speed} through \ref{hyp: resonance}, so that Theorem \ref{t: main} applies immediately in that setting. Adapting the ideas therein, we show that the class of equations we consider here is open, thereby emphasizing the universality across different equations. We make this precise in the following theorem. 
\begin{thm}\label{t: robustness}
	Assume that $\mathcal{P}(\partial_x; \delta)$ is a family of operators of degree $2m$ of the form \eqref{e: P def} for $\delta \in (-\delta_0, \delta_0)$ for some $\delta_0 > 0$, with coefficients $p_k (\delta)$ smooth in $\delta$, $p_{2m} (0) \neq 0$, and that $f = f(u; \delta)$ is smooth in both $u$ and $\delta$. Suppose that $f(0; 0) = f(1; 0) = 0,$ $f'(0; 0) > 0, f'(1; 0) < 0$, and $\mathcal{P}(\partial_x; 0)$ and $f(u; 0)$ are such that Hypotheses \ref{hyp: spreading speed} through \ref{hyp: resonance} are satisfied. Assume further that $f(0; \delta) = f(1; \delta) = 0$. Then Hypotheses \ref{hyp: spreading speed} through \ref{hyp: resonance} also hold for $\delta$ sufficiently small, and hence Theorem \ref{t: main} holds for all $\delta$ sufficiently small. 
\end{thm}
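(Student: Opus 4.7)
The plan is to show that each of Hypotheses \ref{hyp: spreading speed}--\ref{hyp: resonance} is preserved under smooth perturbation in $\delta$, so that Theorem~\ref{t: main} applied to the perturbed equation gives the conclusion. My ingredients are: (a) the implicit function theorem applied to the pinched double root conditions, (b) continuity of essential spectrum on compact sets combined with high-frequency coercivity from ellipticity, (c) transverse intersection of invariant manifolds for the traveling-wave ODE, and (d) an Evans/Jost function argument near the branch point at $\lambda = 0$ to exclude the emergence of resonances or unstable eigenvalues. The approach parallels the analysis of the extended Fisher-KPP equation in \cite{AveryGarenaux}, adapted to the abstract setting here. The rest states $u=0,1$ persist because $f(0;\delta) = f(1;\delta) = 0$, and $f'(0;\delta)>0, f'(1;\delta)<0$ hold for small $\delta$ by continuity.

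\textbf{Persistence of Hypotheses \ref{hyp: spreading speed} and \ref{hyp: stable on left}.} First I would treat the pinched double root condition as a two-equation system
\[
d^+_c(0, -\eta; \delta) = 0, \qquad \partial_\nu d^+_c(0, -\eta; \delta) = 0
\]
in the unknowns $(c, \eta)$. A direct computation at $\delta=0$, using $c = -\mathcal{P}'(-\eta)$ from the second equation, gives a Jacobian with determinant $\eta_* \mathcal{P}''(-\eta_*; 0) = 2\eta_* \alpha \neq 0$, so the implicit function theorem produces smooth families $c_*(\delta)$, $\eta_*(\delta)$ that preserve part~(i) of Hypothesis~\ref{hyp: spreading speed}. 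Parts (ii), (iii), and Hypothesis~\ref{hyp: stable on left} then follow from continuity of the finitely many roots of $d^+$ and $d^-$ lying in any compact region of $(k, \lambda)$, combined with ellipticity bounds based on $(-1)^m p_{2m}(0)<0$ that uniformly exclude roots at large $|k|$ and large $|\lambda|$ for $\delta$ small. The hypothesis $p_{2m}(0) \neq 0$ and smoothness in $\delta$ make these high-frequency estimates uniform.

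\textbf{Persistence of Hypothesis \ref{hyp: front existence}.} I would rewrite the traveling-wave equation as a first-order ODE in $\R^{2m}$. The equilibrium at $u=1$ is hyperbolic with an $m$-dimensional unstable manifold $W^u(1; \delta)$, smooth in $\delta$. At $u=0$, the preservation of the pinched double root from the previous step ensures that the spatial Jordan block at $-\eta_*(\delta)$ persists, so the center-strong-stable manifold $W^{css}(0;\delta)$ of orbits decaying no slower than $\xi e^{-\eta_*(\delta)\xi}$ is well defined and of dimension $m+1$. A dimension count gives generically a one-dimensional intersection $W^u(1;\delta) \cap W^{css}(0;\delta)$ parametrizing the front modulo translation. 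The crucial transversality at $\delta=0$ is, by standard Fredholm and exponential-dichotomy arguments, equivalent to the absence of a bounded solution to $\mcl u = 0$ other than multiples of $q_*'$, which is guaranteed by Hypothesis~\ref{hyp: resonance}. Structural stability of transverse heteroclinic intersections then yields a critical front $q_*(\cdot;\delta)$ smooth in $\delta$, with the asymptotic expansion \eqref{e: front asymptotics} inherited from the persistent Jordan block after a normalization absorbing $b(\delta)$.

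\textbf{Persistence of Hypothesis \ref{hyp: resonance} and the main obstacle.} Adjusting the weight $\omega_\delta$ to $\eta_*(\delta)$, set $\mcl_\delta = \omega_\delta \mathcal{A}_\delta \omega_\delta^{-1}$. By the already-established versions of Hypotheses \ref{hyp: spreading speed} and \ref{hyp: stable on left}, the essential spectrum of $\mcl_\delta$ on $L^2(\R)$ lies in $\{\Re \lambda \leq 0\}$ and touches the imaginary axis only at the origin with a quadratic tangency. Upper semicontinuity of the isolated point spectrum rules out unstable eigenvalues in any compact subset of $\{\Re \lambda > 0\}$ for $\delta$ small, and ellipticity together with sectorial estimates handle the large-$|\lambda|$ regime uniformly. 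The hard part is excluding an eigenvalue or resonance that emerges from $\lambda = 0$, since analytic perturbation theory fails at the branch point. To handle this I would adapt the Evans/Jost function construction of \cite{AveryGarenaux}: one assembles a scalar determinant $E(\lambda, \delta)$ analytic on the appropriate Riemann surface covering a neighborhood of $0$, whose nonvanishing at $\lambda=0$ is equivalent to the absence of a bounded solution to $\mcl_\delta u = 0$. At $\delta=0$, Hypothesis~\ref{hyp: resonance} gives $E(0,0) \neq 0$, and smoothness in $\delta$ then yields $E(0,\delta) \neq 0$ for small $\delta$, which simultaneously rules out resonances and the birth of unstable eigenvalues from the origin. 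This uniform-in-$\delta$ analysis of the Jost solutions and the Evans function across the branch singularity is the step I expect to be the main technical obstacle; the persistence of the linear spreading speed and of the front itself is comparatively routine once the correct pinching and transversality statements are set up.
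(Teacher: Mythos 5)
Your proposal reaches the same conclusion, but takes a genuinely different route from the paper on two of the three non-trivial steps. For the pinched double root the argument is essentially identical: both you and the paper apply the implicit function theorem to the pair $(d^+_c, \partial_\nu d^+_c)=0$ and compute the same Jacobian $2\eta_*\alpha$. From there the routes diverge. For front persistence, you work in the spatial ODE and invoke structural stability of a transverse intersection $W^u(1;\delta)\cap W^{css}(0;\delta)$, whereas the paper sets up a far-field/core ansatz $q=\chi_- + w + \chi_+(a+x)e^{-\eta_*(\delta)x}$ and solves for $(w,a)$ by the implicit function theorem in exponentially weighted Sobolev spaces, with invertibility of the bordered linearization supplied by a pairing condition $\langle \mcl(0)\chi_+,\phi\rangle\neq 0$ (their Lemma on $\phi\in\ker\mcl(0)^*$). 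For the resonance exclusion you propose an Evans/Jost function analytic in $\gamma=\sqrt{\lambda}$, while the paper constructs an equivalent scalar function $E(\gamma,\delta)$ directly from the far-field/core decomposition. These are conceptually the same object — the paper itself advertises its far-field/core machinery as a ``functional analytic alternative'' to the Evans function — but the paper's choice buys coherence with Sections~3--4, where the identical decomposition already produces the resolvent estimates, so no pointwise ODE exponential-dichotomy analysis needs to be set up afresh. Your ODE route buys a more classical picture at the cost of delicate bookkeeping: $W^{css}$ is a \emph{center}-strong-stable manifold containing the algebraically-weighted rate $\xi e^{-\eta_*\xi}$, and such manifolds are not unique in general, so the transversality and persistence statements need to be phrased with some care.

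One imprecision is worth flagging: you state that the transversality is equivalent to ``the absence of a bounded solution to $\mcl u = 0$ other than multiples of $q_*'$.'' In the weighted variable, $\omega q_*'$ is \emph{not} bounded — from the front asymptotics \eqref{e: front asymptotics} one has $\omega(\xi)q_*'(\xi)\sim -\eta_*\xi$ as $\xi\to\infty$ — so no exceptional clause for $q_*'$ is needed or correct. Hypothesis~\ref{hyp: resonance} asserts absence of \emph{any} bounded solution to $\mcl u = 0$, and that is the statement to which the transversality reduces (in the paper's language, $\langle\mcl(0)\chi_+,\phi\rangle\neq 0$). You appear to be thinking of $q_*'$ as a bounded solution to the \emph{unweighted} variational equation $\mathcal{A}u=0$, which it is, but that is a different space and does not match Hypothesis~\ref{hyp: resonance}. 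Relatedly, ``upper semicontinuity of the isolated point spectrum'' is not quite the right invocation for ruling out eigenvalues: what one uses is that the eigenvalue problem on any compact subset of $\{\Re\lambda>0\}$ away from the essential spectrum is a regular (in fact analytic) perturbation problem, so nothing new can appear there, while the potential birth of an eigenvalue or resonance from the branch point at the origin is handled by your Evans function (or the paper's $E(\gamma,\delta)$) in the unfolded variable $\gamma$. With those two points tightened, your approach is a valid alternative proof.
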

We note in passing that the assumptions in Theorem \ref{t: robustness} on the nonlinearity at $\delta = 0$ imply that $0$ and $1$ perturb smoothly to nearby zeros of $f$ for $\delta$ small. Shifting and rescaling $u$, we may then assume without loss of generality that $f(0; \delta) = f(1; \delta) = 0$. 

Together, Theorems \ref{t: main} and \ref{t: robustness} establish propagation at the linear spreading speed for open classes of initial data and open classes of equations. 

%

\subsection{Overview and preliminaries}\label{s: overview}

\noindent \textbf{Pointwise stability and pinched double roots.} We give a brief review of concepts of pointwise decay driving our understanding of invasion processes, and the role of pinched double roots, based on the presentation in \cite{HolzerScheelPointwiseGrowth}. To determine pointwise growth or decay for solutions to the linearization about the unstable state, we use the inverse Laplace transform to write the solution to \eqref{e: lin about 0} as 
\begin{align}
	u(x, t) = \frac{1}{2 \pi i} \int_\Gamma e^{\lambda t} \int_\R G_\lambda (x-y) u_0 (y) \, dy \, d \lambda,
\end{align}
where $u_0$ is the initial data, $\Gamma$ is a suitable contour to the right of the essential spectrum, and $G_\lambda (\xi)$ is the resolvent kernel, which solves
\begin{align}
	(\mathcal{P}(\partial_\xi) + c \partial_\xi + f'(0)  - \lambda) G_\lambda = -\delta_0. 
\end{align}
We restrict to strongly localized, for instance compactly supported, $u_0$. To obtain optimal decay of $u(x, t)$, one aims to shift the integration contour $\Gamma$ as far to the left as possible. Because the initial data is strongly localized and because we are interested in the \emph{pointwise} growth or decay of the solution, rather than in a fixed norm, the obstruction to shifting the contour $\Gamma$ is \emph{not} the essential spectrum of $P(\partial_\xi) + c \partial_\xi + f'(0)$, which depends on the choice of function space and may be moved with exponential weights. Instead, the only obstructions to shifting the contour are singularities of $\lambda \mapsto G_\lambda (\xi)$ for \emph{fixed} $\xi$. 

To track singularities of $G_\lambda(\xi)$, we recast the equation as a first order system, and solve for the matrix Green's function $T_\lambda(\xi)$, which solves
\begin{align*}
	(\partial_\xi - M(\lambda)) T_\lambda = -\delta_0 \cdot I
\end{align*}
for some matrix $M(\lambda)$ which is polynomial in $\lambda$. The resolvent kernel $G_\lambda$ may be recovered from $T_\lambda$, and both have precisely the same pointwise singularities \cite[Lemma 2.1]{HolzerScheelPointwiseGrowth}. The matrix Green's function $T_\lambda(\xi)$ has the explicit expression
\begin{align*}
	T_\lambda (\xi) = \begin{cases}
		-e^{M(\lambda) \xi} P^\mathrm{s}(\lambda), &x > 0,\\
		e^{M(\lambda) \xi} P^\mathrm{u}(\lambda), &x < 0,
	\end{cases}
\end{align*}
where $P^{\mathrm{s}/\mathrm{u}} (\lambda)$ are, for $\lambda$ sufficiently large, the projections onto the stable and unstable eigenspaces of $M(\lambda)$, which are well separated for $\Re \lambda \gg 1$ due to the fact that the underlying equation is parabolic. Since $M(\lambda)$ is polynomial in $\lambda$, singularities of $T_\lambda (\xi)$ are precisely the singularities of $P^{\mathrm{s}/\mathrm{u}} (\lambda)$. Using the Dunford integral, these projections may be analytically continued from $\Re \lambda \gg 1$ until an eigenvalue of $M(\lambda)$ which was stable for $\Re \lambda \gg 1$ collides with an eigenvalue of $M(\lambda)$ which was unstable for $\Re \lambda \gg 1$. Eigenvalues $\nu$ of $M(\lambda)$ are precisely roots $\nu$ of the dispersion relation, and such a collision of stable and unstable eigenvalues is a \emph{pinched double root} of the dispersion relation, by definition; see \cite[Definition 4.2]{HolzerScheelPointwiseGrowth}. The term ``pinched'' refers to the fact that the colliding roots come from opposite (that is, stable and unstable) directions for $\Re \lambda \gg 1$. 

To summarize, the contour $\Gamma$ can be shifted to the left until we reach a pointwise singularity of $G_\lambda(\xi)$, and all such singularities are pinched double roots of the dispersion relation. The pinched double root with maximal real part therefore gives an upper bound on the pointwise exponential decay rate of $u(x, t)$. It is possible to have a pinched double root that is not a singularity of $G_\lambda(\xi)$, if the eigenvalues collide but have distinct limiting eigenspaces \cite[Remark 4.5]{HolzerScheelPointwiseGrowth}. We exclude this possibility in Hypothesis \ref{hyp: spreading speed} by restricting to \emph{simple} pinched double roots, which are robust (see Lemma \ref{l: robustness of simple PDR}) and always produce pointwise growth modes \cite[Lemma 4.4]{HolzerScheelPointwiseGrowth}. 

The linear spreading speed $c_*$ in case (S) is then characterized by a simple pinched double root at the origin, as in Hypothesis \ref{hyp: spreading speed}(i); compare \cite[Section 6]{HolzerScheelPointwiseGrowth}. Assumptions (ii)-(iii) of Hypothesis \ref{hyp: spreading speed} on minimality of critical spectrum guarantee that this is the most unstable pinched double root, so that the linearization precisely exhibits marginal pointwise stability, as required by the marginal stability conjecture. A short calculation shows that the double root $\lambda_\mathrm{dr}$ at the origin moves to the right as $c$ decreases, $\lambda_\mathrm{dr}'(c_*)<0$, so that the origin is pointwise exponentially stable for $c>c_*$ and exponentially unstable for $c<c_*$ \cite[Remark 6.6]{HolzerScheelPointwiseGrowth}. Heuristically, marginal pointwise stability in the leading edge allows solutions evolving from steep initial data to develop a Gaussian tail which does not decay rapidly in time, allowing for matching with the front interface on an intermediate length scale, thus explaining the selection of $c_*$; see Figure \ref{f: spectrum and initial data} and the discussion below.

Note that spreading speeds with marginally stable pinched double roots are quite generally well-defined \cite[Corollary 6.5]{HolzerScheelPointwiseGrowth}, motivating the conceptual, equation-independent setup here. 

\noindent \textbf{Marginal stability as a selection mechanism.} Hypothesis \ref{hyp: spreading speed} guarantees that, in the frame moving with $c_*$, the state $u \equiv 0$ is marginally pointwise stable, and in particular marginally stable in a weighted space with weight $e^{\eta_* x}$. Hypothesis \ref{hyp: front existence} gives us a front to perturb from, although perturbations that cut off the tail of the front, which are the most relevant for front selection, are large perturbations growing linearly in $x$, due to the front asymptotics $xe^{-\eta_*x}$. For initial data that vanish for $x$ sufficiently large, the dynamics for large $x$ are governed by the linearization about the unstable state, which, in the co-moving frame with speed $c_*$ and in a weighted space with weight $e^{\eta_* x}$, is given by
\begin{align}
    \mcl^+ = \mathcal{P}(\partial_x - \eta_*) + c_* (\partial_x - \eta_*) + f'(0) = \alpha \partial_{xx} + \mathrm{O}(\partial_x^3), \label{e: Lplus def}
\end{align}
by Hypothesis \ref{hyp: spreading speed}. This leads to diffusive dynamics in the leading edge: the front rebuilds its tail with Gaussian asymptotics
\begin{align*}
    u\left(x+\frac{3}{2 \eta_*} \log t,t\right) \sim x e^{-\eta_* x} e^{-x^2/(4 \alpha t)}. 
\end{align*}
See Section \ref{s: approximate solution} for further details on the precise form of this Gaussian tail. These diffusive dynamics, with no temporal decay, allow for matching with the front $q_*(x) \sim x e^{-\eta_* x}$ on the intermediate length scale $x +\frac{3}{2 \eta_*} \log t \sim  t^\mu, 0 < \mu \ll 1$. This is the intuition for the speed selection here: for steep initial data, the dynamics are initially driven by the diffusive repair at $+\infty$, which then pulls the front forward at the natural speed $c_*$ associated to this diffusive repair. 

For speeds $c \neq c_*$, the discussion on pointwise growth above implies that the linearization in the leading edge will either grow or decay exponentially in time, precluding matching with the front, which is constant in time in the comoving frame. Also, instabilities beyond the one associated with the pointwise growth in the leading edge would induce temporal growth in the frame with speed $c_*$, again preventing matching with the front on the intermediate length scale. In this sense, the marginal stability of the front, that is, choosing $c=c_*$ and excluding other instabilities as made precise in Hypotheses \ref{hyp: spreading speed}--\ref{hyp: resonance}, is necessary for matching and selection of the pulled front in the sense that failure of marginal stability would select a different profile. An important boundary case, excluded here by Hypothesis \ref{hyp: stable on left}, is \emph{diffusively stable} essential spectrum in the wake, touching the imaginary axis in a parabolic fashion, rather than exponentially stable as assumed in Hypothesis \ref{hyp: stable on left}, a scenario typical  for pattern-forming fronts such as those in \eqref{e: SH}. While this scenario is excluded here, preliminary sharp stability results providing a basis for selection were recently obtained in  \cite{AveryScheelGL}. 

\noindent \textbf{Sketch of the main proof.} Absent a comparison principle but equipped with assumptions on the linearization at a given front profile, one would like to phrase the selection problem as a stability problem. Initial conditions with vanishing support for large $x>0$ can be thought of as perturbations of size $xe^{-\eta_* x}$, which however are not small perturbations in a suitable function space. Indeed, the weighted front satisfies $\omega(x) q_*(x) \sim x$ as $x \to \infty$ by Hypothesis \ref{hyp: front existence}, so that a perturbation which cuts off the front tail is only small in a function space such as $L^\infty_{-r} (\R)$ for $r > 1$ (after already including the exponential weight; see below for definitions of weighted spaces). However, by the argument of \cite[Proposition 7.6]{AveryScheel}, one can show that the linear evolution to such a perturbation will typically grow like $t^\beta$ for some $\beta > 0$, precluding a nonlinear perturbative argument. 

\hl{As suggested in the above discussion}, we overcome this difficulty by perturbing instead from a refined profile, informed by the formal asymptotics in \cite{EbertvanSaarloos}, which resembles the critical front for $x + c_* t - \frac{3}{2\eta_*} \log t \ll \sqrt{t}$ with a Gaussian tail for $x + c_* t - \frac{3}{2 \eta_*} \log t \gg \sqrt{t}$. Such a construction was carried out for the Fisher-KPP equation in \cite{Comparison2, Comparison3} and used together with the comparison principle to establish a refined description of the asymptotics of the front position. The key insight in this construction is to match on an intermediate length scale $x \sim t^\mu$ for some $\mu > 0$ small.

As a first main ingredient to our result, we construct such an approximate solution in our conceptual setup in a way that guarantees small residuals. Based on this first step, most of our work is concerned with establishing stability in time of such an approximate solution. In order to guarantee small residuals, we let the approximate solution evolve for some large time $T$ to an initial profile, such that small perturbations to the approximate solution include initial conditions which vanish for $x$ sufficiently large; see Figure \ref{f: spectrum and initial data}. 

In the second step, we establish stability by closing a perturbative argument. The main difficulty here stems from the fact that the logarithmic shift introduces critical terms into the linear dynamics. We therefore need sharp estimates on the linearized evolution which we obtain by refining resolvent estimates originally derived in order to conclude stability of the critical front in \cite{AveryScheel}. In order to close the nonlinear argument, we rely on sharp characterizations of decay and nonlinear contributions in terms of $T$, the characteristic scale of the initial Gaussian tail.

To illustrate the still substantial difficulties in closing this perturbative argument, consider the heavily simplified model problem 
\begin{align}
\begin{cases}
w_t = w_{xx} - \frac{3}{2(t+T)} (w_x - w), & x > 0, t > 0, \\
w = 0, & x = 0, t>0,
\end{cases}
\end{align}
where the diffusive term captures spectral properties in the leading edge and the non-autonomous terms are induced by the logarithmic shift. 
The autonomous linear evolution, \hl{that is, ignoring the $-\frac{3}{2}(t+T)^{-1} (w_x - w)$ terms}, allows $t^{-3/2}$ algebraic decay in suitable norms provided the initial data is sufficiently localized.  However, it turns out that the term $\frac{3}{2} (t+T)^{-1} w$ is \textit{critical}, so that in fact $w$ does not decay but instead remains $\mathrm{O}(1)$.  This feature is explicit after the simple but insightful change of variables $z = (t+T)^{-3/2} w$, which eliminates the critical term, giving an equation
\begin{align*}
\begin{cases}
z_t = z_{xx} - \frac{3}{2(t+T)} z_x, & x > 0, t > 0 \\\
z = 0, & x = 0, t > 0. 
\end{cases}
\end{align*}
\hl{The term $-\frac{3}{2} (t+T)^{-1} z_x$ has improved decay properties compared to $\frac{3}{2} (t+T)^{-1} w$ due to the presence of an extra spatial derivative, as spatial derivatives of the heat kernel exhibit faster decay}. Using sharp estimates on decay of derivatives and several bootstrap steps, refining in particular the estimates in \cite{AveryScheel},  we find $T$-uniform decay estimates $(t+T)^{-3/2}$ for small initial data. In $z$-variables, the nonlinearity causes additional complications due to the factor $(t+T)^{3/2}$, which we account for using sharp $T$-dependent characterizations of decay. A significant part of our efforts is then concerned with establishing robust decay estimates, equivalent to those in the model problem but based only on our conceptual assumptions, for the linearized evolution near our approximate solution, which in turn we base on estimates on the  linearization at the critical front $\mcl$.
Throughout, we cannot and do not rely on comparison principles which are not available for the full problem.

%
%
%
From this perspective, our results can be seen as an extension of stability results for critical fronts to actual selection mechanisms for fronts and invasion speeds, by placing the selection problem in a sufficiently broad perturbative framework. Indeed, our previous work \cite{AveryScheel} was motivated by stability results for critical  Fisher-KPP fronts by Gallay \cite{Gallay} and the more recent approach by Faye and Holzer \cite{FayeHolzer} using more direct pointwise semigroup methods. \hl{Our analysis in \cite{AveryScheel} establishes stability of pulled fronts against \emph{localized} perturbations that do not alter the exponential tail of the front, thus not sufficiently large in the tail to establish front selection, but also develops the fundamentals of the linear theory we rely on and adapt to our needs here}. This linear theory can be viewed as a robust functional analytic alternative to stability problems that have been successfully analyzed using pointwise resolvents, Evans functions, and  pointwise semigroup methods \cite{ZumbrunHoward,ZumbrunGardner,KapitulaSandstede}. On the level of the resolvent, we indeed replace the pointwise Evans function techniques with an equivalent functional analytic approach to tracking eigenvalues and resonances  based on farfield-core decompositions, initially developed in \cite{PoganScheel}. 

\noindent \textbf{Outline of the paper.} In Section \ref{s: approximate solution}, we use a matching procedure to construct a good approximate solution of \eqref{e: eqn} which moves with the expected speed. In Section \ref{s: resolvent estimates}, we use a far-field/core decomposition to prove sharp estimates on the resolvent $(\mcl - \gamma^2)^{-1}$. In Section \ref{s: linear estimates}, we use carefully chosen Laplace inversion contours as in \cite{AveryScheel} to translate these resolvent estimates into sharp linear decay estimates. We then carry out a nonlinear stability analysis in Section \ref{s: stability argument} to prove that certain classes of solutions to \eqref{e: eqn} resemble our approximate solution. We rephrase these results as statements on front propagation in Section \ref{s: propagation}, thereby proving Theorem \ref{t: main}. In Section \ref{s: robustness}, we use ideas from \cite{AveryGarenaux} to show that our assumptions hold for open classes of equations. We conclude in Section \ref{s: discussion} with a discussion of extensions of our results (including to systems of parabolic equations) and some of the challenges therein. 

\noindent \textbf{Function spaces.} We will need algebraic and exponential weights generalizing those defined in \eqref{e: rho one sided} and \eqref{e: weight critical}. For $r_-, r_+ \in \R$, we define a smooth positive algebraic weight 
\begin{align}
\rho_{r_-, r_+} (x) = \begin{cases}
\langle x \rangle^{r_+}, &x \geq 1, \\
\langle x \rangle^{r_-}, &x \leq -1. 
\end{cases}
\end{align}
For a non-negative integer $k$ and a real number $1 \leq p \leq \infty$, we define the corresponding algebraically weighted Sobolev space $W^{k, p}_{r_-, r_+} (\R)$ through 
\begin{align}
\| g \|_{W^{k,p}_{r_-, r_+}} = \| \rho_{r_-, r_+} g \|_{W^{k,p}}
\end{align}
where $W^{k,p} (\R)$ is the standard Sobolev space with differentiability index $k$ and integrability $p$. If $r_- = 0$ and $r_+ = r$, we write $\rho_{0, r_+} = \rho_r$ and $W^{k,p}_{0, r_+}(\R)=W^{k, p}_r(\R)$ with norm $\| \cdot \|_{W^{k, p}_r}$. For $k = 0$, we write $W^{0, p}_{r_-, r_+} (\R) = L^p_{r_-, r_+} (\R)$, and denote the norm by $\| \cdot \|_{L^p_{r_-, r_+}},$ or $\| \cdot \|_{L^p_{r}}$ in the case where $r_- = 0, r_+ = r$. 
Similarly, for $\eta_-, \eta_+ \in \R$ we let $\omega_{\eta_-, \eta_+}$ be a smooth positive exponential weight satisfying 
\begin{align}
\omega_{\eta_-, \eta_+} (x) = \begin{cases}
e^{\eta_+ x}, & x \geq 1, \\
e^{\eta_- x}, & x \leq -1,
\end{cases}
\end{align}
and define corresponding exponentially weighted Sobolev spaces $W^{k,p}_{\mathrm{exp}, \eta_-, \eta_+} (\R)$ through the norms 
\begin{align}
\| g \|_{W^{k,p}_{\mathrm{exp}, \eta_-, \eta_+}} = \| \omega_{\eta_-, \eta_+} g \|_{W^{k,p}}. 
\end{align}
Again, we write $\omega_{0, \eta_+} = \omega_{\eta}$ when $\eta_- = 0$ and $\eta_+ = \eta$, and  $W^{k,p}_{\mathrm{exp}, 0, \eta_+}(\R)=W^{k,p}_{\mathrm{exp}, \eta} (\R)$, and, for $k = 0$, we write $W^{0,p}_{\mathrm{exp},\eta_-, \eta_+} (\R) = L^p_{\mathrm{exp},\eta_-, \eta_+} (\R)$, with corresponding notation for the norms.

\noindent \textbf{Additional notation.} For two Banach spaces $X$ and $Y$, we let $\mathcal{B}(X,Y)$ denote the space of bounded linear operators from $X$ to $Y$ equipped with the operator norm topology. For $\delta > 0$, we let $B(0, \delta)$ denote the open ball in the complex plane with radius $\delta$. When the intention is clear, we may abuse notation slightly by writing a function $u(x,t)$ as $u(t) = u(\cdot, t)$, viewing it as an element of some function space for each $t$. 

\noindent \textbf{Acknowledgements.} This material is based upon work supported by the National Science
Foundation through the Graduate Research Fellowship Program under Grant No. 00074041, as
well as through NSF-DMS-1907391. Any opinions, findings, and conclusions or recommendations
expressed in this material are those of the authors and do not necessarily reflect the views of the
National Science Foundation.

\section{Construction of the approximate solution}\label{s: approximate solution}
We cast \eqref{e: eqn} in the co-moving frame with position 
\begin{align*}
\xi = x - c_* t + \frac{3}{2 \eta_*} \log (t+T) - \frac{3}{2 \eta_*} \log (T),
\end{align*}
i.e. in a frame that moves with the linear spreading speed up to the logarithmic delay. Here we write the logarithmic shift as $\frac{3}{2\eta_*} (\log(t+T) - \log(T))$ to capture the phase shift resulting from letting the approximate solution evolve for time $T$. After relabeling $\xi$ as $x$ again, we find
\begin{align}
u_t = \mathcal{P}(\partial_x) u + \left( c_* - \frac{3}{2 \eta_* (t+T)} \right) u_x + f(u). 
\end{align}
We next use an exponential weight to stabilize the linear part of the equation, defining $v = \omega u$ with $\omega$ from \eqref{e: weight critical}. The weighted variable $v$ solves $\NL[v] = 0$, where the nonlinear operator $\NL$ is
\begin{align}
\NL[v] = v_t - \omega \mathcal{P}(\partial_x) (\omega^{-1} v) - \left( c_* - \frac{3}{2 \eta_* (t+T)} \right) (\omega (\omega^{-1})' v + v_x) - \omega f(\omega^{-1} v). \label{e: NL def}
\end{align}
Our goal in this section is to construct an approximate solution $\psi$ such that $\NL[\psi] (x,t) = R(x,t)$ with $\| R (\cdot, t) \|_{L^\infty_r}$ small in a suitable sense. We follow the construction in \cite{Comparison3}, modifying it for the higher order equations considered here and only including the terms which are relevant for our analysis. The basic idea is to use an appropriate shift of the front to construct the ``interior'' of our approximate solution, and then glue this on the intermediate length scale $x \sim (t+T)^\mu$ to a diffusive tail which we construct in self-similar coordinates. \hl{The construction in this section relies only on the dynamics in the leading edge captured by Hypothesis \ref{hyp: spreading speed}(i) and the existence of a pulled front with generic asymptotics assumed in Hypothesis \ref{hyp: front existence}. Additional instabilities in the essential spectrum or point spectrum, excluded by Hypothesis \ref{hyp: spreading speed}(ii)-(iii) and Hypothesis \ref{hyp: resonance} respectively, would not prohibit the construction here but instead render the approximate solution constructed in this section unstable.}

\subsection{Interior of the approximate solution}
Fix $\mu > 0$ small. To construct the approximate solution for $x \in (-\infty, (t+T)^\mu)$, we define
\begin{align}
\psi^- (x,t) = \omega(x + \zeta(t+T)) q_* (x + \zeta(t+T)), \label{e: psi minus def}
\end{align}
where we will choose the shift $\zeta(t+T)$ in Lemma \ref{l: pointwise matching} to match with a diffusive tail on the length scale $x \sim (t+T)^\mu$. \hl{The matching conditions will imply that $\zeta$ is smooth, with $\zeta(t +T) = \mathrm{O}((t+T)^{\mu-1/2})$ and $\dot{\zeta}(t+T) = \mathrm{O}((t+T)^{\mu - 3/2})$, and we therefore assume for the remainder of this section that these conditions hold}. Since we choose $T$ large, $\zeta$ will be small uniformly in $t$, and so we expect that $\psi^-(x,t) \approx \omega(x) q_*(x)$, which we make precise in the following lemma. 

\begin{lemma}\label{l: interior shifted front vs front}
	Fix $0 < \mu < \frac{1}{8}$ and let $r = 2 + \mu$. Assume that $\zeta \in C^1[T, \infty)$, with $\zeta(t+T) = \mathrm{O}((t+T)^{\mu-1/2})$ for $t$ large. There exists a constant $C > 0$ such that for any integer $0 \leq k \leq 2m$,
	\begin{align}
	\left\| \partial_x^k \left[ \omega(\cdot + \zeta(t+T)) q_*(\cdot + \zeta(t+T)) - \omega(\cdot) q_* (\cdot) \right] 1_{\{x \leq (t+T)^\mu + 1\}} \right\|_{L^\infty_r} \leq \frac{C}{(t+T)^{1/2-4\mu}} \label{e: interior shifted front vs front} 
	\end{align}
	for $T$ sufficiently large and for all $t > 0$. 
\end{lemma}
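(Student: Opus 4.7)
The plan is to set $F(x) := \omega(x) q_*(x)$ and reduce the problem to estimating $F^{(k)}(x+\zeta) - F^{(k)}(x)$, which by the fundamental theorem of calculus equals $\zeta \int_0^1 F^{(k+1)}(x + s\zeta)\,ds$. The target bound will then follow from uniform bounds on $F^{(k+1)}$ in the relevant weighted norms, combined with $|\zeta| = \mathrm{O}((t+T)^{\mu - 1/2})$.

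First, I would record the asymptotics of $F$ dictated by Hypothesis \ref{hyp: front existence}. Combining the weight $\omega(x) = e^{\eta_* x}$ on $x \geq 1$ with the expansion \eqref{e: front asymptotics} gives $F(x) = a + bx + \mathrm{O}(e^{-\eta_0 x})$ as $x \to +\infty$. This expansion carries over to derivatives --- a standard consequence of invariant manifold theory for the traveling-wave ODE, where the weak $x e^{-\eta_* x}$ tail corresponds to the Jordan block at $\nu = -\eta_*$ noted in the remark following Hypothesis \ref{hyp: front existence} --- so that $F'(x) = b + \mathrm{O}(e^{-\eta_0 x})$ and $F^{(j)}(x) = \mathrm{O}(e^{-\eta_0 x})$ for $j \geq 2$, on $x \geq 1$. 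On $x \leq -1$, $\omega$ is constant and $F = q_*$, and Hypothesis \ref{hyp: stable on left} ensures that $q_* - 1$ and all its derivatives decay exponentially as $x \to -\infty$.

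With these expansions, I would split the cutoff interval $\{x \leq (t+T)^\mu + 1\}$ into three pieces. On $x \leq -1$, the exponential decay of $F^{(k+1)}$ absorbs the $|\zeta|$ factor while the weight satisfies $\rho_r \equiv 1$, yielding $\mathrm{O}(|\zeta|)$. On $-1 \leq x \leq 1$, $F^{(k+1)}$ and $\rho_r$ are both bounded, again giving $\mathrm{O}(|\zeta|)$. The binding region is $1 \leq x \leq (t+T)^\mu + 1$. For $k=0$, $F'$ is uniformly bounded on this range (approaching $b$), so $|F(x+\zeta) - F(x)| \leq C|\zeta|$, and the worst point is the right endpoint:
\begin{equation*}
\sup_{1 \leq x \leq (t+T)^\mu + 1} \rho_r(x) |F(x+\zeta) - F(x)| \leq C (t+T)^{\mu r} |\zeta| = C (t+T)^{3\mu + \mu^2 - 1/2} \leq C (t+T)^{4\mu - 1/2},
\end{equation*}
using $r = 2+\mu$ together with $\mu^2 \leq \mu$ (certainly true for $\mu \in (0, 1/8)$). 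For $k \geq 1$, $F^{(k+1)}$ already decays exponentially on this region, easily dominating the polynomial weight $\rho_r$, so the contribution is the even sharper $\mathrm{O}(|\zeta|)$.

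No serious obstacle arises; the only point requiring minor attention is ensuring that \eqref{e: front asymptotics} propagates to derivatives up to order $2m+1$, which follows from the standard linearization/invariant manifold analysis at $\xi \to +\infty$ for the traveling-wave ODE under Hypothesis \ref{hyp: spreading speed}(i).
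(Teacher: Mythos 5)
Your proposal is correct and follows essentially the same route as the paper: both use the fundamental theorem of calculus to reduce the estimate to bounding $|\zeta|$ times the relevant derivative of $\omega q_*$, both invoke the front asymptotics \eqref{e: front asymptotics} to control that derivative at $+\infty$, and both pay the price of $\rho_r(x) \lesssim (t+T)^{\mu r}$ at the right end of the cutoff. The only cosmetic difference is where you split the domain (at $x=\pm1$ rather than at an auxiliary large $L$), and that you spell out the improved exponential decay of $F^{(k+1)}$ for $k\geq 1$, which the paper dispatches with "the estimates for the derivatives are similar."
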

\begin{proof}
	First we set $k = 0$. We use the fundamental theorem of calculus to write 
	\begin{align*}
	\omega(x+\zeta(t+T)) q_* (x + \zeta(t+T)) - \omega(x) q_* (x) = \int_x^{x + \zeta(t+T)} (\omega q_*)' (y) \, d y. 
	\end{align*}
	Fix $L > 0$ large. For $- \infty<y \leq L + |\zeta(t+T)$|, $(\omega q_*)' (y)$ is bounded uniformly in $y, t$ and $T$ for $T$ sufficiently large, and hence for $x \leq L$, we have 
	\begin{align*}
	\rho_r (x) \left| \int_x^{x+\zeta(t+T)} (\omega q_*)' (y) \, d y \right| \leq C L^r |\zeta(t+T)| \leq \frac{C L^r}{(t+T)^{1/2-\mu}}.
	\end{align*}
	For $y \geq L - |\zeta(t+T)|$, we can use the front asymptotics \eqref{e: front asymptotics} to write 
	\begin{align*}
	(\omega q_*)'(y) = 1 + \mathrm{o}(1),
	\end{align*}
	where the $\mathrm{o}(1)$ terms are with respect to the limit $y \to \infty$. In particular, for $L \leq x \leq (t+T)^\mu + 1$, we have 
	\begin{align*}
	\langle x \rangle^r \left| \int_x^{x + \zeta(t+T)} (\omega q_*)' (y) \, d y \right| &\leq C \langle x \rangle^r | \zeta(t+T)| \\&\leq C \langle (t+T)^{\mu} \rangle^{r} (t+T)^{\mu - 1/2} 
	\leq C (t+T)^{3 \mu + \mu^2 - 1/2} 
	\leq C (t+T)^{4 \mu - 1/2},
	\end{align*}
	recalling that $r = 2 + \mu$ and that $0 < \mu < \frac{1}{8}$. Together with the above estimate for $x \leq L$, this completes the proof of \eqref{e: interior shifted front vs front} for $k = 0$. The estimates for the derivatives are similar. 
\end{proof}

\begin{lemma}\label{l: interior residual estimate}
	Fix $0 < \mu < \frac{1}{8}$ and let $r = 2 + \mu$.  Assume that $\zeta \in C^1[T, \infty)$, with $\zeta(t+T) = \mathrm{O}((t+T)^{\mu-1/2})$ and $\dot{\zeta} (t+T) = \mathrm{O}((t+T)^{\mu - 3/2})$ for $t$ large. There exists a constant $C > 0$ such that 
	\begin{align}
	\| \NL [\psi^-] (\cdot, t) 1_{\{x \leq (t+T)^\mu + 1\}} \|_{L^\infty_r} \leq \frac{C}{(t+T)^{1/2-4\mu}}.
	\end{align}
\end{lemma}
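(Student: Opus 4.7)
The plan is to split
\[
\NL[\psi^-](x,t) = \NL[\omega q_*](x,t) + \bigl(\NL[\psi^-] - \NL[\omega q_*]\bigr)(x,t)
\]
and estimate the two pieces separately. The first captures only the effect of the logarithmic shift on the unshifted weighted critical front, while the second is a perturbation controlled directly by Lemma~\ref{l: interior shifted front vs front}. This strategy isolates the one genuinely shift-driven contribution and reduces the rest to bookkeeping.

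For the first piece, I would carry out the explicit computation using that $q_*$ is a traveling wave. The key algebraic identity is $\omega(\omega^{-1})'(\omega q_*) + (\omega q_*)_x = \omega q_*'$, and since $\omega q_*$ is time-independent, all spatial terms in the undelayed operator cancel after applying the traveling wave equation $\mathcal{P}(\partial_x)q_* + c_* q_*' + f(q_*) = 0$. What remains is
\[
\NL[\omega q_*](x,t) = \frac{3}{2\eta_*(t+T)}\,\omega(x)\, q_*'(x).
\]
By the front asymptotics in Hypothesis~\ref{hyp: front existence}, $\omega q_*'$ grows at most linearly in $x$ as $x \to +\infty$, and by Hypothesis~\ref{hyp: stable on left} decays exponentially as $x \to -\infty$. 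Pairing with the weight $\rho_r$, $r = 2+\mu$, on the cutoff region yields $\|\NL[\omega q_*]\mathbf{1}_{\{x \leq (t+T)^\mu+1\}}\|_{L^\infty_r} \leq C(t+T)^{\mu(r+1)-1} = C(t+T)^{3\mu+\mu^2-1}$, strictly better than the target.

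For the second piece, set $w = \psi^- - \omega q_*$. Lemma~\ref{l: interior shifted front vs front} directly gives $\|\partial_x^k w\cdot \mathbf{1}_{\{x \leq (t+T)^\mu + 1\}}\|_{L^\infty_r} \leq C(t+T)^{4\mu-1/2}$ for $0 \leq k \leq 2m$. I would then control the linear differential contribution $\omega\mathcal{P}(\partial_x)(\omega^{-1}w) + (c_* - \tfrac{3}{2\eta_*(t+T)})(\omega(\omega^{-1})'w + w_x)$ term by term in $L^\infty_r$, using the uniform smoothness and boundedness of the coefficients of the conjugated operator. The nonlinear difference $\omega[f(\omega^{-1}\psi^-) - f(q_*)]$ is bounded by $C|w|$ via Lipschitz continuity of $f$ on bounded sets, together with uniform boundedness of $\omega^{-1}\psi^-$ and $q_*$. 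Finally, $w_t = (\psi^-)_t = \dot\zeta(t+T)\,(\omega q_*)'(x + \zeta(t+T))$ is controlled using the hypothesis $|\dot\zeta| = \mathrm{O}((t+T)^{\mu-3/2})$ and uniform boundedness of $(\omega q_*)'$, producing a weighted bound of order $(t+T)^{4\mu-3/2}$, again strictly better than target.

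The only delicate point is the interplay between the polynomial weight $\rho_r$, which grows like $\langle(t+T)^\mu\rangle^r$ on the truncated region, and the linearly growing factor $\omega q_*'$ in the far field: it is precisely the cutoff $x \leq (t+T)^\mu + 1$ that prevents unbounded contributions, converting linear spatial growth into the polynomial factor $(t+T)^{\mu(r+1)}$ appearing in both estimates above. Apart from this weight bookkeeping, the lemma is essentially a perturbative consequence of Lemma~\ref{l: interior shifted front vs front}; combining the two estimates yields the claimed bound.
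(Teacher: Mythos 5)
Your proof is correct and follows essentially the same route as the paper: compute $\NL[\omega q_*]$ explicitly from the traveling-wave equation, then treat the remainder as a perturbation controlled by Lemma~\ref{l: interior shifted front vs front}, split into the linear differential contribution, the time derivative, and the nonlinear difference $\omega[f(\omega^{-1}\psi^-)-f(q_*)]$. The only cosmetic difference is that you simplify the paper's expression $\omega^2(\omega^{-1})'q_*+\omega'q_*+\omega q_*'$ to $\omega q_*'$ (correctly, since $\omega^2(\omega^{-1})'=-\omega'$), where the paper leaves it unsimplified.
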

\begin{proof}
	We write 
	\begin{align*}
	\NL[\psi^-] = \NL [\omega q_*] + \NL[\psi^- - \omega q_*] + \omega f(\omega^{-1} \psi^-) - \omega f(q_*) - \omega f(\omega^{-1} \psi^- - q_*).
	\end{align*}
	Arguing as in the proof of Lemma \ref{l: interior shifted front vs front}, we see that there is a constant $C > 0$ such that 
	\begin{align*}
	| (\omega(x))^{-1} \psi^-(x,t) - q_*(x)| \leq C,
	\end{align*}
	for all $x \leq (t+T)^\mu +1$, $t > 0$, and $T$ sufficiently large. By Taylor's theorem, we then have 
	\begin{align}
	\omega | f(\omega^{-1} \psi^-) - f(q_*) | \leq C \omega  |\omega^{-1} \psi^- - q_*| = C |\psi^- - \omega q_*|. 
	\end{align}
	By Lemma \ref{l: interior shifted front vs front}, we obtain 
	\begin{align*}
	\| [\omega f(\omega^{-1} \psi^-(\cdot, t)) - \omega f(q_*)] 1_{\{ x \leq (t+T)^\mu +1\}} \|_{L^\infty_r} &\leq C \| (\psi^-(\cdot, t) - \omega q_*) 1_{\{ x \leq (t+T)^\mu +1\}} \|_{L^\infty_r} \\
	&\leq \frac{C}{(t+T)^{1/2-4\mu}}
	\end{align*}
	for all $t > 0$. The term $\NL[\psi^- - \omega q_*] - \omega f(\omega^{-1} \psi^- - q_*)$ contains only terms which are linear in $\psi^- - \omega q_*$. All terms involving spatial derivatives of $\psi^- - \omega q_*$ can be estimated by Lemma \ref{l: interior shifted front vs front}, while the estimate for the time derivative is similar (in fact improved by the better decay of $\dot{\zeta} (t+T)$ compared to $\zeta(t+T)$). Altogether, we obtain 
	\begin{align*}
	\| [\NL [\psi^- - \omega q_*] - \omega f(\omega^{-1} \psi^- - q_*)] 1_{\{x \leq (t+T)^\mu + 1\}} \|_{L^\infty_r} \leq \frac{C}{(t+T)^{1/2-4\mu}}. 
	\end{align*}
	It remains only to estimate the term $\NL[\omega q_*]$. Using the fact that $q_*$ solves the traveling wave equation 
	\begin{align}
	\mathcal{P}(\partial_x) q_* + c_* \partial_x q_* + f(q_*) = 0, 
	\end{align}
	we find after a short computation
	\begin{align}
	\NL [\omega q_*] = \frac{3}{2 \eta_* (t+T)} [\omega^2 (\omega^{-1})' q_* + \omega' q_* + \omega q_*']. \label{e: interior NL omega q}
	\end{align}
	Since $\omega(x) \equiv 1$ for $x \leq -1$, we have 
	\begin{align}
	\| \NL [\omega q_*] 1_{\{x \leq -1\}}\|_{L^\infty_r} = \left\| \frac{3}{2 \eta_* (t+T)} q_*' 1_{\{x \leq -1\}} \right\|_{L^\infty} \leq \frac{C}{t+T}. \label{e: interior omega q x leq -1 estimate}
	\end{align}
	Fix $L > 0$ large. Since the interval $[-1, L]$ is compact and the term in brackets in \eqref{e: interior NL omega q} is continuous in $x$, we have 
	\begin{align}
	\| \NL [\omega q_*] 1_{\{-1 \leq x \leq L\}} \|_{L^\infty_r} \leq \frac{C}{t+T}, \label{e: interior omega q x moderate estimate}
	\end{align}
	for some constant $C$ depending on $L$. For $x \geq L$, we use the front asymptotics \eqref{e: front asymptotics} to write 
	\begin{align*}
	\NL [\omega q_*] (x,t) 1_{\{L \leq x \leq (t+T)^\mu + 1\}} = \frac{3}{2 \eta_* (t+T)} [1 - \eta_* a - \eta_* x + \mathrm{o} (1)],
	\end{align*} 
	where the $\mathrm{o}(1)$ terms are with respect to the limit $x \to \infty$. Hence for $L$ fixed, these terms are bounded for $x \geq L$, and the worst behaved term in the expression grows like $x$. We thereby obtain the estimate
	\begin{align*}
	\| \NL [\omega q_*] 1_{\{L \leq x \leq (t+T)^\mu + 1\}} \|_{L^\infty_r} &\leq \frac{C}{t+T} \| x^{r+1} 1_{\{L \leq x \leq (t+T)^\mu + 1\}} \|_{L^\infty} \\&\leq \frac{C}{t+T} (t+T)^{(r+1)\mu} 
	= \frac{C}{(t+T)^{1-3\mu - \mu^2}} 
	\leq \frac{C}{(t+T)^{1- 4 \mu}},
	\end{align*}
	recalling that $r = 2 + \mu$ and $0 < \mu < \frac{1}{8}$. Together with \eqref{e: interior omega q x leq -1 estimate} and \eqref{e: interior omega q x moderate estimate}, this completes the proof of the lemma. 
\end{proof}

\subsection{Approximate solution in the leading edge}
For $x \geq 1$, $\omega(x) = e^{\eta_* x}$, and so the equation $\NL [v] = 0$ reduces to 
\begin{align}
v_t - \mathcal{P}(\partial_x - \eta_*) v - \left( c_* - \frac{3}{2 \eta_* (t+T)} \right) \left(-\eta_* v + v_x \right) - e^{\eta_* x} f(e^{-\eta_* x} v) = 0. \label{e: leading edge x large eqn}
\end{align}
Since $f(0) = 0$, we have for $x \geq 1$ by Taylor's theorem 
\begin{align}
|f(e^{-\eta_* x} v) - f'(0) e^{-\eta_* x} v | \leq C e^{-2 \eta_* x} |v|^2 \label{e: leading edge nonlinearity quadratic estimate}
\end{align}
provided $e^{-\eta_* x} v$ is bounded there. We define the quadratic remainder $\tilde{f} (u) := f(u) - f'(0) u$ and 
\begin{align}
\mathcal{S} (\partial_x) : = \mathcal{P} (\partial_x - \eta_*) + c_* (\partial_x - \eta_*) + f'(0),
\end{align}
so that \eqref{e: leading edge x large eqn} becomes 
\begin{align}
v_t - \mathcal{S}(\partial_x) + \frac{3}{2 \eta_* (t+T)} (v_x - \eta_* v) - e^{\eta_* x} \tilde{f} (e^{-\eta_* x} v) = 0. 
\end{align}
Hypothesis \ref{hyp: spreading speed} implies that 
\begin{align}
\mathcal{S} (\partial_x) = \alpha \partial_{xx} + \sum_{k = 3}^{2m} \alpha_k \partial_x^k \label{e: leading edge weighted operator}
\end{align}
for some constants $\alpha_k \in \R$, with $(-1)^m\alpha_{2m} < 0$. That is, the dynamics near $x = \infty$ are \textit{essentially diffusive}: since large spatial scales are most relevant for the long time behavior here, the dynamics are governed by the lowest derivative $\alpha \partial_{xx}$. To make this precise, we introduce scaling variables 
\begin{align}
\tau = \log (t+T), \quad \xi = \frac{1}{\sqrt{\alpha}} \frac{x+x_0}{\sqrt{t+T}}, 
\end{align}
where $x_0$ is a shift to be chosen later. We introduce $\Psi(\xi, \tau) = \psi^+(x,t)$, so that the equation $\NL [\psi^+] = R$ for $x \geq 1$ becomes $\cNL [\Psi] = e^{\tau} \mathcal{R}$ for $\xi \geq (1+x_0)/\sqrt{\alpha(t+T)}$ where $\mathcal{R} (\xi, \tau) = R(x,t)$ and
\begin{multline}
\cNL[\Psi] = \Psi_\tau - e^{\tau} \mathcal{S} \left( \frac{1}{\sqrt{\alpha}} e^{-\tau/2} \partial_\xi \right) \Psi - \frac{1}{2} \xi \Psi_\xi - \frac{3}{2} \Psi + \frac{3}{2 \eta_* \sqrt{\alpha}} e^{-\tau/2} \Psi_\xi \\ - \exp[\tau + \eta_* (\sqrt{\alpha} e^{\tau/2} \xi - x_0)] \tilde{f} \left( \exp[-\eta_* (\sqrt{\alpha} e^{\tau/2} \xi - x_0)] \Psi \right). \label{e: leading edge eqn in scaling variables}
\end{multline}
Note that by \eqref{e: leading edge weighted operator}
\begin{align}
e^\tau \mathcal{S} \left( \frac{1}{\sqrt{\alpha}} e^{-\tau/2} \partial_\xi \right) = \partial_{\xi \xi} + \sum_{k = 3}^{2m} \frac{\alpha_k}{\alpha^{k/2}} e^{(1- k/2)\tau} \partial_\xi^k = \partial_{\xi \xi} + \mathrm{O}(e^{-\tau/2}). 
\end{align}
In addition, the nonlinearity in \eqref{e: leading edge eqn in scaling variables} is irrelevant. To see this, note first that by \eqref{e: leading edge nonlinearity quadratic estimate}, we have 
\begin{align*}
\left| \exp[\tau + \eta_* (\sqrt{\alpha} e^{\tau/2} \xi - x_0)] \tilde{f} \left( \exp[-\eta_* (\sqrt{\alpha} e^{\tau/2} \xi - x_0)]\Psi \right) \right| \leq C e^{\tau} \exp[-\eta_* (\sqrt{\alpha} e^{\tau/2} \xi - x_0)] \Psi^2,
\end{align*}
provided $e^{-\eta_* x} \psi$ is bounded for $x \geq 1$. We will only use this equation on the length scale $x \geq (t+T)^{\mu}$, which corresponds to $\xi \geq \alpha^{-1/2} \left(  e^{(\mu - 1/2) \tau} + x_0 e^{-\tau/2} \right)$. On this scale, we have the estimate 
\begin{align}
\exp [-\eta_* (\sqrt{\alpha} e^{\tau/2} \xi - x_0)] \leq \exp[-\eta_* e^{\mu \tau}], \label{e: leading edge nonlinearity irrelevant estimate}
\end{align}
so that for $\mu > 0$ fixed and for $\tau$ large, this factor dominates $e^{\tau}$, and the nonlinearity is exponentially small in $\tau$. Therefore, to leading order in $e^{\tau/2}$, the equation $\cNL [\Psi] = 0$ is 
\begin{align}
\Psi_\tau = \Psi_{\xi \xi} + \frac{1}{2} \xi \Psi_\xi + \frac{3}{2} \Psi, \label{e: leading edge scaling heat eqn}
\end{align}
revealing in which sense the dynamics are essentially diffusive: this is precisely the heat equation in self-similar variables. The spectrum of the operator 
\begin{align}
L_\Delta := \partial_{\xi \xi} + \frac{1}{2} \xi \partial_\xi + 1
\end{align}
is well known (see for instance \cite[Appendix A]{GallayWayne}), and we make use of this in order to construct expansions for solutions of \eqref{e: leading edge eqn in scaling variables}. 
As in \cite{Comparison3}, we make an ansatz 
\begin{align}
\Psi(\xi, \tau) = e^{\tau/2} \Psi_0 (\xi) + \Psi_1 (\xi). \label{e: leading edge ansatz}
\end{align}
We only need a solution defined for $\xi \geq \alpha^{-1/2} \left( e^{(\mu - 1/2) \tau} + x_0 e^{-\tau/2} \right) \geq 0$ to match with the interior solution, so we will consider the resulting equations for $\Psi_0$ and $\Psi_1$ on the half-line $\xi \geq 0$. Inserting the ansatz \eqref{e: leading edge ansatz} into \eqref{e: leading edge eqn in scaling variables} and collecting terms in powers of $e^{\tau/2}$ gives 
\begin{align}
L_\Delta \Psi_0 = 0 \label{e: leading edge V0 eqn}
\end{align}
and 
\begin{align}
\left( L_\Delta + \frac{1}{2} \right) \Psi_1 = \frac{3}{2 \eta_* \sqrt{\alpha}} \partial_\xi \Psi_0 - \frac{\alpha_3}{\alpha^{3/2}} \partial_\xi^3 \Psi_0. \label{e: leading edge V1 eqn}
\end{align}
The interior of the front provides a strong absorption effect since the spectrum of $\mcl^-$ is strictly contained in the left half plane.  We reflect this fact in the choice of Dirichlet boundary conditions $\Psi_i (0) = 0, i = 0, 1$. The unique solution $\Psi_0 \in L^2 (\R_+)$ to \eqref{e: leading edge V0 eqn} then is \cite[Appendix A]{GallayWayne}
\begin{align}
\Psi_0 (\xi) = \beta_0 \xi e^{-\xi^2/4} \label{e: leading edge V0 formula}
\end{align}
for a constant $\beta_0 \in \R$. 
If we posed these equations on the whole real line, the next eigenvalue for $L_\Delta$ would be at $\lambda = -\frac{1}{2}$, which would present an obstacle to solving \eqref{e: leading edge V1 eqn}. However, the restriction to the half-line with a Dirichlet boundary condition, equivalent to considering the equation on the real line with odd data, removes this eigenvalue since the corresponding eigenfunction is even; see again \cite[Appendix A]{GallayWayne}. One can further obtain Gaussian estimates on the solution to \eqref{e: leading edge V1 eqn}:  conjugating with Gaussian weight $e^{\xi^2/8}$ transforms $L_\Delta$ into the quantum harmonic oscillator with well known spectral properties; see e.g. \cite{Helffer}. We collect the relevant results in the following lemma. 
\begin{lemma}\label{l: leading edge V1 estimates}
	With $V_0$ given by \eqref{e: leading edge V0 formula}, there exists a smooth solution $V_1$ to \eqref{e: leading edge V1 eqn} such that for each natural number $n$, we have 
	\begin{align}
	|\partial_{\xi}^n \Psi_1 (\xi)| \leq C_n e^{-\xi^2 /8} \label{e: leading edge V1 estimates}
	\end{align}
	for all $\xi > 0$. Furthermore, all derivatives of $\Psi_1$ extend continuously to the boundary $\xi = 0$. 
\end{lemma}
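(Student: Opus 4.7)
The plan is to reduce the problem to the spectral theory of the quantum harmonic oscillator via a Gaussian conjugation, as hinted at in the paragraph preceding the lemma. Setting $\Phi(\xi) := e^{\xi^2/8}\Psi_1(\xi)$, a direct calculation gives
\begin{align*}
e^{\xi^2/8}\bigl(L_\Delta + \tfrac12\bigr) e^{-\xi^2/8} \;=\; \tfrac{5}{4} - H, \qquad H := -\partial_{\xi\xi} + \tfrac{\xi^2}{16},
\end{align*}
so that \eqref{e: leading edge V1 eqn} together with the Dirichlet condition becomes $(\tfrac{5}{4} - H)\Phi = G$ on $(0,\infty)$ with $\Phi(0)=0$, where $G(\xi) := e^{\xi^2/8}\bigl[\tfrac{3}{2\eta_*\sqrt\alpha}\partial_\xi\Psi_0 - \tfrac{\alpha_3}{\alpha^{3/2}}\partial_\xi^3\Psi_0\bigr]$. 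Since $\Psi_0(\xi)=\beta_0 \xi e^{-\xi^2/4}$, each $\partial_\xi^k \Psi_0$ is a polynomial times $e^{-\xi^2/4}$, and hence $G$ is a polynomial times $e^{-\xi^2/8}$; in particular $G$ is Schwartz on $[0,\infty)$.

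Next I would establish solvability by computing the spectrum of $H$ with Dirichlet boundary condition on $L^2(\R_+)$. Under the rescaling $\xi = 2y$ the operator $H$ becomes $\tfrac14(-\partial_{yy}+y^2)$, a constant multiple of the standard harmonic oscillator, whose eigenfunctions are the Hermite functions $h_n(y) = H_n(y) e^{-y^2/2}$ with eigenvalues $(2n+1)/4$. Since $H$ commutes with reflection, $H_{\mathrm{Dir}}$ on $L^2(\R_+)$ is unitarily equivalent to the restriction of $H$ to odd functions on $\R$, and its spectrum is $\{(2n+1)/4: n \text{ odd}\} = \{\tfrac34,\tfrac74,\tfrac{11}{4},\ldots\}$. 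Crucially, $\tfrac54$ is absent: the corresponding full-line eigenfunction $H_2(\xi/2) e^{-\xi^2/8} \propto (\xi^2-2)e^{-\xi^2/8}$ is even, and hence does not belong to $\mathrm{Dom}(H_{\mathrm{Dir}})$. Therefore $\tfrac{5}{4}-H_{\mathrm{Dir}}$ is boundedly invertible, yielding a unique solution $\Phi \in \mathrm{Dom}(H_{\mathrm{Dir}})$.

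For regularity and Gaussian decay, I would expand $G$ in the orthonormal Dirichlet eigenbasis of $H_{\mathrm{Dir}}$. Schwartz functions have rapidly decreasing Hermite coefficients, and the spectral gaps $\bigl|\tfrac{5}{4}-\tfrac{2n+1}{4}\bigr|$ grow linearly in $n$, so dividing preserves this decay. Hence $\Phi$ is Schwartz on $[0,\infty)$ and extends smoothly across the boundary $\xi = 0$. Unwinding the conjugation, $\Psi_1 = e^{-\xi^2/8}\Phi$, and the Leibniz rule expresses $\partial_\xi^n \Psi_1$ as a finite linear combination of terms $p_{n,j}(\xi) e^{-\xi^2/8} \partial_\xi^{n-j}\Phi$ with polynomial coefficients $p_{n,j}$. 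Since every derivative of $\Phi$ decays faster than any polynomial, each summand is bounded by $C_n e^{-\xi^2/8}$, establishing \eqref{e: leading edge V1 estimates}. Smoothness of all derivatives up to the boundary is inherited from the smoothness of $\Phi$ at $\xi = 0$. I expect the main conceptual obstacle to be the parity-based exclusion of the resonant value $\tfrac54$ from $\sigma(H_{\mathrm{Dir}})$; once this is in hand, the remainder of the argument amounts to standard spectral bookkeeping and manipulation of Gaussian factors.
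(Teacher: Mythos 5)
Your conjugation to $\tfrac54 - H$ with $H = -\partial_{\xi\xi} + \xi^2/16$, the identification of the Dirichlet spectrum on $L^2(\R_+)$ as $\{\tfrac34,\tfrac74,\tfrac{11}{4},\dots\}$, and the parity argument excluding $\tfrac54$ are correct and match the mechanism the paper sketches (via the evenness of the $n=2$ Hermite eigenfunction). Existence and uniqueness of $\Phi\in\mathrm{Dom}(H_{\mathrm{Dir}})$ follows. So far this is the paper's route.

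The gap is in your final step. You claim that since $G$ is Schwartz on $[0,\infty)$, its coefficients in the Dirichlet eigenbasis decay rapidly. But the Dirichlet eigenbasis is the \emph{odd} Hermite functions, and rapid decay in that basis is equivalent to the odd extension of $G$ being Schwartz on $\R$, which requires $G^{(2k)}(0)=0$ for every $k$. From $\Psi_0=\beta_0\xi e^{-\xi^2/4}$ one computes $\partial_\xi\Psi_0(0)=\beta_0$ and $\partial_\xi^3\Psi_0(0)=-\tfrac32\beta_0$, so
\[
G(0) \;=\; \frac{3\beta_0}{2\alpha^{3/2}}\left(\frac{\alpha}{\eta_*} + \alpha_3\right),
\]
generically nonzero. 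The odd extension of $G$ therefore has a jump, its Dirichlet-basis coefficients decay only at a polynomial rate, and dividing by the spectral gap gains one power --- nowhere near rapid decay. Concretely, evaluating the equation at $\xi=0$ gives $\Phi''(0)=G(0)\neq 0$, so $\Phi\notin\mathrm{Dom}(H_{\mathrm{Dir}}^2)$ and $\Phi$ does not extend even $C^2$ across $\xi=0$, contradicting your "extends smoothly across the boundary" claim. The lemma only requires one-sided smoothness on $[0,\infty)$, which does hold, but you must obtain it from a direct ODE bootstrap on $\Phi'' = G + \bigl(\tfrac{\xi^2}{16}-\tfrac54\bigr)\Phi$ rather than from the eigenfunction expansion; likewise the Gaussian bounds $|\partial_\xi^n\Psi_1|\leq C_n e^{-\xi^2/8}$ should come from asymptotic analysis of this second-order linear ODE as $\xi\to+\infty$ (parabolic-cylinder/WKB asymptotics), using that the forcing has Gaussian decay and that the unique $L^2(\R_+)$ solution selects the exponentially decaying branch.
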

Reverting to the original variables, the approximate solution $\Psi(\xi, \tau)$ we have constructed has the form 
\begin{align}
\psi^+(x,t) := \beta_0 \frac{1}{\sqrt{\alpha}} (x+x_0) e^{-(x+x_0)^2/[4\alpha(t+T)]} + \Psi_1 \left( \frac{1}{\sqrt{\alpha}} \frac{x+x_0}{\sqrt{t+T}} \right), \label{e: psi plus formula}
\end{align}
where $\beta_0$ and $x_0$ are parameters to be chosen, and $\alpha$ is the effective diffusivity from \eqref{e: leading edge weighted operator}. In particular, we have $|\psi^+(x,t)| \leq C \langle x \rangle$ uniformly for $t > 0, x > 0$, so that $e^{-\eta_* x} \psi^+(x,t)$ is uniformly bounded in $t$, and we can use the estimate \eqref{e: leading edge nonlinearity quadratic estimate} on the nonlinearity to write 
\begin{align}
\left| \tilde{f} \left( \exp [-\eta_* (\sqrt{\alpha} e^{\tau/2} \xi - x_0)] \Psi \right) \right| \leq C \exp\left[ -2 \eta_* (\sqrt{\alpha} e^{\tau/2} \xi - x_0) \right] |\Psi|^2,
\end{align}
for $\Psi$ as in \eqref{e: leading edge ansatz}. Together with \eqref{e: leading edge nonlinearity irrelevant estimate}, this implies 
\begin{align}
\exp[ \tau + \eta_* (\sqrt{\alpha} e^{\tau/2} \xi - x_0)] \left| \tilde{f} \left( \exp[-\eta_* (\sqrt{\alpha} e^{\tau/2} \xi - x_0) ] \Psi \right) \right| &\leq C \exp[\tau - \eta_* e^{\mu \tau}] |\Psi|^2 \nonumber \\ &\leq C e^{-3 \tau/2} |\Psi|^2 \label{e: leading edge nonlinearity irrelevant estimate 2}
\end{align}
for $\tau$ sufficiently large, which we can guarantee by choosing $T$ large. 
Inserting our ansatz \eqref{e: leading edge ansatz} into \eqref{e: leading edge eqn in scaling variables}, we obtain 
\begin{multline}
\cNL[\Psi] = - \sum_{k=4}^{2m} \frac{\alpha_k}{\alpha^{k/2}} e^{\left(\frac{3}{2} - \frac{k}{2} \right)\tau} \partial_\xi^k \Psi_0 - \sum_{k=3}^{2m} \frac{\alpha_k}{\alpha^{k/2}} e^{\left(1 - \frac{k}{2} \right) \tau} \partial_\xi^k \Psi_1 + \frac{3}{2 \eta_* \sqrt{\alpha}} e^{-\tau/2} \partial_\xi \Psi_1 \\
- \exp[ \tau + \eta_* (\sqrt{\alpha} e^{\tau/2} \xi - x_0)] \tilde{f} \left( \exp[-\eta_* (\sqrt{\alpha} e^{\tau/2} \xi - x_0) ] \Psi \right).
\end{multline}
The important observation is that every term carries a factor of at least $e^{-\tau/2}$ and has Gaussian localization in $\xi$. More precisely, by Lemma \ref{l: leading edge V1 estimates}, the formula \eqref{e: leading edge V0 formula}, and the estimate \eqref{e: leading edge nonlinearity irrelevant estimate 2}, there exists a constant $C > 0$ such that 
\begin{align}
| \cNL [\Psi] (\xi, \tau) | \leq C e^{-\tau/2} e^{-\xi^2/8}
\end{align}
for $T$ large and for $\xi \geq \alpha^{-1/2} (e^{(\mu - 1/2) \tau} + x_0 e^{-\tau/2})$. In the original variables, one finds $\NL [\psi^+] = e^{-\tau} \cNL [\Psi]$, so that for $x \geq (t+T)^\mu$ and for $T$ large,
\begin{align}
| R^+(x,t) | \leq \frac{C}{(t+T)^{3/2}} e^{-(x+x_0)^2/[8\alpha (t+T)]}
\end{align} 
where $\NL [\psi^+] = R^+$. This pointwise estimate on the residual implies the following estimates in norm. 
\begin{lemma}\label{l: leading edge residual estimate}
	Fix $0 < \mu < \frac{1}{8}$ and set $r = 2 + \mu$. There exists a constant $C > 0$ so that 
	\begin{align}
	\| \NL [\psi^+] 1_{\{ x \geq (t+T)^\mu\}} \|_{L^\infty_r} \leq \frac{C}{(t+T)^{1/2-\mu/2}},
	\end{align}
	and \begin{align}
	\| \NL [\psi^+] 1_{\{ x \geq (t+T)^\mu\}} \|_{L^1_1} \leq \frac{C}{(t+T)^{1/2}}. 
	\end{align}
	for all $t > 0$ and for $T$ sufficiently large. 
\end{lemma}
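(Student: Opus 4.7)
The plan is to reduce both bounds to the pointwise residual estimate
\[
|R^+(x,t)| \leq \frac{C}{(t+T)^{3/2}} e^{-(x+x_0)^2/[8\alpha (t+T)]}, \qquad x \geq (t+T)^\mu,
\]
already established in the discussion preceding the lemma, and then to exploit the Gaussian factor through the self-similar substitution $y = (x+x_0)/\sqrt{t+T}$. Both bounds are essentially accounting exercises in which the Gaussian allows one to trade polynomial growth in $x$ for powers of $\sqrt{t+T}$.

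For the $L^\infty_r$ estimate, I would bound $\langle x\rangle^r \leq C (x+x_0)^r$ on $x \geq (t+T)^\mu$ for $T$ large, then note that
\[
(x+x_0)^r \, e^{-(x+x_0)^2/[8\alpha(t+T)]} = (t+T)^{r/2} \, y^r e^{-y^2/(8\alpha)} \leq C_r (t+T)^{r/2},
\]
since $y \mapsto y^r e^{-y^2/(8\alpha)}$ is bounded on $[0,\infty)$. Combining with the prefactor $(t+T)^{-3/2}$ and recalling $r = 2+\mu$ yields
\[
\| \NL[\psi^+] 1_{\{x\geq (t+T)^\mu\}}\|_{L^\infty_r} \leq C (t+T)^{(2+\mu)/2 - 3/2} = C (t+T)^{-(1/2 - \mu/2)},
\]
as required.

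For the $L^1_1$ estimate, I would write
\[
\int_{(t+T)^\mu}^\infty \langle x \rangle |R^+(x,t)| \, dx \leq \frac{C}{(t+T)^{3/2}} \int_{(t+T)^\mu}^\infty (x+x_0) \, e^{-(x+x_0)^2/[8\alpha(t+T)]} \, dx,
\]
and then substitute $y = (x+x_0)/\sqrt{t+T}$, which produces a factor $(t+T)^{1/2}$ from $(x+x_0) = \sqrt{t+T}\, y$ and another factor $(t+T)^{1/2}$ from $dx = \sqrt{t+T}\, dy$. The remaining integral $\int_0^\infty y e^{-y^2/(8\alpha)}\, dy$ is a finite constant, and the total gives $(t+T)^{-3/2} \cdot (t+T) = (t+T)^{-1/2}$, as claimed.

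Neither bound presents a genuine analytic obstacle once the pointwise Gaussian estimate on $R^+$ is in hand; the only subtlety is being careful that the algebraic weight $\langle x\rangle^r$ with $r = 2+\mu$ is tamed by the Gaussian precisely because the cutoff $x \geq (t+T)^\mu$ places us in the regime where $x + x_0$ grows at most polynomially in $t+T$ while the Gaussian has scale $\sqrt{t+T}$. The exponential tail automatically controls contributions from $x \gg \sqrt{t+T}$, so the worst-case behavior comes from intermediate $x$ where $y = \mathrm{O}(1)$, and this determines the stated exponents.
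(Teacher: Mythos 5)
Your argument is correct and is exactly what the paper intends: the text immediately preceding this lemma derives the pointwise bound $|R^+(x,t)| \leq C(t+T)^{-3/2} e^{-(x+x_0)^2/[8\alpha(t+T)]}$ for $x \geq (t+T)^\mu$ and then states, without further detail, that this ``implies the following estimates in norm.'' Your filling-in — the self-similar substitution $y = (x+x_0)/\sqrt{t+T}$, the boundedness of $y \mapsto y^r e^{-y^2/(8\alpha)}$ for the $L^\infty_r$ bound, and the explicit Gaussian integral with Jacobian factor for the $L^1_1$ bound — supplies precisely the missing bookkeeping, with the correct exponent arithmetic $r/2 - 3/2 = -(1/2-\mu/2)$ for $r = 2+\mu$.
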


\subsection{Matching the interior to the leading edge}

We construct the full approximate solution $\psi$ by matching $\psi^-$ and $\psi^+$ at the length scale $x \sim (t+T)^\mu$. Simply matching values  at $x = (t+T)^{\mu}$ through choosing the shift $\zeta(t+T)$ would leave us with mismatched derivatives and distributions in the residual.  To avoid this, we smoothly blend solutions over the region $x \in [(t+T)^\mu, (t+T)^\mu + 1]$. Therefore, consider the smooth cutoff $\chi_0$ with $0 \leq \chi_0 \leq 1$ and 
\begin{align}
\chi_0 (x) = \begin{cases}
1, &x \leq 0, \\
0, &x \geq 1. 
\end{cases}
\end{align}
We then define a time-varying smoothed cutoff by 
\begin{align}
\chi(x,t) = \chi_0 (x - (t+T)^\mu),
\end{align}
and define our approximate solution 
\begin{align}
\psi(x,t) = \chi(x,t) \psi^-(x,t) + (1-\chi(x,t)) \psi^+(x,t). 
\end{align}
The remainder of this section is dedicated to proving that $\psi$ satisfies $\NL [\psi] = R$ for a small residual $R$, provided we choose the constants $\beta_0$ and $x_0$ appropriately. 

\begin{prop}\label{p: residual estimate}
	Let $\beta_0 = \sqrt{\alpha}$ and let $x_0 = a$ from \eqref{e: front asymptotics}. Fix $0 < \mu < \frac{1}{8}$ and let $r = 2 + \mu$. There exists a shift $\zeta(t+T)$ and a constant $C > 0$ so that 
	\begin{align}
	\| R(\cdot, t; T) \|_{L^\infty_r} \leq \frac{C}{(t+T)^{1/2 - 4 \mu}}, \label{e: residual L inf r estimate}
	\end{align}
	for all $t> 0$ and $T$ sufficiently large, where $R(\cdot, t; T) = \NL[\psi](\cdot, t)$. 
\end{prop}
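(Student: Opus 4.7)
The proof naturally splits along the partition of $\R$ induced by the cutoff $\chi$. In the far interior region $\{x \leq (t+T)^\mu\}$ we have $\chi \equiv 1$, hence $\psi = \psi^-$ and Lemma \ref{l: interior residual estimate} already gives the desired bound. In the far leading-edge region $\{x \geq (t+T)^\mu + 1\}$, we have $\chi \equiv 0$, hence $\psi = \psi^+$ and Lemma \ref{l: leading edge residual estimate} applies. All genuinely new work is confined to the matching interval $I_t = [(t+T)^\mu, (t+T)^\mu + 1]$, a set of bounded length on which the algebraic weight satisfies $\rho_r(x) \leq C (t+T)^{\mu r} = C(t+T)^{2\mu + \mu^2}$.

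In the matching region I would expand
\begin{align*}
\NL[\psi] = \chi \, \NL[\psi^-] + (1-\chi) \NL[\psi^+] + \mathcal{C}[\chi, \psi^- - \psi^+] + \mathcal{N},
\end{align*}
where $\mathcal{C}[\chi, \cdot]$ collects all commutator terms between the linear part of $\NL$ (together with the time derivative) and the cutoff $\chi$, and $\mathcal{N}$ is the nonlinear correction coming from the failure of $f$ to be linear. The two residual terms $\chi\NL[\psi^-]$ and $(1-\chi)\NL[\psi^+]$ are estimated pointwise by Lemmas \ref{l: interior residual estimate} and \ref{l: leading edge residual estimate}, losing at worst $(t+T)^{-1/2+4\mu}$. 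The nonlinearity $\mathcal{N}$ is a quadratic expression in $\psi^+ - \psi^-$ on $I_t$ (using the bound on $\omega^{-1}\psi$ and Taylor's theorem as in the proof of Lemma \ref{l: interior residual estimate}), so it is dominated by $\|\psi^+ - \psi^-\|_{L^\infty(I_t)}^2$ times the weight. The commutator $\mathcal{C}$ is a finite sum of terms of the form $(\partial_t^j \partial_x^k \chi) \cdot \partial_x^\ell (\psi^+ - \psi^-)$ with $j+\ell \leq 2m$ and $k \geq 1$ or $j \geq 1$; since $\partial_x^k \chi$ is bounded for $k \geq 1$ and $\partial_t \chi = \mathrm{O}((t+T)^{\mu-1})$, everything reduces to controlling $\|\partial_x^\ell(\psi^+ - \psi^-)\|_{L^\infty(I_t)}$.

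The core step is therefore the choice of the shift $\zeta(t+T)$ so that $\psi^+ - \psi^-$ and its first $2m$ spatial derivatives are small on $I_t$. Using Hypothesis \ref{hyp: front existence} with $b=1$, on $I_t$ one has
\begin{align*}
\psi^-(x,t) = \omega(x+\zeta) q_*(x+\zeta) = (a + x + \zeta(t+T)) + \mathrm{O}(e^{-\eta_0 (t+T)^\mu}),
\end{align*}
while by \eqref{e: psi plus formula} and Lemma \ref{l: leading edge V1 estimates},
\begin{align*}
\psi^+(x,t) = (x+a) e^{-(x+a)^2/[4\alpha(t+T)]} + \Psi_1(\xi), \quad \xi = \frac{x+a}{\sqrt{\alpha(t+T)}} = \mathrm{O}((t+T)^{\mu-1/2}),
\end{align*}
with $\beta_0=\sqrt{\alpha}$ and $x_0 = a$ as prescribed. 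Expanding the Gaussian gives an error of order $(x+a)^3/(t+T) = \mathrm{O}((t+T)^{3\mu-1})$, and since $\Psi_1(0)=0$ the Taylor expansion $\Psi_1(\xi) = \Psi_1'(0)\xi + \mathrm{O}(\xi^2)$ produces a contribution of order $(t+T)^{\mu-1/2}$. Defining $\zeta(t+T)$ implicitly by the matching condition
\begin{align*}
\psi^-(\hat{x}(t),t) = \psi^+(\hat{x}(t),t), \qquad \hat{x}(t) = (t+T)^\mu + \tfrac{1}{2},
\end{align*}
yields $\zeta(t+T) = \mathrm{O}((t+T)^{\mu-1/2})$, with $\dot{\zeta}(t+T) = \mathrm{O}((t+T)^{\mu-3/2})$ by differentiating the matching relation (the implicit function theorem applies since $\partial_\zeta \psi^- \to 1$ as $T \to \infty$ on $I_t$). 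Because $\psi^+-\psi^-$ is smooth and on $I_t$ its value plus all derivatives are polynomial-times-Gaussian/front expansions, once the value is matched at a single point $\hat{x}(t)$, the remaining variation across the interval of length one is controlled by the derivatives of the leading polynomial $x \mapsto (x+a)$ corrected by the higher-order Taylor terms above, giving
\begin{align*}
\|\partial_x^\ell (\psi^+-\psi^-)\|_{L^\infty(I_t)} \leq C (t+T)^{\mu - 1/2}, \qquad 0 \leq \ell \leq 2m.
\end{align*}

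The hardest part is this derivative bound on the matching region, since a naive pointwise match produces mismatched derivatives that, when convolved with the high-order operator $\mathcal{P}(\partial_x)$, could produce spurious distributional contributions. Smooth blending via $\chi_0$ rather than a sharp cutoff removes the distributional issue, and the quantitative derivative estimate follows from the explicit forms of $\psi^\pm$ combined with the matching asymptotics above. Multiplying by the weight $\rho_r \leq C(t+T)^{2\mu+\mu^2}$ on $I_t$ and by the temporal weight $(t+T)^{\mu-1}$ from $\partial_t\chi$ (or $1$ from $\partial_x^k \chi$), every term in $\mathcal{C}$ and in $\mathcal{N}$ is bounded by $C(t+T)^{-1/2 + 4\mu}$ for $T$ large and $0<\mu<1/8$. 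Combining with the outer-region estimates yields \eqref{e: residual L inf r estimate}.
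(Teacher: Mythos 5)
Your proof is correct and follows essentially the same route as the paper's: decompose $\NL[\psi]$ into the two one-sided residuals controlled by Lemmas~\ref{l: interior residual estimate} and \ref{l: leading edge residual estimate} plus correction terms supported on the blending interval, then control the latter by choosing $\zeta$ via a single-point matching condition and using Taylor bounds on $\partial_x^\ell(\psi^+-\psi^-)$ across the length-one interval, exactly as in Lemma~\ref{l: pointwise matching}. The only differences are cosmetic: you match at the midpoint of the interval rather than the left endpoint $(t+T)^\mu$, and your bookkeeping separating the nonlinear mismatch $\mathcal{N}$ (which you correctly observe is quadratic in $\psi^+-\psi^-$) from the linear commutator $\mathcal{C}$ is slightly more careful than the paper's $[\NL,\chi]$ shorthand, but the substance of the argument is identical.
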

To prove this proposition, we write 
\begin{align}
\NL [\psi] = \chi \NL [\psi^-] + (1-\chi) \NL [\psi^+] + [\NL, \chi] (\psi^- - \psi^+),
\end{align}
where $[\NL, \chi]$ is the commutator defined by $[\NL, \chi] \psi = \NL[\chi \psi] - \chi \NL [\psi]$. The terms $\chi \NL[\psi^-]$ and $(1-\chi) \NL [\psi^+]$ satisfy \eqref{e: residual L inf r estimate} by Lemmas \ref{l: interior residual estimate} and \ref{l: leading edge residual estimate}, so we only need to prove \eqref{e: residual L inf r estimate} for the term involving the commutator. Since $f(0) = f(1) = 0$, the support of this commutator is contained in $\{ (x,t) : x \in [(t+T)^\mu, (t+T)^\mu + 1]\}$ by construction of $\chi$. We first match the values of $\psi^-$ and $\psi^+$ precisely at $x = (t+T)^\mu$, as in \cite{Comparison3}. 

\begin{lemma} \label{l: pointwise matching}
	Let $\beta_0 = \sqrt{\alpha}$ and let $x_0 = a$ from \eqref{e: front asymptotics}, fix $0 < \mu < \frac{1}{8}$. There exists a shift $\zeta(t+T)$ so that $\zeta(t+T) = \mathrm{O}((t+T)^{\mu - 1/2})$, $\dot{\zeta} (t+T) = \mathrm{O}((t+T)^{\mu - 3/2})$, and 
	\begin{align}
	\psi^-((t+T)^\mu, t) = \psi^+ ((t+T)^\mu, t), \label{e: matching}
	\end{align}
	for $t > 0$ and $T$ large. 
	Moreover, the derivatives satisfy for $t > 0$ and $T$ large
	\begin{align}
	|\psi^-_x ((t+T)^\mu, t) - \psi^+_x((t+T)^\mu, t)| = \mathrm{O}((t+T)^{-1/2}). \label{e: matching derivatives}
	\end{align}
	and 
	\begin{align}
	|\partial_x^k \psi^\pm (x, t) | \leq \frac{C}{(t+T)^{1/2}} 
	\end{align}
	for any integer $2 \leq k \leq 2m$ and for any $x \in [(t+T)^\mu, (t+T)^\mu + 1]$. 
\end{lemma}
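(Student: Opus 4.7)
The plan is to view \eqref{e: matching} as a scalar equation
\[
F(\zeta; t, T) \;:=\; \omega\bigl((t+T)^\mu + \zeta\bigr)\, q_*\bigl((t+T)^\mu + \zeta\bigr) \;-\; \psi^+\bigl((t+T)^\mu, t\bigr) \;=\; 0
\]
to be solved for $\zeta$ near $0$. By \eqref{e: front asymptotics} with $b=1$, the weighted front satisfies $(\omega q_*)(y) = y + a + \mathrm{O}(e^{-\eta_0 y})$, so $\partial_\zeta F = (\omega q_*)'\bigl((t+T)^\mu + \zeta\bigr) = 1 + \mathrm{O}(e^{-\eta_0 (t+T)^\mu})$, which is uniformly bounded away from zero for $T$ large. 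The implicit function theorem then produces a unique smooth branch $\zeta(t+T)$ for each $t\ge 0$, and the remainder of the proof is to extract the sharp sizes of $\zeta$, $\dot\zeta$, and $\psi^+_x - \psi^-_x$ at $x = (t+T)^\mu$, along with higher-derivative bounds.

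\textbf{Size of $\zeta$.} Set $y_0 := (t+T)^\mu$ and $\xi_0 := (y_0+a)/\sqrt{\alpha(t+T)} = \mathrm{O}((t+T)^{\mu-1/2})$. With the choices $\beta_0 = \sqrt\alpha$ and $x_0 = a$, the matching equation reads
\[
y_0 + \zeta + a + \mathrm{O}(e^{-\eta_0 y_0}) \;=\; (y_0 + a)\, e^{-\alpha \xi_0^2/4} \;+\; \Psi_1(\xi_0).
\]
Taylor expanding the Gaussian in $\xi_0^2 \to 0$ and writing $\Psi_1(\xi_0) = \Psi_1'(0) \xi_0 + \mathrm{O}(\xi_0^2)$ using $\Psi_1(0)=0$ and the smoothness up to $\xi=0$ supplied by Lemma \ref{l: leading edge V1 estimates}, the leading $y_0 + a$ cancel on both sides and I read off
\[
\zeta \;=\; \Psi_1'(0)\, \xi_0 \;+\; \mathrm{O}\bigl(\xi_0^2(y_0+a)\bigr) \;+\; \mathrm{O}(e^{-\eta_0 y_0}) \;=\; \mathrm{O}\bigl((t+T)^{\mu - 1/2}\bigr),
\]
using $\xi_0^2(y_0+a) = \mathrm{O}((t+T)^{3\mu - 1})$ together with $3\mu - 1 < \mu - 1/2$ for $\mu < 1/4$.

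\textbf{Time derivative and first-derivative matching.} Implicit differentiation of $F(\zeta(t+T), t, T) = 0$ in $t$, using $\partial_\zeta F = 1 + \mathrm{O}(e^{-\eta_0 y_0})$, gives
\[
\dot\zeta \;=\; \bigl[\psi^+_x(y_0, t) - (\omega q_*)'(y_0+\zeta)\bigr]\, \dot y_0 \;+\; \psi^+_t(y_0, t) \;+\; \text{(exp.\ small)}.
\]
For \eqref{e: matching derivatives} I differentiate \eqref{e: psi plus formula} once in $x$ at $x=y_0$ to get $\bigl[1 - \alpha \xi_0^2/2\bigr]\, e^{-\alpha\xi_0^2/4} + (\alpha(t+T))^{-1/2}\Psi_1'(\xi_0) = 1 + \mathrm{O}((t+T)^{2\mu-1}) + \mathrm{O}((t+T)^{-1/2})$, and subtract from $(\omega q_*)'(y_0+\zeta) = 1 + \mathrm{O}(e^{-\eta_0 y_0})$, so the bracket above is $\mathrm{O}((t+T)^{-1/2})$. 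With $\dot y_0 = \mu(t+T)^{\mu-1}$, the first term is $\mathrm{O}((t+T)^{\mu-3/2})$. Similarly, differentiating \eqref{e: psi plus formula} in $t$ at $x=y_0$ produces the two contributions $(y_0+a)^3/[4\alpha(t+T)^2] = \mathrm{O}((t+T)^{3\mu-2})$ and $(y_0+a)/[2\sqrt\alpha(t+T)^{3/2}] \Psi_1'(\xi_0) = \mathrm{O}((t+T)^{\mu-3/2})$, both dominated by $(t+T)^{\mu-3/2}$ for $\mu < 1/4$.

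\textbf{Higher derivatives and main obstacle.} For $2 \leq k \leq 2m$ and $x \in [y_0, y_0+1]$, the left side $\partial_x^k \psi^-(x,t) = (\omega q_*)^{(k)}(x+\zeta)$ is super-polynomially small in $y_0$, since $\omega q_*$ agrees with a linear function modulo $\mathrm{O}(e^{-\eta_0 y})$. On the right, each $x$-derivative of the Gaussian piece of $\psi^+$ contributes either $(t+T)^{-1/2}$ or $(y_0+a)/(t+T) = \mathrm{O}((t+T)^{\mu-1})$, and each derivative of the $\Psi_1$ piece carries $(\alpha(t+T))^{-1/2}$; combining Gaussian decay from Lemma \ref{l: leading edge V1 estimates} with $\mu < 1/8$, one obtains $|\partial_x^k \psi^+| \leq C(t+T)^{-1/2}$ for $k \geq 2$. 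The hard part I anticipate is the $\dot\zeta$ bookkeeping: naively, $\partial_t F$ produces $\mathrm{O}((t+T)^{\mu-1})$ contributions, and the sharper $\mathrm{O}((t+T)^{\mu-3/2})$ rate is only recovered after recognizing that these contributions factor precisely through the first-derivative mismatch $\psi^+_x - (\omega q_*)'$ (of size $(t+T)^{-1/2}$) times $\dot y_0$, together with the $\partial_t$ terms of $\psi^+$ that are a priori of size $(t+T)^{\mu - 3/2}$. Once this cancellation is isolated, the remainder of the proof is routine accounting in powers of $(t+T)^{-1/2}$.
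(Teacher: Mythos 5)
Your argument is correct and reaches the same estimates as the paper, but the construction of $\zeta$ is organized slightly differently. You solve the matching equation $F(\zeta;t,T)=0$ pointwise in $t$ by the implicit function theorem (using $\partial_\zeta F = (\omega q_*)'((t+T)^\mu+\zeta)=1+\mathrm{O}(e^{-\eta_0(t+T)^\mu})$), then read off the sizes of $\zeta$ and $\dot\zeta$ by Taylor expansion and implicit differentiation. The paper instead sets up a single contraction $\zeta\mapsto \psi^+((\cdot+T)^\mu,\cdot)-\psi^-((\cdot+T)^\mu,\cdot)+\zeta$ on the $C^1$ function space $M=\{\zeta: |\zeta(t+T)|\le C(t+T)^{\mu-1/2},\, |\dot\zeta(t+T)|\le C(t+T)^{\mu-3/2}\}$, so both decay bounds are encoded in the domain of the fixed-point map rather than derived a posteriori. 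These two routes are equivalent in substance (the parametric IFT is itself a Banach fixed-point argument), and your explicit identification of the cancellation $[\psi^+_x-(\omega q_*)']\dot y_0+\psi^+_t$ that upgrades the naive $\mathrm{O}((t+T)^{\mu-1})$ bound on $\dot\zeta$ to the required $\mathrm{O}((t+T)^{\mu-3/2})$ is a nice clarification of what the paper absorbs into its choice of $M$. The remaining expansions — $\psi^+((t+T)^\mu,t)=(y_0+a)+\mathrm{O}((t+T)^{\mu-1/2})$ from the Gaussian, $\Psi_1(0)=0$ plus smoothness, and $\psi^-((t+T)^\mu,t)=(y_0+a)+\zeta+\mathrm{O}(e^{-\eta_0 y_0})$ from \eqref{e: front asymptotics} — match the paper's. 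One cosmetic slip: with $\xi_0=(y_0+a)/\sqrt{\alpha(t+T)}$ the Gaussian exponent is $\xi_0^2/4$, not $\alpha\xi_0^2/4$; this does not change any order of magnitude.
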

\begin{proof}
	Setting $x = (t+T)^\mu$ in \eqref{e: psi plus formula} and Taylor expanding the exponential, we find 
	\begin{align}
	\psi^+ ((t+T)^\mu, t) = \frac{\beta_0}{\sqrt{\alpha}} [(t+T)^\mu + x_0] [ 1 + \mathrm{O}(t+T)^{-1 + 2 \mu}] + \Psi_1 \left( \frac{1}{\sqrt{\alpha}} \frac{(t+T)^\mu + x_0}{\sqrt{t+T}} \right). \label{e: matching psi plus expansion}
	\end{align}
	Since $\Psi_1(\xi)$ is smooth for $\xi \geq 0$ by Lemma \ref{l: leading edge V1 estimates} and $\Psi_1 (0) = 0$, we have $|\Psi_1 (\xi)| \leq C \xi$ for $\xi> 0$ small, and hence
	\begin{align}
	\psi^+ ((t+T)^\mu, t) = \frac{\beta_0}{\sqrt{\alpha}} [x_0 + (t+T)^\mu] + \mathrm{O}((t+T)^{\mu - 1/2}). 
	\end{align}
	Using the front asymptotics \eqref{e: front asymptotics} in the definition \eqref{e: psi minus def} of $\psi^-$, we obtain 
	\begin{align}
	\psi^-((t+T)^\mu, t) = a + (t+T)^\mu + \zeta(t+T) + \mathrm{O} \left( e^{-\eta_0 (t+T)^\mu} \right). \label{e: matching psi minus expansion} 
	\end{align}
	Choosing $\beta_0 = \sqrt{\alpha}$ and $x_0 = a$ therefore matches the leading order terms in this expansion, so that 
	\begin{align}
	\psi^- ((t+T)^\mu, t) - \psi^+ ((t+T)^\mu, t) = \zeta(t+T) + \mathrm{O} ((t+T)^{\mu - 1/2}). 
	\end{align}
	\hl{Therefore, we can choose $\zeta(t+T),$ smooth, to cancel the remainder while ensuring $\zeta(t+T) = \mathrm{O}((t+T)^{\mu - 1/2})$. More precisely, we see from a short calculation following the above expansions that the map
	\begin{align*}
	    \zeta(\cdot + T) \mapsto \psi^+((\cdot+T)^\mu, \cdot) - \psi^- ((\cdot+T)^\mu, \cdot) + \zeta(\cdot+T) : M \to M
	\end{align*}
	is a contraction, where 
	\begin{align*}
	    M = \{ \zeta(\cdot + T) \in C^1 [0, \infty) : |\zeta(t+T)| \leq C(t+T)^{\mu - 1/2}, \, |\dot{\zeta} (t+T)| \leq C (t+T)^{\mu - 3/2} \text{ for } t \geq 0 \}
	\end{align*}
	for $T$ sufficiently large and some constant $C > 0$ (notice that $\psi^-$ depends on $\zeta$, so that the dependence on $\zeta$ in this map is nontrivial). We then define $\zeta(t+T)$ to be the unique fixed point of this map, and find that under this choice we have $\psi^+ ((t+T)^\mu, t)=\psi^-((t+T)^\mu, t)$, as desired. }

	Evaluating the derivatives of $\psi^\pm$ at $x = (t+T)^\mu$, one finds 
	\begin{align*}
	\psi^+_x ((t+T)^\mu, t) = 1 + \mathrm{O}((t+T)^{-1/2}).\qquad 
	\psi^-_x ((t+T)^\mu, t) = 1 + \mathrm{O} \left( e^{-\eta_0 (t+T)^\mu} \right),
	\end{align*}
	and hence \eqref{e: matching derivatives} holds. The estimates on higher derivatives follow from similar considerations --- we point out that the smoothness of $\Psi_1$ up to the boundary from Lemma \ref{l: leading edge V1 estimates} is essential here. 
\end{proof}

\begin{remark}
    This matching procedure forces the choice of coefficient of the logarithmic shift. Indeed, if instead of $-\frac{3}{2 \eta_*} (\log (t+T) - \log(T)$, we consider an arbitrary shift $-\frac{c_0}{\eta_*} (\log (t+T) - \log (T)$ with $c_0 \in \R$, then the diffusive equation governing dynamics in the leading edge \eqref{e: leading edge scaling heat eqn} is replaced by 
    \begin{align*}
    W_\tau = W_{\xi \xi} + \frac{1}{2} \xi W_\xi + c_0 W. 
    \end{align*}
    Solutions to this equation are related to solutions of \eqref{e: leading edge scaling heat eqn} through 
    \begin{align}
        W(\xi, \tau) = e^{(\frac{3}{2} - c_0) \tau} V(\xi, \tau), 
    \end{align}
    so that we cannot match a diffusive tail with the interior of the front solution  when $c_0 \neq \frac{3}{2}$:  the analogue of \eqref{e: matching psi plus expansion} would no longer be on the same order as the interior solution, \eqref{e: matching psi minus expansion}. 
\end{remark}

\begin{proof}[Proof of Proposition \ref{p: residual estimate}]
	It remains to show that  \eqref{e: residual L inf r estimate} holds for $[\NL, \chi] (\psi^- - \psi^+)$. While there are many terms in this expression, and writing all of them would be unwieldy, every term is either: 
    \begin{itemize}
	    \setlength\itemsep{0em}
	    \item the term $(\partial_t \chi) (\psi^- - \psi^+)$, 
		\item the term $\omega f(\omega^{-1} [\chi(\psi^- - \psi^+)]) - \chi \omega f(\omega^{-1} (\psi^- - \psi^+))$,
		\item or some smooth bounded $x$-dependent coefficient multiplied by $(\partial_x^k \chi) \partial_x^\ell (\psi^- - \psi^+)$, with $k + \ell \leq 2m$, possibly also with a factor of $(t+T)^{-1}$. 
	\end{itemize}
	For any term of the third type, we note by construction of $\chi$, all derivatives of $\chi$ up to order $2m$ are uniformly bounded and are supported on $x \in [(t+T)^\mu, (t+T)^\mu +1]$. To estimate these terms, it therefore suffices to get good estimates on $\partial_x^\ell (\psi^- - \psi^+)$ for $x \in [(t+T)^\mu, (t+T)^\mu +1]$. 
	
	For $\ell = 0$, we Taylor expand, using the control of derivatives from Lemma \ref{l: pointwise matching} to write 
	\begin{align*}
	\psi^- (x,t) &= \psi^- ((t+T)^\mu, t) + \psi_x^- ((t+T)^\mu, t) (x-(t+T)^\mu) + \mathrm{O} \left( (t+T)^{-1/2} \right), \\
	 \psi^+(x,t) &= \psi^+((t+T)^\mu, t) + \psi_x^+ ((t+T)^\mu, t) (x- (t+T)^\mu) + \mathrm{O} \left( (t+T)^{-1/2} \right)
	\end{align*}
	for $(t+T)^\mu \leq x \leq (t+T)^\mu +1$. Hence by Lemma \ref{l: pointwise matching}, we have 
	\begin{align}
	| \psi^- (x,t) - \psi^+ (x,t) | \leq \frac{C}{(t+T)^{1/2}} \label{e: matching estimate blending region}
	\end{align}
	for $t > 0$, $T$ large, and $(t+T)^\mu \leq x \leq (t+T)^\mu +1$. We similarly obtain by Taylor expanding
	\begin{align*}
	|\psi^-_x (x,t) - \psi^+_x (x,t) | \leq \frac{C}{(t+T)^{1/2}}
	\end{align*}
	for $t > 0$, $T$ large, and $(t+T)^\mu \leq x \leq (t+T)^\mu +1$. Higher derivatives in this region are bounded by $C(t+T)^{-1/2}$ by Lemma \ref{l: pointwise matching}, so if $b(x)$ is any smooth bounded function and $k$, $\ell$ are non-negative integers such that $k + \ell \leq 2m$, we obtain 
	\begin{align*}
	\| b(\cdot) (\partial_x^k \chi) \partial_x^\ell (\psi^- - \psi^+) \|_{L^\infty_r} &\leq \frac{C}{(t+T)^{1/2}} \sup_{(t+T)^\mu \leq x \leq (t+T)^\mu +1} | \langle x \rangle^{2 + \mu} | \\
	&\leq \frac{C}{(t+T)^{1/2}} \langle (t+T) \rangle^{\mu (2+\mu)} \\
	&\leq \frac {C}{(t+T)^{1/2 - 4 \mu}},
	\end{align*}
	as desired. The estimates on the term involving $\partial_t \chi$ are similar --- in fact, they are improved since we gain a factor of $(t+T)^{\mu - 1}$ when differentiating $\chi$ in time. 
	
	For the term involving the nonlinearity, since $\omega^{-1} (\psi^- - \psi^+)$ is uniformly bounded and $f(0) = 0$, we can Taylor expand the nonlinearity to obtain an estimate
	\begin{align*}
	| \omega f(\omega^{-1} [\chi (\psi^- - \psi^+)]) - \chi \omega f(\omega^{-1} (\psi^- - \psi^+)) | \leq C 1_{\{(t+T)^\mu \leq x \leq (t+T)^\mu +1\}} |\psi^- - \psi^+| 
	\end{align*}
	Hence by estimate \eqref{e: matching estimate blending region} on $\psi^- - \psi^+$ in the region of interest, we obtain 
	\begin{align*}
	\| \omega f(\omega^{-1} [\chi (\psi^- - \psi^+)]) - \chi \omega f(\omega^{-1} (\psi^- - \psi^+)) \|_{L^\infty_r} &\leq \frac{C}{(t+T)^{1/2}} \sup_{(t+T)^\mu \leq x \leq (t+T)^\mu + 1} \langle x \rangle^{2 + \mu} \\
	&\leq \frac{C}{(t+T)^{1/2 - 4 \mu}},
	\end{align*}
	which completes the proof of the proposition. 
\end{proof}

Finally, we collect for later use the following result, which says that, for large times, $\psi$ is well-approximated by $\omega q_*$. The bulk of the work is in proving Lemma \ref{l: interior shifted front vs front} --- the estimates in the leading edge are simple in comparison, so we omit the proof. 

\begin{lemma}\label{l: front versus psi}
	Let $r = 2 + \mu$ with $0 < \mu < \frac{1}{8}$. There exists a constant $C > 0$ such that 
	\begin{align}
	\| q_* - \omega^{-1} \psi (\cdot, t) \|_{L^\infty_{r+1}} \leq \frac{C}{(t+T)^{1/2-4\mu}}. 
	\end{align}	
	for all $t > 0$. 
\end{lemma}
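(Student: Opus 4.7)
The plan is to split the estimate according to the definition $\psi = \chi \psi^- + (1-\chi) \psi^+$ by writing
\begin{align*}
q_* - \omega^{-1}\psi = \chi\bigl(q_* - \omega^{-1}\psi^-\bigr) + (1-\chi)\bigl(q_* - \omega^{-1}\psi^+\bigr),
\end{align*}
and to estimate the two contributions separately. The first piece is essentially a minor refinement of Lemma~\ref{l: interior shifted front vs front}, adapted to the $L^\infty_{r+1}$ norm via the extra factor $\omega^{-1}$, while the second piece is superpolynomially small and can be handled crudely.

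For the first (interior) term, the support of $\chi$ is contained in $\{x \le (t+T)^\mu + 1\}$, and on this set I would split further at some fixed large $L$. For $x \le L$ (including all $x \le -1$, where $\omega \equiv 1$ so that $\omega^{-1}\psi^- = q_*(\cdot + \zeta)$), I would estimate $|q_*(x) - \omega^{-1}(x)\psi^-(x,t)|$ by Taylor expansion in $\zeta(t+T) = \mathrm{O}((t+T)^{\mu - 1/2})$, picking up a constant uniform in $x$ from the derivatives of $\omega q_*$ on a compact set; the weight $\rho_{r+1}$ is then bounded by $L^{r+1}$, yielding a contribution of size $\mathrm{O}((t+T)^{\mu - 1/2})$. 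For $L \le x \le (t+T)^\mu + 1$ I would plug the front asymptotics \eqref{e: front asymptotics} into $\omega^{-1}(x)\psi^-(x,t) = e^{\eta_* \zeta(t+T)} q_*(x+\zeta)$ and expand, finding that the leading $(a+x)e^{-\eta_* x}$ terms in $q_*(x)$ and $\omega^{-1}\psi^-$ cancel up to a residual of the form $-\zeta(t+T)\,e^{-\eta_* x} + \mathrm{O}(e^{-(\eta_* + \eta_0)x}) + \mathrm{O}(\zeta^2 e^{-\eta_* x})$; the weight $\langle x\rangle^{r+1}$ is then absorbed by the exponential $e^{-\eta_* x}$ and we obtain the same $\mathrm{O}((t+T)^{\mu - 1/2})$ bound, comfortably below the required $\mathrm{O}((t+T)^{4\mu - 1/2})$.

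For the second (leading-edge) term, the support of $1-\chi$ lies in $\{x \ge (t+T)^\mu\}$, where $\omega^{-1}(x) = e^{-\eta_* x} \le e^{-\eta_*(t+T)^\mu}$ decays faster than any polynomial in $(t+T)$. Using $\beta_0 = \sqrt{\alpha}$ and $x_0 = a$ from Proposition~\ref{p: residual estimate} together with the front asymptotics \eqref{e: front asymptotics}, I would write
\begin{align*}
q_*(x) - \omega^{-1}(x)\psi^+(x,t) = (a+x)e^{-\eta_* x}\!\left[1 - e^{-(x+a)^2/[4\alpha(t+T)]}\right] - e^{-\eta_* x}\Psi_1(\xi) + \mathrm{O}(e^{-(\eta_* + \eta_0)x}),
\end{align*}
with $\xi = (x+a)/[\sqrt{\alpha}\sqrt{t+T}]$. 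Each term carries the factor $e^{-\eta_* x}$ (and the $\Psi_1$ term enjoys additional Gaussian decay by Lemma~\ref{l: leading edge V1 estimates}), so that even after multiplying by the polynomial weight $\langle x\rangle^{r+1}$ the contribution is $\mathrm{O}(e^{-\eta_*(t+T)^\mu/2})$, which is much smaller than $(t+T)^{4\mu - 1/2}$ for $T$ large.

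The main subtlety, as in Lemma~\ref{l: interior shifted front vs front}, is keeping track of the various scales---the polynomial weight growing up to $(t+T)^{\mu(r+1)}$, the smallness $|\zeta(t+T)| \lesssim (t+T)^{\mu - 1/2}$, and the exponential factor $e^{-\eta_* x}$---and verifying that the budget $\mu(r+1) + (\mu - 1/2) < 4\mu - 1/2$ (which holds as $\mu < 1/8$, so that $r+1 = 3+\mu < 4$) is respected; once the decomposition above is set up, the computation is essentially bookkeeping and all estimates are comfortably within the required bound.
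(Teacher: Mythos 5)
Your overall decomposition $q_*-\omega^{-1}\psi = \chi(q_*-\omega^{-1}\psi^-) + (1-\chi)(q_*-\omega^{-1}\psi^+)$ and the treatment of the leading-edge piece are both fine and follow the paper's intent (the paper simply reduces the interior part to Lemma~\ref{l: interior shifted front vs front} and notes the leading-edge part is easy). However, there are two flaws in the interior estimate as written.

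First, your expansion of the interior residual for $L\le x\le (t+T)^\mu+1$ produces a term $\mathrm{O}(e^{-(\eta_*+\eta_0)x})$ \emph{without} a $\zeta$-factor. If one plugs in only the pointwise asymptotics $q_*(y)=(a+y)e^{-\eta_*y}+E(y)$ with $|E(y)|\le Ce^{-(\eta_*+\eta_0)y}$, the leftover $E(x)-e^{\eta_*\zeta}E(x+\zeta)$ is only bounded by $Ce^{-(\eta_*+\eta_0)x}$, which is constant in $t$; after weighting, this gives a contribution $\mathrm{O}(e^{-\eta_0 L})$ that does \emph{not} decay as $t\to\infty$, so the estimate fails. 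The point you are missing is that the quantity $q_*(x)-e^{\eta_*\zeta}q_*(x+\zeta)$ vanishes identically at $\zeta=0$, and one should exploit this by differentiating in $\zeta$ (or applying the fundamental theorem of calculus to $\zeta\mapsto e^{\eta_*\zeta}q_*(x+\zeta)$, exactly as in the proof of Lemma~\ref{l: interior shifted front vs front}): then the error term also picks up a $\zeta$ factor and the bound $\mathrm{O}(|\zeta|e^{-\eta_*x})$ follows. The cleanest route is simply to invoke Lemma~\ref{l: interior shifted front vs front} and observe that $\rho_{r+1}\omega^{-1}\le C\rho_r$ pointwise, which converts the $L^\infty_r$ bound on $\psi^--\omega q_*$ into the desired $L^\infty_{r+1}$ bound on $q_*-\omega^{-1}\psi^-$.

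Second, your concluding budget inequality $\mu(r+1)+(\mu-\tfrac{1}{2})<4\mu-\tfrac{1}{2}$ is false: it is equivalent to $r+1<3$, whereas $r+1=3+\mu>3$. This is a side error that does not actually sink the argument, because — as you yourself point out earlier — the factor $e^{-\eta_*x}$ coming from $\omega^{-1}$ absorbs the entire polynomial weight $\langle x\rangle^{r+1}$ for $x\ge 1$, so the effective budget is just $\mu-\tfrac{1}{2}<4\mu-\tfrac{1}{2}$, which holds trivially. But the budget statement as written is inconsistent with your own computation and should be removed or corrected.
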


\subsection{Outlook: approximating the true solution} \label{s: outlook}
The goal of the remainder of the paper is to prove that $\psi$ is a good approximation to the true solution $u$, in an appropriate sense. We let $v$ solve $\NL [v] = 0$, where $\NL$ is given by \eqref{e: NL def}. This is simply the original equation \eqref{e: eqn}, in the co-moving frame with the logarithmic delay, and with the exponential weight $\omega$. We then let $w = v - \psi$, i.e. we view $v$ as a perturbation of $\psi$. We must then control the perturbation $w$, which solves
\begin{align}
	w_t = \mcl w - f'(q_*) w - \frac{3}{2 \eta_* (t+T)} \left[ \omega (\omega^{-1})' w + w_x \right] + \omega f(\omega^{-1} w + \psi) - \omega f(\omega^{-1} \psi) - R,
\end{align}
where $R = \NL [\psi]$. Note that we have introduced a term $f'(q_*) w$ so that the principal linear part of this equation has the form $\mcl w$. We then define 
\begin{align}
	N( \omega^{-1} w) = f (\omega^{-1} (w + \psi)) - f(\omega^{-1} \psi) - f'(\omega^{-1} \psi) \omega^{-1} w, 
\end{align}
so that the equation for $w$ becomes 
\begin{align} 
	w_t = \mcl w - \frac{3}{2 \eta_* (t+T)} \left[ \omega (\omega^{-1})' w + w_x \right] + (f'(\omega^{-1} \psi) - f'(q_*)) w - R + \omega N(\omega^{-1} w). \label{e: w eqn}
\end{align}
We would like to view $w_t = \mcl w$ as the principal part of this equation, so that we can control $w$ by estimating nonlinear terms in the variation of constants formula with sharp bounds on the linear semigroup $e^{\mcl t}$. This is not yet possible at this level, however: as suggested in the discussion of the reduced model problem in Section \ref{s: overview}, the term $-\frac{3}{2 \eta_*} (t+T)^{-1} \omega (\omega^{-1})' w$ is \emph{critical}, and prevents the solution from decaying in time. To unravel this, we introduce $z = (t+T)^{-3/2} w$ and find that $z$ solves
\begin{multline}
	z_t = \mcl z - \frac{3}{2 \eta_* (t+T)} [\omega (\omega^{-1})' + \eta_*] z - \frac{3}{2 \eta_* (t+T)} z_x + [f'(\omega^{-1} \psi) - f'(q_*)] z  - (t+T)^{-3/2} R \\  + (t+T)^{-3/2} \omega N (\omega^{-1} (t+T)^{3/2} z). \label{e: z eqn1}
\end{multline}
Note that $\omega(\omega^{-1})' (x) + \eta_* \equiv 0 $ for $x > 1$, so that this term is essentially removed, which ultimately allows us to view this equation as a perturbation $z_t = \mcl z + H(t) [z]$ of the linear equation $z_t = \mcl z$, with $H(t) [z]$ encoding the nonlinear and non-autonomous terms in \eqref{e: z eqn1}. Our goal is then to obtain sharp temporal decay of $z$, equivalent to boundedness of $w$, by controlling all terms in the variation of constants formula
\begin{align*}
	z(t) = e^{\mcl t} z_0 + \int_0^t e^{\mcl (t-s)} H(s) [z(s)] \, ds. 
\end{align*}
Closing this argument requires sharp estimates on the decay of the semigroup $e^{\mcl t}$, which we obtain in the following two sections by a detailed analysis of the resolvent $(\mcl - \lambda)^{-1}$. 

\section{Resolvent estimates}\label{s: resolvent estimates}
We obtain the linear estimates necessary for our analysis through sharp estimates on the resolvent of the weighted linearization $\mcl$ near its essential spectrum. We start with some preliminary spectral theory. We say the essential spectrum of an operator $A$ is the set of $\lambda \in \C$ such that $A - \lambda$ is not a Fredholm operator of index 0. The following discussion applies in particular on $L^p(\R)$, $1 \leq p \leq \infty$ or on any algebraically weighted $L^p$ space. Fredholm properties of the linearization $\mathcal{A} = \mathcal{P}(\partial_x) + c_* \partial_x + f'(q_*)$ are determined by the asymptotic dispersion curves \eqref{e: right dispersion curve} and \eqref{e: Sigma minus}; see \cite{FiedlerScheel, KapitulaPromislow} for background. Specifically, $\mathcal{A} - \lambda$ is Fredholm if and only if $\lambda \notin \Sigma^+ \cup \Sigma^-$ and $\mathcal{A} - \lambda$ has index zero if $\lambda$ is to the right of these curves. 

Exponential weights change the asymptotic dispersion relations and thereby move the Fredholm borders. As a result, the Fredholm borders of the weighted linearization $\mcl$ are given by
\begin{align*}
\Sigma^+_{\eta_*} = \{ d^+ (\lambda, ik - \eta_*) = 0 \text{ for some } ik \in \R \} \label{e: Sigma plus eta}, \qquad \Sigma^-_{\eta_*}=\Sigma^-.
\end{align*}
Hypothesis \ref{hyp: spreading speed} and Hypothesis \ref{hyp: stable on left} then imply that the essential spectrum of $\mcl$ is marginally stable, as depicted in Figure \ref{f: spectrum and initial data}. 

We start by proving estimates for the resolvent $(\mcl^+-\lambda)^{-1}$ of the limiting operator at $x = \infty$ near its essential spectrum $\Sigma^+_{\eta_*}$. Hypothesis \ref{hyp: spreading speed} implies that the dispersion relation has a branch point at $\lambda = 0$, which we unfold through $\gamma := \sqrt{\lambda}$ with branch cut along the negative real axis.
\subsection{Estimates on the asymptotic resolvent}
We analyze the asymptotic resolvent through its integral kernel $G^+_\gamma$, which solves 
\begin{align}
(\mcl^+ - \gamma^2) G^+_\gamma = - \delta_0. 
\end{align}
Since $\mcl^+$ is a constant coefficient differential operator (defined by \eqref{e: Lplus def}), the solution to $(\mcl^+ - \gamma^2) u = g$ is given through convolution with $G^+_\gamma$. In \cite[Section 2]{AveryScheel}, we give a detailed description of this resolvent kernel, and use this description to prove that the asymptotic resolvent is Lipschitz in $\gamma$ near the origin in an appropriate sense. 

\begin{prop}\label{p: asymptotic resolvent Lipschitz estimate}
	Let $r > 2$. There exist positive constants $C$ and $\delta$ and a limiting operator $R_0^+$ such that for any odd function $g \in L^1_{1,1} (\R)$, we have
	\begin{align}
	\| (\mcl^+ - \gamma^2)^{-1} g - R_0^+ g \|_{W^{2m-1, 1}_{-r, -r}} \leq C |\gamma| \| g \|_{L^1_{1,1}},
	\end{align}
	and 
	\begin{align}
	\| (\mcl^+ - \gamma^2)^{-1} g - R_0^+ g \|_{W^{2m-1, \infty}_{-1,-1}} \leq C |\gamma| \| g ||_{L^1_{1,1}}
	\end{align}
	for all $\gamma \in B(0,\delta)$ such that $\gamma^2$ is to the right of $\Sigma_{\eta_*}^+$. 
\end{prop}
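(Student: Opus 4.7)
The plan is to exploit the constant-coefficient structure of $\mcl^+$, so that $(\mcl^+ - \gamma^2)^{-1} g = G_\gamma^+ * g$ and everything reduces to pointwise analysis of the resolvent kernel $G_\gamma^+$ as a function of $\gamma$ near $0$. I would build on the detailed description of $G_\gamma^+$ from \cite[Section 2]{AveryScheel}, which expresses it as a sum of exponentials $e^{\nu_j(\gamma) x}$ over roots $\nu_j(\gamma)$ of the dispersion relation $d^+(\gamma^2, \nu - \eta_*) = 0$, on the appropriate half-line for each root. Hypothesis \ref{hyp: spreading speed}(i) guarantees that exactly two roots bifurcate from the origin as $\nu_\pm(\gamma) = \pm \gamma/\sqrt{\alpha} + \mathrm{O}(\gamma^2)$, while the remaining $2m-2$ roots stay bounded away from $0$ and depend analytically on $\gamma \in B(0, \delta)$ (with $\gamma^2$ in the admissible sector). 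Consequently the kernel splits as $G_\gamma^+(x) = \tfrac{1}{\gamma} G_\gamma^{\mathrm{sing}}(x) + G_\gamma^{\mathrm{reg}}(x)$, with both pieces analytic in $\gamma$, and with $G_\gamma^{\mathrm{reg}}$ together with all its $x$-derivatives decaying exponentially uniformly in $\gamma$.

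The central step is to use the assumed oddness of $g$ to cancel the $1/\gamma$ prefactor. Writing
\begin{align*}
((\mcl^+ - \gamma^2)^{-1} g)(x) = \int_0^\infty \bigl[G_\gamma^+(x-y) - G_\gamma^+(x+y)\bigr] g(y) \, dy,
\end{align*}
the leading singular piece $-\tfrac{1}{2\sqrt{\alpha}\gamma} e^{-\gamma|z|/\sqrt{\alpha}}$ contributes $\tfrac{1}{2\alpha}(|x-y| - |x+y|) + \mathrm{O}(\gamma)$ to this difference, the $1/\gamma$ having cancelled after Taylor expansion of the exponential. The limiting kernel $K_0(x,y) := \lim_{\gamma \to 0}\bigl[G_\gamma^+(x-y) - G_\gamma^+(x+y)\bigr]$ satisfies $|K_0(x,y)| \leq C \min(|x|,|y|)$ by the elementary inequality $\bigl||x-y| - |x+y|\bigr| \leq 2 \min(|x|, |y|)$, together with exponential decay contributions from $G_0^{\mathrm{reg}}$. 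I would then define $R_0^+$ by $(R_0^+ g)(x) = \int_0^\infty K_0(x,y) g(y) \, dy$.

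To obtain the Lipschitz bound, I would carry out a second-order Taylor expansion in $\gamma$, writing $G_\gamma^+(x-y) - G_\gamma^+(x+y) = K_0(x,y) + \gamma K_1^\gamma(x,y)$ with $|K_1^\gamma(x,y)| \leq C \langle x \rangle \langle y \rangle$ uniformly for $\gamma \in B(0, \delta)$ in the admissible region. The two stated estimates then follow from weighted Young-type inequalities: the $L^\infty_{-1,-1}$ bound uses the $\langle x \rangle^{-1}$ weight to absorb the linear growth of $K_1^\gamma$ in $x$, producing $C|\gamma| \int \langle y \rangle |g(y)| \, dy = C|\gamma| \|g\|_{L^1_{1,1}}$; the $L^1_{-r,-r}$ bound requires integrability of $\langle x \rangle^{1-r}$, which holds precisely because $r > 2$. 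Estimates on derivatives up to order $2m-1$ follow by differentiating the kernel in $x$; each derivative brings in either a factor of a root $\nu_j(\gamma)$, uniformly bounded in the parameter region, or a jump term at $x=y$ controlled by the $L^1$ integrability of $g$, so no new singularity in $\gamma$ is produced.

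The main obstacle I anticipate is the careful bookkeeping of which derivatives act on the singular versus regular parts of $G_\gamma^+$, verifying that the oddness-based cancellation persists after differentiation in $x$, and tracking uniformity of all Taylor remainders in $\gamma$ throughout the admissible region where $\gamma^2$ lies to the right of $\Sigma^+_{\eta_*}$. This hinges on the root-projection machinery of \cite[Section 2]{AveryScheel}. The oddness hypothesis on $g$ is indispensable, since without it the $1/\gamma$ prefactor survives and no Lipschitz-in-$\gamma$ bound is possible; the requirement $r > 2$ is exactly what is needed to absorb the linear-in-$x$ growth of $R_0^+ g$ against the algebraic weight.
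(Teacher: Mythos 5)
Your overall strategy matches the paper's: decompose the constant-coefficient resolvent kernel into a simple-pole piece $G^\mathrm{c}_\gamma$ of size $\mathrm{O}(1/\gamma)$ together with analytic remainders, and use oddness of $g$ (as a surrogate Dirichlet condition at the interface) to cancel the pole in the difference $G^+_\gamma(x-y) - G^+_\gamma(x+y)$, then close by weighted Young-type bounds. The paper defers the detailed estimates to Proposition 2.1 of \cite{AveryScheel}, but the kernel decomposition $G^+_\gamma = G^\mathrm{c}_\gamma + \tilde{G}^\mathrm{c}_\gamma + G^\mathrm{h}_\gamma$ set out in Section \ref{s: resolvent estimates} is exactly the split you use, so you are on the right track.

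Two points need correcting. First, your statement that $G^\mathrm{reg}_\gamma$ and all its $x$-derivatives decay exponentially \emph{uniformly in $\gamma$} is not accurate: the remainder $\tilde{G}^\mathrm{c}_\gamma$ still carries the soft exponentials $e^{\nu^\pm(\gamma) z}$, whose decay rate $|\Re\nu^\pm(\gamma)| \sim c|\gamma|$ vanishes as $\gamma\to 0$. Only the strong-hyperbolic part $G^\mathrm{h}_\gamma$ decays uniformly. This is not fatal (the $\tilde{G}^\mathrm{c}_\gamma$ contribution is uniformly bounded and analytic in $\gamma$, which suffices against the negative algebraic weights on the output), but the claim as written would mislead you into an invalid shortcut elsewhere. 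Second, and more seriously, the bound $|K_1^\gamma(x,y)|\le C\langle x\rangle\langle y\rangle$ does not follow from a ``second-order Taylor expansion in $\gamma$'' as you describe: the effective expansion parameter is $\gamma(x\pm y)$, which is unbounded over the integration domain, so the Taylor remainder estimate $e^{-s}-1+s=\mathrm{O}(s^2)$ is not uniform. The correct argument writes the remainder in integral form, for instance
\begin{align*}
\frac{1}{\gamma^2}\Bigl[ e^{-\gamma a} - e^{-\gamma b} - \gamma(b-a) \Bigr] = -\int_a^b \int_0^s e^{-\gamma t}\,dt\,ds,
\end{align*}
and then uses $|e^{-\gamma t}| \le 1$ for $t\ge 0$ — valid here because $\gamma^2$ to the right of $\Sigma^+_{\eta_*}$ forces $\Re\gamma\ge 0$, so the exponentials $e^{\nu^-(\gamma)z}$, $z\ge 0$, are decaying rather than growing. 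With that modification the bound $|K_1^\gamma(x,y)|\le C(|x|+|y|)\min(|x|,|y|)\le C\langle x\rangle\langle y\rangle$ does hold uniformly, and your weighted Young argument then closes: this is exactly where $r>2$ is needed for the $L^1_{-r,-r}$ estimate. An alternative, which is what the paper uses for the closely related Lemma \ref{l: asymptotic resolvent heat derivative estimate}, is to integrate by parts once against the antiderivative $G(y)=\int_{-\infty}^y g$ and invoke Lemma \ref{l: antiderivative localization}; this avoids any Taylor expansion of the kernel entirely.
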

This is essentially Proposition 2.1 of \cite{AveryScheel}, although there we state the result for $L^2$-based spaces only. The proof carries over with minor modifications, so we do not give the full details here. We will use the same general approach to prove further estimates which translate to improved time decay for derivatives, and therefore we outline the overall strategy in the following. \hl{We note that since proposition only involves the asymptotic operator $\mcl^+$, which can be defined by \eqref{e: Lplus def} independently of the existence of a critical front, Proposition \ref{p: asymptotic resolvent Lipschitz estimate} relies only on Hypothesis \ref{hyp: spreading speed}.} Our approach is based on a decomposition of the resolvent kernel in which the principal piece resembles the resolvent kernel for the heat equation. To arrive at this decomposition, we write $(\mcl^+ - \gamma^2)u = g$ as a first order system in $U = (u, \partial_x u, ..., \partial_x^{2m-1} u)$, which has the form
\begin{align}
\partial_x U = M (\gamma) U + F, \label{e: asymptotic resolvent system}
\end{align}
where $F = (0, 0, ..., g)^T$ and $M(\gamma)$ is a $2m$-by-$2m$ matrix which is analytic (in fact a polynomial) in $\gamma$. The structure of this matrix implies 
\begin{align}
\det ((M(\gamma) - \nu I)) = d^+(\gamma^2, \nu - \eta_*), 
\end{align}
so that eigenvalues of $M(\gamma)$, which we call \textit{spatial eigenvalues}, correspond with roots of the dispersion relation. Using Fourier transform and asymptotics for large $\gamma>0$, we see that, for  $\gamma^2$ to the right of the essential spectrum, $M(\gamma)$ is a hyperbolic matrix with stable and unstable subspaces $E^\mathrm{s}(\gamma)$ and $E^\mathrm{u}(\gamma)$ satisfying $\dim E^\mathrm{s}(\gamma) = \dim E^\mathrm{u}(\gamma)$. We let $P^\mathrm{s}(\gamma)$ and $P^\mathrm{u}(\gamma) = 1 - P^\mathrm{s}(\gamma)$ denote the corresponding spectral projections, which are analytic for $\gamma^2$ to the right of the essential spectrum. We can use these projections to write the matrix Green's function for the system \eqref{e: asymptotic resolvent system} as in \cite{HolzerScheelPointwiseGrowth}
\begin{align}
T_\gamma (x) = \begin{cases}
- e^{M(\gamma) x} P^\mathrm{s}(\gamma), & x \geq 0, \\
e^{M(\gamma) x} P^\mathrm{u}(\gamma), & x < 0. 
\end{cases}
\end{align}
We can then recover the scalar resolvent kernel $G_\gamma^+$ through the formula
\begin{align}
G^+_\gamma = P_1 T_\gamma Q_1 p_{2m}^{-1},
\end{align} 
where $P_1$ is the projection onto the first component and $Q_1$ is the embedding into the last component, i.e. $P(u_1, ..., u_{2m}) = u_1$ and $Q_1 g = (0, ..., 0, g)^T$; see \cite{HolzerScheelPointwiseGrowth, AveryScheel} for details. 

Following \cite{HolzerScheelPointwiseGrowth}, we conclude that singularities of $G_\gamma^+$ are determined precisely by singularities of the stable projection $P^\mathrm{s}(\gamma)$. Hypothesis \ref{hyp: spreading speed} implies that the dispersion relation $d^+(\gamma^2, \nu^\pm - \eta_*)$ has two roots of the form
\begin{align}
\nu^\pm (\gamma) = \pm \nu_0 \gamma+ \mathrm{O}(\gamma^2) \label{e: nu pm expansion}
\end{align}
for $\gamma$ close to zero, and that all other roots are bounded away from zero for $\gamma$ small. In particular, $\nu^\pm (\gamma)$ collide as $\gamma \to 0$ and form a Jordan block for $M(0)$ to the eigenvalue 0 \cite{HolzerScheelPointwiseGrowth}. Hence, $P^\mathrm{s}(\gamma)$ necessarily has a singularity at $\gamma = 0$ \cite{Kato}. We isolate this singularity by splitting 
\begin{align}
P^\mathrm{s/u} (\gamma) = P^\mathrm{cs/cu} (\gamma) + P^\mathrm{ss/uu}(\gamma),
\end{align}
where $P^\mathrm{cs}(\gamma)$ is the spectral projection associated to the eigenvalue $\nu^-(\gamma)$, and $P^\mathrm{ss}(\gamma)$ is the strong stable projection associated to the rest of the stable eigenvalues. Since the other stable eigenvalues are bounded away from the imaginary axis for $\gamma$ small, $P^\mathrm{ss}(\gamma)$ is analytic in $\gamma$ near the origin, and only $P^\mathrm{cs}(\gamma)$ is singular. Similarly, $P^\mathrm{cu} (\gamma)$ is the spectral projection associated to $\nu^+(\gamma)$, and the strong unstable projection $P^\mathrm{uu} (\gamma)$ is analytic near $\gamma = 0$. 

In \cite[Lemma 2.2]{AveryScheel}, we characterize the singularity of $P^\mathrm{cs}(\gamma)$ at $\gamma = 0$ using Lagrange interpolation to write $P^\mathrm{cs}(\gamma)$ as a polynomial in $M(\gamma)$, and we find that the singularity is a simple pole such that $\gamma P^\mathrm{cs} (\gamma) \big|_{\gamma = 0} = - \gamma P^\mathrm{cu} (\gamma) \big|_{\gamma =0}$. We let $\tilde{P}^{\mathrm{cs/cu}} (\gamma)$ denote the analytic part of $P^\mathrm{cs/cu}(\gamma)$ which remains after subtracting  this pole. We arrive at the following decomposition of the resolvent kernel: 
\begin{align}
G^+_\gamma = G^\mathrm{c}_\gamma + \tilde{G}^\mathrm{c}_\gamma + G^\mathrm{h}_\gamma, 
\end{align}
where $G^\mathrm{c}_\gamma$ is the principal part associated to the pole at $\gamma = 0$, 
\begin{align}
G^\mathrm{c}_\gamma (x) = \begin{cases}
\frac{\beta_c}{\gamma} e^{\nu^-(\gamma) x}, & x \geq 0, \\
\frac{\beta_c}{\gamma} e^{\nu^+(\gamma) x}, & x < 0,
\end{cases} \label{e: G c def}
\end{align}
for some constant $\beta_c \neq 0$, $\tilde{G}^\mathrm{c}_\gamma$ is a remainder term associated to $\tilde{P}^{\mathrm{cs/cu}} (\gamma)$, 
\begin{align}
\tilde{G}^\mathrm{c}_\gamma (x) = \begin{cases}
- p_{2m}^{-1} e^{\nu^- (\gamma) x} P_1 \tilde{P}^{\mathrm{cs}} (\gamma) Q_1, & x \geq 0, \\
p_{2m}^{-1} e^{\nu^+ (\gamma) x} P_1 \tilde{P}^\mathrm{cu} (\gamma) Q_1, & x < 0, 
\end{cases} \label{e: G tilde c def}
\end{align}
and $G^\mathrm{h}_\gamma$ is a remainder term associated to the strongly hyperbolic projections $P^\mathrm{ss/uu}(\gamma)$,
\begin{align}
G^\mathrm{h}_\gamma (x) = \begin{cases}
- p_{2m}^{-1} P_1 e^{M(\gamma) x} P^\mathrm{ss} (\gamma) Q_1, & x \geq 0, \\
p_{2m}^{-1} P_1 e^{M(\gamma) x} P^\mathrm{uu} (\gamma) Q_1, & x < 0. 
\end{cases} \label{e: G h def}
\end{align}

Our goal in the remainder of this section is to use this decomposition of the resolvent kernel to prove the following estimates on derivatives of solutions to $(\mcl^+ - \gamma^2) u = g$. 

\begin{prop}\label{p: asymptotic resolvent derivative estimate}
	Let $r > 2$. There exist positive constants $C$ and $\delta$ such that for any odd function $g \in L^\infty_{r,r} (\R)$, we have 
	\begin{align}
	\| \partial_x (\mcl^+ - \gamma^2)^{-1} g \|_{L^1_{1,1}} \leq \frac{C}{|\gamma|} \| g \|_{L^\infty_{r,r}},
	\end{align}
	and 
	\begin{align}
	\| \partial_x (\mcl^+ - \gamma^2)^{-1} g \|_{L^\infty_{r,r}} \leq \frac{C}{|\gamma|^{r-1}} \| g \|_{L^\infty_{r,r}},
	\end{align}
	for all $\gamma \in B(0, \delta)$ such that $\Re \gamma \geq \frac{1}{2}|\Im \gamma|$.  
\end{prop}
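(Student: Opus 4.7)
My plan is to adapt the farfield--core decomposition analysis used for Proposition~\ref{p: asymptotic resolvent Lipschitz estimate} in \cite{AveryScheel}, estimating $\partial_x(\mcl^+-\gamma^2)^{-1} g = -(\partial_x G^+_\gamma) * g$ piece by piece through the splitting $G^+_\gamma = G^\mathrm{c}_\gamma + \tilde{G}^\mathrm{c}_\gamma + G^\mathrm{h}_\gamma$ recalled above. The strongly hyperbolic contribution $G^\mathrm{h}_\gamma$ is essentially inert: the spatial eigenvalues contributing there are bounded away from the imaginary axis uniformly for $\gamma \in B(0,\delta)$, so $\partial_x G^\mathrm{h}_\gamma$ has uniform exponential decay and the associated convolution operator between the relevant weighted spaces is bounded uniformly in $\gamma$. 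The analytic center remainder $\tilde{G}^\mathrm{c}_\gamma$ is similarly accessible: differentiation pulls down a factor $\nu^\pm(\gamma) = \pm\nu_0\gamma + \mathrm{O}(\gamma^2)$ which cancels any apparent singularity, and the remaining bounds are at least as good as the claimed ones by the same analysis I sketch below for $G^\mathrm{c}_\gamma$.

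The main work is with the principal singular piece $G^\mathrm{c}_\gamma$. Since $G^\mathrm{c}_\gamma$ is continuous at the origin, the boundary terms from differentiating under the integral cancel, and a direct computation yields
\begin{equation*}
\partial_x u^\mathrm{c}(x) = -\frac{\beta_c\,\nu^-(\gamma)}{\gamma}\,e^{\nu^-(\gamma)x}\int_{-\infty}^x e^{-\nu^-(\gamma)y} g(y)\,dy - \frac{\beta_c\,\nu^+(\gamma)}{\gamma}\,e^{\nu^+(\gamma)x}\int_x^\infty e^{-\nu^+(\gamma)y} g(y)\,dy,
\end{equation*}
where $u^\mathrm{c} = -G^\mathrm{c}_\gamma * g$. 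The prefactors $\nu^\pm(\gamma)/\gamma = \pm \nu_0 + \mathrm{O}(\gamma)$ are bounded, so the $1/\gamma$ singularity of $G^\mathrm{c}_\gamma$ is already removed by differentiation. Moreover, the sector condition $\Re\gamma \geq \tfrac{1}{2}|\Im\gamma|$ guarantees $|\Re\nu^\pm(\gamma)| \gtrsim |\gamma|$, so that the kernel $K_\gamma := \partial_x G^\mathrm{c}_\gamma$ satisfies the pointwise bound $|K_\gamma(x)| \leq C e^{-c|\gamma||x|}$ uniformly in $\gamma$.

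Applying these bounds directly yields only $\|\partial_x u^\mathrm{c}\|_{L^1_{1,1}} = \mathrm{O}(|\gamma|^{-2})$ and $\|\partial_x u^\mathrm{c}\|_{L^\infty_{r,r}} = \mathrm{O}(|\gamma|^{-r})$, because of the interplay of the weights $\langle x \rangle$ and $\langle x \rangle^r$ with the spatial scale $|\gamma|^{-1}$ of the kernel. To improve these by the extra factor $|\gamma|$ claimed, I would antisymmetrize using the oddness of $g$:
\begin{equation*}
\partial_x u^\mathrm{c}(x) = \int_0^\infty [K_\gamma(x+y) - K_\gamma(x-y)]\, g(y)\, dy.
\end{equation*}
For $x > 0$ and $0 < y < x$, both arguments $x \pm y$ lie in the stable branch and the difference simplifies to $\tfrac{2\beta_c\,\nu^-(\gamma)}{\gamma}\, e^{\nu^-(\gamma) x}\, \sinh(\nu^-(\gamma) y)$. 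Since $|\sinh(\nu^-(\gamma) y)| \lesssim |\gamma| y$ for $y \lesssim |\gamma|^{-1}$ (and $|\sinh(\nu^-(\gamma) y)| \leq e^{|\Re \nu^-(\gamma)| y}$ for larger $y$), the antisymmetrized kernel gains a factor of $|\gamma|$ in precisely the regime responsible for the sharp estimates. Splitting the $y$-integral at the scales $|\gamma|^{-1}$ and $x$, and bounding $|g(y)| \leq \|g\|_{L^\infty_{r,r}} \langle y \rangle^{-r}$, then yields the claimed bounds, both saturated at the scale $x \sim |\gamma|^{-1}$.

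The main technical obstacle is this case analysis: one must split the $(x,y)$-plane into regions determined by the kernel scale $|\gamma|^{-1}$ and the branch boundary $y = x$, and track each contribution separately. The assumption $r > 2$ is required so that the moment $\int_0^\infty y \langle y \rangle^{-r}\, dy$ converges, guaranteeing the $|\gamma|$-gain from antisymmetrization is not negated by a logarithmic loss near the cutoff. The contributions from $y > x$, where no antisymmetrization is available, are handled directly using the decay of $g$ and produce bounds of order $|\gamma|^{-1}$ in both norms, hence consistent with (and better than) the overall claim. The $L^1_{1,1}$ estimate is then obtained by integrating the pointwise bounds using $\int \langle x \rangle \, e^{-c|\gamma|x}\, dx \sim |\gamma|^{-2}$ combined with the $|\gamma|$-gain.
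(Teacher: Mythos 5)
Your proposal is correct and reaches the same result by a genuinely different mechanism. Both arguments focus on the principal singular piece $G^{\mathrm c}_\gamma$ (and correctly observe that $\tilde{G}^{\mathrm c}_\gamma$ and $G^{\mathrm h}_\gamma$ satisfy the bounds directly, the former because differentiation supplies a factor $\nu^\pm(\gamma)\sim|\gamma|$ with no $1/\gamma$ to offset, the latter because it is uniformly exponentially localized). The difference lies in how the oddness of $g$ is converted into the extra factor of $|\gamma|$. The paper splits the convolution at $|y|=|x|/2$, integrates by parts in the near piece to replace $g$ by its antiderivative $G(y)=\int_{-\infty}^y g$, and invokes Lemma~\ref{l: antiderivative localization} to see that $G\in L^\infty_{r-1,r-1}$ precisely because $g$ is odd; the extra $y$-derivative on $G^{\mathrm c}_\gamma$ then contributes $\nu^-(\gamma)\sim|\gamma|$, while the weaker localization of $G$ and the boundary terms from integration by parts must both be tracked. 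Your route antisymmetrizes the kernel itself, so that for $0<y<x$ the relevant quantity is $e^{\nu^-(\gamma)x}\sinh(\nu^-(\gamma)y)$, and the $|\gamma|$-gain comes from $|\sinh(\nu^-(\gamma)y)|\lesssim|\gamma|y$ at the scale $|\gamma|y\lesssim 1$, paying the same single power of localization via the extra factor of $y$. The cost is thus identical (both require $r>2$ for the corresponding moment of $g$ to converge), but your version avoids boundary terms and does not need the antiderivative lemma, at the price of a slightly more explicit case analysis ($y\lessgtr x$ versus the paper's $|y|\lessgtr|x|/2$, and a further split at $y\sim|\gamma|^{-1}$). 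The cancellation being exploited is the same; the bookkeeping is different and both are valid.
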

\hl{We note again that Proposition \ref{p: asymptotic resolvent derivative estimate} relies only on Hypothesis \ref{hyp: spreading speed}, since it only involves the linearization about $u \equiv 0$}. The assumption that $g$ is odd in Propositions \ref{p: asymptotic resolvent Lipschitz estimate} and \ref{p: asymptotic resolvent derivative estimate} models the absorption effect in the wake of the front due to the fact that the spectrum of the linearization at $u \equiv 1$ is strictly contained in the left half plane. Indeed, for a second order equation, $\mcl^+ = \partial_{xx}$, and so oddness is equivalent to imposing a Dirichlet boundary condition. Although the full operator does not necessarily commute with reflections, we can exploit oddness in Section \ref{s: full resolvent} when extending estimates to the full resolvent $(\mcl - \gamma^2)^{-1}$. 

The main work in proving Proposition \ref{p: asymptotic resolvent derivative estimate} is to prove the estimate for the piece of the resolvent given by convolution with $G^\mathrm{c}_\gamma$, since this is the worst behaved term from the perspective of $\gamma$ dependence. Indeed, it is clear that $|\tilde{G}^\mathrm{c}_\gamma| \leq C |\gamma| G^\mathrm{c}_\gamma$ for $\gamma$ small, and $G^\mathrm{h}$ is even better behaved, since it is uniformly exponentially localized in space for $\gamma$ small. Hence we only give the proof for the term involving $G^\mathrm{c}_\gamma$. We first need a basic preliminary result on localization of antiderivatives of odd functions, a proof of which can be found for instance in \cite[Appendix A]{JaramilloScheelWu}. 

\begin{lemma}\label{l: antiderivative localization}
	Let $r > 1$, and let $g \in L^\infty_{r,r}(\R)$ be odd. Define 
	\begin{align}
	G(x) = \int_{-\infty}^x g(y) \, dy. 
	\end{align}
	Then $G \in L^\infty_{r-1, r-1} (\R)$, and $\|G\|_{L^\infty_{r-1, r-1}} \leq C \| g \|_{L^\infty_{r,r}}$ for some constant $C > 0$ independent of $g$. 
\end{lemma}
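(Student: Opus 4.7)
The plan is to exploit the oddness of $g$ to convert $G$ into a tail integral, and then estimate that tail using the pointwise decay inherited from the weighted $L^\infty$ norm. The key observation is that since $r > 1$, the pointwise bound $|g(y)| \le \|g\|_{L^\infty_{r,r}} \langle y \rangle^{-r}$ makes $g$ integrable on $\R$, and oddness then gives $\int_{\R} g(y)\, dy = 0$. In particular $G(+\infty) = 0$, which is the essential ingredient for getting decay of $G$ at $+\infty$.

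First I would handle the case $x \ge 0$. Using $\int_\R g = 0$, rewrite
\begin{equation*}
G(x) = \int_{-\infty}^x g(y)\, dy = -\int_x^\infty g(y)\, dy,
\end{equation*}
and estimate
\begin{equation*}
|G(x)| \le \|g\|_{L^\infty_{r,r}} \int_x^\infty \langle y \rangle^{-r}\, dy \le C_r \|g\|_{L^\infty_{r,r}} \langle x \rangle^{-(r-1)},
\end{equation*}
where the constant $C_r$ depends only on $r > 1$ and comes from the explicit computation $\int_x^\infty y^{-r}\, dy = x^{-(r-1)}/(r-1)$ for $x \ge 1$, patched with the trivial bound $|G(x)| \le \|g\|_{L^1} \le C \|g\|_{L^\infty_{r,r}}$ for $|x| \le 1$.

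For $x \le 0$, the estimate is even more direct: one simply writes
\begin{equation*}
|G(x)| \le \int_{-\infty}^x |g(y)|\, dy \le \|g\|_{L^\infty_{r,r}} \int_{-\infty}^x \langle y \rangle^{-r}\, dy \le C_r \|g\|_{L^\infty_{r,r}} \langle x \rangle^{-(r-1)},
\end{equation*}
no appeal to oddness required on this side. Combining the two estimates yields $\langle x \rangle^{r-1}|G(x)| \le C \|g\|_{L^\infty_{r,r}}$ for all $x \in \R$, which is precisely $\|G\|_{L^\infty_{r-1,r-1}} \le C \|g\|_{L^\infty_{r,r}}$.

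There is no real obstacle in this argument: once one recognizes that oddness plus $r > 1$ forces $\int_\R g = 0$, the proof reduces to the standard integral $\int_x^\infty \langle y \rangle^{-r}\, dy = O(\langle x \rangle^{-(r-1)})$. The only mild care needed is in the patching at $|x| \le 1$ to produce a constant independent of $g$, which is handled by the uniform bound $\|g\|_{L^1} \le C \|g\|_{L^\infty_{r,r}}$ that again relies on $r > 1$.
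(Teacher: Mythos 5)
Your proof is correct. The paper cites an external reference (Jaramillo--Scheel--Wu, Appendix A) rather than giving its own argument, but the approach you give is the standard one: the only role of oddness is to force $\int_\R g = 0$ (possible because $r>1$ guarantees integrability), after which $G(x) = -\int_x^\infty g$ for $x\ge 0$ and the decay $\langle x\rangle^{-(r-1)}$ follows from the tail integral $\int_x^\infty \langle y\rangle^{-r}\,dy$. The patching at $|x|\le 1$ via $\|g\|_{L^1}\le C\|g\|_{L^\infty_{r,r}}$ is exactly what is needed to get a $g$-independent constant, and you correctly note that the $x\le 0$ side needs no cancellation.
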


\begin{lemma}\label{l: asymptotic resolvent heat derivative estimate}
	Let $r > 2$. There exist positive constants $C$ and $\delta$ such that for any odd function $g \in L^\infty_{r,r}(\R)$, we have 
	\begin{align}
	\| \partial_x (G_\gamma^\mathrm{c} \ast g) \|_{L^1_{1,1}} \leq \frac{C}{ |\gamma|} \| g \|_{L^\infty_{r,r}}, \label{e: asymptotic resolvent G c L11 estimate}
	\end{align}
	and 
	\begin{align}
	\| \partial_x (G_\gamma^\mathrm{c} \ast g) \|_{L^\infty_{r,r}} \leq \frac{C}{ |\gamma|^{r-1}} \| g \|_{L^\infty_{r,r}} \label{e: asymptotic resolvent G c Linf r estimate}
	\end{align}
	for all $\gamma \in B(0,\delta)$ such that $\Re \gamma \geq \frac{1}{2}|\Im \gamma|$. 
\end{lemma}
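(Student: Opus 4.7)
The strategy is to exploit the oddness of $g$ to extract hidden cancellation in the convolution kernel $\partial_x G^c_\gamma$. Since $G^c_\gamma$ is even in $x$ (as $\nu^+ = -\nu^- + \mathrm{O}(\gamma^2)$), $\partial_x G^c_\gamma$ is odd, and consequently $\partial_x G^c_\gamma \ast g$ is even when $g$ is odd, so it suffices to estimate for $x > 0$. Folding $y < 0$ onto $(0,\infty)$ via $g(-y) = -g(y)$ yields the half-line representation
\begin{equation*}
\partial_x(G^c_\gamma \ast g)(x) = \int_0^\infty K_\gamma(x, y) g(y) \, dy, \qquad K_\gamma(x, y) := \partial_x G^c_\gamma(x-y) - \partial_x G^c_\gamma(x+y), \quad x > 0.
\end{equation*}

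Using the explicit form \eqref{e: G c def} and $\nu^\pm(\gamma) = \pm \nu_0 \gamma + \mathrm{O}(\gamma^2)$, I would observe that for $0 < y \leq x$ the kernel factorizes exactly as $K_\gamma(x,y) = -2(\beta_c \nu^-/\gamma)\, e^{\nu^- x}\sinh(\nu^- y)$, while for $0 < x \leq y$ it reduces (to leading order) to a $\cosh$-type combination. Combining $|\sinh(\nu^- y)| \leq C\min(|\gamma|y, e^{c|\gamma| y})$ with the exponential decay $|e^{\nu^\pm x}| \leq e^{-c|\gamma|x}$ (valid in the sector $\Re \gamma \geq \tfrac{1}{2}|\Im \gamma|$ for $\gamma \in B(0,\delta)$) gives the pointwise kernel estimates
\begin{align*}
|K_\gamma(x, y)| &\leq C \min(|\gamma| y, 1)\, e^{-c|\gamma|(x-y)}, && 0 < y \leq x, \\
|K_\gamma(x, y)| &\leq C \, e^{-c|\gamma|(y-x)}, && y \geq x > 0,
\end{align*}
where the decisive factor $|\gamma| y$ in the first estimate is the gain from the $\sinh$ cancellation induced by oddness. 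The higher-order corrections $\nu^+ + \nu^- = \mathrm{O}(\gamma^2)$ enter only as bounded multiplicative perturbations for $|x|,|y| \lesssim 1/|\gamma|$ and remain absorbed by the exponential decay at rate $\sim |\gamma|$ for larger arguments.

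Using these kernel bounds, I would integrate against $|g(y)| \leq \|g\|_{L^\infty_{r,r}} \langle y\rangle^{-r}$ and split according to $x \leq 1/|\gamma|$ versus $x \geq 1/|\gamma|$. In the inner regime the exponentials are essentially frozen and the bound reduces to $|\gamma|\int_0^\infty y \langle y\rangle^{-r}\,dy + \int_x^\infty \langle y \rangle^{-r}\,dy \leq C[|\gamma| + \langle x\rangle^{1-r}]$, with the first integral converging since $r > 2$. In the outer regime the exponential localization of $e^{-c|\gamma|(x-y)}$ near $y = x$ dominates and one obtains $\leq C[\langle x\rangle^{-r}/|\gamma| + |\gamma| e^{-c|\gamma| x/2}]$. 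Multiplying by $\langle x\rangle^r$ and taking the supremum, the peak contributions come from $|\gamma| \langle x\rangle^r$ near $x \sim 1/|\gamma|$ and from $|\gamma|\langle x\rangle^r e^{-c|\gamma| x/2}$ near $x \sim r/|\gamma|$, each of size $\mathrm{O}(|\gamma|^{-(r-1)})$, giving \eqref{e: asymptotic resolvent G c Linf r estimate}. Integrating $\langle x\rangle$ against the same pointwise bounds and using $r > 2$ to convert $\int \langle x\rangle^{2-r}\,dx \leq C/|\gamma|$ on $[0, 1/|\gamma|]$ yields $\mathrm{O}(|\gamma|^{-1})$ in each region, establishing \eqref{e: asymptotic resolvent G c L11 estimate}.

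The main obstacle is that without oddness the naive estimate would yield the weaker $|\gamma|^{-r}$ in place of $|\gamma|^{-(r-1)}$. It is precisely the $|\gamma| y$ factor from the $\sinh$ cancellation that saves a power of $|\gamma|$ in the transition region $x \sim 1/|\gamma|$, where the algebraic and exponential decays are comparable. Tracking this cancellation through the higher-order corrections to $\nu^\pm(\gamma)$ and through the careful interplay of the two $x$-regions, so that the gain from oddness is fully realized at every step, is the technical heart of the argument.
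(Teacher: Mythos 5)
Your argument is correct but takes a genuinely different route from the paper's. The paper's proof splits the convolution into $|y| \leq |x|/2$ and $|y| > |x|/2$ and, on the near-origin piece, integrates by parts against the antiderivative $G(y) = \int_{-\infty}^y g$, invoking Lemma~\ref{l: antiderivative localization}, which records that for odd $g \in L^\infty_{r,r}(\R)$ the antiderivative gains one algebraic power, $G \in L^\infty_{r-1,r-1}(\R)$. You instead fold the convolution onto the half-line and extract the oddness gain pointwise from the reflected kernel $K_\gamma(x,y) = \partial_x G^\mathrm{c}_\gamma(x-y) - \partial_x G^\mathrm{c}_\gamma(x+y)$, whose $\sinh(\nu^-\gamma\, y)$ factor vanishes at $y=0$. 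Note that the $\sinh$ factorization is in fact \emph{exact} for $0 < y \leq x$ (not just to leading order), since for $x \geq y > 0$ both $x-y \geq 0$ and $x+y \geq 0$ lie on the $\nu^-$ branch of $G^\mathrm{c}_\gamma$; the $\mathrm{O}(\gamma^2)$ correction to $\nu^+ + \nu^-$ that you flag only enters the $y > x$ regime, where no cancellation is needed. Both methods exploit the same cancellation — the gain of one power from $\int_\R g = 0$ — but your version makes it visible directly in the kernel, at the cost of depending on the explicit exponential form of $G^\mathrm{c}_\gamma$, whereas the paper's antiderivative lemma is kernel-agnostic and works once the kernel decay $e^{-c|\gamma|\cdot}$ is known. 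One small imprecision to tighten: you write $|e^{\nu^\pm x}| \leq e^{-c|\gamma|x}$, but $|e^{\nu^+ x}|$ grows for $x>0$ since $\Re\,\nu^+(\gamma) > 0$ in the sector; what you actually use, and what does hold, is decay of the combinations that arise, namely $e^{\nu^-(x\pm y)}$ with $x \pm y \geq 0$ and $e^{\nu^+(x-y)}$ with $x-y \leq 0$.
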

\begin{proof}
	We adopt a similar approach to \cite[proof of Lemma 5.1]{HowardCahnHilliard}, although we carry out our estimates in the Laplace domain rather than the time domain. We prove the $L^1_{1,1}$ control in estimate \eqref{e: asymptotic resolvent G c L11 estimate}, and the estimate \eqref{e: asymptotic resolvent G c Linf r estimate} follows with minor modifications. We first split the integral in the convolution as 
	\begin{align}
	(\partial_x G_\gamma^\mathrm{c} \ast g) (x) = \int_{ |y| \leq \frac{|x|}{2}} \partial_x G_\gamma^\mathrm{c}(x-y) g(y) \, dy + \int_{|y| > \frac{|x|}{2}} \partial_x G_\gamma^\mathrm{c}(x-y) g(y) \, dy. \label{e: asymptotic resolvent integral splitting}
	\end{align}
	Integrating by parts in the first term, we obtain 
	\begin{multline}
	\int_{|y| \leq \frac{|x|}{2}} \partial_x G_\gamma^\mathrm{c}(x-y) g(y) \, dy = - \int_{|y| \leq \frac{|x|}{2}} \partial_y \partial_x G_\gamma^\mathrm{c} (x-y) G(y) \, dy \\ + \left[ G \left( \frac{|x|}{2} \right) \partial_x G_\gamma^{\mathrm{c}} \left( x - \frac{|x|}{2} \right) - G \left( - \frac{|x|}{2} \right) \partial_x G_\gamma^\mathrm{c} \left( x + \frac{|x|}{2} \right) \right], \label{e: asymptotic resolvent integration by parts}
	\end{multline}
	where $G(x) = \int_{-\infty}^x g(y) \, dy$. Assume $x > 0$, so that $x \mp \frac{|x|}{2} \geq \frac{x}{2} \geq 0$. Differentiating the formula \eqref{e: G c def} for $G_\gamma^\mathrm{c} (\xi)$ for $\xi > 0$, we then obtain 
	\begin{align}
	\partial_x G_\gamma^\mathrm{c} \left( x \mp \frac{|x|}{2} \right) = \beta_c \frac{\nu^-(\gamma)}{\gamma} e^{\nu^-(\gamma) (x \mp \frac{x}{2})}.
	\end{align}
	for $x > 0$. By \eqref{e: nu pm expansion}, $\nu^- (\gamma)/\gamma$ is bounded for $\gamma$ small. Furthermore, if $\gamma$ is small and  $\Re \gamma \geq \frac{1}{2} |\Im \gamma|$, we have 
	\begin{align}
	\Re \nu^- (\gamma) = (\Re \nu^- (\gamma) +  \Re \nu_0 \gamma) - \Re \nu_0 \gamma \leq C |\gamma|^2 - c|\gamma| \leq -c |\gamma| 
	\end{align}
	for some constant $c > 0$, again by \eqref{e: nu pm expansion}, together with the fact that for $\Re \gamma \geq \frac{1}{2} |\Im \gamma|$, one has $|\gamma|^2 = (\Re \gamma)^2 + (\Im \gamma)^2 \leq C (\Re \gamma)^2$. Hence we have for $x > 0$
	\begin{align*}
	\langle x \rangle \left| G \left( \pm \frac{|x|}{2} \right) \partial_x G^\mathrm{c}_\gamma \left(x \mp \frac{|x|}{2} \right) \right| \leq C \| G \|_{L^\infty_{1,1}}  e^{\Re \nu^- (\gamma) (x \mp \frac{x}{2})} &\leq C \| g \|_{L^\infty_{r,r}} e^{\Re \nu^-(\gamma) x/2} \\
	&\leq C \|g \|_{L^\infty_{r,r}} e^{- c |\gamma| x} 
	\end{align*}
	by Lemma \ref{l: antiderivative localization}. Using the same consideration for $x < 0$ with $\nu^+(\gamma)$ replacing $\nu^-(\gamma)$, we obtain 
	\begin{align}
	\langle x \rangle \left| G \left( \pm \frac{|x|}{2} \right) \partial_x G^\mathrm{c}_\gamma \left(x \mp \frac{|x|}{2} \right) \right| \leq C \| g \|_{L^\infty_{r,r}} e^{-c |\gamma| |x|}
	\end{align}
	for all $x \neq 0$. By the change of variables $z = |\gamma| x$, we have 
	\begin{align}
	\int_\R e^{-c |\gamma| |x|} \, dx \leq \frac{C}{|\gamma|}
	\end{align}
	for some constant $C > 0$ independent of $\gamma$, and so the boundary terms in \eqref{e: asymptotic resolvent integration by parts} are controlled in $L^1_{1,1} (\R)$ by $\frac{C}{|\gamma|}\| g \|_{L^\infty_{r,r}}$. For the remaining integral in \eqref{e: asymptotic resolvent integration by parts}, we note that $G_\gamma^\mathrm{c} (x-y)$ is smooth on the region of integration since $x \neq y$ there. Splitting into cases $x - y > 0$ and $x - y < 0$ corresponding to the two different formulas in \eqref{e: G c def} and arguing as above, we obtain an estimate
	\begin{align*}
	\left| \int_{|y| \leq \frac{|x|}{2}} \partial_y \partial_x G_\gamma^\mathrm{c} (x-y) G(y) \, dy \right| \leq C |\gamma| \int_{|y| \leq \frac{|x|}{2}} e^{- c |\gamma| |x-y|} |G(y) | \, dy.
	\end{align*}
	By Lemma \ref{l: antiderivative localization}, we then have 
	\begin{align*}
	|\gamma| \int_{|y| \leq \frac{|x|}{2} } e^{- c |\gamma| |x-y|} |G(y) | \, dy &\leq C |\gamma| \int_{|y| \leq \frac{|x|}{2}} e^{- c |\gamma| |x-y|} \langle y \rangle^{-r+1} \, dy \| g \|_{L^\infty_{r,r}} \\
	&\leq C |\gamma| e^{-c |\gamma| |x|/2} \int_{|y| \leq \frac{|x|}{2}} \langle y \rangle^{-r+1} \, dy \| g \|_{L^\infty_{r,r}} \\
	&\leq C |\gamma| e^{-c |\gamma| |x|/2} \|g \|_{L^\infty_{r,r}},
	\end{align*}
	where we have used the fact that $|x-y| \geq \frac{|x|}{2}$ for $|y| \leq \frac{|x|}{2}$, and that $\langle y \rangle^{-r+1}$ is integrable on $\R$ for $r > 2$. Hence we have 
	\begin{align*}
	\int_\R \langle x \rangle \left| \int_{|y| \leq \frac{|x|}{2}} \partial_y \partial_x G_\gamma^\mathrm{c} (x-y) G(y) \, dy \right| \, dx &\leq C \| g \|_{L^\infty_{r,r}} \int_\R |\gamma| \langle x \rangle e^{- |\gamma| |x|/2} \, dx 
	\leq \frac{C}{|\gamma|} \| g \|_{L^\infty_{r,r}},
	\end{align*}
	where the last estimate follows from again using the change of variables $z = |\gamma| x$ to estimate the remaining integral. Hence the first term in the decomposition of $\partial_x (G^\mathrm{c}_\gamma \ast g)$ in \eqref{e: asymptotic resolvent integral splitting} satisfies \eqref{e: asymptotic resolvent G c L11 estimate}. 
	
	For the second term in \eqref{e: asymptotic resolvent integral splitting}, for any fixed $x$ we have for almost every $y$ with $|y| > \frac{|x|}{2}$ 
	\begin{align*}
	|\partial_x G_\gamma^\mathrm{c} (x-y)| \leq e^{-c |\gamma| |x-y|} 
	\end{align*}
	for all $\gamma$ sufficiently small with $\Re \gamma \geq \frac{1}{2} |\Im \gamma|$. We therefore have 
	\begin{align*}
	\int_\R \langle x \rangle \left | \int_{|y| > \frac{|x|}{2} } \partial_x G_\gamma^\mathrm{c} (x-y) g(y) \, dy \right| \, dx &\leq C \| g \|_{L^\infty_{r,r}} \int_\R \int_{|y| > \frac{|x|}{2}} \langle x \rangle e^{-c |\gamma| |x-y|} \langle y \rangle^{-r} \, dy \, dx \\
	&\leq C \| g \|_{L^\infty_{r,r}} \int_\R \langle x \rangle^{-r+1} \int_{|y| \geq \frac{|x|}{2}} e^{- c |\gamma| |x-y|} \, dy \, dx \\
	&\leq C \| g \|_{L^\infty_{r,r}} \left( \int_\R \langle x \rangle^{-r+1} \, dx \right) \left(\int_\R e^{-c |\gamma| |z|} \, dz \right) \\
	&\leq \frac{C}{|\gamma|} \| g \|_{L^\infty_{r,r}},
	\end{align*}
	which proves \eqref{e: asymptotic resolvent G c L11 estimate}. The proof of \eqref{e: asymptotic resolvent G c Linf r estimate} is the same, simply replacing $L^1_{1,1}$ by $L^\infty_{r,r}$ norms. 
\end{proof}

\subsection{Estimates on the full resolvent} \label{s: full resolvent}
As in \cite{AveryScheel}, we use a far-field/core decomposition to transfer estimates from the asymptotic resolvent to the full resolvent $(\mcl - \gamma^2)^{-1}$. The main difference from \cite{AveryScheel} is that here we need to work in both $L^1$- and $L^\infty$-based spaces rather than simply in $L^2$-based spaces, as an interplay of these spaces is needed in order to handle some borderline cases in our estimates. Such a borderline case can be seen already in Proposition \ref{p: asymptotic resolvent derivative estimate}, where only in the space $L^1_{1,1} (\R)$ do we get the optimal $|\gamma|^{-1}$ estimate --- we cannot close the argument in $L^\infty_{2,2} (\R)$ due to the fact that $\langle x \rangle^{-1}$ is not integrable on $\R$, and for $r > 2$ we find a slightly worse estimate \eqref{e: asymptotic resolvent G c Linf r estimate} in $L^\infty_{r,r} (\R)$. 

In this section we turn to obtaining estimates on the resolvent $(\mcl - \gamma^2)^{-1}$ that translate into optimal decay estimates for the semigroup $e^{\mcl t}$. \hl{We note that since these estimates concern the full linearization about the front $q_*$, the estimates in this section rely on all of Hypotheses \ref{hyp: spreading speed} through \ref{hyp: resonance}}. The first estimate corresponds to the $t^{-3/2}$ decay of the semigroup when one gives up sufficient algebraic localization. 

\begin{prop}\label{p: full resolvent lipschitz estimate}
	There exist positive constants $C$ and $\delta$ and a bounded limiting operator $R_0 : L^1_1 (\R) \to W^{2m-1, \infty}_{-1} (\R)$ such that for any $g \in L^1_1(\R)$, we have 
	\begin{align}
	\| (\mcl - \gamma^2)^{-1} g - R_0 g \|_{W^{2m-1, \infty}_{-1}} \leq C |\gamma| \| g \|_{L^1_1}
	\end{align}
	for all $\gamma \in B(0, \delta)$ such that $\gamma^2$ is to the right of the essential spectrum of $\mcl$. 
\end{prop}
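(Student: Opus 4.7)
The plan is to follow the farfield/core decomposition from \cite{AveryScheel}, adapting it from the $L^2$-based setting used there to the $L^1$/$L^\infty$-based setting required by the estimate stated here. The result is a statement about Lipschitz dependence of the resolvent on $\gamma$ at the branch point, which is obtained by inverting an appropriately extended, $\gamma$-regular Fredholm operator.

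Concretely, I would first fix a smooth reference function $\phi \in C^\infty(\R)$ with $\phi(x) \equiv 1$ for $x \geq 1$ and $\phi(x) \equiv 0$ for $x \leq -1$; this function encodes the bounded asymptotic mode at $+\infty$ that is responsible for the $1/\gamma$ pole of the asymptotic resolvent kernel $G^\mathrm{c}_\gamma$. I would then seek solutions to $(\mcl - \gamma^2) u = g$ in the form $u = w + a\phi$, with $w$ in a function space $Y$ enforcing algebraic decay at $+\infty$ (e.g.\ $W^{2m,2}$ against a weight $\langle x \rangle^{s}$ for small $s > 0$) and $a \in \R$. Substituting yields the extended problem
\begin{equation*}
\mathcal{T}(\gamma)(w, a) := (\mcl - \gamma^2) w + a \,(\mcl - \gamma^2)\phi = g,
\end{equation*}
which we view as a bounded linear operator $Y \oplus \R \to L^1_1(\R)$. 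Note that $(\mcl - \gamma^2)\phi = \mcl\phi - \gamma^2 \phi$, where $\mcl\phi$ is exponentially localized because $\phi'$ is compactly supported and $\mcl - \mcl^+$ is multiplication by a function of order $q_*$, which is exponentially localized at $+\infty$ by Hypothesis \ref{hyp: front existence}; the $-\gamma^2\phi$ correction is the only piece not in $L^1_1$ at $\gamma \neq 0$, and it contributes the Lipschitz remainder.

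The key step is to show that $\mathcal{T}(0) : Y \oplus \R \to L^1_1(\R)$ is an isomorphism. Fredholm index zero follows from the structure of the asymptotic operators: $\mcl^-$ is invertible by Hypothesis \ref{hyp: stable on left}, and the one-dimensional extension by $\phi$ is designed to compensate for the loss of Fredholmness of $\mcl$ at $\gamma = 0$ caused by the simple pinched double root (Hypothesis \ref{hyp: spreading speed}(i)); this is essentially the content of the farfield/core construction in \cite{PoganScheel, AveryScheel} and uses the generic asymptotics \eqref{e: front asymptotics}. Injectivity reduces to ruling out nontrivial $(w, a)$ in the kernel: if $\mathcal{T}(0)(w, a) = 0$ then $u = w + a\phi$ is a bounded solution of $\mcl u = 0$, which is excluded by Hypothesis \ref{hyp: resonance} (no resonance and no unstable point spectrum), forcing $a = 0$ by inspecting behavior as $x \to +\infty$ and then $w = 0$.

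For $\gamma \in B(0,\delta)$ with $\gamma^2$ to the right of the essential spectrum, $\mathcal{T}(\gamma)$ differs from $\mathcal{T}(0)$ by a bounded perturbation of size $\mathrm{O}(|\gamma|)$ when acting between these spaces: the $\gamma^2$ term is obviously $\mathrm{O}(|\gamma|)$ small and the remaining resolvent dependence on $\gamma$ is controlled using the asymptotic resolvent estimates from Proposition \ref{p: asymptotic resolvent Lipschitz estimate}, which supply Lipschitz continuity in $\gamma$ on odd data (oddness being recoverable after the core extraction removes the bounded-at-infinity component through $a\phi$). A Neumann series argument then gives $\mathcal{T}(\gamma)^{-1} = \mathcal{T}(0)^{-1} + \mathrm{O}(|\gamma|)$ as operators $L^1_1 \to Y \oplus \R$. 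Defining $R_0 g = w_0 + a_0 \phi$ with $(w_0, a_0) = \mathcal{T}(0)^{-1} g$, the embedding $Y \oplus \R\cdot\phi \hookrightarrow W^{2m-1, \infty}_{-1}(\R)$ (where the weight $\rho_{-1}$ accommodates the bounded asymptote $a\phi$ at $+\infty$) delivers the claimed estimate.

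The hard part is the step verifying that $\mathcal{T}(0)$ is Fredholm of index zero in the $L^1/L^\infty$ framework and that the Lipschitz dependence of $\mathcal{T}(\gamma)$ in $\gamma$ passes to this non-Hilbert setting with the correct weights. In particular, the transfer of the asymptotic resolvent Lipschitz estimate, which in Proposition \ref{p: asymptotic resolvent Lipschitz estimate} requires oddness of the right-hand side, must be done carefully: one needs to show that the part of $g$ that is not absorbed into the core correction $a\phi$ effectively behaves like an odd function as seen by $\mcl^+$, using the strict stability of $\mcl^-$ on the left (Hypothesis \ref{hyp: stable on left}) as the functional-analytic replacement for the symmetry used in the previous proposition.
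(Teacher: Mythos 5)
Your proposal captures the high-level idea of a far-field/core decomposition borrowed from \cite{AveryScheel,PoganScheel}, and it correctly invokes Hypothesis~\ref{hyp: resonance} to rule out a kernel of the extended operator at $\gamma=0$. However, two structural choices break the argument before it gets off the ground, and a third step is left in a form that does not actually supply what is needed.

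First, the far-field ansatz must depend on $\gamma$. You take a fixed bounded profile $\phi$, so that $(\mcl-\gamma^2)\phi = \mcl\phi - \gamma^2\phi$. While $\mcl\phi$ is indeed exponentially localized (because $\mcl_+ 1 = d^+(0,-\eta_*)=0$ and $\mcl - \mcl_+$ has exponentially decaying coefficients), the term $-\gamma^2\phi$ is a bounded, non-integrable function for every $\gamma\neq 0$: it does not belong to $L^1_1(\R)$ or to any of the target spaces you consider. Consequently $\mathcal T(\gamma)$ does not map $Y\oplus\R$ into $L^1_1(\R)$ for $\gamma\neq 0$, and the comparison $\|\mathcal T(\gamma)-\mathcal T(0)\|=\mathrm{O}(|\gamma|)$ that your Neumann-series step requires is false (the norm is infinite). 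The paper avoids this entirely by choosing the $\gamma$-dependent far-field $\chi_+ e^{\nu^-(\gamma)\cdot}$: since $\nu^-(\gamma)$ is a root of the dispersion relation, $(\mcl_+ - \gamma^2)e^{\nu^-(\gamma)\cdot} = 0$ identically, so $(\mcl-\gamma^2)\chi_+ e^{\nu^-(\gamma)\cdot}$ reduces to commutator and coefficient-difference terms which are exponentially localized uniformly in $\gamma$. This is not a cosmetic difference; it is the mechanism by which the singular $1/\gamma$ behavior of the asymptotic resolvent is absorbed into the far-field ansatz rather than reappearing as an unlocalized residual.

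Second, your core space $Y$ carries an algebraic weight $\langle x\rangle^s$. At the branch point $\gamma=0$ the essential spectrum of $\mcl$ touches the origin, and algebraic weights do not move Fredholm borders. Hence $\mcl : Y \to L^1_1$ is \emph{not} Fredholm, and the Fredholm bordering lemma cannot be applied to conclude that $\mathcal T(0)$ has index $0$. The paper works in small exponentially weighted spaces $W^{2m,p}_{\mathrm{exp},-\eta,\eta}(\R)$, where $\mcl$ is Fredholm of index $-1$ (Lemma~\ref{l: full resolvent L fredholm properties}). But this immediately creates a new constraint: the right-hand side handed to the far-field/core solver must itself be exponentially localized. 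Your generic $g\in L^1_1$ is not, so even if you replaced $\phi$ with $\chi_+ e^{\nu^-(\gamma)\cdot}$ and $Y$ with an exponentially weighted space, the equation $\mathcal T(0)(w,a)=g$ would have no solution with $w$ in that space. This is precisely why the paper first peels off the tails by solving $(\mcl_- - \gamma^2)u^- = g_-$ (using Hypothesis~\ref{hyp: stable on left}) and $(\mcl_+ - \gamma^2)u^+ = g_+^{\mathrm{odd}}$ (using Proposition~\ref{p: asymptotic resolvent Lipschitz estimate}), and only then applies the far-field/core decomposition to the remaining, exponentially localized data $\tilde g(\gamma)$. This preliminary splitting is not an optional refinement; it is what makes the Fredholm step applicable.

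Finally, your treatment of the oddness restriction in Proposition~\ref{p: asymptotic resolvent Lipschitz estimate} is a placeholder rather than an argument: you remark that the data ``effectively behaves like an odd function as seen by $\mcl^+$'' with the strict stability of $\mcl^-$ as a substitute for symmetry, but this is not how the paper closes the issue. The paper explicitly solves the right asymptotic problem with the odd extension $g_+^{\mathrm{odd}}(x)=g_+(x)-g_+(-x)$ (which is legitimate because $g_+=\chi_+g$ is supported on $x\geq 2$), and then multiplies the resulting $u^+$ by $\chi_+$ so that the left portion of $u^+$ introduced by the odd extension never contributes to the reconstructed $u$. Hypothesis~\ref{hyp: stable on left} enters separately, as the reason the \emph{left} asymptotic problem is solvable without any smallness in $\gamma$. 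These two roles should not be conflated.

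In short, the high-level plan (bordered operator, kernel ruled out by Hypothesis~\ref{hyp: resonance}, Lipschitz perturbation in $\gamma$) is the right one, but the $\gamma$-independent far-field mode, the algebraically weighted core space, and the missing preliminary split are each fatal to the implementation you describe.
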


The next result contains estimates on derivatives which imply sharp decay estimates on $\partial_x e^{\mcl t}$ when giving up (essentially) no spatial localization. 

\begin{prop}\label{p: full resolvent derivative estimates}
	Let $r > 2$. There exist positive constants $C$ and $\delta$ such that for any $g \in L^\infty_r(\R)$, we have 
	\begin{align}
	\| \partial_x (\mcl - \gamma^2)^{-1} g \|_{L^1_1} \leq \frac{C}{|\gamma|} \| g \|_{L^\infty_r} \label{e: full resolvent L11 derivative estimate}
	\end{align}
	and 
	\begin{align}
	\| \partial_x (\mcl - \gamma^2)^{-1} g \|_{L^\infty_r} \leq \frac{C}{|\gamma|^{r-1}} \| g \|_{L^\infty_r} \label{e: full resolvent L inf r derivative estimate}
	\end{align}
	for all $\gamma \in B(0,\delta)$ such that $\Re \gamma \geq \frac{1}{2}|\Im \gamma|$. 
\end{prop}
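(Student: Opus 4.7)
The plan is to extend the asymptotic resolvent estimates from Proposition \ref{p: asymptotic resolvent derivative estimate} to the full resolvent via a far-field/core decomposition, following the scheme of \cite[Section 3]{AveryScheel} but adapted from $L^2$-based spaces to the algebraically weighted one-sided spaces $L^1_1$ and $L^\infty_r$. Given $g$ and $\gamma$ as in the statement, I would decompose a candidate solution $u = (\mcl-\gamma^2)^{-1} g$ as
\begin{align*}
u(x) = w(x) + (1-\chi(x))\, u^+(x),
\end{align*}
where $\chi$ is a smooth cutoff with $\chi\equiv 1$ on $x\leq 0$ and $\chi\equiv 0$ on $x\geq 1$, the far-field piece $u^+$ solves $(\mcl^+-\gamma^2)u^+ = \tilde g$ with $\tilde g$ the odd extension across $x=0$ of a suitable truncation of $g$, and $w$ is a core correction sought in an exponentially weighted Sobolev space $H^{2m}_{\mathrm{exp},\eta_-,\eta_+}(\R)$. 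Inserting this ansatz into $(\mcl-\gamma^2)u=g$ and matching yields a linear system for $(w, u^+)$ whose principal operator $\mathcal{M}(\gamma)$ depends analytically on $\gamma$.

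The next step is to verify that $\mathcal{M}(\gamma)$ is Fredholm of index $0$ and invertible uniformly for $\gamma$ in a small neighborhood of $0$. At $\gamma=0$, a nontrivial element of $\ker\mathcal{M}(0)$ would correspond either to an $L^2$ eigenfunction of $\mcl$ at $\lambda=0$ or to a bounded solution of $\mcl v=0$; both are precluded by Hypothesis \ref{hyp: resonance}. Robustness of Fredholm properties then gives uniform invertibility of $\mathcal{M}(\gamma)$, and in particular the core correction $w$ satisfies a $\gamma$-uniform bound
\begin{align*}
\|w\|_{H^{2m}_{\mathrm{exp},\eta_-,\eta_+}} \leq C\bigl(\|g\|_{L^\infty_r} + \|u^+\|_{\text{matching norm}}\bigr),
\end{align*}
with the matching norm controlled by the already-established bounds on $u^+$.

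With this setup in place, the estimates \eqref{e: full resolvent L11 derivative estimate} and \eqref{e: full resolvent L inf r derivative estimate} follow by combining three ingredients: Proposition \ref{p: asymptotic resolvent derivative estimate} applied to $\tilde g$, which gives $\|\partial_x u^+\|_{L^1_{1,1}}\leq C|\gamma|^{-1}\|\tilde g\|_{L^\infty_{r,r}}$ and $\|\partial_x u^+\|_{L^\infty_{r,r}}\leq C|\gamma|^{-(r-1)}\|\tilde g\|_{L^\infty_{r,r}}$; the bound $\|\tilde g\|_{L^\infty_{r,r}}\leq C\|g\|_{L^\infty_r}$ for the odd extension; and the observation that $(1-\chi)u^+$ is supported in $x\geq 0$, so the one-sided algebraic weights on the left-hand side are bounded by the symmetric ones on the right. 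The core piece $w$ is $\gamma$-uniformly bounded in an exponentially weighted space and therefore contributes only an $\mathrm{O}(1)$ correction that does not affect the singular $\gamma$ scaling.

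The main obstacle, in my view, is the adaptation of the core/far-field matching to these non-Hilbert weighted spaces, together with the use of the odd extension $\tilde g$. The odd extension is legitimate here because Hypothesis \ref{hyp: stable on left} provides strict stability in the wake, which makes the reflection effectively harmless on the length scales that matter for the resolvent near the branch point; this is the key place where the structure of the full linearization $\mcl$ (as opposed to $\mcl^+$) enters. A subsidiary technical point is ensuring that solving the matching system does not introduce a spurious $|\gamma|^{-1}$ factor in $w$: this requires checking that the leading-order matching matrix is nondegenerate, which again reduces to Hypothesis \ref{hyp: resonance}. Once these invertibility and extension facts are in hand, the $L^1_1$ and $L^\infty_r$ derivative bounds transfer directly from Proposition \ref{p: asymptotic resolvent derivative estimate}.
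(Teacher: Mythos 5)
Your proposal has the right overall shape --- odd extension, asymptotic resolvent, far-field/core decomposition, Fredholm structure pinned down by Hypothesis \ref{hyp: resonance} --- and these ingredients do match the paper's strategy at a high level. But there is a genuine gap in the way you close the argument: you claim that the core correction $w$ is $\gamma$-uniformly bounded in an exponentially weighted space and hence contributes only an $\mathrm{O}(1)$ term, so that the entire $|\gamma|^{-1}$ singularity comes from $u^+$. That is not consistent with the Fredholm structure you invoke, and it misses where half of the singular scaling actually comes from.

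The obstruction is this. Once you subtract $(1-\chi)u^+$ (and a wake piece $u^-$) from $u$, the residual data $\tilde g(\gamma)$ for the core equation is exponentially localized, but the operator $\mcl-\gamma^2$ acting between exponentially weighted spaces has Fredholm index $-1$ (Lemma \ref{l: full resolvent L fredholm properties}), not $0$. So the equation $(\mcl-\gamma^2)w = \tilde g(\gamma)$ generally has \emph{no} solution $w$ in the exponentially weighted space, and your ansatz $u = w + (1-\chi)u^+$ simply does not exist. To restore index $0$ one must enlarge the ansatz by one far-field degree of freedom, $u^c = w + a\,\chi_+\, e^{\nu^-(\gamma)\cdot}$, so that the bordered operator $D_{(w,a)}F$ has index $0$; triviality of its kernel then does follow from Hypothesis \ref{hyp: resonance} as you say. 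But the price is that the component $a(\gamma)\,\chi_+\, e^{\nu^-(\gamma)\cdot}$ is \emph{not} uniformly bounded in the relevant norms: while $|a(\gamma)|\leq C\|g\|_{L^\infty_r}$ is uniform, one has $\Re\nu^-(\gamma)\sim -c|\gamma|$, and a direct computation gives $\|\partial_x(\chi_+ e^{\nu^-(\gamma)\cdot})\|_{L^1_1}\sim C/|\gamma|$. So the far-field part of the core solution contributes the \emph{same} $|\gamma|^{-1}$ singularity as $u^+$ and must be estimated explicitly; it cannot be absorbed into an $\mathrm{O}(1)$ remainder. The truly $\gamma$-uniform part is only $T(\gamma)\tilde g(\gamma)$, and even that requires using the analyticity of the bordered solution operator from Corollary \ref{c: full resolvent ff core invertability}, not just continuity of Fredholm indices. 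Filling this gap is exactly the content of the proof in the paper.
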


In the remainder of this section, we prove Proposition \ref{p: full resolvent derivative estimates}. The essential ingredients of this proof are used to prove the $L^2$ analogue of Proposition \ref{p: full resolvent lipschitz estimate} in \cite{AveryScheel}, so we do not give the full details of the proof of Proposition \ref{p: full resolvent lipschitz estimate} here. As in \cite{AveryScheel}, we decompose our data $g$ into a left piece, a center piece, and a right piece. We use the left and right pieces as data for the asymptotic operators $\mcl_\pm$, and then solve the resulting equation on the center piece in exponentially weighted function spaces in which we recover Fredholm properties of $\mcl$. 

To this end, we let $(\chi_-, \chi_c, \chi_+)$ be a partition of unity on $\R$ such that 
\begin{align}
\chi_+ (x) = \begin{cases}
0, & x \leq 2, \\
1, & x \geq 3,
\end{cases}
\end{align}
$\chi_- (x) = \chi_+(-x)$, and $\chi_c=1-\chi_+-\chi_-$ compactly supported. Given $g \in L^1_1 (\R)$, we then write 
\begin{align}
g = \chi_- g + \chi_c g + \chi_+g =: g_- + g_c + g_+. 
\end{align}
We would like to decompose the solution $u$ to $(\mcl - \gamma^2) u = g$ in a similar way, but we need to refine this approach to take advantage of the fact that the spectrum of $\mcl_-$ is strictly in the left half plane creating a strong absorption effect on the left. For this, let $g^\mathrm{odd}_+ (x) = g_+ (x) - g_+ (-x)$ be the odd extension of $g_+$ and let $u^+$ solve 
\begin{align}
(\mcl_+ - \gamma^2)u^+ = g_+^\mathrm{odd}. 
\end{align}
We let $u^-$ solve 
\begin{align}
(\mcl_- - \gamma^2) u^- = g_-,
\end{align}
decompose the solution $u$ to $(\mcl - \gamma^2) u = g$ as $u = u^- + u^c + \chi_+ u^+$, such that $u^c$ solves 
\begin{align}
(\mcl - \gamma^2) u^c = \tilde{g} (\gamma), \label{e: full resolvent u c eqn}
\end{align}
with
\begin{align}
\tilde{g}(\gamma) = g_c + (\chi_+- \chi_+^2) g - [\mcl_+, \chi_+] u^+ + (\mcl_+ - \mcl) (\chi_+u^+) + (\mcl_- - \mcl) u^-, \label{e: full resolvent tilde f def}
\end{align}
where $[\mcl_+, \chi_+] = \mcl_+ (\chi_+ \cdot) - \chi_+ \mcl \cdot$ is the commutator. Note that $\mcl$ attains its limits exponentially quickly as $x \to \infty$, and the commutator $[\mcl_+, \chi_+]$ is a differential operator of order $2m-1$ with compactly supported coefficients since $\chi_+$ is constant outside the interval $[2, 3]$, and so $\tilde{g}(\gamma)$ is exponentially localized with a rate that is uniform in $\gamma$ for $\gamma$ small. In fact, the exponential localization of the coefficients of $u^+$ and $u^-$ in \eqref{e: full resolvent tilde f def} together with Proposition \ref{p: asymptotic resolvent Lipschitz estimate} and Hypothesis \ref{hyp: stable on left} (which implies that $0$ is not in the spectrum of $\mcl_-$)  allow us to conclude that $\tilde{g}(\gamma)$ is Lipschitz in $\gamma$ in exponentially localized spaces with small exponents. Recall the notation for exponentially weighted spaces $L^p_{\mathrm{exp}, \eta_-, \eta_+} (\R)$ introduced in Section \ref{s: overview}. 

\begin{lemma}\label{l: full resolvent tilde f localization}
	Let $r > 2$ and let $\eta > 0$ be small. There exist positive constants $C$ and $\delta$ such that for $\gamma \in B(0,\delta)$ with $\gamma^2$ to the right of the essential spectrum of $\mcl$, we have 
	\begin{align}
	\| \tilde{g} (\gamma) - \tilde{g}(0) \|_{L^1_{\mathrm{exp}, -\eta, \eta}} \leq C | \gamma| \| g \|_{L^1_1} \label{e: tilde f L1 estimate}
	\end{align}
	for any $g \in L^1_1 (\R)$, and 
	\begin{align}
	\| \tilde{g} (\gamma) - \tilde{g}(0) \|_{L^\infty_{\mathrm{exp}, -\eta, \eta}} \leq C |\gamma| \| g \|_{L^\infty_r} \label{e: tilde f Linfty estimate}
	\end{align}
	for any $g \in L^\infty_r (\R)$. 
\end{lemma}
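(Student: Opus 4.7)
The plan is to exploit the fact that in the expression \eqref{e: full resolvent tilde f def}, the terms $g_c$ and $(\chi_+-\chi_+^2)g$ are independent of $\gamma$ and drop out of $\tilde{g}(\gamma)-\tilde{g}(0)$, leaving
\begin{align*}
\tilde{g}(\gamma)-\tilde{g}(0) = -[\mcl_+,\chi_+](u^+(\gamma)-u^+(0)) + (\mcl_+-\mcl)\bigl(\chi_+(u^+(\gamma)-u^+(0))\bigr) + (\mcl_--\mcl)(u^-(\gamma)-u^-(0)).
\end{align*}
I will estimate the three pieces separately, in both target norms simultaneously. A key preliminary: since $g_+$ is supported on $x \geq 2$, one has $\|g_+^{\mathrm{odd}}\|_{L^1_{1,1}} \leq 2\|g\|_{L^1_1}$; when $g\in L^\infty_r$ with $r>2$, the estimate $\|g_+^{\mathrm{odd}}\|_{L^1_{1,1}} \leq C\|g\|_{L^\infty_r}$ holds because $\langle x\rangle^{1-r}$ is integrable. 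In both situations Proposition \ref{p: asymptotic resolvent Lipschitz estimate} then yields $\|u^+(\gamma)-u^+(0)\|_{W^{2m-1,\infty}_{-1,-1}} \leq C|\gamma|\|g\|_X$ with $X\in\{L^1_1,L^\infty_r\}$.

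For the first piece, $[\mcl_+,\chi_+]$ is a differential operator of order at most $2m-1$ whose coefficients are supported in $[2,3]$ (where $\chi_+'\neq 0$). Therefore $[\mcl_+,\chi_+](u^+(\gamma)-u^+(0))$ is compactly supported in $[2,3]$, and on this compact set every exponentially or algebraically weighted $L^1$ and $L^\infty$ norm is equivalent to the unweighted $W^{2m-1,\infty}$ norm, which is controlled by the $W^{2m-1,\infty}_{-1,-1}$ Lipschitz estimate from Step~1. For the second piece, one computes that on the support $x\geq 2$ of $\chi_+$, where $\omega=e^{\eta_* x}$, the difference $\mcl_+-\mcl$ reduces to the multiplication operator $f'(0)-f'(q_*(x))$, whose coefficient decays like $xe^{-\eta_* x}$ by Hypothesis \ref{hyp: front existence}. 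Choosing $0<\eta<\eta_*$, both
\begin{align*}
\int_2^\infty (1+x)\,e^{-(\eta_*-\eta)x}\,(1+x)\,dx\qquad \text{and}\qquad \sup_{x\geq 2}(1+x)^2 e^{-(\eta_*-\eta)x}
\end{align*}
are finite, so multiplying the $W^{2m-1,\infty}_{-1,-1}$ bound on $u^+(\gamma)-u^+(0)$ by this decaying coefficient produces a bound $C|\gamma|\|g\|_X$ in both $L^1_{\mathrm{exp},-\eta,\eta}$ and $L^\infty_{\mathrm{exp},-\eta,\eta}$.

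For the third piece, Hypothesis \ref{hyp: stable on left} guarantees that $\mcl_-$ is invertible and that $(\mcl_--\gamma^2)^{-1}$ is analytic in $\gamma$ near the origin. The resolvent identity $u^-(\gamma)-u^-(0) = \gamma^2(\mcl_--\gamma^2)^{-1}\mcl_-^{-1}g_-$ yields a gain of $|\gamma|^2$. Moreover, since $g_-$ is supported in $x\leq -2$ and the Green's function of $\mcl_--\gamma^2$ decays exponentially (uniformly for small $\gamma$) because $\Sigma^-$ is bounded away from $\gamma^2$, the function $u^-(\gamma)-u^-(0)$ decays exponentially as $x\to +\infty$ and is bounded together with derivatives uniformly on compacta, with $W^{2m,\infty}_{-r,-\eta'}\cap W^{2m,1}_{-1,-\eta'}$-norm bounded by $C|\gamma|^2\|g\|_X$ for some $\eta'>0$. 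On $x\leq -1$, where $\omega\equiv 1$, the coefficients of $\mcl_--\mcl$ reduce to $f'(1)-f'(q_*(x))$, which decays exponentially as $x\to -\infty$ by the hyperbolicity of $u=1$. Combining exponential decay of the coefficients on the left with exponential decay of $u^-(\gamma)-u^-(0)$ on the right, and choosing $\eta$ smaller than both rates and than $\eta'$, we conclude that the third piece is bounded by $C|\gamma|^2\|g\|_X \leq C|\gamma|\|g\|_X$ in both target norms. Summing the three estimates yields \eqref{e: tilde f L1 estimate} and \eqref{e: tilde f Linfty estimate}.

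The main obstacle is matching the exponential weight $\eta$ against three competing decay rates: the rate $\eta_*$ from the front asymptotics controlling the decay of the coefficients of $\mcl_+-\mcl$ on the right, the exponential rate of $q_*(x)-1$ as $x\to -\infty$ controlling $\mcl_--\mcl$ on the left, and the uniform exponential rate of the Green's function of $\mcl_--\gamma^2$ for small $\gamma$. Taking $\eta$ strictly smaller than the minimum of these rates gives enough room to absorb the mild algebraic growth of $u^+(\gamma)-u^+(0)$ permitted by Proposition \ref{p: asymptotic resolvent Lipschitz estimate} and still land in the exponentially weighted target space.
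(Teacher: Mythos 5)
Your proof is correct and follows the same strategy the paper sketches in the discussion immediately preceding the lemma (exponential localization of the coefficients of $[\mcl_+,\chi_+]$ and $\mcl_\pm-\mcl$, the Lipschitz estimate of Proposition~\ref{p: asymptotic resolvent Lipschitz estimate}, and invertibility of $\mcl_-$ from Hypothesis~\ref{hyp: stable on left}). One small notational remark: the hybrid space ``$W^{2m,\infty}_{-r,-\eta'}\cap W^{2m,1}_{-1,-\eta'}$'' you invoke for $u^-(\gamma)-u^-(0)$ mixes algebraic and exponential weight subscripts in a way the paper's conventions do not support; what you actually use (and what suffices) is that $u^-(\gamma)-u^-(0)$ lies in $W^{2m,p}(\R)$ with norm $O(|\gamma|^2)\|g\|_X$ and, for $x\geq 0$, decays exponentially at a $\gamma$-uniform rate inherited from the Green's function of $\mcl_-$. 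With that rephrasing the argument is airtight.
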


With exponential localization of $\tilde{f}(\gamma)$ in hand, we solve \eqref{e: full resolvent u c eqn} by making the far-field/core ansatz 
\begin{align*}
u^c (x) = w(x) + a \chi_+ (x) e^{\nu^- (\gamma) x},
\end{align*}
where $a \in \C$ and we will require $w$ (the \textit{core} piece of the solution) to be exponentially localized. Inserting this ansatz into \eqref{e: full resolvent u c eqn} results in an equation
\begin{align}
F (w, a; \gamma) = \tilde{g}(\gamma) \label{e: full resolvent ff core eqn}
\end{align}
where 
\begin{align}
F(w, a; \gamma) = \mcl w + a \mcl (\chi_+ e^{\nu^- (\gamma) \cdot}) - \gamma^2 (w + a \chi_+ e^{\nu^- (\gamma) \cdot}). \label{e: full resolvent F def}
\end{align}
We will solve \eqref{e: full resolvent ff core eqn} by taking advantage of Fredholm properties of $F$ on exponentially weighted function spaces. First we state relevant Fredholm properties of $\mcl$ which follow readily from Morse index calculations \cite{FiedlerScheel, KapitulaPromislow}. Throughout, for $\eta > 0$ we let $(X_\eta, Y_\eta)$ denote either pair of spaces \begin{align}
X_\eta = L^1_{\mathrm{exp}, -\eta, \eta} (\R), \, Y_\eta = W^{2m, 1}_{\mathrm{exp}, -\eta, \eta} (\R) \quad \text{ or } \quad X_\eta = L^\infty_{\mathrm{exp}, -\eta, \eta} (\R), \, Y_\eta = W^{2m, \infty}_{\mathrm{exp}, -\eta, \eta} (\R). \label{e: full resolvent X eta Y eta}
\end{align}
\begin{lemma}\label{l: full resolvent L fredholm properties}
	For $\eta > 0$ sufficiently small, the operator $\mcl : Y_\eta \to X_\eta$ is Fredholm with index $-1$ for either pair of spaces $(X_\eta, Y_\eta)$ defined in \eqref{e: full resolvent X eta Y eta}.  
\end{lemma}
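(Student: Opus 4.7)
The plan is to realize $\mcl$ as an asymptotically autonomous first-order system and apply the standard far-field Fredholm framework already invoked in \cite{AveryScheel, PoganScheel}. Writing $\mcl u = g$ for $U = (u, \partial_x u, \dots, \partial_x^{2m-1} u)^\top$ produces a system $U_x = M(x) U + F$, where $M(x)$ approaches constant matrices $M_+$ and $M_-$ corresponding to the asymptotic operators $\mcl^+$ at $+\infty$ (see \eqref{e: Lplus def}) and $\mathcal{A}^- = \mathcal{P}(\partial_x) + c_* \partial_x + f'(1)$ at $-\infty$, since $\omega \equiv 1$ there. Posing $\mcl$ on $X_\eta$ is equivalent to working with the conjugated operator $\omega_{-\eta, \eta} \mcl \omega_{-\eta, \eta}^{-1}$ on unweighted $L^p$, whose asymptotic matrices $M_\pm(\eta)$ have spatial eigenvalues given by the roots of $d^+(0, \nu - \eta_* - \eta) = 0$ and $d^-(0, \nu + \eta) = 0$, respectively. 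By Palmer's theorem and its far-field/core generalizations, $\mcl: Y_\eta \to X_\eta$ is Fredholm precisely when both $M_\pm(\eta)$ are hyperbolic, with index $\dim E^s(M_+(\eta)) - \dim E^s(M_-(\eta))$; the framework is insensitive to the choice of $L^1$- versus $L^\infty$-based norms.

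Next I would verify hyperbolicity and count Morse indices on each side. At $+\infty$, Hypothesis \ref{hyp: spreading speed}(i) produces a double spatial eigenvalue of $M_+(\eta)$ at $\nu = \eta$, strictly unstable for $\eta > 0$. Hypotheses \ref{hyp: spreading speed}(ii)--(iii), combined with the dimensional analysis in the remark following Hypothesis \ref{hyp: front existence}, imply that the remaining $2m-2$ roots of $d^+(0, \mu) = 0$ split as $m-1$ with $\Re \mu < -\eta_*$ and $m-1$ with $\Re \mu > -\eta_*$, none lying on $\Re \mu = -\eta_*$. After the weight shift by $\eta_* + \eta$ with $\eta$ small, these $2m-2$ eigenvalues remain off the imaginary axis, yielding $\dim E^s(M_+(\eta)) = m - 1$ and $\dim E^u(M_+(\eta)) = m+1$. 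At $-\infty$, Hypothesis \ref{hyp: stable on left} gives $d^-(0, ik) \neq 0$ for all $k \in \R$, so $M_-$ is already hyperbolic at $\lambda = 0$ and a small weight preserves this. To count, parabolicity ($(-1)^m p_{2m} < 0$) gives $m$ stable and $m$ unstable roots of $d^-(\lambda, \nu) = 0$ for large real $\lambda > 0$, and Hypothesis \ref{hyp: stable on left} rules out any crossings of the imaginary axis as $\lambda$ is decreased continuously along $\R_+$ to $0$, so $\dim E^s(M_-(\eta)) = m$ for $\eta$ small.

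Combining the counts yields $\mathrm{ind}(\mcl) = (m-1) - m = -1$, as claimed. The argument transfers identically to both the $L^1$- and $L^\infty$-based settings because the far-field/core construction reduces Fredholm analysis to exponential dichotomies plus a compactly supported matching problem, neither of which depends on the underlying $L^p$ scale. The main obstacle in executing this plan is the Morse index bookkeeping: at $+\infty$ one must carefully combine the ODE count from the remark after Hypothesis \ref{hyp: front existence} with the action of the weight on the double root pinned at $\nu = 0$, and at $-\infty$ one must rule out spatial eigenvalue crossings along $\R_+$, for which the combination of Hypothesis \ref{hyp: stable on left} with ellipticity of $\mathcal{P}$ is essential. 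Both steps are standard but demand care in tracking signs and ensuring no hidden crossings occur along the homotopy in $\lambda$.
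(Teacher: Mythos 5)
Your proof is correct and carries out precisely the Morse index calculation the paper defers to \cite{FiedlerScheel, KapitulaPromislow}: writing $\mcl$ as an asymptotically autonomous first-order system, invoking Palmer-type Fredholm theory, and counting spatial eigenvalues at $\pm\infty$ after the weight shift, which is exactly the intended argument. The bookkeeping at $+\infty$ (using the remark after Hypothesis \ref{hyp: front existence} to locate the $m-1$ strictly stable roots, the double root moving to $\nu = \eta > 0$, and the $m-1$ strictly unstable roots) and at $-\infty$ (hyperbolicity from Hypothesis \ref{hyp: stable on left} with the $m/m$ split by parabolicity) is accurate, giving index $(m-1) - m = -1$.
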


\begin{lemma}
	For $\eta > 0$ sufficiently small and for either pair of spaces $(X_\eta, Y_\eta)$ defined in \eqref{e: full resolvent X eta Y eta}, there exists a $\delta > 0$ such that $F : Y_\eta \times \C \times B(0,\delta) \to X_\eta$ is well defined and the mapping 
	\begin{align}
	\gamma \mapsto F(\cdot, \cdot; \gamma) : B(0, \delta) \to \mathcal{B} (Y_\eta \times \C, X_\eta)
	\end{align}
	is analytic in $\gamma$. 
\end{lemma}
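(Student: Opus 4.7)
The plan is to verify the two claims separately: that $F$ maps into $X_\eta$ and that the $\gamma$-dependence is analytic in operator norm. Splitting $F(w,a;\gamma) = (\mcl - \gamma^2) w + a (\mcl - \gamma^2)(\chi_+ e^{\nu^-(\gamma)\cdot})$, the first summand is straightforward: $\mcl$ is a differential operator of order $2m$ whose coefficients are smooth and bounded (converging exponentially to the constant coefficients of $\mcl^\pm$ at $\pm\infty$), so $\mcl: Y_\eta \to X_\eta$ is bounded uniformly in $\gamma$, and the $\gamma$-dependence reduces to the polynomial $-\gamma^2$, which is manifestly analytic.

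The heart of the argument concerns $(\mcl - \gamma^2)(\chi_+ e^{\nu^-(\gamma) x})$. The crucial identity exploits that $\nu^-(\gamma)$ is by construction a root of $d^+(\gamma^2, \nu - \eta_*) = 0$, so $\mcl^+ e^{\nu^-(\gamma) x} = \gamma^2 e^{\nu^-(\gamma) x}$. I would decompose
\begin{align*}
(\mcl - \gamma^2)(\chi_+ e^{\nu^-(\gamma) x}) = [\mcl^+, \chi_+] e^{\nu^-(\gamma) x} + (\mcl - \mcl^+)(\chi_+ e^{\nu^-(\gamma) x}),
\end{align*}
in which the entire $\gamma^2$ term is absorbed into the cancellation $\mcl^+ e^{\nu^-(\gamma)x} - \gamma^2 e^{\nu^-(\gamma)x}=0$. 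The commutator $[\mcl^+, \chi_+]$ is a differential operator of order at most $2m-1$ whose coefficients are compactly supported in $[2,3]$, so the first term is smooth and compactly supported, hence lies in $X_\eta$ regardless of $\gamma$. For the second term, on $x \geq 1$ the weighted operator $\mcl - \mcl^+$ reduces to multiplication by $f'(q_*(x)) - f'(0)$, which by Hypothesis \ref{hyp: front existence} decays like $x e^{-\eta_* x}$; provided $\eta < \eta_*$ and $\delta$ is chosen so that $|\Re \nu^-(\gamma)| < \eta_* - \eta$ for all $\gamma \in B(0,\delta)$, the product $(f'(q_*) - f'(0))\chi_+ e^{\nu^-(\gamma) x}$ decays at rate strictly greater than $\eta$ and so lies in $X_\eta$.

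For analyticity, Hypothesis \ref{hyp: spreading speed}(i) and the expansion \eqref{e: nu pm expansion} imply that $\gamma \mapsto \nu^-(\gamma)$ is analytic on $B(0,\delta)$ as a simple root of an analytic family. Consequently $\gamma \mapsto e^{\nu^-(\gamma)\cdot}$ is analytic with values in a suitable exponentially weighted space dominating its growth on the support of $\chi_+$, via the convergent power series representation of the exponential. Multiplication by $\chi_+$, application of the commutator, and multiplication by the exponentially localized coefficients of $\mcl - \mcl^+$ are all bounded linear operations independent of $\gamma$, so compositions yield analyticity of $\gamma \mapsto F(\cdot, \cdot; \gamma) \in \mathcal{B}(Y_\eta \times \C, X_\eta)$. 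The main delicacy, though not deep, is the quantitative choice of parameters: $\eta$ must be smaller than the decay rate $\eta_*$ governing $\mcl - \mcl^+$, and $\delta$ small enough that $|\nu^-(\gamma)|$ does not overwhelm the available weight margin $\eta_* - \eta$. Once these are fixed, the proof is a combination of the algebraic cancellation built into the dispersion relation, standard mapping properties of elliptic operators with bounded coefficients, and analyticity of a simple spatial eigenvalue.
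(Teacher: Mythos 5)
Your proof is correct and follows essentially the same route as the paper: exploit the cancellation $(\mcl^+ - \gamma^2)e^{\nu^-(\gamma)\cdot} = 0$ to reduce the far-field contribution to a commutator term plus a $(\mcl - \mcl^+)$-term, observe that both are exponentially localized uniformly in $\gamma$ (the commutator by compact support of $\chi_+'$, the other via the decay of $f'(q_*) - f'(0)$ from Hypothesis \ref{hyp: front existence}), and conclude analyticity from that of $\nu^-(\gamma)$ and of $\gamma \mapsto e^{\nu^-(\gamma)\cdot}$ in a space tolerating small exponential growth. Your rearrangement $[\mcl^+,\chi_+]e^{\nu^-(\gamma)\cdot} + (\mcl - \mcl^+)(\chi_+ e^{\nu^-(\gamma)\cdot})$ is algebraically equivalent to the paper's $\chi_+(\mcl-\mcl^+)e^{\nu^-(\gamma)\cdot} + [\mcl,\chi_+]e^{\nu^-(\gamma)\cdot}$, and the quantitative condition $\eta < \eta_*$, $|\Re\nu^-(\gamma)| < \eta_*-\eta$ matches the paper's requirement that $\eta$ be small and $\delta$ small. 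One minor imprecision: calling $\nu^-(\gamma)$ a ``simple root of an analytic family'' glosses over the fact that at $\gamma=0$ the roots $\nu^\pm$ collide in a Jordan block; the analyticity in $\gamma$ really comes from the fact that the substitution $\lambda=\gamma^2$ unfolds the branch point (the paper invokes the Newton polygon), not from simplicity of the root. The conclusion is nonetheless correct.
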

\begin{proof}
	Exploiting the fact that $(\mcl - \gamma^2) e^{\nu^-(\gamma) \cdot} = 0$ since $\nu^-(\gamma)$ is a root of the dispersion relation, we can rewrite $F$ as 
	\begin{align*}
	F(w, a; \gamma) = (\mcl - \gamma^2) w + a \left[ \chi_+ (\mcl - \mcl_+) e^{\nu^- (\gamma) \cdot} + [\mcl, \chi_+] e^{\nu^-(\gamma) \cdot} \right] 
	\end{align*}
	The coefficients of the operators $(\mcl - \mcl_+)$ and $[\mcl, \chi_+]$ are exponentially localized in space with a rate that is uniform in $\gamma$ for $\gamma$ small, which implies that $F$ is well-defined in these spaces for $\eta$ sufficiently small, since this exponential localization is enough to absorb any small exponential growth of $e^{\nu^-(\gamma) \cdot}$. Since $\nu^-(\gamma)$ solves 
	\begin{align*}
	d^+(\gamma^2, \nu^-(\gamma) - \eta_*) = 0,
	\end{align*}
	which is a polynomial in $\nu^-(\gamma)$, one can use the Newton polygon to show that $\gamma \mapsto \nu^-(\gamma)$ is analytic in a neighborhood of $\gamma = 0$. Analyticity of $F$ in $\gamma$ then follows from the uniform localization of the coefficients of $(\mcl - \mcl_+)$ and $[\mcl, \chi_+]$ together with the analyticity of 
	\begin{align}
	\gamma \mapsto e^{\nu^- (\gamma \cdot)} : B(0,\delta) \to Y_{-\eta}
	\end{align}
	for $Y_{-\eta} = W^{2m, \infty}_{\mathrm{exp}, \eta, -\eta} (\R)$ or $W^{2m, 1}_{\mathrm{exp}, \eta, -\eta} (\R)$. The fact that this map is analytic readily follows from the fact that $\nu^-(\gamma)$ is analytic and has the expansion \eqref{e: nu pm expansion}, along with pointwise analyticity of the exponential function. 
\end{proof}

\begin{corollary}\label{c: full resolvent ff core invertability}
	For $\eta > 0$ sufficiently small and for either pair of spaces $(X_\eta, Y_\eta)$ in \eqref{e: full resolvent X eta Y eta}, there exists a $\delta> 0$ such that for each $\gamma \in B(0,\delta)$, the map 
	\begin{align}
	(w,a) \mapsto F(w, a; \gamma) : Y_\eta \times \C \to X_\eta 
	\end{align}
	is invertible. We denote the solution to $F(w, a; \gamma) = \tilde{g}$ by 
	\begin{align}
	(w,a) = (T(\gamma) \tilde{g}, A(\gamma) \tilde{g}), 
	\end{align}
    with analytic maps
    \begin{align}
	\gamma \mapsto (T(\gamma),A(\gamma))  : B(0, \delta) \to \mathcal{B} (X_\eta, Y_\eta) \times  \mathcal{B} (X_\eta, \C).
	\end{align}
\end{corollary}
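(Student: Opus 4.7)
The plan is to establish invertibility first at $\gamma = 0$ and then extend to a small disk $B(0,\delta)$ by analytic perturbation. At $\gamma = 0$, since $\nu^-(0) = 0$ by \eqref{e: nu pm expansion}, the ansatz collapses and
\[
F(w, a; 0) = \mcl w + a\, \mcl \chi_+ .
\]
One first checks that $\mcl \chi_+ \in X_\eta$: on $x \geq 3$ one has $\mcl^+ \cdot 1 = d^+(0,-\eta_*) = 0$ by Hypothesis~\ref{hyp: spreading speed}(i), so $\mcl^+ \chi_+$ has compact support, while $(\mcl - \mcl^+) \chi_+$ decays exponentially at $+\infty$ by the front asymptotics, at a rate exceeding $\eta$ for $\eta$ sufficiently small. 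Hence the map $a \mapsto a\,\mcl\chi_+$ is a bounded rank-one operator $\C \to X_\eta$, and combining Lemma \ref{l: full resolvent L fredholm properties} with standard Fredholm perturbation theory, $F(\cdot,\cdot;0) : Y_\eta \oplus \C \to X_\eta$ is Fredholm of index $-1 + 1 = 0$.

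Next, I would establish that the kernel of $F(\cdot,\cdot;0)$ is trivial, which combined with the index calculation yields invertibility. Suppose $(w,a) \in Y_\eta \oplus \C$ satisfies $\mcl w + a\,\mcl\chi_+ = 0$, so that $u := w + a\chi_+$ is a solution of $\mcl u = 0$. Since $w$ decays exponentially and $\chi_+$ is bounded, $u$ is a bounded solution. Hypothesis \ref{hyp: resonance} rules out bounded solutions of $\mcl u = 0$, so $u \equiv 0$ and hence $w = -a\chi_+$. Because $\chi_+ \equiv 1$ for $x \geq 3$ while $w \in Y_\eta$ decays exponentially at $+\infty$, this forces $a = 0$ and $w = 0$.

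Having invertibility at $\gamma = 0$, extension to a neighborhood follows from analytic perturbation. Using the preceding lemma, I would write
\[
F(\cdot,\cdot;\gamma) = F(\cdot,\cdot;0)\,\bigl( I + F(\cdot,\cdot;0)^{-1}\bigl[F(\cdot,\cdot;\gamma) - F(\cdot,\cdot;0)\bigr]\bigr),
\]
where the bracketed perturbation is an analytic family of bounded operators on $Y_\eta \oplus \C$ vanishing at $\gamma = 0$. A convergent Neumann series then yields invertibility of $F(\cdot,\cdot;\gamma)$ for $\gamma \in B(0,\delta)$, $\delta$ small, together with analyticity of the inverse. Defining $(T(\gamma)\tilde g, A(\gamma)\tilde g)$ as the components of $F(\cdot,\cdot;\gamma)^{-1}\tilde g$ gives the claimed analytic maps.

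The main obstacle is the kernel triviality step at $\gamma = 0$: it is precisely the no-resonance part of Hypothesis \ref{hyp: resonance} that separates the pulled setting treated here from the pushed or resonant cases where a bounded solution would obstruct the far-field/core decomposition at $\gamma = 0$. The index arithmetic and the analytic perturbation argument are essentially routine once this step is secured.
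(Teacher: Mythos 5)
Your proof is correct and follows essentially the same route as the paper: Fredholm index $0$ for the bordered operator via Lemma~\ref{l: full resolvent L fredholm properties}, trivial kernel at $\gamma=0$ from the no-resonance part of Hypothesis~\ref{hyp: resonance}, and analytic continuation in $\gamma$. The only cosmetic differences are that you verify $\mcl\chi_+ \in X_\eta$ by hand (the paper delegates well-definedness to the preceding lemma) and you close via a Neumann series where the paper cites the analytic Fredholm theorem; these are interchangeable here.
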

\begin{proof}
	Observe that $F$ is linear in both $w$ and $a$. By Lemma \ref{l: full resolvent L fredholm properties} and continuity of the Fredholm index, $D_w F = \mcl - \gamma^2$ is Fredholm with index $-1$ for $\gamma$ sufficiently small. Therefore, the joint linearization $D_{(w,a)} F$ has Fredholm index 0 by the Fredholm bordering lemma. Moreover, the kernel of $F(\cdot, \cdot; 0)$ is trivial, since a nontrivial kernel would imply there exists $(w,a) \in Y_\eta \times \C$ such that $\mcl (w + a \chi_+) = 0$, i.e. there would be a bounded solution to $\mcl u = 0$, which contradicts Hypothesis \ref{hyp: resonance}. The result then follows from the analytic Fredholm theorem.    
\end{proof}

The proof of Proposition \ref{p: full resolvent lipschitz estimate} is similar to the proof of \cite[Proposition 3.1]{AveryScheel} --- the only additional ingredient is the use of the Sobolev embedding $\| g \|_{L^\infty} \leq C \| g \|_{W^1_1}$ to account for the interplay of $L^1$- and $L^\infty$-based spaces, so we only give the details of the proof of Proposition \ref{p: full resolvent derivative estimates}.

\begin{proof}[Proof of Proposition \ref{p: full resolvent derivative estimates}]
	The estimates for $\chi_+ u^+$ follow from Proposition \ref{p: asymptotic resolvent derivative estimate}, and the estimates for $u^-$ follow from the fact that $0$ is in the resolvent set of $\mcl^-$, so we only need to prove the estimate for $u^c$. We use Corollary \ref{c: full resolvent ff core invertability} to write $u^c$ as 
	\begin{align}
 	u^c (\gamma) = T(\gamma) \tilde{g}(\gamma) + A(\gamma) \tilde{g}(\gamma) \chi_+ e^{\nu^-(\gamma) \cdot}, \label{e: full resolvent uc ff core formula}
 	\end{align}
 	where $T(\gamma) : L^1_{\mathrm{exp}, -\eta, \eta} (\R) \to W^{2m,1}_{\mathrm{exp}, -\eta, \eta} (\R)$ and $A(\gamma) : L^1_{\mathrm{exp}, -\eta, \eta} (\R) \to \C$ are analytic in $\gamma$ in a neighborhood of the origin for $\eta > 0$ small, fixed. We use Corollary \ref{c: full resolvent ff core invertability} to estimate
	\begin{align*}
	\| \partial_x T(\gamma) \tilde{g}(\gamma) \|_{L^1_1} \leq C \| T(\gamma) \tilde{g}(\gamma) \|_{W^{1, 1}_{\mathrm{exp}, -\eta, \eta}} \leq C \| \tilde{g}(\gamma) \|_{L^1_{\mathrm{exp}, -\eta, \eta}} \leq C \| g \|_{L^1_1} \leq C \| g \|_{L^\infty_r}.
	\end{align*}
	For $\gamma$ small with $\Re \gamma \geq  \frac{1}{2}| \Im \gamma|$, $\gamma^2$ is in particular to the right of the essential spectrum of $\mcl$, so that by Lemma \ref{l: full resolvent tilde f localization} we have 
	\begin{align}
	\| \tilde{g}(\gamma) \|_{L^1_{\mathrm{exp}, -\eta, \eta}} \leq C \| g \|_{L^1_1} \leq C \| g \|_{L^\infty_r},
	\end{align}
	using also the continuous embedding $L^\infty_r (\R) \subset L^1_1(\R)$ for $r > 2$. Hence the core term satisfies an even better estimate than \eqref{e: full resolvent L11 derivative estimate}, namely, 
	\begin{align}
		\| \partial_x T(\gamma) \tilde{g}(\gamma) \|_{L^1_1} \leq C \| g \|_{L^\infty_r}. 
	\end{align}
	For the far-field terms, we again use Corollary \ref{c: full resolvent ff core invertability} and Lemma \ref{l: full resolvent tilde f localization} to estimate
	\begin{align*}
	\| \partial_x [ A(\gamma) \tilde{g}(\gamma) \chi_+ e^{\nu^-(\gamma) \cdot}] \|_{L^1_1} &\leq C | A(\gamma) \tilde{g}(\gamma) | \| \partial_x (\chi_+ e^{\nu^-(\gamma) \cdot}) \|_{L^1_1} \\
	&\leq C \| g \|_{L^1_1} \| \partial_x (\chi_+ e^{\nu^-(\gamma) \cdot}) \|_{L^1_1} \\
	&\leq C \| g \|_{L^\infty_r} \left( \| \chi_+' e^{\nu^-(\gamma) \cdot}) \|_{L^1_1} + |\nu^-(\gamma)| \| \chi_+ e^{\nu^-(\gamma) \cdot} \|_{L^1_1} \right).
	\end{align*}
	We recall that $\chi_+'$ is supported on the interval $[2,3]$, and that for $\gamma $ small with $\Re \gamma \geq \frac{1}{2} |\Im \gamma|$, we have $\Re \nu^-(\gamma) \leq -c |\gamma|$ for some constant $c > 0$. Hence we have 
	\begin{align*}
	\| \chi_+' e^{\nu^-(\gamma) \cdot} \|_{L^1_1} \leq C \int_2^3 \langle x \rangle e^{-c |\gamma| x} \, dx \leq C \int_2^3 x e^{- c |\gamma| x} \, dx = \frac{C}{|\gamma|^2} \int_{2 |\gamma|}^{3 |\gamma|} z e^{-cz} \, dz 
	\end{align*}
	using the change of variables $z = |\gamma| x$. By the fundamental theorem of calculus, the remaining integral is $\mathrm{O}(\gamma)$ for $\gamma$ small, so that 
	\begin{align}
	\| \chi_+' e^{\nu^-(\gamma) \cdot} \|_{L^1_1} \leq \frac{C}{|\gamma|}. 
	\end{align}
	For the other term, we similarly obtain
	\begin{align*}
	| \nu^-(\gamma) | \| \chi_+ e^{\nu^-(\gamma) \cdot} \|_{L^1_1} \leq C |\gamma| \int_2^\infty e^{- c |\gamma| x} \, dx \leq C \leq \frac{C}{|\gamma|},
	\end{align*}
	so that 
	\begin{align*}
	\| \partial_x [ A(\gamma) \tilde{g}(\gamma) \chi_+ e^{\nu^-(\gamma) \cdot}] \|_{L^1_1} \leq \frac{C}{|\gamma|} \| g \|_{L^\infty_r} 
	\end{align*}
	for $\gamma$ small with $\Re \gamma \geq \frac{1}{2} |\Im \gamma|$, which completes the proof of \eqref{e: full resolvent L11 derivative estimate}. The proof of \eqref{e: full resolvent L inf r derivative estimate} is similar. 
\end{proof}

\section{Linear decay estimates}\label{s: linear estimates}

We translate the resolvent estimates of Section \ref{s: resolvent estimates} into decay estimates on the semigroup $e^{\mcl t}$. We only sketch proofs, which follow very closely the analogous results in \cite{AveryScheel}, and point out modifications. \hl{We note that the estimates in this section rely on all of Hypotheses \ref{hyp: spreading speed} through \ref{hyp: resonance}, as they depend on the full resolvent estimates of Section \ref{s: full resolvent}.}

\begin{lemma}
   For each $1 \leq p \leq \infty$, the operator $\mcl : W^{2m, p} (\R) \subseteq L^p(\R) \to L^p(\R)$ is sectorial. 
\end{lemma}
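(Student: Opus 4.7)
The plan is to verify the standard sectoriality criterion: exhibit $\omega \in \R$, $\theta \in (\pi/2, \pi)$, and $M>0$ such that the sector $S_{\omega,\theta} = \{\lambda \neq \omega : |\arg(\lambda-\omega)| < \theta\}$ lies in the resolvent set of $\mcl$ and $\|(\mcl-\lambda)^{-1}\|_{\mathcal{B}(L^p)} \leq M/|\lambda-\omega|$ there. I would first decompose $\mcl = \mathcal{L}_0 + Q$, where $\mathcal{L}_0 = p_{2m} \partial_x^{2m}$ is the constant-coefficient principal part and $Q$ is a differential operator of order at most $2m-1$ with smooth bounded coefficients. These coefficients collect the lower-order terms of $\mathcal{P}(\partial_x)$, the variable coefficients produced by conjugating with the exponential weight (whose logarithmic derivative $\omega'/\omega$ is smooth and bounded), and the bounded multiplier $f'(q_*)$.

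For the principal part, I would use Fourier analysis. Its symbol $a_0(\xi) = p_{2m}(i\xi)^{2m} = -|p_{2m}|\,\xi^{2m}$ is real and non-positive, thanks to the ellipticity sign condition $(-1)^m p_{2m} < 0$. Hence for any $\theta < \pi$ there is a constant $c_\theta > 0$ with $|a_0(\xi)-\lambda| \geq c_\theta(|\lambda| + |\xi|^{2m})$ uniformly for $|\arg\lambda| < \theta$. For $1 < p < \infty$ this estimate together with Mikhlin's multiplier theorem, applied to the symbol $(a_0(\xi)-\lambda)^{-1}$ and its derivatives, yields the uniform bound $\|(\mathcal{L}_0 - \lambda)^{-1}\|_{L^p \to L^p} \leq M/|\lambda|$. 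For the endpoint cases $p=1$ and $p=\infty$, I would instead estimate the kernel $K_\lambda = \mathcal{F}^{-1}[(a_0(\xi)-\lambda)^{-1}]$ directly: the rescaling $\xi = |\lambda|^{1/(2m)}\eta$ produces $K_\lambda(x) = |\lambda|^{1/(2m)-1} K_1(|\lambda|^{1/(2m)} x)$ for a fixed Schwartz function $K_1$, so $\|K_\lambda\|_{L^1(\R)} \leq C/|\lambda|$, and Young's inequality yields the same resolvent bound on $L^1$ and $L^\infty$.

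To incorporate the lower-order perturbation $Q$, I would apply the interpolation inequality $\|u\|_{W^{k,p}} \leq \varepsilon \|u\|_{W^{2m,p}} + C_\varepsilon \|u\|_{L^p}$, valid for $0 \leq k < 2m$ and any $\varepsilon > 0$. Combined with uniform boundedness of the coefficients of $Q$, this gives the relative bound $\|Qu\|_{L^p} \leq \varepsilon \|\mathcal{L}_0 u\|_{L^p} + C_\varepsilon \|u\|_{L^p}$. Choosing $\varepsilon$ small and invoking the standard perturbation result that a lower-order perturbation with arbitrarily small relative bound preserves sectoriality, up to a shift of $\omega$ to the right, one concludes that $\mcl$ itself is sectorial on $L^p(\R)$.

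The point warranting the most care is the endpoint $p=\infty$ (and, to a much lesser extent, $p=1$), where $W^{2m,\infty}(\R)$ is not dense in $L^\infty(\R)$, so sectoriality has to be understood in the generalized (Sinestrari) sense that drops the density-of-domain requirement. This is the standard convention for analytic semigroups on non-reflexive spaces and is harmless for our later purposes, since the estimates above are operator-norm bounds on the resolvent that do not use any density hypothesis. Everything else is routine elliptic regularity theory; the main work has already been done in Hypothesis~\ref{hyp: spreading speed} and the sign condition on $p_{2m}$, which together make $\mathcal{L}_0$ genuinely sectorial and allow the perturbation argument to close.
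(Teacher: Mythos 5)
Your argument is correct and follows essentially the same route as the paper: both proofs isolate the constant-coefficient principal part, establish the resolvent bound for $1<p<\infty$ by a multiplier theorem (the paper cites Lunardi where you invoke Mikhlin directly) and for $p=1,\infty$ by an $L^1$ kernel estimate plus Young's convolution inequality, and then absorb the lower-order terms via an interpolation/Gagliardo--Nirenberg relative bound. The paper handles the $L^1$ estimate through an explicit pointwise exponential bound on the kernel rather than the scaling identity you use, and it relegates the non-dense-domain caveat for $p=\infty$ to the remarks following the lemma, but these are cosmetic differences only.
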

\begin{proof}
    For $1 < p < \infty$, this result is implied by \cite[Theorem 3.2.2]{Lunardi}, even for $x\in\R^n$. In the boundary cases $p = 1, \infty$, the result holds for $x\in\R$, as can readily be seen as follows. By Fourier transform and scaling,   the integral kernel for $((-1)^{m+1} \partial_x^{2m} - \lambda)^{-1}$ is bounded above by $C|\lambda|^{1-1/2m} e^{- c |\lambda|^{1/2m} |x|}$ in  $\{\lambda = |\lambda| e^{i \theta}, \theta \in (-\frac{\pi}{2}, \frac{\pi}{2})\}$, for some constants $C, c > 0$, which yields 
    \begin{align}
    \| [(-1)^{m+1} \partial_x^{2m} - \lambda]^{-1} \|_{L^p \to L^p} \leq \frac{C_p}{|\lambda|}
    \end{align}
    for each $1 \leq p \leq \infty$ by Young's convolution inequality. Hence the highest order part $(-1)^{m+1} \partial_x^{2m}$ of $\mcl$ is sectorial, and, by the Gagliardo-Nirenberg-Sobolev inequality, $\mcl$ is sectorial as well \cite{Henry}.
\end{proof}



Therefore, $\mcl$ generates an analytic semigroup on $L^p (\R)$, given by the inverse Laplace transform 
\begin{align}
e^{\mcl t} = -\frac{1}{2 \pi i} \int_\Gamma e^{\lambda t} (\mcl - \lambda)^{-1} \, d \lambda, \label{e: semigroup formula}
\end{align}
for a suitably chosen contour $\Gamma$. Note that $\mcl$ is not densely defined on $L^\infty(\R)$ and we rely on the construction of analytic semigroups in \cite{Lunardi} for not necessarily densely defined sectorial operators; in particular,  strong continuity at time $t = 0$ holds only after regularizing with $(\mcl-\lambda)^{-1}$. We now begin stating  decay estimates on this semigroup. 

\begin{prop}\label{p: linear decay L11} 
	There exists a constant $C > 0$ such that for any $z_0 \in L^1_1(\R)$, we have for all $t>0$,
	\begin{align} 
	\| e^{\mcl t} z_0 \|_{L^\infty_{-1}} \leq \frac{C}{t^{3/2}} \| z_0 \|_{L^1_1}.
	\end{align}
\end{prop}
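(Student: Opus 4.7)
The plan is to represent the semigroup as an inverse Laplace transform, deform the contour to take advantage of the Lipschitz resolvent estimate of Proposition \ref{p: full resolvent lipschitz estimate}, and then scale the contour in a $t$-dependent way to extract the $t^{-3/2}$ decay. Start from the formula \eqref{e: semigroup formula} and deform $\Gamma$ into two pieces: a near-branch-point piece $\Gamma_{\mathrm{in}}$ contained in $B(0,\delta)\subset\C$ and a far-field piece $\Gamma_{\mathrm{out}}$ lying in a standard sectorial region for $\mcl$. On $\Gamma_{\mathrm{out}}$, use sectoriality and the standard bound $\|(\mcl-\lambda)^{-1}\|_{L^\infty\to L^\infty}\leq C/|\lambda|$ to conclude exponential decay in $t$, which trivially dominates $t^{-3/2}$ for $t\geq 1$; for $t\in(0,1]$, the bound is immediate from the fact that $\mcl$ generates a bounded analytic semigroup and from the embedding $L^1_1\hookrightarrow L^\infty_{-1}$ composed with short-time smoothing.

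For the inner contour, pass to the unfolded variable $\gamma=\sqrt{\lambda}$ (with branch cut along $\R_{<0}$), so that
\begin{align*}
-\frac{1}{2\pi i}\int_{\Gamma_{\mathrm{in}}} e^{\lambda t}(\mcl-\lambda)^{-1}z_0\,d\lambda
= -\frac{1}{\pi i}\int_{\Gamma_\gamma} \gamma\, e^{\gamma^2 t}(\mcl-\gamma^2)^{-1}z_0\,d\gamma,
\end{align*}
where $\Gamma_\gamma$ is a contour in $\{\Re\gamma\geq \tfrac12|\Im\gamma|\}\cap B(0,\delta)$, joined to the rays leading out to $\Gamma_{\mathrm{out}}$. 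Split the resolvent via the decomposition from Proposition \ref{p: full resolvent lipschitz estimate}:
\begin{align*}
(\mcl-\gamma^2)^{-1}z_0 = R_0 z_0 + E(\gamma)z_0,\qquad \|E(\gamma)z_0\|_{L^\infty_{-1}} \leq C|\gamma|\,\|z_0\|_{L^1_1}.
\end{align*}

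The $R_0$-contribution has the form $(R_0 z_0)\int_{\Gamma_\gamma}\gamma e^{\gamma^2 t}\,d\gamma$. Since $\gamma e^{\gamma^2 t}=\tfrac{1}{2t}\tfrac{d}{d\gamma}e^{\gamma^2 t}$ is entire, one closes $\Gamma_\gamma$ by joining its endpoints through a large arc in $\{\Re\gamma^2<0\}$ where $e^{\gamma^2 t}$ decays rapidly; by Cauchy's theorem the closed-contour integral is zero, and the arcs contribute nothing in the limit, so this term vanishes. This is the analogue of the cancellation exploited in \cite{AveryScheel} and I expect it to be the most delicate bookkeeping step since the near and far contours must be glued compatibly so that the $R_0$-term truly drops out and the $E(\gamma)$ estimate can be applied uniformly in $\gamma$ along the inner contour.

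For the remainder term, parametrize $\Gamma_\gamma$ to lie at distance $\sim t^{-1/2}$ from the origin by choosing, for instance, the parabolic contour $\gamma=t^{-1/2}\tilde\gamma$, with $\tilde\gamma$ on a fixed contour $\tilde\Gamma$ in $\{\Re\tilde\gamma\geq\tfrac12|\Im\tilde\gamma|\}$ escaping to $\infty$ in the half-plane $\{\Re\tilde\gamma^2\leq -c|\tilde\gamma|^2\}$. Then $|d\gamma|=t^{-1/2}|d\tilde\gamma|$, $|\gamma|^2=t^{-1}|\tilde\gamma|^2$ and $\Re\gamma^2 t=\Re\tilde\gamma^2$, so
\begin{align*}
\left\| \int_{\Gamma_\gamma}\gamma e^{\gamma^2 t} E(\gamma)z_0\,d\gamma\right\|_{L^\infty_{-1}}
\leq C\|z_0\|_{L^1_1}\int_{\Gamma_\gamma}|\gamma|^2 e^{\Re\gamma^2 t}|d\gamma|
= \frac{C\|z_0\|_{L^1_1}}{t^{3/2}}\int_{\tilde\Gamma}|\tilde\gamma|^2 e^{\Re\tilde\gamma^2}|d\tilde\gamma|,
\end{align*}
and the $\tilde\gamma$ integral converges to a finite constant independent of $t$. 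Combined with the exponentially small contribution from $\Gamma_{\mathrm{out}}$ and the short-time bound, this yields the claimed $t^{-3/2}$ decay for all $t>0$ with constant independent of $z_0$.
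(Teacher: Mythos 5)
Your strategy is essentially the one the paper uses (and attributes to \cite[Prop.\ 4.1]{AveryScheel}): Laplace inversion, a near/far split of the contour, the Lipschitz decomposition $(\mcl-\gamma^2)^{-1} = R_0 + E(\gamma)$ with $\|E(\gamma)\|_{L^1_1\to L^\infty_{-1}}\lesssim|\gamma|$ from Proposition~\ref{p: full resolvent lipschitz estimate} on the inner piece, and the observation that the $R_0$ contribution is negligible. Your $t$-dependent rescaled wedge $\gamma=t^{-1/2}\tilde\gamma$ is just a reparametrization of the paper's fixed contour tangent to the essential spectrum in the $\gamma$-plane; both extract the same $t^{-3/2}$ from the factor $|\gamma|^2$ on the inner contour, so this is not a genuinely different route.

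Two imprecisions are worth tightening. First, the $R_0$-term does not \emph{vanish}: the inner contour has fixed endpoints on $|\gamma|=\delta$ where it meets $\Gamma_{\mathrm{out}}$, and since $\gamma e^{\gamma^2 t}$ is exact, $\int_{\Gamma_\gamma}\gamma e^{\gamma^2 t}\,d\gamma = \tfrac{1}{2t}\bigl(e^{\gamma_+^2 t}-e^{\gamma_-^2 t}\bigr)$. With $\Re\gamma_\pm^2\leq -c\delta^2<0$ (guaranteed since your wedge has opening angle past $\pi/4$), this is $\mathrm{O}(e^{-c\delta^2 t}/t)$, which is what actually makes the $R_0$ contribution harmless --- exponentially small, not zero. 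Closing the contour with "a large arc escaping to infinity" is not available here because the endpoints are at finite radius $\delta$. Second, there is no embedding $L^1_1(\R)\hookrightarrow L^\infty_{-1}(\R)$; what gives the short-time bound is the smoothing estimate $\|e^{\mcl t}\|_{L^1\to L^\infty}\lesssim t^{-1/2m}$ (Lemma~\ref{l: regularity L1 Linf}) combined with $\|\cdot\|_{L^\infty_{-1}}\leq\|\cdot\|_{L^\infty}$ and $\|\cdot\|_{L^1}\leq\|\cdot\|_{L^1_1}$, and then $t^{-1/2m}\leq t^{-3/2}$ for $0<t<1$. You should also note explicitly, as the paper does, that Hypothesis~\ref{hyp: resonance} is what permits pushing $\Gamma_{\mathrm{out}}$ past the imaginary axis without hitting point spectrum.
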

\begin{proof}
    The proof is the analogous to the proof of Proposition 4.1 in \cite{AveryScheel}, exploiting the previously established estimates on the resolvent in function spaces slightly different from those in  \cite{AveryScheel}.  The key insight is to use an  integration contour  tangent to the essential spectrum of $\mcl$ in the $\gamma$-plane at the origin. By Hypothesis \ref{hyp: resonance}, there are no unstable eigenvalues to obstruct shifting of the contours since eigenfunctions are smooth and exponentially localized and thus independent of algebraic weights or choice of $L^p$ space; see Figure \ref{f: contours} for a schematic of the integration contours. 
\end{proof}

\begin{corollary}\label{c: linear decay L inf r}
	Let $r > 2$. There exists a constant $C > 0$ such that for any $z_0 \in L^\infty_r (\R)$, we have 
	\begin{align}
	\| e^{\mcl t} z_0 \|_{L^\infty_{-1}} \leq \frac{C}{(1+t)^{3/2}} \| z_0 \|_{L^\infty_r} 
	\end{align}
	for all $t > 0$. 
\end{corollary}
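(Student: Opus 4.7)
The plan is to split into two regimes in time and reduce to Proposition \ref{p: linear decay L11} for large times, using short-time boundedness of the analytic semigroup for small times.

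First I would establish the continuous embedding $L^\infty_r(\R) \hookrightarrow L^1_1(\R)$ for $r > 2$. This is elementary: for $g \in L^\infty_r(\R)$, we have
\begin{align*}
\|g\|_{L^1_1} = \int_\R \langle x \rangle |g(x)| \, dx \leq \|g\|_{L^\infty_r} \int_\R \langle x \rangle^{1-r} \, dx = C_r \|g\|_{L^\infty_r},
\end{align*}
and the integral is finite precisely because $r > 2$. This is exactly the borderline case flagged in the discussion preceding Proposition \ref{p: full resolvent derivative estimates}.

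For the regime $t \geq 1$, I would then directly invoke Proposition \ref{p: linear decay L11} on $z_0$ (viewed as an element of $L^1_1$ via the embedding) to obtain
\begin{align*}
\| e^{\mcl t} z_0 \|_{L^\infty_{-1}} \leq \frac{C}{t^{3/2}} \|z_0\|_{L^1_1} \leq \frac{C'}{t^{3/2}} \|z_0\|_{L^\infty_r} \leq \frac{C''}{(1+t)^{3/2}} \|z_0\|_{L^\infty_r}.
\end{align*}
For the complementary regime $0 < t \leq 1$, I would use that $\mcl$ is sectorial on $L^\infty(\R)$ (as established in the lemma preceding Proposition \ref{p: linear decay L11}), so that the analytic semigroup $e^{\mcl t}$ is uniformly bounded on $L^\infty(\R)$ for $t \in (0,1]$. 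Combining this with the trivial embeddings $L^\infty_r(\R) \hookrightarrow L^\infty(\R) \hookrightarrow L^\infty_{-1}(\R)$ (the first because $\rho_r^{-1}$ is bounded for $r \geq 0$, the second because $\rho_{-1}$ is bounded) yields
\begin{align*}
\|e^{\mcl t} z_0\|_{L^\infty_{-1}} \leq C \|e^{\mcl t} z_0\|_{L^\infty} \leq C \|z_0\|_{L^\infty} \leq C \|z_0\|_{L^\infty_r},
\end{align*}
and for $t \in (0,1]$ this is bounded by $C'/(1+t)^{3/2}$ up to adjusting the constant. Piecing together the two regimes gives the desired uniform bound.

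There is no real obstacle here; the only mild subtlety is verifying that the short-time bound is available on $L^\infty$ (since $\mcl$ is not densely defined there, one uses the construction of analytic semigroups for non-densely defined sectorial operators of \cite{Lunardi}, which still gives uniform operator bounds $\|e^{\mcl t}\|_{L^\infty \to L^\infty} \leq C$ for $t$ in bounded intervals, even though strong continuity fails at $t = 0$). The key conceptual point is simply that the weight $\langle x \rangle^{-r}$ for $r > 2$ makes $L^\infty_r$ summable enough to absorb into $L^1_1$, which is the data space of Proposition \ref{p: linear decay L11}.
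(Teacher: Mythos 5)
Your proof is correct and takes essentially the same route as the paper: both split at $t=1$, use the embedding $L^\infty_r(\R)\hookrightarrow L^1_1(\R)$ for $r>2$ together with Proposition \ref{p: linear decay L11} for $t\geq 1$, and use short-time boundedness of the analytic semigroup for $0<t\leq 1$. The only cosmetic difference is that the paper obtains the small-time bound by conjugating $\mcl$ with $\langle x\rangle^{-1}$ to get uniform boundedness of the semigroup directly on $L^\infty_{-1}(\R)$, while you appeal to the already-established sectoriality on $L^\infty(\R)$ together with the trivial embeddings $L^\infty_r(\R)\hookrightarrow L^\infty(\R)\hookrightarrow L^\infty_{-1}(\R)$; both are equally valid.
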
 
\begin{proof}
	The estimate holds for $t > 1$ by Proposition \ref{p: linear decay L11} and the fact that  $L^\infty_r(\R)$ is continuously embedded in $L^1_1(\R)$ for $r > 2$. For $0 < t < 1$, observe that conjugating $\mcl$ with the weight $\langle x \rangle^{-1}$ results in an elliptic operator with smooth bounded coefficients, so that 
	\begin{align}
	\| e^{\mcl t} z_0 \|_{L^\infty_{-1}} \leq C \| g \|_{L^\infty_{-1}} \leq C \| g \|_{L^\infty_r}
	\end{align}
	for $0 < t < 1$ by standard semigroup theory, and the result follows. 
\end{proof}

\begin{figure}
	\centering
	\includegraphics[width=1\textwidth]{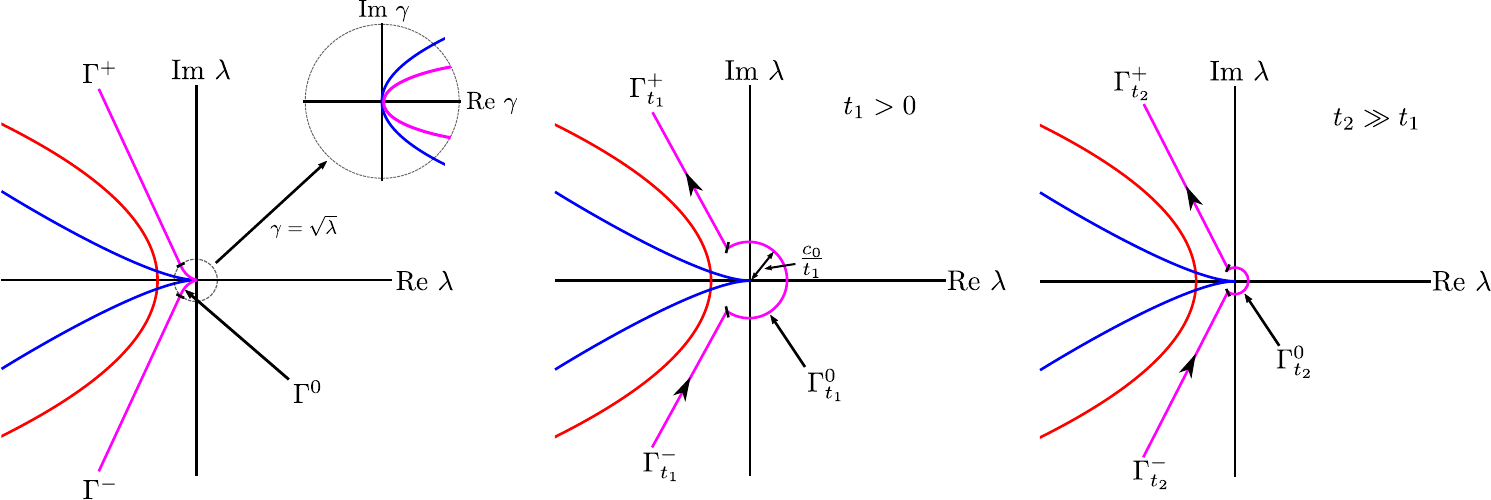}
	\caption{Left: Fredholm borders of $\mcl$ (blue, red) together with integration contour $\Gamma = \Gamma^+ \cup \Gamma^0 \cup \Gamma^-$ (magenta) used in the proof of Proposition \ref{p: linear decay L11}. Center and right: integration contours (magenta) used in the proof of Proposition \ref{p: linear derivative estimates} at times $t_1$ (center) and $t_2 > t_1$ (right).}
	\label{f: contours}
\end{figure}
The previous results trade spatial  localization for temporal decay. The next result obtains decay of derivatives without loss of localization.

\begin{prop}\label{p: linear derivative estimates}
	Let $r > 2$. There exists a constant $C > 0$ such that for any $z_0 \in L^\infty_r (\R)$, we have 
	\begin{align}
	\| \partial_x e^{\mcl t} z_0 \|_{L^\infty_r} \leq \frac{C}{t^{3/2 - r/2}} \| z_0 \|_{L^\infty_r},
	\end{align}
	and 
	\begin{align}
	\| \partial_x e^{\mcl t} z_0 \|_{L^1_1} \leq \frac{C}{t^{1/2}} \| z_0 \|_{L^\infty_r} 
	\end{align}
	for all $t > 1$. 
\end{prop}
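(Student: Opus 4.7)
The proof follows the same overall template as Proposition \ref{p: linear decay L11}: we represent $\partial_x e^{\mcl t} z_0$ via the inverse Laplace transform \eqref{e: semigroup formula}, unfold the square-root branch by setting $\lambda = \gamma^2$, and then choose an integration contour $\widetilde{\Gamma}$ in the $\gamma$-plane tangent to the essential spectrum at the origin. Since $d\lambda = 2\gamma\, d\gamma$, the representation reads
\begin{align*}
    \partial_x e^{\mcl t} z_0 = -\frac{1}{\pi i} \int_{\widetilde{\Gamma}} e^{\gamma^2 t}\, \gamma\, \partial_x (\mcl - \gamma^2)^{-1} z_0\, d\gamma.
\end{align*}
As in \cite{AveryScheel} and the proof of Proposition \ref{p: linear decay L11}, I would deform $\widetilde{\Gamma}$ into the union of a small piece $\widetilde{\Gamma}_0$, parametrized as $\gamma = t^{-1/2} + i s$ for $|s|\leq t^{-1/2}$, and two outer pieces $\widetilde{\Gamma}_\pm$ satisfying $\Re\gamma\geq \tfrac{1}{2}|\Im\gamma|$, along which the estimates of Proposition \ref{p: full resolvent derivative estimates} apply. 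Hypothesis \ref{hyp: resonance} again ensures no point spectrum is crossed when deforming the original contour.

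For the $L^1_1$ estimate, on $\widetilde{\Gamma}_0$ the resolvent estimate gives $\|\partial_x(\mcl-\gamma^2)^{-1} z_0\|_{L^1_1}\leq C|\gamma|^{-1}\|z_0\|_{L^\infty_r}$, which combines with the Jacobian factor $|\gamma|$ to yield an integrand bounded by a constant multiple of $|e^{\gamma^2 t}|\cdot\|z_0\|_{L^\infty_r}$. On $\widetilde{\Gamma}_0$, $\Re(\gamma^2) = t^{-1} - s^2 \leq 1$, and the imaginary part contributes only oscillation; the arc length is $2t^{-1/2}$, giving a contribution $\lesssim t^{-1/2}\|z_0\|_{L^\infty_r}$. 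On the outer pieces, parametrized so that $|e^{\gamma^2 t}| \leq e^{-c|\Im\gamma|^2 t}$, the same pointwise bound together with a Gaussian integral in $\Im\gamma$ produces the identical $t^{-1/2}$ decay.

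For the $L^\infty_r$ estimate, I would use instead the second bound in Proposition \ref{p: full resolvent derivative estimates}, so the integrand is bounded by $|e^{\gamma^2 t}|\, |\gamma|^{2-r}\,\|z_0\|_{L^\infty_r}$. On $\widetilde{\Gamma}_0$ we have $|\gamma|\geq t^{-1/2}$, hence $|\gamma|^{2-r}\leq t^{(r-2)/2}$; combined with the arc-length factor $t^{-1/2}$ this gives $t^{(r-3)/2} = t^{-(3/2-r/2)}$, matching the claim. On the outer contour, after the change of variables $s = |\Im\gamma|\sqrt{t}$ the remaining Gaussian integral contributes the same $t^{(r-3)/2}$ scaling, since $r-2>0$ guarantees integrability near $\gamma=0$ in the transformed variable.

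The main technical obstacle — as in \cite{AveryScheel} — is controlling the large $|\gamma|$ tail of the contour, where the resolvent estimates in Proposition \ref{p: full resolvent derivative estimates} no longer hold because they are local near the branch point. There we must replace the tangential contour with a standard sectorial one, on which elliptic/parabolic semigroup estimates apply and yield exponentially small contributions in $t$ for any $t>1$; these are absorbed into the $t^{-1/2}$ and $t^{-(3-r)/2}$ terms without loss. Compatibility of the tangential and sectorial pieces is ensured by Cauchy's theorem since the resolvent is analytic in the wedge between them, as guaranteed by Hypothesis \ref{hyp: resonance}.
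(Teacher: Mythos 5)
Your proposal is correct and follows essentially the same approach as the paper's: one deforms the Laplace inversion contour in the $\gamma = \sqrt{\lambda}$ plane into an inner piece of scale $|\gamma| \sim t^{-1/2}$ (described in the paper as a circular arc of radius $\sim t^{-1}$ in $\lambda$) joined to rays extending into the left half-plane, and applies the blowup estimates of Proposition \ref{p: full resolvent derivative estimates}. The details you supply — the Jacobian factor $|\gamma|$ from $d\lambda = 2\gamma\,d\gamma$, the Gaussian bounds along the rays, and the separate sectorial treatment of the far-field $|\gamma| \geq \delta$ tail — are exactly what the paper relegates to the analogous argument in \cite[Proposition 7.4]{AveryScheel}.
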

\begin{proof}
	We differentiate \eqref{e: semigroup formula} and proceed as in \cite[Proposition 7.4]{AveryScheel}, choosing $\Gamma$ to be a circular arc centered at the origin whose radius scales as $t^{-1}$, connected to two rays extending out to infinity in the left half plane; see Figure \ref{f: contours}. The estimates in Proposition \ref{p: full resolvent derivative estimates} on the blowup of derivatives of the resolvent near the origin then translate into the claimed decay rates. 
\end{proof}

Finally, we record useful small time regularity estimates for $e^{\mcl t}$. 

\begin{lemma}\label{l: regularity L1 Linf}
	There exists a constant $C > 0$ such that 
	\begin{align}
	\| e^{\mcl t} z_0 \|_{L^\infty} \leq \frac{C}{t^{1/2m}} \| z_0 \|_{L^1}
	\end{align}
	for all $0 < t < 2$. 
\end{lemma}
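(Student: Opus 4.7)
The estimate is a short-time $L^1 \to L^\infty$ smoothing bound for the analytic semigroup $e^{\mcl t}$, with parabolic rate $t^{-1/(2m)}$. My plan is to establish it first for the constant-coefficient principal part of $\mcl$ by explicit Fourier analysis, and then incorporate the remaining lower-order, variable-coefficient terms via Duhamel's formula. I would split $\mcl = A_0 + B$, where $A_0 := p_{2m} \partial_x^{2m}$ is the leading-order constant-coefficient term and $B := \mcl - A_0$ is a differential operator of order at most $2m - 1$ with smooth bounded coefficients (the variable parts arise from the conjugation $\omega \mathcal{A} \omega^{-1}$, confined to the interpolation region $|x| \leq 1$, together with $f'(q_*)$).

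For $A_0$, Fourier inversion yields the convolution kernel
\begin{equation*}
K_t(x) = \frac{1}{2\pi} \int_\R e^{i \xi x + t p_{2m} (i\xi)^{2m}} \, d\xi,
\end{equation*}
whose integrand is Schwartz in $\xi$ for $t > 0$ by the sign condition $(-1)^m p_{2m} < 0$. Rescaling $\xi = t^{-1/(2m)} \eta$ exhibits the self-similar structure $K_t(x) = t^{-1/(2m)} K_1(x \, t^{-1/(2m)})$ with $K_1 \in \mathcal{S}(\R)$, so $\| K_t \|_{L^\infty} \leq C t^{-1/(2m)}$, and Young's convolution inequality then produces $\| e^{A_0 t} f \|_{L^\infty} \leq C t^{-1/(2m)} \| f \|_{L^1}$. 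To transfer this bound to $\mcl$, I would use
\begin{equation*}
e^{\mcl t} f = e^{A_0 t} f + \int_0^t e^{A_0 (t-s)} B \, e^{\mcl s} f \, ds;
\end{equation*}
sectoriality of $\mcl$ on $L^1(\R)$ (the preceding lemma), with $D(\mcl) = W^{2m, 1}(\R)$, and the moment inequality give $\| \partial_x^k e^{\mcl s} f \|_{L^1} \leq C s^{-k/(2m)} \| f \|_{L^1}$ for $0 \leq k \leq 2m - 1$ and $s \in (0, 2)$. Since $B$ has bounded coefficients and order at most $2m - 1$, this implies $\| B \, e^{\mcl s} f \|_{L^1} \leq C s^{-(2m-1)/(2m)} \| f \|_{L^1}$, and the Duhamel integral collapses to a convergent Beta-type integral
\begin{equation*}
\int_0^t (t-s)^{-1/(2m)} s^{-(2m-1)/(2m)} \, ds = C_m,
\end{equation*}
yielding the claimed bound for $0 < t < 2$.

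The one mildly delicate point is the $L^1$-derivative estimate on $e^{\mcl s}$ used in the Duhamel step, since standard interpolation and moment tools for analytic semigroups behave best in reflexive spaces. If the fractional-power-domain route proves awkward in $L^1$, a bootstrap alternative is to iterate Duhamel: each spatial derivative falling on $e^{\mcl s}$ can be traded for a factor of $s^{-1/(2m)}$ using only the $A_0$-kernel estimate above and the boundedness of the coefficients of $B$, avoiding any appeal to fractional-power characterizations in $L^1$.
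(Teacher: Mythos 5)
Your proposal follows essentially the same route as the paper's proof: split $\mcl$ into a constant-coefficient leading-order part plus a remainder of order at most $2m-1$, obtain the $L^1\to L^\infty$ bound for the leading part by Fourier scaling of the kernel, and absorb the remainder through Duhamel's formula. Your fallback of iterating Duhamel to avoid fractional-power domain issues in $L^1$ corresponds to the paper's contraction argument in temporally weighted spaces, where the integrand is controlled using Gagliardo--Nirenberg--Sobolev in place of the moment inequality; both are sound, and your choice $A_0 = p_{2m}\partial_x^{2m}$ (rather than the paper's unnormalized $(-1)^{m+1}\partial_x^{2m}$) actually guarantees more cleanly that the remainder has order $\leq 2m-1$.
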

\begin{proof}
	Set $\mcl_0 = (-1)^{m+1} \partial_x^{2m}$, and write $\mcl = \mcl_0 + (\mcl - \mcl_0)$. Using the Fourier transform yields
	\begin{align}
	\| e^{\mcl_0 t} z_0 \|_{L^\infty} \leq \frac{C}{t^{1/2m}} \| z_0 \|_{L^1}. 
	\end{align}
	We then write $e^{\mcl t} z_0$ in mild form, viewing $\mcl$ as a perturbation of $\mcl_0$, so that
	\begin{align}
	z(t) := e^{\mcl t} z_0 = e^{\mcl_0 t} z_0 + \int_0^t e^{\mcl_0 (t-s)} [(\mcl-\mcl_0) z(s)] \, ds. 
	\end{align}
	Since $(\mcl - \mcl_0)$ is a differential operator of order $2m-1$ with smooth bounded coefficients, the Gagliardo-Nirenberg-Sobolev inequality allows us to control the integrand and obtain the desired estimate through a contraction argument in temporally weighted spaces.  
\end{proof}

Similarly, we obtain the following small time bounds on derivatives of solutions. 

\begin{lemma}\label{l: small time derivative estimates}
	Let $r > 2$. There exists a constant $C > 0$ such that 
	\begin{align}
	\| \partial_x e^{\mcl t} z_0 \|_{L^\infty_r} \leq \frac{C}{t^{1/2m}} \| z_0 \|_{L^\infty_r},
	\end{align}
	and 
	\begin{align}
	\| \partial_x e^{\mcl t} z_0 \|_{L^1_1} \leq \frac{C}{t^{1/2m}} \| z_0 \|_{L^1_1}
	\end{align}
	for all $0 < t < 2$. 
\end{lemma}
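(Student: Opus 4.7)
The plan is to mirror the strategy of Lemma \ref{l: regularity L1 Linf}: decompose $\mcl = \mcl_0 + \mathcal{Q}$ with top-order part $\mcl_0 = (-1)^{m+1} \partial_x^{2m}$ and lower-order remainder $\mathcal{Q} = \mcl - \mcl_0$, a differential operator of order $2m-1$ with smooth bounded coefficients (boundedness uses that after the exponential conjugation $\mcl = \omega \mathcal{A} \omega^{-1}$, the weight $\omega$ is exactly exponential for $|x|\geq 1$, so contributions such as $\omega(\omega^{-1})'$ are bounded). Then $\mathcal{Q}$ will be handled perturbatively via Duhamel's formula in a temporally weighted norm capturing all derivatives $\partial_x^k$ with $0\leq k\leq 2m-1$.

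First I would establish a weighted-kernel estimate for $\mcl_0$. By Fourier transform, $G_t(x) = t^{-1/2m} K(t^{-1/2m} x)$ with $K$ Schwartz, so $\partial_x^k G_t(x) = t^{-(k+1)/2m} K^{(k)}(t^{-1/2m} x)$. A change of variables $y = t^{-1/2m} x$ yields
\begin{align*}
\|\langle \cdot \rangle^s \partial_x^k G_t\|_{L^1(\R)} = t^{-k/2m} \int \langle t^{1/2m} y\rangle^s |K^{(k)}(y)|\, dy \leq C_{s,k}\, t^{-k/2m}
\end{align*}
for any $s \geq 0$ and $0 < t \leq 2$, where the restriction $t \leq 2$ bounds $\langle t^{1/2m}y\rangle^s$ by a multiple of $\langle y\rangle^s$. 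Combining with the Peetre-type inequality $\langle x\rangle^r \leq C_r \langle x - y\rangle^r \langle y\rangle^r$ and Young's convolution inequality then gives, for $X \in\{L^\infty_r, L^1_1\}$ and $0 \leq k \leq 2m-1$,
\begin{align*}
\|\partial_x^k e^{\mcl_0 t} z_0\|_X \leq C\, t^{-k/2m} \|z_0\|_X, \qquad 0 < t \leq 2.
\end{align*}

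Next I would close a contraction on the mild formulation
\begin{align*}
z(t) = e^{\mcl_0 t} z_0 + \int_0^t e^{\mcl_0(t-s)} \mathcal{Q} z(s)\, ds
\end{align*}
in the temporally weighted space with norm
\begin{align*}
\|z\|_{Y_T} = \sup_{0 < s \leq T} \sum_{k=0}^{2m-1} s^{k/2m} \|\partial_x^k z(s)\|_X .
\end{align*}
Since $\mathcal{Q}$ has smooth bounded coefficients and order $2m-1$, one has $\|\mathcal{Q} z(s)\|_X \leq C s^{-(2m-1)/2m} \|z\|_{Y_T}$, and step one yields
\begin{align*}
\|\partial_x^k e^{\mcl_0(t-s)} \mathcal{Q} z(s)\|_X \leq C(t-s)^{-k/2m} s^{-(2m-1)/2m} \|z\|_{Y_T}.
\end{align*}
A Beta-function integration produces $\int_0^t (t-s)^{-k/2m} s^{-(2m-1)/2m}\, ds = C t^{(1-k)/2m}= C t^{1/2m}\cdot t^{-k/2m}$, so the Duhamel map is bounded by $C T^{1/2m} \|z\|_{Y_T}$ on $Y_T$, a contraction for $T$ small. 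This yields the claim on $(0, T_0]$ for some $T_0>0$, and extension to $(0,2)$ follows by writing $e^{\mcl t} = e^{\mcl(t-T_0/2)} e^{\mcl T_0/2}$ and using standard analytic semigroup boundedness on compact subintervals of $(0,2]$ together with the $X$-bound on $z(T_0/2)$ provided by the $k=0$ term of $\|z\|_{Y_{T_0}}$.

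The main obstacle is the weighted-kernel estimate in step one --- the self-similar scaling produces a factor $\langle t^{1/2m} y\rangle^s$ against a Schwartz function, whose uniform integrability only requires $t^{1/2m}$ to remain bounded, which is precisely why the result is a small-time statement rather than a global one. Once this is in hand, the bootstrap is routine because $\mathcal{Q}$ costs exactly $2m-1$ derivatives --- precisely what $Y_T$ controls --- with a half-power $T^{1/2m}$ to spare that drives the contraction.
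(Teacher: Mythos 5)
Your proposal is correct and follows the same strategy the paper intends: split $\mcl = \mcl_0 + (\mcl - \mcl_0)$ with $\mcl_0 = (-1)^{m+1}\partial_x^{2m}$, obtain weighted kernel bounds for $e^{\mcl_0 t}$ by self-similar scaling, and close a Duhamel contraction in a temporally weighted norm on a short time interval, exactly as sketched in the proof of Lemma~\ref{l: regularity L1 Linf}. The only cosmetic difference is that you track all derivatives $0\le k\le 2m-1$ explicitly in the norm $\|\cdot\|_{Y_T}$ rather than invoking Gagliardo--Nirenberg--Sobolev to interpolate; either route works, and your version is if anything more self-contained. One small point of hygiene: the weights $\rho_r$ and $\rho_1$ are one-sided, so the Peetre-type inequality you want is $\rho_r(x)\le C\langle x-y\rangle^r\rho_r(y)$ rather than the two-sided $\langle x\rangle^r\le C\langle x-y\rangle^r\langle y\rangle^r$ you wrote; this one-sided version holds for $r\ge 0$ and is what actually feeds Young's inequality here, but the argument goes through unchanged.
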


\section{Stability argument}\label{s: stability argument}

Recall from Section \ref{s: outlook} that our goal is to use the sharp linear estimates obtained in Section \ref{s: linear estimates} to control perturbations to the approximate solution. We start by letting $v$ solve solve $\NL [v] = 0$, where $\NL$ is given by \eqref{e: NL def}, which is the original equation \eqref{e: eqn} in the co-moving frame with the logarithmic delay, and with the exponential weight $\omega$. We then define the perturbation $w = v -\psi$, which solves 
\begin{align}
	w_t = \mcl w - f'(q_*) w - \frac{3}{2 \eta_* (t+T)} \left[ \omega (\omega^{-1})' w + w_x \right] + \omega f(\omega^{-1} w + \psi) - \omega f(\omega^{-1} \psi) - R,
\end{align}
where $R = \NL[\psi]$. 
Note that $\omega^{-1} \psi$ is uniformly bounded, so by Taylor's theorem,
\begin{align}
	\omega | N (\omega^{-1} w) | \leq C \omega^{-1} w^2, \label{e: nonlinearity quadratic estimate}
\end{align}
and 
\begin{align}
	|(f'(\omega^{-1} \psi)- f'(q_*)) w| \leq C | \omega^{-1} \psi - q_*| |w| 
\end{align}
for some constant $C > 0$. 

Recall that, in order to deal with the presence of the critical term $-\frac{3}{2 \eta_*} (t+T)^{-1} \omega(\omega^{-1} w)$ in the $w$-equation, we introduce the new variable $z = (t+T)^{-3/2} w$, which solves 
\begin{multline}
z_t = \mcl z - \frac{3}{2 \eta_* (t+T)} [\omega (\omega^{-1})' + \eta_*] z - \frac{3}{2 \eta_* (t+T)} z_x + [f'(\omega^{-1} \psi) - f'(q_*)] z  - (t+T)^{-3/2} R \\  + (t+T)^{-3/2} \omega N (\omega^{-1} (t+T)^{3/2} z). \label{e: z eqn}
\end{multline}
Note that $\omega(\omega^{-1})' (x) + \eta_* \equiv 0 $ for $x > 1$, so that this term is essentially removed, which ultimately allows us to regain the $t^{-3/2}$ decay for $z$, equivalent to  boundedness for $w$. This argument requires some care, however: we must track dependence on $T$, and compensate for the extra factor of $(t+T)^{3/2}$ now appearing with the nonlinearity. \hl{Tracking the $T$ dependence is necessary since we will treat linear terms such as $-\frac{3}{\eta_*} (t+T)^{-1} z_x$ perturbatively, and so we use largeness of $T$ to guarantee that these terms remain small, and because we need to track the $T$ dependence of $z$ in order to conclude that $w = (t+T)^{3/2} z$ is bounded.} To handle the nonlinearity, note that, by Taylor's theorem and exponential decay of $\omega^{-1}$, there exists a non-decreasing function $K : \R_+ \to \R_+$ such that 
\begin{align}
\| (t+T)^{-3/2} \omega N (\omega^{-1} (t+T)^{3/2} z) \|_{L^\infty_r} \leq K(B) (t+T)^{3/2} \| z \|_{L^\infty_{-1}}^2, \label{e: nonlinearity quadratic norm estimate}
\end{align}
provided $(t+T)^{3/2} \| \omega^{-1} z \|_{L^\infty} \leq B$. In summary,  the nonlinearity gains spatial localization but carries a factor of $(t+T)^{3/2}$.  

We rewrite \eqref{e: z eqn} in mild form via the variation of constants formula
\begin{align}
z(t) = e^{\mcl t} z_0 + \mathcal{I}_1 (t) + \mathcal{I}_2 (t) + \mathcal{I}_3 (t) + \mathcal{I}_R (t) + \mathcal{I}_N (t), \label{e: voc}
\end{align} 
where 
\begin{align}
\mathcal{I}_1 (t) &= - \frac{3}{2 \eta_*} \int_0^t e^{\mcl (t-s)} \left[(\omega (\omega^{-1})' + \eta_*) \frac{z(s)}{s+T} \right] \, ds, \\
\mathcal{I}_2 (t) &= - \frac{3}{2 \eta_*} \int_0^t e^{\mcl (t-s)} \frac{z_x (s)}{s+T} \, ds, \label{e: I2 def} \\
\mathcal{I}_3 (t) &= \int_0^t e^{\mcl (t-s)} \left[ (f'(\omega^{-1} \psi (s)) - f'(q_*)) z(s) \right] \, ds, \\
\mathcal{I}_R (t) &= - \int_0^t e^{\mcl (t-s)} (s+T)^{-3/2} R(s) \, ds, \\
\mathcal{I}_N (t) &= \int_0^t e^{\mcl (t-s)} [ (s+T)^{-3/2} \omega N(\omega^{-1} (s+T)^{3/2} z(s)) ] \, ds. 
\end{align}

By standard parabolic regularity  \cite{Lunardi, Henry}, the equation for $z$ is locally well-posed in $L^\infty_r (\R)$ for any $r \in \R$, in the sense that given any small initial data $z_0 \in L^\infty_r (\R)$, the variation of constants formula \eqref{e: voc} defines a unique solution $z(t)$ for $t \in (0, t_*)$ to \eqref{e: z eqn} with
\begin{align}
\lim_{t \to 0^+} (\mcl - \lambda)^{-1} z(t) = (\mcl - \lambda)^{-1} z_0 \text{ in } L^\infty_r (\R) \label{e: z initial data}
\end{align}
for any $\lambda$ in the resolvent set of $\mcl$. Furthermore, the maximal existence time $t_*$ depends only on $\| z_0 \|_{L^\infty_r}$, and there is a constant $C > 0$ such that 
for $t$ sufficiently small,
\begin{align}
\| z_x (t)\|_{L^\infty_r} \leq \frac{C}{t^{1/2m}} \| z_0 \|_{L^\infty_r}. \label{e: z small time regularity}
\end{align}

\begin{thmlocal}\label{t: nonlinear decay}
	Let $0 < \mu < \frac{1}{8}$ and $r = 2 + \mu$. Choose $\zeta(t+T)$ as in Proposition \ref{p: residual estimate}, and let $R(t;T)$ be the associated nonlinear residual defined in \eqref{e: residual L inf r estimate}. Then define
	\begin{align}
	R_0 = \sup_{T \geq T_*} \sup_{s > 0} \, (s+T)^{1/2-4\mu} \| R(s; T) \|_{L^\infty_r}, \label{e: R0 def}
	\end{align}
	which is finite for some $T_*$ sufficiently large by Proposition \ref{p: residual estimate}. There exist positive constants $C$ and $\eps$ such that if $z_0 \in L^\infty_r (\R)$ with 
	\begin{align}
	T^{3/2} \| z_0 \|_{L^\infty_r} + T^{-1/2+4\mu} R_0  < \eps, \label{e: initial data smallness}
	\end{align}
	then the solution $z(t)$ to \eqref{e: z eqn} with initial data $z_0$ exists globally in time in $L^\infty_{-1} (\R)$ and satisfies
	\begin{align}
	\| z(t) \|_{L^\infty_{-1}} \leq \frac{C}{(t+T)^{3/2}} \left( T^{3/2} \| z_0 \|_{L^\infty_r} + T^{-1/2+4\mu} R_0 \right) 
	\end{align}
	for all $t > 0$.
\end{thmlocal}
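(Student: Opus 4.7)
The plan is a nonlinear bootstrap driven by the variation of constants formula \eqref{e: voc}. Guided by the expected $(t+T)^{-3/2}$ decay of $z$ in $L^\infty_{-1}(\R)$ and the need to control $\mathcal{I}_2$ through $z_x$, I would work with a template function of the form
\[
\Phi(t) = \sup_{0 \leq s \leq t} (s+T)^{3/2}\|z(s)\|_{L^\infty_{-1}} + \sup_{0 \leq s \leq t} (s+T)^{3/2}(1+s)^{1/2}\|z_x(s)\|_{L^1_1},
\]
with the factor $(1+s)^{1/2}$ in the second summand replaced by $s^{1/(2m)}$ for $s$ near $0$ to accommodate Lemma \ref{l: small time derivative estimates}. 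Writing $M = T^{3/2}\|z_0\|_{L^\infty_r} + T^{-1/2+4\mu} R_0$, the bootstrap hypothesis is $\Phi(t) \leq K M$ for a suitable constant $K$ to be fixed.

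Under this hypothesis, I would estimate each term on the right-hand side of \eqref{e: voc} in $L^\infty_{-1}$. The linear flow $e^{\mcl t} z_0$ is controlled by Corollary \ref{c: linear decay L inf r}, contributing $T^{3/2}\|z_0\|_{L^\infty_r}(t+T)^{-3/2}$. For $\mathcal{I}_1$, the multiplier $\omega(\omega^{-1})' + \eta_*$ is compactly supported, so the semigroup input is $L^1_1$-bounded by $\Phi(s)(s+T)^{-5/2}$; Proposition \ref{p: linear decay L11}, combined with splitting the time integral at $t/2$, gives a contribution of order $\Phi(t) T^{-1}(t+T)^{-3/2}$. The term $\mathcal{I}_3$ is handled analogously, drawing the small coefficient $\|f'(\omega^{-1}\psi(s)) - f'(q_*)\|_{L^\infty_{r+1}} \lesssim (s+T)^{-1/2+4\mu}$ from Lemma \ref{l: front versus psi} and contributing $\Phi(t) T^{-1/2+4\mu}(t+T)^{-3/2}$. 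For $\mathcal{I}_R$, the residual bound $\|R(s)\|_{L^\infty_r} \leq R_0 (s+T)^{-1/2+4\mu}$ feeds directly into Corollary \ref{c: linear decay L inf r}. Finally, for $\mathcal{I}_N$, the quadratic estimate \eqref{e: nonlinearity quadratic norm estimate} with $B$ taken uniformly in $s$ (using exponential decay of $\omega^{-1}$ for $x > 0$ and the $L^\infty_{-1}$ bound on the left) yields an $L^\infty_r$-input of order $\Phi(s)^2 (s+T)^{-3/2}$, contributing $\Phi(t)^2(t+T)^{-3/2}$.

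The delicate term is $\mathcal{I}_2$, which corresponds precisely to the critical contribution in the reduced model problem of Section \ref{s: overview}. Estimating it in $L^\infty_{-1}$ through Proposition \ref{p: linear decay L11} requires $\|z_x(s)/(s+T)\|_{L^1_1}$, which the derivative part of $\Phi$ supplies as $\Phi(t)(s+T)^{-5/2}(1+s)^{-1/2}$; the extra $(1+s)^{-1/2}$ decay predicted by Proposition \ref{p: linear derivative estimates} is exactly what allows the resulting time integral to close, with an additional $T^{-\sigma}$ smallness coming from the $(s+T)^{-1}$ weight. To simultaneously close the bootstrap for the derivative part of $\Phi$, I would differentiate \eqref{e: voc} in $x$ and apply Proposition \ref{p: linear derivative estimates} for $s \geq 1$ together with Lemma \ref{l: small time derivative estimates} for $s$ close to zero, running the same input estimates in parallel.

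Combining the estimates yields an inequality of the form $\Phi(t) \leq C_0 M + C_1(T^{-\sigma} + \Phi(t)) \Phi(t)$ for some $\sigma > 0$. For $T$ large and $M$ small as in \eqref{e: initial data smallness}, the coefficient in parentheses is less than $1/(2K)$, and a standard continuity argument based on the local well-posedness of \eqref{e: z eqn} in $L^\infty_r$ and the short-time regularity estimate \eqref{e: z small time regularity} gives $\Phi(t) \leq 2 C_0 M$ for all $t > 0$, which implies the claim. The main obstacle is the bookkeeping surrounding $\mathcal{I}_2$: one must track the decay of $z_x$ in $L^1_1$ as part of the bootstrap itself, and verify that the $(s+T)^{-1}$ weight together with the $(1+s)^{-1/2}$ decay from Proposition \ref{p: linear derivative estimates} is sharp enough to compensate the absence of strong spatial localization and the absence of direct temporal decay in this critical-looking term, producing the requisite $T^{-\sigma}$ smallness that ultimately allows the perturbative argument to close.
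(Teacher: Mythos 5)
Your overall strategy --- a norm template encoding the expected decay rates, controlled through the variation of constants formula and sharp linear estimates, closed by a continuity argument --- matches the paper's strategy and is the right one. However, the template you propose has a structural gap that prevents the bootstrap from closing.

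The paper's template \eqref{e: theta def} tracks \emph{three} quantities, not two: $(s+T)^{3/2}\|z(s)\|_{L^\infty_{-1}}$, a weighted $\|z_x(s)\|_{L^1_1}$, \emph{and} a weighted $\|z_x(s)\|_{L^\infty_r}$. Your $\Phi$ drops the third. That third component is not optional: the derivative semigroup estimates of Proposition~\ref{p: linear derivative estimates} (which you invoke to ``run the same input estimates in parallel'' for $z_x$) only accept $L^\infty_r$-type input. There is no long-time bound of the form $\|\partial_x e^{\mcl t}\|_{L^1_1 \to L^1_1}$ in the toolkit --- Lemma~\ref{l: small time derivative estimates} gives one only for $t<2$, and Proposition~\ref{p: full resolvent derivative estimates} is formulated precisely as $L^\infty_r \to L^1_1$ and $L^\infty_r \to L^\infty_r$ because $\langle x\rangle^{-1}$ fails to be integrable, as explained at the start of Section~\ref{s: full resolvent}. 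Concretely, when you differentiate the variation of constants formula and reach
\begin{align*}
\|\partial_x \mathcal{I}_2(t)\|_{L^1_1} \leq C\int_0^t \|\partial_x e^{\mcl(t-s)}\|_{L^\infty_r \to L^1_1}\,\frac{\|z_x(s)\|_{L^\infty_r}}{s+T}\,ds,
\end{align*}
the integrand requires $\|z_x(s)\|_{L^\infty_r}$, which your $\Phi$ does not control, so the step cannot be completed. The paper resolves this by including $(s+T)^{\beta}T^{1/2}\|z_x(s)\|_{L^\infty_r}$ with $\beta = 3/2 - r/2$ in $\Theta$ and then closing the resulting $L^\infty_r$ derivative bootstrap in Proposition~\ref{p: z x large time Linf r}, exploiting the extra $(s+T)^{-1}$ decay in $\mathcal{I}_2$.

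A secondary issue is the weight you place on $\|z_x(s)\|_{L^1_1}$: the factor $(s+T)^{3/2}(1+s)^{1/2}$ grows like $s^2$ for $s \gg T$, which would require $\|z_x(s)\|_{L^1_1} \lesssim s^{-2}$. But the self-similar heuristic \eqref{e: theta heuristics} and the actual outcome of the paper's argument give only $\|z_x(s)\|_{L^1_1} \sim (s+T)^{-1/2}$ (up to $T$-dependent smallness); the paper's weight $T^{1/2}(s+T)^{1/2}$ is calibrated to exactly this. With your weight, $\Phi$ would fail to stay bounded at late times even for the true solution, so the continuity argument would break down regardless of how well you estimate the right-hand side.
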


Note that $z_0 = T^{-3/2} w_0$, where $w_0$ is the initial data for \eqref{e: w eqn}, such that \eqref{e: initial data smallness} only enforces smallness of $\| w_0 \|_{L^\infty_r}$, independent of $T$. 

The remainder of this section is dedicated to proving Theorem  \ref{t: nonlinear decay} by estimating terms in the variation of constants formula. A first attempt would aim to control $(t+T)^{3/2} \| z (t) \|_{L^\infty_{-1}}$, but $z_x$ also enters via the term $\mathcal{I}_2 (t)$. Handling this term requires the most care: in order to obtain $t^{-3/2}$ decay there, one needs $z_x (s)$ to decay in $L^1_1(\R)$, a norm that enforces localization.  To close the argument, we then use the weaker decay of $z_x (s)$ in $L^\infty_r (\R)$ according to the linear estimates in Proposition \ref{p: linear derivative estimates}. We capture this bootstrapping procedure, as well as the small-time regularity of the solution in the norm template
\begin{multline}
\Theta(t) = \sup_{0 < s < t} \bigg[ (s+T)^{3/2} \| z(s) \|_{L^\infty_{-1}} + 1_{\{0 < s < 1\}} T^{3/2} s^{1/2m} \left( \| z_x (s) \|_{L^1_1} + \| z_x (s) \|_{L^\infty_r} \right) \\ + 1_{\{s \geq 1\}} T^{1/2} \left( (s+T)^{1/2}  \| z_x (s) \|_{L^1_1} +  (s+T)^\beta \| z_x (s) \|_{L^\infty_r} \right) \bigg], \label{e: theta def}
\end{multline}
where $\beta = \frac{3}{2} - \frac{r}{2} = \frac{1}{2} - \frac{\mu}{2}$. With the spatio-temporal decay, uniformly in $T$, encoded in $\Theta(t)$, we eventually obtain global-in-time control of $\Theta(t)$ from the following local-in-time estimates. 
\begin{prop}\label{p: theta control}
	Let $r = 2 + \mu$ with $0 < \mu < \frac{1}{8}$. There exist positive constants $C_0, C_1,$ and $C_2$ independent of $z_0 \in L^\infty_r (\R)$ such that 
	\begin{align}
	\Theta(t) \leq C_0 \left( T^{3/2} \| z_0 \|_{L^\infty_r} + T^{-1/2 + 4 \mu} R_0 \right) + \frac{C_1}{T^{1/2 - 4 \mu}} \Theta(t) + C_2 K (B \Theta(t)) \Theta(t)^2,
	\end{align}
    for all $t \in (0, t_*)$, where $B = \| \rho_1 \omega^{-1} \|_{L^\infty}$, with $K$ as in \eqref{e: nonlinearity quadratic norm estimate} and $\rho_1$ given by \eqref{e: rho one sided}. 
\end{prop}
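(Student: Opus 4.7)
The plan is to apply the variation of constants formula \eqref{e: voc} and bound each of the six summands in each of the three norms comprising $\Theta(t)$, so that each contribution lands in one of three categories: a linear-in-data piece producing $C_0(T^{3/2}\|z_0\|_{L^\infty_r} + T^{-1/2+4\mu} R_0)$, a perturbative linear-in-$\Theta(t)$ piece with small prefactor $C_1 T^{-1/2+4\mu}$, or the quadratic piece $C_2 K(B\Theta(t))\Theta(t)^2$.

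I would first handle the homogeneous term $e^{\mcl t}z_0$ using Corollary \ref{c: linear decay L inf r} in the $L^\infty_{-1}$ norm, together with Lemma \ref{l: small time derivative estimates} and Proposition \ref{p: linear derivative estimates} for the $L^1_1$ and $L^\infty_r$ derivative bounds on short- and long-time scales respectively; the weight ratio $(s+T)^{3/2}/(1+s)^{3/2} \leq T^{3/2}$ (valid for $T \geq 1$) produces the $T^{3/2}\|z_0\|_{L^\infty_r}$ prefactor. The residual term $\mathcal{I}_R$ uses the hypothesis $\|R(s)\|_{L^\infty_r} \leq R_0(s+T)^{-1/2+4\mu}$ together with the embedding $L^\infty_r \hookrightarrow L^1_1$ for $r > 2$, then Propositions \ref{p: linear decay L11} and \ref{p: linear derivative estimates}; convolution against the semigroup kernel yields the $T^{-1/2+4\mu} R_0$ contribution. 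The nonlinearity $\mathcal{I}_N$ is handled directly by \eqref{e: nonlinearity quadratic norm estimate}, bounding the integrand in $L^\infty_r$ by $K(B\Theta(t))\Theta(t)^2 (s+T)^{-3/2}$, which integrates through the semigroup to the quadratic piece.

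Largeness of $T$ provides the small parameter for the three perturbative linear terms. For $\mathcal{I}_1$, the coefficient $\omega(\omega^{-1})'+\eta_*$ is compactly supported in $\{x \leq 1\}$, so the integrand is controlled in $L^1_1$ by $\|z(s)\|_{L^\infty_{-1}}$ at no cost in weights, and convolution against the $t^{-3/2}$ kernel of Proposition \ref{p: linear decay L11} yields an integrable $T$-improvement. For $\mathcal{I}_3$, Lemma \ref{l: front versus psi} provides $\|f'(\omega^{-1}\psi)-f'(q_*)\|_{L^\infty_{r+1}} \leq C(s+T)^{-1/2+4\mu}$, which combined with the $(s+T)^{-3/2}\Theta(t)$ decay of $z$ supplies the desired prefactor after semigroup convolution. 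For $\mathcal{I}_2$ I would use the $L^1_1$ bound $\|z_x(s)\|_{L^1_1} \leq T^{-1/2}(s+T)^{-1/2}\Theta(t)$ encoded in $\Theta(t)$, with the short-time $s^{-1/2m}$ analogue for $s < 1$. Derivative estimates on each $\mathcal{I}_j$ are obtained by splitting the time integral: near $s=t$, Lemma \ref{l: small time derivative estimates} handles the $(t-s)^{-1/2m}$ singularity, while far from $s=t$, Proposition \ref{p: linear derivative estimates} provides decay rates $(t-s)^{-1/2}$ into $L^1_1$ and $(t-s)^{-\beta}$ into $L^\infty_r$.

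The main obstacle is closing the self-referential bootstrap. Bounding $\|z(s)\|_{L^\infty_{-1}}$ through $\mathcal{I}_2$ consumes the $L^1_1$ derivative bound, but bounding $\|z_x\|_{L^1_1}$ from $\partial_x \mathcal{I}_2$ transfers the derivative onto the semigroup and yields only the decay rate $(t-s)^{-1/2}$, forcing us to simultaneously control the weaker $L^\infty_r$ derivative norm (decaying only as $(s+T)^{-\beta}$ with $\beta = (1-\mu)/2$) in order to interpolate and close the system. Meticulous tracking of $T$-dependence throughout is essential, so that every non-data, non-quadratic contribution carries an absorbing factor $T^{-1/2+4\mu}$, ready to be made small by taking $T$ large in the subsequent continuation step.
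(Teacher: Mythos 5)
Your proposal follows the same strategy as the paper: the paper establishes Proposition~\ref{p: theta control} by bounding each of the six variation-of-constants terms in the three weighted norms entering $\Theta(t)$, precisely the decomposition you describe, with the same assignment of linear estimates (Corollary~\ref{c: linear decay L inf r} for $L^\infty_{-1}$, Proposition~\ref{p: linear derivative estimates} and Lemma~\ref{l: small time derivative estimates} split by time scale for the derivative norms), the same use of the support of $\omega(\omega^{-1})'+\eta_*$ for $\mathcal{I}_1$, Lemma~\ref{l: front versus psi} for $\mathcal{I}_3$, estimate~\eqref{e: nonlinearity quadratic norm estimate} for $\mathcal{I}_N$, and the same recognition that $\mathcal{I}_2$ forces simultaneous control of $\|z_x\|_{L^1_1}$ and $\|z_x\|_{L^\infty_r}$. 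One small inaccuracy: you describe the closing step as an ``interpolation,'' but the bootstrap in fact terminates because in $\partial_x \mathcal{I}_2(t)$, estimated in $L^\infty_r$, the extra $(s+T)^{-1}$ factor coming from the $\mathcal{I}_2$ coefficient compensates for the weaker $(s+T)^{-\beta}$ decay of $\|z_x(s)\|_{L^\infty_r}$, so that Proposition~\ref{p: z x large time Linf r} closes on its own rather than via interpolation between the two derivative norms. This is a minor imprecision of language; the overall architecture and all the essential ingredients match the paper's.
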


We break the proof of this proposition into several parts according to the the estimates on $z(t)$ versus $z_x(t)$, and to the different terms in the variation of constants formula \eqref{e: voc}. For the remainder of this section, we fix $0 < \mu < \frac{1}{8}$, $z_0 \in L^\infty_r (\R)$, and $T$ large, and let $t_*$ be the maximal existence time of $z(t)$ in $L^\infty_{-1} (\R)$. Unless otherwise noted, constants in this section are independent of $z_0 \in L^\infty_r(\R)$ and $T \geq T_*$. 

\subsection{Heuristics for $\Theta(t)$}\label{s: theta heuristics}
For $x$ large, the equation for $w$ at least formally resembles the equation $\NL[v] = 0$ considered in Section \ref{s: approximate solution}. Revisiting the scaling variables analysis therein (compare \eqref{e: psi plus formula}), we expect $w$ to develop a diffusive tail as well, so that to leading order, at large $x$, 
\begin{align}
    w(x,t) \sim c_0 (x+a) e^{-(x+a)^2/[4(t+T)]} 
\end{align}
for some constant $c_0$ depending on the initial data. Since $z(x,t) = (t+T)^{-3/2} w(x,t)$, this yields
\begin{align}
    z(x,t) \sim c_0 \frac{x+a}{(t+T)^{3/2}} e^{-(x+a)^2/[4 (t+T)]} \label{e: theta heuristics}
\end{align}
for $x$ large. We expect dynamics to be driven by this diffusive tail and infer the decay rates of $z(x,t)$ from this approximation. Indeed, we see decay in $L^\infty_{-1} (\R_+)$ with rate $(t+T)^{-3/2}$ in the right hand
side of \eqref{e: theta heuristics}, and decay of the derivative in $L^1_1(\R)$ with rate $(t+T)^{-1/2}$, as captured in \eqref{e: theta def}.

However, we have $\| z_0 \|_{L^\infty_r} \lesssim T^{-3/2}$, for the initial data, which we wish to encode in the constant $c_0$. For small times, we expect to retain this $T^{-3/2}$ estimates at the price of some blowup in $t$ according to small-time regularity estimates in Section \ref{s: linear estimates}, which we capture in the terms in \eqref{e: theta def} involving $0 < s <1$. We then track how this smallness of the initial data propagates to large times by incorporating $1_{s \geq 1} (s+T)^{1/2} T^{\tilde{\beta}} \| z_x (s)\|_{L^1_1}$ into the definition of $\Theta(t)$, leaving $\tilde{\beta}$ free at first. Throughout the course of the proof (see in particular Proposition \ref{p: z x large time L11}), we find that $\tilde{\beta} = \frac{1}{2}$ is the optimal choice, and this control just suffices to close our argument. 

At this point, we can also identify why we needed sharp $L^1$-$L^\infty$ estimates to replace the $L^2$-based linear estimates of $\cite{AveryScheel}$. In order to control $\mathcal{I}_2(t)$ and obtain $t^{-3/2}$ decay of $z(t)$, we need the integrand in \eqref{e: I2 def} to be bounded by $(t-s)^{-3/2} (s+T)^{-3/2}$. In order to extract $(t-s)^{-3/2}$ decay from $e^{\mcl (t-s)}$, we have to control $z_x (s)$ in a space $X$ for which an estimate $\| e^{\mcl t} \|_{X \to L^\infty_{-1}} \leq C t^{-3/2}$ holds. From Proposition \ref{p: asymptotic resolvent Lipschitz estimate}, we see that the weakest such norm on the scale of algebraically weighted $L^p$ spaces is $L^1_1(\R)$. We need this weakest norm, since we also have to extract decay from $z_x (s)$ to obtain the $(s+T)^{-3/2}$ estimate. Indeed, for $r > 2$ we have by Proposition \ref{p: linear derivative estimates}
\begin{align*}
    \| \partial_x e^{\mcl t} \|_{L^\infty_r \to L^\infty_r} \leq \frac{C}{t^{3/2 - r/2}},
\end{align*}
a decay is strictly slower than $t^{-1/2}$, which would not enable us to obtain the necessary $(s+T)^{-3/2}$ decay in the integrand and close the argument. This obstruction remains if we replace $L^\infty_r (\R), r > 2$, with the corresponding $L^2$-based localization, $L^2_{\tilde{r}} (\R), \tilde{r} > \frac{3}{2}$. Measuring the derivative instead in $L^1_1(\R)$ gives the sharp $t^{-1/2}$ estimate which suffices to close the estimates on $z(t)$. 

The proof now proceeds by estimating norms of $z$ and $z_x$ through the variation-of-constant formula invoking $\Theta$ to control the right-hand side.

\subsection{Control of $z(t)$}
We start with estimates on $z(t)$.
\begin{prop}[Estimates on $z(t)$]\label{p: z control}
	There exist constants $C_0, C_1$ and $C_2 > 0$ such that 
	\begin{align}
	(t+T)^{3/2} \| z(t) \|_{L^\infty_{-1}} \leq C_0 \left( T^{3/2} \| z_0 \|_{L^\infty_r} + T^{-1/2 + 4\mu} R_0 \right) + \frac{C_1}{T^{1/2 - 4 \mu}} \Theta(t) + C_2 K(B \Theta(t)) \Theta(t)^2
	\end{align}
	for all $t \in (0, t_*)$. 
\end{prop}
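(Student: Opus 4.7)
The plan is to apply the variation of constants formula \eqref{e: voc} and estimate each of the six resulting terms in $L^\infty_{-1}$ separately, identifying which contribution on the right-hand side each feeds: $e^{\mcl t} z_0$ and $\mathcal{I}_R(t)$ produce the $C_0$ term, the linear perturbations $\mathcal{I}_1(t), \mathcal{I}_2(t), \mathcal{I}_3(t)$ produce the $C_1 T^{-1/2+4\mu}\Theta(t)$ term, and $\mathcal{I}_N(t)$ produces the quadratic contribution.

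For $e^{\mcl t} z_0$, Corollary \ref{c: linear decay L inf r} gives $\|e^{\mcl t} z_0\|_{L^\infty_{-1}} \leq C(1+t)^{-3/2}\|z_0\|_{L^\infty_r}$, and multiplying by $(t+T)^{3/2}$ yields the bound $C T^{3/2}\|z_0\|_{L^\infty_r}$, verified separately for $t \leq T$ and $t \geq T$. For $\mathcal{I}_R(t)$, the same corollary combined with $\|R(s)\|_{L^\infty_r} \leq R_0 (s+T)^{-1/2+4\mu}$ yields an integrand bounded by $C R_0 (1+t-s)^{-3/2}(s+T)^{-2+4\mu}$; I split the integral at $s=t/2$, bound $(1+t-s)^{-3/2}$ and $(s+T)^{-2+4\mu}$ by their respective maxima on each subinterval, and multiply by $(t+T)^{3/2}$ to obtain $C R_0 T^{-1/2+4\mu}$. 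For $\mathcal{I}_N(t)$, the hypothesis of \eqref{e: nonlinearity quadratic norm estimate} is verified by $\|\omega^{-1}z(s)\|_{L^\infty} \leq B \|z(s)\|_{L^\infty_{-1}} \leq B\Theta(t)(s+T)^{-3/2}$ with $B = \|\rho_1 \omega^{-1}\|_{L^\infty}$; the integrand is then bounded in $L^\infty_r$ by $C K(B\Theta(t))\Theta(t)^2 (s+T)^{-3/2}$, which via Corollary \ref{c: linear decay L inf r} integrates to the desired quadratic contribution.

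The estimates for $\mathcal{I}_1(t)$ and $\mathcal{I}_3(t)$ exploit structural cancellations. In $\mathcal{I}_1(t)$, the multiplier $\omega(\omega^{-1})' + \eta_*$ vanishes for $x \geq 1$, so $\|(\omega(\omega^{-1})'+\eta_*)z(s)\|_{L^\infty_r} \leq C\|z(s)\|_{L^\infty_{-1}}$ since its bounded support lies where $\rho_r$ and $\rho_{-1}^{-1}$ remain uniformly bounded; the resulting integrand $C\Theta(t)(1+t-s)^{-3/2}(s+T)^{-5/2}$ integrates, after multiplication by $(t+T)^{3/2}$, to $CT^{-1}\Theta(t) \leq CT^{-1/2+4\mu}\Theta(t)$. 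In $\mathcal{I}_3(t)$, Lemma \ref{l: front versus psi} delivers $|f'(\omega^{-1}\psi) - f'(q_*)| \leq C(s+T)^{-1/2+4\mu}\langle x\rangle^{-r-1}$, so the extra $\langle x\rangle^{-1}$ converts $L^\infty_{-1}$ control of $z$ into $L^\infty_r$ control of the product at the cost of a factor $(s+T)^{-1/2+4\mu}$; the resulting template integral matches that for $\mathcal{I}_R(t)$ up to $\Theta(t)$ in place of $R_0$, so the same splitting argument yields $CT^{-1/2+4\mu}\Theta(t)$.

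The most delicate estimate is for $\mathcal{I}_2(t)$, which contains $z_x$, tracked by $\Theta(t)$ in both $L^1_1$ (for good decay at large $s$) and $L^\infty_r$ (for regularity at small $s$), both controls being needed simultaneously. I plan to split the time integral into $[0,1]$, $[1, t-1]$, and $[t-1, t]$. On $[0,1]$, the small-time bound $\|z_x(s)\|_{L^1_1} \leq \Theta(t)/(T^{3/2} s^{1/2m})$ combined with the $(t-s)^{-3/2}$ decay of $\|e^{\mcl(t-s)}\|_{L^1_1 \to L^\infty_{-1}}$ from Proposition \ref{p: linear decay L11} integrates directly, since $s^{-1/2m}$ is integrable on $[0,1]$ for $m \geq 1$. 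On $[1, t-1]$, the bound $\|z_x(s)\|_{L^1_1} \leq \Theta(t) T^{-1/2}(s+T)^{-1/2}$ together with Proposition \ref{p: linear decay L11} produces the integral $\int_1^{t-1} (t-s)^{-3/2}(s+T)^{-3/2} ds$, which I tighten by exploiting that on the subregion where $s+T \lesssim T$ the integrand is bounded by $C t^{-3/2} T^{-3/2}$; integration yields a contribution of order $t^{-1/2} T^{-3/2}$, which after multiplication by $(t+T)^{3/2}$ produces the desired $C T^{-1/2+4\mu}\Theta(t)$. Finally, on $[t-1,t]$ I invoke the regularizing $L^1 \to L^\infty$ estimate $\|e^{\mcl(t-s)}\|_{L^1 \to L^\infty} \leq C(t-s)^{-1/(2m)}$ from Lemma \ref{l: regularity L1 Linf} to bypass the non-integrable $(t-s)^{-3/2}$ singularity near $s = t$. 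The main obstacle is this bookkeeping: the norm $\Theta(t)$ is calibrated precisely, as discussed in Subsection \ref{s: theta heuristics}, so that the decay rates of $z_x$ in $L^1_1$ and $L^\infty_r$ just suffice to close the estimate with prefactor $CT^{-1/2+4\mu}$, rather than picking up unwanted growth in $t$ or $T$.
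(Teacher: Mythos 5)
Your overall structure follows the paper's proof closely: you feed the terms in the variation of constants formula into the three contributions in the claimed inequality, and your estimates for $e^{\mcl t}z_0$, $\mathcal{I}_R$, $\mathcal{I}_N$, $\mathcal{I}_1$, and $\mathcal{I}_3$ are all correct and essentially identical to Lemmas \ref{l: I1 I3 estimates}--\ref{l: I2 estimates} (the paper packages the repeated convolution bound into Lemma \ref{l: t minus s integral estimate}, which is a presentational difference only). The issue is in your treatment of $\mathcal{I}_2(t)$ on $[1,t-1]$.

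Your stated intermediate estimate for that piece is incorrect. You claim that the subregion $s+T \lesssim T$ produces a contribution of order $t^{-1/2}T^{-3/2}$ to $\int_1^{t-1}(t-s)^{-3/2}(s+T)^{-3/2}\,ds$ and that this suffices. First, the bound $(t-s)^{-3/2}(s+T)^{-3/2}\leq Ct^{-3/2}T^{-3/2}$ on that subregion requires $s\leq t/2$ as well; for $s$ close to $t-1$ (which can fall inside $\{s+T\lesssim T\}$ when $t\lesssim T$) the factor $(t-s)^{-3/2}$ is $O(1)$, not $O(t^{-3/2})$. Second, and more importantly, the subregion you analyze is not the dominant contribution for large $t$: the piece near $s=t-1$, where $(t-s)^{-3/2}$ is largest but $(s+T)^{-3/2}\approx(t+T)^{-3/2}$, contributes $\mathrm{O}((t+T)^{-3/2})$ to the integral; for $t\gtrsim T$ this exceeds $t^{-1/2}T^{-3/2}$ by an arbitrarily large factor. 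Your derivation as written would also blow up in $t$ if you actually carried through the stated intermediate bound (e.g., $(t+T)^{3/2}T^{-1/2}\cdot t^{-1/2}T^{-3/2}\sim tT^{-2}$ for $t\gg T$). The final claimed bound $CT^{-1/2+4\mu}\Theta(t)$ is correct, but the argument you give does not reach it: you need a genuine four-way split (as the paper does, at $0,1,t/2,t-1,t$) so that on $[1,t/2]$ you may use $(t-s)^{-3/2}\leq Ct^{-3/2}$ and bound the remaining integral $\int_1^{t/2}(s+T)^{-3/2}\,ds$ by $C\min(tT^{-3/2},T^{-1/2})$ (this distinction, rather than the crude $T^{-1/2}$ tail bound, is what prevents $T$-growth for $t\lesssim T$), while on $[t/2,t-1]$ you instead pull out $(s+T)^{-3/2}\leq C(t+T)^{-3/2}$ and integrate $(t-s)^{-3/2}$. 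Finally, note that your three-interval split only makes sense for $t\geq 2$; you should also address $0<t<2$ separately, as the paper does.
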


In the following, we estimate each term in the variation of constants formula. We prepare the proof with several lemmas. Throughout, we repeatedly use the following elementary inequality. 

\begin{lemma}\label{l: t minus s integral estimate}
	Let $\alpha \geq \frac{3}{2}$. There exists a constant $C >0$ such that for all $t > 0$, 
	\begin{align}
	(t+T)^{3/2} \int_0^t \frac{1}{(1+t-s)^{3/2}} \frac{1}{(s+T)^\alpha} \, ds \leq \frac{C}{T^{\alpha - 3/2}}.
	\end{align}
\end{lemma}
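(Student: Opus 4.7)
The plan is a routine splitting-and-estimating argument, but care is needed to extract the correct power of $T$ and to avoid picking up an extra factor of $T$ in the regime where $t$ is comparable to $T$. I would split into two regimes depending on the relative size of $t$ and $T$.

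First, for the easy regime $t \leq T$, I would use the crude bound $(s+T)^{-\alpha} \leq T^{-\alpha}$ on the whole interval $[0,t]$, pull this out of the integral, and compute
\[
\int_0^t \frac{1}{(1+t-s)^{3/2}} \, ds = \int_0^t \frac{1}{(1+u)^{3/2}} \, du \leq 2.
\]
Since $(t+T)^{3/2} \leq (2T)^{3/2}$ in this regime, the left-hand side is bounded by $C T^{3/2} \cdot T^{-\alpha} = C T^{3/2-\alpha}$, as desired.

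For the harder regime $t > T$, I would split the $s$-integral at the midpoint $s = t/2$. On $[0, t/2]$ one has $1+t-s \geq 1+t/2$, so the kernel factor contributes $(1+t)^{-3/2}$, and the remaining $(s+T)^{-\alpha}$-integral is bounded (using $\alpha \geq 3/2 > 1$) by $\tfrac{1}{\alpha-1} T^{1-\alpha}$. Multiplying by $(t+T)^{3/2} \leq (2t)^{3/2}$ gives a bound of order $T^{1-\alpha} \leq T^{3/2-\alpha}$ (the last inequality uses $T \geq 1$). On $[t/2, t]$ one instead bounds $(s+T)^{-\alpha}$ by its value at the left endpoint, using $s+T \geq t/2 + T \geq (t+T)/4$ to get $(s+T)^{-\alpha} \leq 4^\alpha (t+T)^{-\alpha}$, and then integrates the kernel factor to a constant. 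Multiplying by $(t+T)^{3/2}$ yields $C (t+T)^{3/2 - \alpha}$, which since $3/2 - \alpha \leq 0$ is bounded by $C T^{3/2-\alpha}$.

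The only mild subtlety, which is the ``main obstacle'' insofar as this lemma has one, is ensuring that in the midpoint-split regime one does not pick up the extra factor of $T$ that would appear from a careless bound like $(t+T)^{3/2}/(1+t)^{3/2} \lesssim T^{3/2}$ when $t$ is small; the fix is precisely to separate the regime $t \leq T$ and handle it by the cruder estimate above, so that the midpoint splitting is only used once $t > T$ where $(t+T)^{3/2} \asymp t^{3/2}$. Combining the two regimes yields the stated bound with a constant $C$ depending only on $\alpha$.
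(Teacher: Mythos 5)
Your proof is correct. Both you and the paper split the integral at $s=t/2$ and distinguish the regimes $t\lesssim T$ and $t\gtrsim T$, exploiting $\alpha\geq 3/2$ to get the integrability and the correct power of $T$; the underlying ingredients are identical. You organize the case analysis in the opposite order (first $t\leq T$ versus $t>T$, then split at the midpoint only in the harder regime), which lets you dispense with the $t\leq T$ case by the crude bound $(s+T)^{-\alpha}\leq T^{-\alpha}$ before any midpoint splitting. This sidesteps the paper's substitution $\tau=s/T$ and its Taylor-expansion estimate $\int_0^{t/(2T)}(1+\tau)^{-\alpha}\,d\tau\lesssim t/T$, making your version a bit shorter. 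One small point worth making explicit: your bounds, like the paper's, require $T\geq 1$ (you use it both for $T^{1-\alpha}\leq T^{3/2-\alpha}$ and, in the $t>T$ regime, for $(1+t/2)\gtrsim t$). This is consistent with the paper, which everywhere takes $T$ sufficiently large, but since the lemma as stated does not mention $T$ you might flag that assumption.
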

\begin{proof}
	We split the integral into two pieces, 
	\begin{equation*}
	(t+T)^{3/2} \int_0^t  \frac{1}{(1+t-s)^{3/2}} \frac{1}{(s+T)^{\alpha}} \, ds = (t+T)^{3/2} \left(\int_0^{t/2} + \int_{t/2}^t  \right)\frac{1}{(1+t-s)^{3/2}} \frac{1}{(s+T)^{\alpha}} \, ds 
	\end{equation*}
	In the second integral, $s \geq t/2$, and so 
	\begin{align*}
	(t+T)^{3/2} \int_{t/2}^t  \frac{1}{(1+t-s)^{3/2}} \frac{1}{(s+T)^{\alpha}} \, ds  &\leq C \frac{(t+T)^{3/2}}{(t+T)^{\alpha}} \int_{t/2}^t \frac{1}{(1+t-s)^{3/2}} \, ds \\
	&= \frac{C}{(t+T)^{\alpha-3/2}} \int_0^{t/2} \frac{1}{(1+\tau)^{3/2}} \, d \tau \\
	&\leq \frac{C}{T^{\alpha-3/2}}
	\end{align*}
	since $\tau \mapsto (1+\tau)^{-3/2}$ is integrable on $[0, \infty]$. In the first integral, on $[0, t/2]$, we use $t-s \geq t/2$, 
	\begin{align*}
	(t+T)^{3/2} \int_0^{t/2} \frac{1}{(1+t-s)^{3/2}} \frac{1}{(s+T)^{\alpha}} \, ds &\leq C \frac{(t+T)^{3/2}}{(1+t)^{3/2}} \int_0^{t/2} \frac{1}{(s+T)^{\alpha}} \, ds \\
	&= C \frac{(t+T)^{3/2}}{(1+t)^{3/2}} T^{-\alpha} \int_0^{t/2} \frac{1}{(1+\frac{s}{T})^{\alpha}} \, ds \\
	&= C \frac{(t+T)^{3/2}}{(1+t)^{3/2}} T^{-\alpha+1} \int_0^{t/(2T)} \frac{1}{(1+\tau)^{\alpha}} \, d \tau, 
	\end{align*}
	where we substituted $\tau = \frac{s}{T}$. By Taylor's theorem, there exists $C > 0$ such that for $t \leq T$, we have 
	\begin{align*}
	\int_0^{t/(2T)} \frac{1}{(1+\tau)^{\alpha}} \, d \tau \leq C \frac{t}{T},
	\end{align*}
	since the left-hand side is a smooth function that vanishes at $t = 0$. Hence for $t \leq T$, we have 
	\begin{align*}
	(t+T)^{3/2} \int_0^{t/2} \frac{1}{(1+t-s)^{3/2}} \frac{1}{(s+T)^{\alpha}} \, ds \leq C \frac{(t+T)^{3/2}}{(1+t)^{3/2}} T^{-\alpha} t \leq \frac{C}{T^{\alpha-3/2}} \frac{t}{(1+t)^{3/2}} \leq \frac{C}{T^{\alpha-3/2}}.  
	\end{align*}
	For $t \geq T$, we write 
	\begin{align*}
	\left( \frac{t+T}{1+t} \right)^{3/2} = \left( 1 + \frac{T-1}{1+t} \right)^{3/2} \leq C, 
	\end{align*}
	and so in this case 
	\begin{align*}
	(t+T)^{3/2} \int_0^{t/2} \frac{1}{(1+t-s)^{3/2}} \frac{1}{(s+T)^{\alpha}} \, ds &\leq  C \frac{(t+T)^{3/2}}{(1+t)^{3/2}} T^{-\alpha+1} \int_0^{t/(2T)} \frac{1}{(1+\tau)^{\alpha}} \, d \tau \\
	&\leq C T^{-\alpha+1} \int_0^\infty \frac{1}{(1 + \tau)^{5/2}} \, d \tau \\ 
	&\leq C T^{-\alpha+1} \\
	&\leq\frac{C}{T^{\alpha-3/2}},
	\end{align*}
	which completes the proof of the lemma. 
\end{proof}

\begin{lemma}[Estimates on $\mathcal{I}_1(t)$ and $\mathcal{I}_3 (t)$] \label{l: I1 I3 estimates}
	There exists a constant $C > 0$ such that 
	\begin{align}
	(t+T)^{3/2} \| \mathcal{I}_1 (t) + \mathcal{I}_3 (t) \|_{L^\infty_{-1}} \leq \frac{C}{T^{1/2 - 4 \mu}} \Theta(t) 
	\end{align}
	for all $t \in (0, t_*)$. 
\end{lemma}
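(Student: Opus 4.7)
The plan is to handle both integrals by a common scheme: first bound the integrand in $L^\infty_r(\R)$, then apply Corollary \ref{c: linear decay L inf r} to extract $(1+t-s)^{-3/2}$ decay from the semigroup acting on $L^\infty_r \to L^\infty_{-1}$, and finally invoke Lemma \ref{l: t minus s integral estimate} to distill the prefactor $(t+T)^{-3/2}$ together with a small $T$-dependent factor. Throughout, we use the pointwise control $|z(s,x)| \leq \rho_{-1}^{-1}(x) \| z(s)\|_{L^\infty_{-1}}$ together with $\| z(s)\|_{L^\infty_{-1}} \leq (s+T)^{-3/2}\Theta(s) \leq (s+T)^{-3/2}\Theta(t)$, which follows from the definition of $\Theta$ and its monotonicity in $t$.

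For $\mathcal{I}_1(t)$ the essential observation, already flagged in Section \ref{s: outlook}, is that $\omega(\omega^{-1})'(x) + \eta_* \equiv 0$ for $x \geq 1$ by \eqref{e: weight critical} and equals $\eta_*$ for $x \leq -1$. Hence this coefficient is bounded in $x$ and supported where the weight $\rho_r$ does not grow, so no weight inflation occurs when taking the $L^\infty_r$ norm. Combining with $|z(s,x)| \leq \|z(s)\|_{L^\infty_{-1}}$ on $\{x \leq 1\}$ yields
\begin{equation*}
\left\| (\omega(\omega^{-1})' + \eta_*) \tfrac{z(s)}{s+T} \right\|_{L^\infty_r} \leq \frac{C}{s+T} \|z(s)\|_{L^\infty_{-1}} \leq \frac{C\,\Theta(t)}{(s+T)^{5/2}}.
\end{equation*}
Corollary \ref{c: linear decay L inf r} followed by Lemma \ref{l: t minus s integral estimate} with $\alpha = 5/2$ then gives $(t+T)^{3/2}\|\mathcal{I}_1(t)\|_{L^\infty_{-1}} \leq C\Theta(t)/T$, which is much stronger than the claim.

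For $\mathcal{I}_3(t)$, Lemma \ref{l: front versus psi} provides $\|q_* - \omega^{-1}\psi(s)\|_{L^\infty_{r+1}} \leq C(s+T)^{-1/2+4\mu}$. Since $q_*$ and $\omega^{-1}\psi$ are uniformly bounded and $f'$ is smooth, Taylor's theorem yields the pointwise bound $|f'(\omega^{-1}\psi(s,x)) - f'(q_*(x))| \leq C\rho_{r+1}^{-1}(x)(s+T)^{-1/2+4\mu}$. The key algebra is then that for $x \geq 1$ we have $\rho_r(x)\,\langle x\rangle\,\rho_{r+1}^{-1}(x) = 1$, while for $x \leq -1$ both $\rho_r$ and $\rho_{-1}^{-1}$ are bounded; combining with $|z(s,x)| \leq \rho_{-1}^{-1}(x)\|z(s)\|_{L^\infty_{-1}}$ gives
\begin{equation*}
\left\|(f'(\omega^{-1}\psi(s)) - f'(q_*))\,z(s)\right\|_{L^\infty_r} \leq C(s+T)^{-1/2+4\mu}\,\|z(s)\|_{L^\infty_{-1}} \leq \frac{C\,\Theta(t)}{(s+T)^{2-4\mu}}.
\end{equation*}
Since $\mu < 1/8$ guarantees $\alpha := 2 - 4\mu \geq 3/2$, Corollary \ref{c: linear decay L inf r} and Lemma \ref{l: t minus s integral estimate} apply and yield $(t+T)^{3/2}\|\mathcal{I}_3(t)\|_{L^\infty_{-1}} \leq C\Theta(t)/T^{1/2 - 4\mu}$.

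The only delicate point is the exact spatial accounting for $\mathcal{I}_3$: we need the coefficient to decay at precisely the rate $\rho_{r+1}^{-1}$ so as to absorb the linear growth that $L^\infty_{-1}$-control of $z$ permits on the right and leave integrable weight against $\rho_r$; this is exactly the localization supplied by Lemma \ref{l: front versus psi}, and is the reason we insisted on the weight $r+1$ there. Summing the two contributions completes the proof.
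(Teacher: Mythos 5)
Your proof is correct and takes essentially the same approach as the paper: both exploit the compact support of $\omega(\omega^{-1})'+\eta_*$ for $\mathcal{I}_1$, use Lemma~\ref{l: front versus psi} together with the weight algebra $\|fg\|_{L^\infty_r}\lesssim\|f\|_{L^\infty_{r+1}}\|g\|_{L^\infty_{-1}}$ for $\mathcal{I}_3$, and then feed the resulting $(s+T)^{-\alpha}$ bounds through Corollary~\ref{c: linear decay L inf r} and Lemma~\ref{l: t minus s integral estimate}. Your spelling out of the pointwise weight cancellation $\rho_r\langle x\rangle\rho_{r+1}^{-1}=1$ is just an explicit rendering of the product estimate the paper applies in norm form.
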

\begin{proof}
	Using the linear decay from Corollary \ref{c: linear decay L inf r}, we have 
	\begin{align*}
	(t+T)^{3/2} \| \mathcal{I}_1 (t) \|_{L^\infty_{-1}} \leq C (t+T)^{3/2} \int_0^t \frac{1}{(1+t-s)^{3/2}} \frac{1}{s+T} \| (\omega (\omega^{-1})' + \eta_*) z(s) \|_{L^\infty_r} \, ds. 
	\end{align*}
	Since $\omega (\omega^{-1})' + \eta_*$ is identically zero on the right, we can absorb any algebraic weights into this factor, so that in particular
	\begin{align*}
	\| (\omega (\omega^{-1})' + \eta_*) z(s) \|_{L^\infty_r} \leq C \| z (s) \|_{L^\infty_{-1}}.
	\end{align*}
	By the definition of $\Theta(t)$, we then have 
	\begin{align*}
	\| z(s) \|_{L^\infty_{-1}} \leq (s+T)^{-3/2} \Theta(s) \leq (s+T)^{-3/2} \Theta(t) 
	\end{align*}
	for $0 < s \leq t$, and so 
	\begin{align*}
	(t+T)^{3/2} \| \mathcal{I}_1(t) \|_{L^\infty_{-1}} &\leq C (t+T)^{3/2} \int_0^t \frac{1}{(1+t-s)^{3/2}} \frac{1}{s+T} \| z(s) \|_{L^\infty_{-1}} \, ds \\ 
	&\leq C \Theta(t) (t+T)^{3/2} \int_0^t \frac{1}{(1+t-s)^{3/2}} \frac{1}{(s+T)^{5/2}} \, ds.
	\end{align*}
	By Lemma \ref{l: t minus s integral estimate}, we have 
	\begin{align*}
	(t+T)^{3/2} \int_0^t \frac{1}{(1+t-s)^{3/2}} \frac{1}{(s+T)^{5/2}} \, ds \leq \frac{C}{T}
	\end{align*}
	for all $t > 0$, so that
	\begin{align*}
	(t+T)^{3/2} \| \mathcal{I}_1 (t) \|_{L^\infty_{-1}} \leq \frac{C}{T} \Theta(t). 
	\end{align*}
	
	We next turn to $\mathcal{I}_3(t)$. With  the linear decay estimate in Corollary \ref{c: linear decay L inf r} we have 
	\begin{align*}
	(t+T)^{3/2} \| \mathcal{I}_3 (t) \|_{L^\infty_{-1}} &\leq C (t+T)^{3/2} \int_0^t \frac{1}{(1+t-s)^{3/2}} \| (f'(\omega^{-1} \psi(s)) - f'(q_*)) z(s) \|_{L^\infty_r} \, ds \\
	&\leq C (t+T)^{3/2} \int_0^t \frac{1}{(1+t-s)^{3/2}} \| f'(\omega^{-1} \psi(s)) - f'(q_*) \|_{L^\infty_{r+1}} \| z(s) \|_{L^\infty_{-1}} \, ds.  
	\end{align*}
	By Lemma \ref{l: front versus psi} and the fact that $f$ is smooth, 
	\begin{align*}
	\| f'(\omega^{-1} \psi(s)) - f'(q_*) \|_{L^\infty_{r+1}} \leq C \| \omega^{-1} \psi(s) - q_* \|_{L^\infty_{r+1}} \leq \frac{C}{(s+T)^{1/2-4\mu}},
	\end{align*}
	so that 
	\begin{align*}
	(t+T)^{3/2} \| \mathcal{I}_3 (t) \|_{L^\infty_{-1}} \leq C (t+T)^{3/2} \Theta(t) \int_0^t \frac{1}{(1+t-s)^{3/2}} \frac{1}{(s+T)^{2 - 4 \mu}} \, ds,
	\end{align*}
	using also the definition of $\Theta(t)$ to extract a factor of $(s+T)^{-3/2}$ from $\| z(s) \|_{L^\infty_{-1}}$. By Lemma \ref{l: t minus s integral estimate}, we have 
	\begin{align*}
	(t+T)^{3/2} \int_0^t \frac{1}{(1+t-s)^{3/2}} \frac{1}{(s+T)^{2-4\mu}} \, ds \leq \frac{C}{T^{1/2 - 4 \mu}},
	\end{align*}
	and hence 
	\begin{align*}
	(t+T)^{3/2} \| \mathcal{I}_3 (t) \|_{L^\infty_{-1}} \leq \frac{C}{T^{1/2-4\mu}} \Theta(t),
	\end{align*}
	which completes the proof of the lemma. 
\end{proof}

\begin{lemma}[Estimates on $\mathcal{I}_R (t)$]
	Let $r = 2 + \mu$. There exists a constant $C > 0$ such that 
	\begin{align}
	(t+T)^{3/2} \| \mathcal{I}_R (t) \|_{L^\infty_{-1}} \leq \frac{C}{T^{1/2-4\mu}} R_0
	\end{align}
	for all $t \in (0, t_*)$. 
\end{lemma}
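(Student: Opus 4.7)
The plan is straightforward and parallels the structure of Lemma 5.5 (the $\mathcal{I}_1, \mathcal{I}_3$ estimates): estimate the integrand of $\mathcal{I}_R(t)$ pointwise in $s$ via the linear decay estimate, extract the decay of $R(s)$ from the definition of $R_0$, and then apply the integral estimate of Lemma~5.4.

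First I would apply Corollary~4.2 (the decay estimate $\|e^{\mcl t}g\|_{L^\infty_{-1}} \leq C(1+t)^{-3/2}\|g\|_{L^\infty_r}$ for $r=2+\mu$) inside the integral defining $\mathcal{I}_R(t)$, giving
\[
\|\mathcal{I}_R(t)\|_{L^\infty_{-1}} \leq C \int_0^t \frac{1}{(1+t-s)^{3/2}}\,(s+T)^{-3/2}\,\|R(s;T)\|_{L^\infty_r}\,ds.
\]
By the very definition of $R_0$ in \eqref{e: R0 def}, we have $\|R(s;T)\|_{L^\infty_r} \leq R_0 (s+T)^{-1/2+4\mu}$, so the integrand is bounded by $R_0 (1+t-s)^{-3/2}(s+T)^{-2+4\mu}$.

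Multiplying by $(t+T)^{3/2}$ and invoking Lemma~5.4 with exponent $\alpha = 2 - 4\mu$ (note $\alpha \geq 3/2$ since $\mu < \tfrac{1}{8}$ implies $4\mu < \tfrac{1}{2}$, so the hypothesis of Lemma~5.4 is satisfied), we get
\[
(t+T)^{3/2}\int_0^t \frac{1}{(1+t-s)^{3/2}}\,\frac{1}{(s+T)^{2-4\mu}}\,ds \leq \frac{C}{T^{1/2-4\mu}},
\]
which immediately yields $(t+T)^{3/2}\|\mathcal{I}_R(t)\|_{L^\infty_{-1}} \leq C R_0 T^{-1/2+4\mu}$, as claimed.

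There is no real obstacle here: the proof is a direct application of the already-established linear decay estimate and the elementary convolution lemma, and the bookkeeping of exponents works because $\mu < \tfrac{1}{8}$ was chosen precisely so that the residual decay $(s+T)^{-1/2+4\mu}$ combines with the factor $(s+T)^{-3/2}$ to yield an integrable-with-enough-margin weight. The only point that merits a brief sanity check is the strictness $\alpha \geq \tfrac{3}{2}$ required by Lemma~5.4, which is exactly the borderline role played by the assumption $\mu < \tfrac{1}{8}$.
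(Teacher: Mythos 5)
Your proof is correct and follows the paper's argument exactly: apply Corollary~\ref{c: linear decay L inf r} inside the integral, use the definition of $R_0$ to bound $\|R(s;T)\|_{L^\infty_r}$, and close with Lemma~\ref{l: t minus s integral estimate} at exponent $\alpha = 2 - 4\mu \geq 3/2$. The bookkeeping of exponents and the verification of the hypothesis $\alpha \geq 3/2$ are both accurate.
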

\begin{proof}
	Using linear decay from Corollary \ref{c: linear decay L inf r} and control of the residual from Proposition \ref{p: residual estimate} yields
	\begin{align*}
	(t+T)^{3/2} \| \mathcal{I}_R (t) \|_{L^\infty_{-1}} &\leq C (t+T)^{3/2} \int_0^t \frac{1}{(1+t-s)^{3/2}} \frac{1}{(s+T)^{3/2}} \| R(s; T) \|_{L^\infty_r} \, ds \\
	&\leq C R_0 (t+T)^{3/2} \int_0^t \frac{1}{(1+t-s)^{3/2}} \frac{1}{(s+T)^{2 - 4 \mu}} \, ds \\
	&\leq \frac{C}{T^{1/2-4\mu}} R_0,
	\end{align*}
	where the final estimate follows from Lemma \ref{l: t minus s integral estimate}. 
\end{proof}

We next estimate the nonlinearity. 

\begin{lemma}[Estimates on $\mathcal{I}_N(t)$]
	There exists a constant $C > 0$ such that 
	\begin{align}
	(t+T)^{3/2} \| \mathcal{I}_N (t) \|_{L^\infty_{-1}} \leq C K(B \Theta(t)) \Theta(t)^2
	\end{align}
	for all $t \in (0, t_*)$. 
\end{lemma}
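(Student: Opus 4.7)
The plan is to apply the linear decay estimate of Corollary \ref{c: linear decay L inf r} to the integrand of $\mathcal{I}_N(t)$, bound the nonlinearity in $L^\infty_r$ by the quadratic estimate \eqref{e: nonlinearity quadratic norm estimate}, and then absorb the resulting $(s+T)^{-3/2}$ factor using Lemma \ref{l: t minus s integral estimate}.

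First, I would verify the hypothesis of \eqref{e: nonlinearity quadratic norm estimate}. By definition of $B = \|\rho_1 \omega^{-1}\|_{L^\infty}$ we have
\begin{align*}
\|\omega^{-1} z(s)\|_{L^\infty} \leq \|\rho_1 \omega^{-1}\|_{L^\infty} \|\rho_{-1} z(s)\|_{L^\infty} \leq B \|z(s)\|_{L^\infty_{-1}},
\end{align*}
so that $(s+T)^{3/2}\|\omega^{-1} z(s)\|_{L^\infty} \leq B\Theta(t)$ for every $s\in(0,t)$, by definition of $\Theta$. Therefore \eqref{e: nonlinearity quadratic norm estimate} applies with the monotone function $K$ evaluated at $B\Theta(t)$, giving
\begin{align*}
\|(s+T)^{-3/2}\omega N(\omega^{-1}(s+T)^{3/2} z(s))\|_{L^\infty_r} \leq K(B\Theta(t))\,(s+T)^{3/2}\|z(s)\|_{L^\infty_{-1}}^2.
\end{align*}
Using $\|z(s)\|_{L^\infty_{-1}} \leq (s+T)^{-3/2}\Theta(t)$, the right-hand side is bounded by $K(B\Theta(t))(s+T)^{-3/2}\Theta(t)^2$.

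Next, Corollary \ref{c: linear decay L inf r} yields
\begin{align*}
\|\mathcal{I}_N(t)\|_{L^\infty_{-1}} \leq C\int_0^t \frac{1}{(1+t-s)^{3/2}}\,\|(s+T)^{-3/2}\omega N(\omega^{-1}(s+T)^{3/2}z(s))\|_{L^\infty_r}\,ds,
\end{align*}
which combined with the previous bound gives
\begin{align*}
\|\mathcal{I}_N(t)\|_{L^\infty_{-1}} \leq C K(B\Theta(t))\Theta(t)^2 \int_0^t \frac{1}{(1+t-s)^{3/2}(s+T)^{3/2}}\,ds.
\end{align*}
Multiplying by $(t+T)^{3/2}$ and invoking Lemma \ref{l: t minus s integral estimate} with $\alpha = 3/2$ to control the remaining integral by a constant, we arrive at the desired bound $(t+T)^{3/2}\|\mathcal{I}_N(t)\|_{L^\infty_{-1}} \leq C K(B\Theta(t))\Theta(t)^2$.

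There is no serious obstacle here: the heavy lifting has already been done in establishing the linear estimate of Corollary \ref{c: linear decay L inf r} (which uses the crucial spatial localization $r > 2$ of the nonlinear output) and in the quadratic estimate \eqref{e: nonlinearity quadratic norm estimate}, which was set up precisely so that the extra factor $(s+T)^{3/2}$ from unweighting $z$ is exactly cancelled by the spatial gain in $\omega^{-1}$ combined with the decay rate of $\|z(s)\|_{L^\infty_{-1}}^2$. The only subtlety is the bookkeeping with the two weights $\omega$ and $\rho$, which is handled by the constant $B$.
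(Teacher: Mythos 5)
Your proof is correct and follows essentially the same route as the paper's: apply Corollary~\ref{c: linear decay L inf r} to the integrand, invoke the quadratic bound \eqref{e: nonlinearity quadratic norm estimate} with the $T$-dependent factor, extract $(s+T)^{-3/2}$ decay from $\|z(s)\|_{L^\infty_{-1}}^2$ via the definition of $\Theta$, and finish with Lemma~\ref{l: t minus s integral estimate} for $\alpha = 3/2$. Your preliminary check that $(s+T)^{3/2}\|\omega^{-1}z(s)\|_{L^\infty} \leq B\Theta(t)$ (so that the hypothesis of \eqref{e: nonlinearity quadratic norm estimate} is verified and $K$ may be evaluated at $B\Theta(t)$ using monotonicity of $K$ and $\Theta$) spells out a step the paper carries out only implicitly when it passes from $K\bigl(\|\omega^{-1}(s+T)^{3/2}z(s)\|_{L^\infty}\bigr)$ to $K(B\Theta(s))$.
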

\begin{proof}
	By the decay estimate in Corollary \ref{c: linear decay L inf r} together with the quadratic estimate \eqref{e: nonlinearity quadratic norm estimate} on the nonlinearity, we have 
	\begin{align*}
	(t+T)^{3/2} &\| \mathcal{I}_N (t) \|_{L^\infty_{-1}} \\
	&\leq C (t+T)^{3/2} \int_0^t \frac{1}{(1+t-s)^{3/2}} \| (s+T)^{-3/2} \omega N(\omega^{-1} (s+T)^{3/2} z(s)) \|_{L^\infty_r} \, ds \\
	&\leq C (t+T)^{3/2} \int_0^t \frac{1}{(1+t-s)^{3/2}} K \left( \| \omega^{-1} (s+T)^{3/2} z(s) \|_{L^\infty} \right) (s+T)^{3/2} \| z(s) \|_{L^\infty_{-1}}^2 \, ds\\
	&\leq C (t+T)^{3/2} \int_0^t \frac{1}{(1+t-s)^{3/2}} K(B \Theta(s)) (s+T)^{3/2} \| z(s) \|_{L^\infty_{-1}}^2 \, ds \\
	&\leq C K(B \Theta(t)) \Theta(t)^2 \int_0^t \frac{1}{(1+t-s)^{3/2}} \frac{1}{(s+T)^{3/2}} \, ds,
	\end{align*}
	where we have also used that $K$ and $\Theta$ are non-decreasing. By Lemma \ref{l: t minus s integral estimate}, there exists a constant $C > 0$ such that 
	\begin{align*}
	\int_0^t \frac{1}{(1+t-s)^{3/2}} \frac{1}{(s+T)^{3/2}} \, ds \leq C. 
	\end{align*}
	Hence we obtain 
	\begin{align*}
	(t+T)^{3/2} \| \mathcal{I}_N (t) \|_{L^\infty_{-1}} \leq C K(B \Theta(t)) \Theta(t)^2, 
	\end{align*}
	as desired. 
\end{proof}

We finally estimate $\mathcal{I}_2(t)$, using the decay of $z_x(t)$ in $L^1_1(\R)$ which we have encoded into the definition of $\Theta(t)$. 
\begin{lemma}[Estimates on $\mathcal{I}_2(t)$]\label{l: I2 estimates}
	There exists a constant $C > 0$ such that 
	\begin{align}
	(t+T)^{3/2} \| \mathcal{I}_2 (t) \|_{L^\infty_{-1}} \leq \frac{C}{T^{1/2}} \Theta(t)
	\end{align}
	for all $t \in (0, t_*)$. 
\end{lemma}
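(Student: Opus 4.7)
The strategy is to apply, in the integrand defining $\mathcal{I}_2(t)$, the sharp large-time decay $\|e^{\mcl\tau}\|_{L^1_1\to L^\infty_{-1}}\leq C\tau^{-3/2}$ from Proposition \ref{p: linear decay L11} together with the short-time regularization $\|e^{\mcl\tau}\|_{L^1\to L^\infty}\leq C\tau^{-1/(2m)}$ from Lemma \ref{l: regularity L1 Linf} (using $L^\infty\hookrightarrow L^\infty_{-1}$ and $\|\cdot\|_{L^1}\leq\|\cdot\|_{L^1_1}$). Starting from
\[
\|\mathcal{I}_2(t)\|_{L^\infty_{-1}}\leq C\int_0^t\bigl\|e^{\mcl(t-s)}\bigr\|_{L^1_1\to L^\infty_{-1}}\,\frac{\|z_x(s)\|_{L^1_1}}{s+T}\,ds,
\]
I will split $[0,t]$ according to whether $t-s\lessgtr 1$, which selects the relevant semigroup estimate, and whether $s\lessgtr 1$, which selects the correct regime of the two $\Theta$-controlled bounds on $\|z_x(s)\|_{L^1_1}$ (monotonicity of $\Theta$ allows replacing $\Theta(s)$ by $\Theta(t)$). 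This produces up to four sub-integrals.

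The dominant contribution arises from $s\in[1,t-1]$ (present when $t\geq 2$), where Proposition \ref{p: linear decay L11} and $\|z_x(s)\|_{L^1_1}\leq T^{-1/2}(s+T)^{-1/2}\Theta(t)$ combine to give
\[
(t+T)^{3/2}\int_1^{t-1}\frac{\|z_x(s)\|_{L^1_1}}{(t-s)^{3/2}(s+T)}\,ds\leq T^{-1/2}\Theta(t)\,(t+T)^{3/2}\int_1^{t-1}\frac{ds}{(t-s)^{3/2}(s+T)^{3/2}},
\]
which is $\mathrm{O}(T^{-1/2})\Theta(t)$ by Lemma \ref{l: t minus s integral estimate} with $\alpha=3/2$ (after bounding $(t-s)^{-3/2}\leq C(1+t-s)^{-3/2}$ for $t-s\geq 1$). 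This matches the claimed estimate exactly and explains the choice of the weight $T^{1/2}(s+T)^{1/2}$ multiplying $\|z_x(s)\|_{L^1_1}$ in the definition \eqref{e: theta def} of $\Theta$.

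The other three regimes each produce strictly better decay in $T$. For the far/small regime $s\in[0,\min(1,t-1)]$, the bound $\|z_x(s)\|_{L^1_1}\leq T^{-3/2}s^{-1/(2m)}\Theta(t)$, together with $s+T\geq T$, $(1+t-s)\geq c(1+t)$, and the integrability of $s^{-1/(2m)}$ on $[0,1]$, leaves an $\mathrm{O}(T^{-1})\Theta(t)$ estimate after accounting for $(t+T)^{3/2}/(1+t)^{3/2}\leq C(1+T^{3/2})$. The near/large regime $s\in[\max(1,t-1),t]$ uses the short-time semigroup bound together with $s+T\sim t+T$ to reduce the estimate to boundedness of $\int_{t-1}^t(t-s)^{-1/(2m)}\,ds$, again yielding $\mathrm{O}(T^{-1/2})\Theta(t)$. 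Finally, the near/small regime (relevant only for $t\leq 2$) reduces to a bounded beta-type integral $\int_0^t(t-s)^{-1/(2m)}s^{-1/(2m)}\,ds$ times $T^{-5/2}$ and $(t+T)^{3/2}\lesssim T^{3/2}$, producing an $\mathrm{O}(T^{-1})$ bound. The main obstacle throughout is uniformity in $T$: any weaker semigroup decay, or a weaker norm than $L^1_1$ for $z_x$, would cause the dominant-regime integral to fail to close via Lemma \ref{l: t minus s integral estimate}, which is precisely why the $L^1$–$L^\infty$ resolvent machinery of Section \ref{s: resolvent estimates} was needed in place of an $L^2$-based analysis.
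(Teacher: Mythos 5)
Your proof is correct and uses the same ingredients as the paper's: Proposition~\ref{p: linear decay L11}, Lemma~\ref{l: regularity L1 Linf} (with the implicit embeddings $L^\infty_r\hookrightarrow L^1_1$, $L^\infty\hookrightarrow L^\infty_{-1}$), the definition~\eqref{e: theta def} of $\Theta$, and Lemma~\ref{l: t minus s integral estimate}. The only organizational difference is in the dominant middle regime: the paper further subdivides $[1,t-1]$ at $s=t/2$ and treats the two halves with separate hands-on estimates (one of which reproduces the argument of Lemma~\ref{l: t minus s integral estimate} in-line), whereas you note that since $(t-s)^{-3/2}\leq C(1+t-s)^{-3/2}$ for $t-s\geq 1$ and the integrand is positive, one can extend to $\int_0^t$ and invoke Lemma~\ref{l: t minus s integral estimate} directly with $\alpha=3/2$ — a modest but genuine streamlining.
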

\begin{proof}
	First assume $0 < t < 1$. Using the small time regularity estimate from Lemma \ref{l: regularity L1 Linf}, we have 
	\begin{align*}
	(t+T)^{3/2} \| \mathcal{I}_2 (t) \|_{L^\infty_{-1}} &\leq C (t+T)^{3/2} \int_0^t \frac{1}{(t-s)^{1/2m}} \frac{1}{s+T} \| z_x (s) \|_{L^1_1} \, ds \\
	&\leq C (t+T)^{3/2} T^{-3/2} \Theta(t) \int_0^t \frac{1}{(t-s)^{1/2m}} \frac{1}{s+T} s^{-1/2m}  \, ds,
	\end{align*}
	where the last estimate follows from the definition \eqref{e: theta def} of $\Theta(t)$. Since for $t < 1$, $(t+T)^{3/2} \leq C T^{3/2}$, we have 
	\begin{align*}
	(t+T)^{3/2} \| \mathcal{I}_2 (t) \|_{L^\infty_{-1}} \leq C \Theta(t) \int_0^t \frac{1}{(t-s)^{1/2m}} \frac{s^{-1/2m}}{s+T} \, ds &\leq \frac{C}{T} \Theta(t) \int_0^t \frac{1}{(t-s)^{1/2m}} s^{-1/2m} \, ds 
	\leq \frac{C}{T} \Theta(t).
	\end{align*} 
	Next, we consider $1 \leq t < 2$. Here we split the integral into two pieces, since the estimates on $z_x(s)$ encoded in the definition of $\Theta(t)$ differ for $s < 1$ and $s> 1$. We write 
	\begin{align*}
	(t+T)^{3/2} \| \mathcal{I}_2 (t) \|_{L^\infty_{-1}} \leq C(t+T)^{3/2} \left[ \int_0^1 \left\| e^{\mcl (t-s)} \frac{z_x(s)}{s+T} \right\|_{L^\infty_{-1}} \, ds + \int_1^t \left\| e^{\mcl(t-s)} \frac{z_x (s)}{s+T} \right\|_{L^\infty_{-1}} \, ds \right]. 
	\end{align*}
	We estimate the first integral as in the $0 < t < 1$ case above and thereby obtain
	\begin{align*}
	(t+T)^{3/2} \int_0^1 \left\| e^{\mcl(t-s)} \frac{z_x (s)}{s+T} \right\|_{L^\infty_{-1}} \leq \frac{C}{T} \Theta(t).
	\end{align*}
	For the integral from $1$ to $t$, we again use Lemma \ref{l: regularity L1 Linf} to estimate $\| e^{\mcl (t-s)} \|_{L^1_1 \to L^\infty_{-1}}$, but the estimate on $\|z_x (s)\|_{L^1_1}$ differs slightly for $s > 1$, so that we obtain
	\begin{align*}
	(t+T)^{3/2} \int_1^t \left\| e^{\mcl (t-s)} \frac{z_x(s)}{s+T} \right\|_{L^\infty_{-1}} \, ds &\leq C T^{3/2} \int_1^t \frac{1}{(t-s)^{1/2m}} \frac{\| z_x (s) \|_{L^1_1}}{s+T} \, ds \\
	&\leq C T^{3/2} \Theta(t) \int_1^t \frac{1}{(t-s)^{1/2m}} \frac{1}{(s+T)^{3/2}} T^{-1/2} \, ds \\
	&\leq \frac{C}{T^{1/2}} \Theta(t) \int_1^t \frac{1}{(t-s)^{1/2m}} \, ds \\
	&\leq \frac{C}{T^{1/2}} \Theta(t).
	\end{align*}
	It remains only to obtain the estimate for $t \geq 2$. For this, we split the integral into four pieces,  
	\begin{align*}
	(t+T)^{3/2} \| \mathcal{I}_2 (t) \|_{L^\infty_{-1}} \leq C (t+T)^{3/2} 
	\bigg[ \int_0^1+ \int_1^{t/2} + \int_{t/2}^{t-1} + \int_{t-1}^t \bigg]\left\| e^{\mcl(t-s)} \frac{z_x (s)}{s+T} \right\|_{L^\infty_{-1}} \, ds. 
	\end{align*}
	We split the integral in this manner because we have to handle separately: the blowup of $z_x (s)$ when $s \sim 0$; the decay of $e^{\mcl (t-s)}$ when $(t-s) \sim t$; the decay of $z_x (s)$ when $s \sim t$; and the blowup of $e^{\mcl(t-s)}$ when $(t-s) \sim 0$. 
	
	For the first integral, we have $(t-s) \geq t/2 \geq 1$, and by Proposition \ref{p: linear decay L11} and the definition of $\Theta(t)$,
	\begin{align*}
	(t+T)^{3/2} \int_0^1 \left\| e^{\mcl (t-s)} \frac{z_x (s)}{s+T} \right\|_{L^\infty_{-1}} \, ds &\leq C(t+T)^{3/2} \int_0^1 \frac{1}{(t-s)^{3/2}} \frac{\| z_x (s) \|_{L^1_1}}{s+T} \, ds \\
	&\leq C \frac{(t+T)^{3/2}}{t^{3/2}} T^{-3/2} \Theta(t) \int_0^1 \frac{s^{-1/2m}}{s+T} \, ds \\
	&\leq \frac{C}{T} \Theta(t) \int_0^1 s^{-1/2m} \, ds \\
	&\leq \frac{C}{T} \Theta(t). 
	\end{align*}
	For the integral from $1$ to $t/2$, we still have $(t-s) \geq t/2 \geq 1$, but the estimate on $z_x(s)$ differs, so that
	\begin{align}
	(t+T)^{3/2} \int_1^{t/2} \left\| e^{\mcl(t-s)} \frac{z_x (s)}{s+T} \right\|_{L^\infty_{-1}} \, ds &\leq C \frac{(t+T)^{3/2}}{t^{3/2}} \Theta(t) T^{-1/2} \int_1^{t/2} \frac{1}{(s+T)^{3/2}} \, ds. \label{e: I2 estimate 1 to t over 2}
	\end{align}
	For the remaining integral, arguing as in Lemma \ref{l: t minus s integral estimate}, we obtain
	\begin{align*}
	\frac{(t+T)^{3/2}}{t^{3/2}} T^{-1/2} \int_1^{t/2} \frac{1}{(s+T)^{3/2}} \, ds \leq \frac{C}{T^{1/2}}, 
	\end{align*}
	so that
	\begin{align*}
	(t+T)^{3/2} \int_1^{t/2} \left\| e^{\mcl(t-s)} \frac{z_x (s)}{s+T} \right\|_{L^\infty_{-1}} \, ds \leq \frac{C}{T^{1/2}} \Theta(t). 
	\end{align*}
	
	
	For the integral from $t/2$ to $t-1$, we have $(t-s) \geq 1$ and $s \geq t/2$, so we use the linear decay from Proposition \ref{p: linear decay L11} to estimate 
	\begin{align*}
	(t+T)^{3/2} \int_{t/2}^{t-1} \left\| e^{\mcl (t-s)} \frac{z_x (s)}{s+T} \right\|_{L^\infty_{-1}} \, ds &\leq C (t+T)^{3/2} \int_{t/2}^{t-1} \frac{1}{(t-s)^{3/2}} \frac{\| z_x (s) \|_{L^1_1}}{s+T} \, ds \\
	&\leq C (t+T)^{3/2} \Theta(t) T^{-1/2} \int_{t/2}^{t-1} \frac{1}{(t-s)^{3/2}} \frac{1}{(s+T)^{3/2}} \, ds \\
	&\leq C \frac{(t+T)^{3/2}}{(t+T)^{3/2}}\Theta(t) T^{-1/2} \int_{t/2}^{t-1} \frac{1}{(t-s)^{3/2}} \, ds \\
	&= \frac{C}{T^{1/2}} \Theta(t) \int_1^{t/2} \frac{1}{\tau^{3/2}} \, d \tau \\
	&\leq \frac{C}{T^{1/2}} \Theta(t). 
	\end{align*}
	
	Finally, for the integral from $t-1$ to $t$, we use Lemma \ref{l: regularity L1 Linf} to estimate 
	\begin{align*}
	(t+T)^{3/2} \int_{t-1}^t \left\| e^{\mcl(t-s)} \frac{z_x (s)}{s+T} \right\|_{L^\infty_{-1}} \, ds  &\leq C (t+T)^{3/2} \int_{t-1}^t \frac{1}{(t-s)^{1/2m}} \frac{\| z_x (s) \|_{L^1_1}}{s+T} \, ds \\
	&\leq C (t+T)^{3/2} \Theta(t) T^{-1/2} \int_{t-1}^t \frac{1}{(t-s)^{1/2m}} \frac{1}{(s+T)^{3/2}} \, ds \\
	&\leq \frac{C}{T^{1/2}} \frac{(t+T)^{3/2}}{(t+T)^{3/2}} \Theta(t) \int_{t-1}^t \frac{1}{(t-s)^{1/2m}} \, ds \\
	&\leq \frac{C}{T^{1/2}} \Theta(t),
	\end{align*}
	since $s \sim t$ in this region. This completes the proof of the lemma. 
\end{proof}
We emphasize that the proof of Lemma \ref{l: I2 estimates} shows  that  control of $z(t)$ necessitates good estimates on $\| z_x (t) \|_{L^1_1}$. We now gather the individual estimates on terms in $z(t)$ for the proof of Proposition \ref{p: z control}, and thereby establish control of $\| z(t) \|_{L^\infty_{-1}}$ provided appropriate  control of $z_x(t)$.
\begin{proof}[Proof of Proposition \ref{p: z control}]
	With Lemmas \ref{l: I1 I3 estimates} through \ref{l: I2 estimates} at hand, it only remains to estimate the term in the variation of constants formula \eqref{e: voc} involving the initial data, for which we have, by the linear decay estimate in Corollary \ref{c: linear decay L inf r},
	\begin{align*}
	(t+T)^{3/2} \| e^{\mcl t} z_0 \|_{L^\infty_{-1}} \leq C \frac{(t+T)^{3/2}}{(1+t)^{3/2}} \| z_0 \|_{L^\infty_r} \leq C T^{3/2} \| z_0 \|_{L^\infty_r},
	\end{align*}
	which completes the proof of the proposition. 
\end{proof}

\subsection{Control of derivatives}

To complete the proof of Proposition \ref{p: theta control}, we now estimate $z_x (t)$. We therefore differentiate the variation of constants formula, obtaining 
\begin{align}
z_x (t) = \partial_x (e^{\mcl t} z_0) + \partial_x \mathcal{I}_1 (t) + \partial_x \mathcal{I}_2 (t) + \partial_x \mathcal{I}_3 (t) + \partial_x \mathcal{I}_R (t) + \partial_x \mathcal{I}_N (t). \label{e: voc derivative}
\end{align}
We need estimates on $\| z_x (t) \|_{L^1_1}$ to close the argument for $\| z(t) \|_{L^\infty_{-1}}$, and in turn we need estimates on $\| z_x (t)\|_{L^\infty_r}$ to close the argument for $\| z_x (t) \|_{L^1_1}$. We thereby rely on the sharp linear estimates on derivatives from Section \ref{s: linear estimates}. 

\begin{prop}[Small time  estimates on $z_x(t)$]\label{p: z x small time}
	Let $r = 2 + \mu$ with $0 < \mu < \frac{1}{8}$. There exist constants $C_0, C_1,$ and $C_2 > 0$ such that 
	\begin{align}
	T^{3/2} t^{1/2m} \| z_x (t) \|_X \leq C_0 \left( T^{3/2} \| z_0 \|_{L^\infty_r} + T^{-1/2 + 4 \mu} R_0 \right) + \frac{C_1}{T^{1/2-4\mu}} \Theta(t) + C_2 K(B
	\Theta(t)) \Theta(t)^2
	\end{align}
	for all $0 \leq t < \min(1, t_*)$, where $X = L^1_1 (\R)$ or $L^\infty_r (\R)$. 
\end{prop}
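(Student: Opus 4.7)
The plan is to differentiate the variation of constants formula \eqref{e: voc}, obtaining \eqref{e: voc derivative}, and then estimate each term by combining the small-time derivative estimates of Lemma \ref{l: small time derivative estimates} with the bounds on $z(s)$ and $z_x(s)$ already encoded in $\Theta(t)$. The key fact is that for $X = L^1_1(\R)$ or $L^\infty_r(\R)$ and $0 < t - s < 2$,
\begin{align*}
\| \partial_x e^{\mcl (t-s)} g \|_X \leq \frac{C}{(t-s)^{1/2m}} \| g \|_X.
\end{align*}
Since the basic temporal integrals $\int_0^t (t-s)^{-1/2m} \, ds$ and $\int_0^t (t-s)^{-1/2m} s^{-1/2m} \, ds$ both remain uniformly bounded for $0 < t < 1$ (as $m \geq 1$), the analysis reduces to keeping track of the dependence on $T$ and on $\Theta(t)$.

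For the initial-data term, we apply Lemma \ref{l: small time derivative estimates} directly, noting for $X = L^1_1$ that the embedding $L^\infty_r \hookrightarrow L^1_1$ gives $\| z_0 \|_{L^1_1} \leq C \| z_0 \|_{L^\infty_r}$; this accounts for the $T^{3/2} \| z_0 \|_{L^\infty_r}$ contribution. For $\partial_x \mathcal{I}_1$, the compact support of $\omega(\omega^{-1})' + \eta_*$ lets us bound the integrand in any algebraic weight by $\| z(s) \|_{L^\infty_{-1}} \leq (s+T)^{-3/2}\Theta(t)$; pulling out $(s+T)^{-1}$ gives an overall $T^{-1}\Theta(t) \leq T^{-1/2+4\mu}\Theta(t)$. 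For $\partial_x \mathcal{I}_3$, Lemma \ref{l: front versus psi} yields $\| f'(\omega^{-1}\psi(s)) - f'(q_*)\|_{L^\infty_{r+1}} \leq C (s+T)^{-1/2+4\mu}$, which combined with $\| z(s) \|_{L^\infty_{-1}} \leq (s+T)^{-3/2}\Theta(t)$ gives $(s+T)^{-2+4\mu}\Theta(t)$ in the integrand and hence the $T^{-1/2+4\mu}\Theta(t)$ bound. For $\partial_x \mathcal{I}_R$, Proposition \ref{p: residual estimate} together with the residual definition \eqref{e: R0 def} supplies $\| R(s;T)\|_{L^\infty_r} \leq R_0 (s+T)^{-1/2+4\mu}$, and the extra $(s+T)^{-3/2}$ prefactor yields the claimed $R_0 T^{-1/2+4\mu}$ bound. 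For $\partial_x \mathcal{I}_N$, the quadratic estimate \eqref{e: nonlinearity quadratic norm estimate} produces the integrand $K(B\Theta(t)) (s+T)^{3/2} \| z(s)\|_{L^\infty_{-1}}^2 \leq K(B\Theta(t)) (s+T)^{-3/2}\Theta(t)^2$, giving the cubic-looking term on the right side.

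The most delicate piece is $\partial_x \mathcal{I}_2$, because $z_x(s)$ itself already appears inside the integral and must be controlled through $\Theta$. Using $\| z_x(s) \|_X \leq T^{-3/2} s^{-1/2m} \Theta(t)$ for $0 < s < 1$ from the definition \eqref{e: theta def} of $\Theta$, we bound
\begin{align*}
\| \partial_x \mathcal{I}_2(t) \|_X \leq C \int_0^t \frac{1}{(t-s)^{1/2m}} \frac{\| z_x(s) \|_X}{s+T} \, ds \leq C T^{-5/2} \Theta(t) \int_0^t \frac{s^{-1/2m}}{(t-s)^{1/2m}} \, ds,
\end{align*}
so that $T^{3/2} t^{1/2m} \| \partial_x \mathcal{I}_2(t) \|_X \leq C T^{-1} \Theta(t) \cdot t^{1 - 1/2m} \leq C T^{-1/2 + 4\mu} \Theta(t)$ for $t < 1$ and $T$ large. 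The analogous $L^1_1$ estimates for the remaining terms go through identically, with the embedding $L^\infty_r \hookrightarrow L^1_1$ (valid since $r = 2 + \mu > 2$) used to convert any $L^\infty_r$ bounds on residual, nonlinearity, or coefficient factors into $L^1_1$ bounds on the full integrand. Collecting the five contributions yields the stated inequality with appropriate constants $C_0, C_1, C_2$.
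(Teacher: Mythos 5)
Your proof is correct and follows essentially the same approach as the paper: differentiate the variation-of-constants formula, apply the small-time derivative bound of Lemma \ref{l: small time derivative estimates} to each term, and track $T$-dependence using the definition of $\Theta$ together with the embedding $L^\infty_r \hookrightarrow L^1_1$. The paper treats only the initial-data and $\mathcal{I}_N$ terms explicitly and calls the rest ``similar,'' so you have supplied the omitted (and correctly executed) details for $\mathcal{I}_1$, $\mathcal{I}_2$, $\mathcal{I}_3$, and $\mathcal{I}_R$.
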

\begin{proof}
	For the term involving the initial data in \eqref{e: voc derivative}, we have by Lemma \ref{l: small time derivative estimates}, 
	\begin{align*}
	T^{3/2} t^{1/2m} \| \partial_x (e^{\mcl t} z_0) \|_X \leq C T^{3/2} \| z_0 \|_{L^1_1} \leq C T^{3/2} \| z_0 \|_{L^\infty_r}.
	\end{align*}
	We now focus on the term involving $\mathcal{I}_N (t)$, as this term is the closest to critical, in the sense that if its $T$ dependence were any worse, we would not be able to close the argument. By Lemma \ref{l: small time derivative estimates} and the estimate \eqref{e: nonlinearity quadratic norm estimate} on the nonlinearity, we have 
	\begin{align*}
	T^{3/2} t^{1/2m} \| \partial_x \mathcal{I}_N (t) \|_X &\leq C T^{3/2} t^{1/2m} \int_0^t \frac{1}{(t-s)^{1/2m}} \| (s+T)^{-3/2} \omega N(\omega^{-1} (s+T)^{3/2} z(s)) \|_{L^\infty_r} \, ds \\
	&\leq C K(B \Theta(t)) T^{3/2} t^{1/2m} \int_0^t \frac{1}{(t-s)^{1/2m}} (s+T)^{3/2} \| z(s) \|_{L^\infty_{-1}}^2 \, ds \\
	&\leq C K(B \Theta(t)) \Theta(t)^2 T^{3/2} t^{1/2m} \int_0^t \frac{1}{(t-s)^{1/2m}} \frac{1}{(s+T)^{3/2}} \, ds  \\
	&\leq C K(B \Theta(t)) \Theta(t)^2 t^{1/2m} \int_0^t \frac{1}{(t-s)^{1/2m}} \, ds \\
	&\leq C K(B \Theta(t)) \Theta(t)^2
	\end{align*}
	for $0 < t < 1$, as desired. We readily obtain the estimates on the other terms by similar arguments. 
\end{proof}

\begin{prop}[Large time $L^1_1$ estimates on $z_x (t)$]\label{p: z x large time L11}
	There exist constants $C_0, C_1$, and $C_2 > 0$ such that 
	for all $t \in [1, t_*)$,
	\begin{align}
	(t+T)^{1/2} T^{1/2} \| z_x (t) \|_{L^1_1} \leq C_0 \left( T^{3/2} \| z_0 \|_{L^\infty_r} + T^{-1/2 + 4 \mu} R_0 \right) + \frac{C_1}{T^{1/2-4\mu}} \Theta(t) + C_2 K(B \Theta(t)) \Theta(t)^2.
	\end{align}
\end{prop}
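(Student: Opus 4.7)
The plan is to differentiate the variation-of-constants formula \eqref{e: voc} in $x$ and estimate each of the six resulting terms in $L^1_1$, closely mirroring the structure of the proof of Proposition \ref{p: z control}. The two relevant derivative estimates on the semigroup are the large-time bound $\|\partial_x e^{\mcl \tau}\|_{L^\infty_r \to L^1_1} \leq C \tau^{-1/2}$ for $\tau > 1$ from Proposition \ref{p: linear derivative estimates}, and the small-time bound $\|\partial_x e^{\mcl \tau}\|_{L^1_1 \to L^1_1} \leq C \tau^{-1/2m}$ for $\tau < 2$ from Lemma \ref{l: small time derivative estimates}. For each integrand I will split the time range (for $t \geq 2$) as $[0, 1] \cup [1, t-1] \cup [t-1, t]$: on $[t-1, t]$ I use the small-time $L^1_1 \to L^1_1$ estimate, bounding the integrand in $L^1_1$; on $[0, 1]$ and $[1, t-1]$ I use the large-time $L^\infty_r \to L^1_1$ estimate, with $z_x(s)$ controlled in $L^\infty_r$ via $\Theta$ through the short-time blowup $s^{-1/2m}$ for $s \in [0, 1]$ and the moderate-time decay $(s+T)^{-\beta}$, $\beta = \tfrac{1}{2} - \tfrac{\mu}{2}$, for $s \in [1, t-1]$. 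For $t \in [1, 2]$, the small-time $L^1_1 \to L^1_1$ estimate is used throughout.

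The contributions of the initial-data term $\partial_x e^{\mcl t} z_0$, as well as $\partial_x \mathcal{I}_1$, $\partial_x \mathcal{I}_R$, and $\partial_x \mathcal{I}_N$, follow by direct analogy with the corresponding arguments in the proof of Proposition \ref{p: z control}: the compact support of $\omega (\omega^{-1})' + \eta_*$, the residual estimate from Proposition \ref{p: residual estimate}, and the quadratic estimate \eqref{e: nonlinearity quadratic norm estimate} produce integrands whose decay in $s$ is sufficient to absorb both the new prefactor $(t+T)^{1/2} T^{1/2}$ and the slightly worse semigroup rates into $L^1_1$. The delicate terms are $\partial_x \mathcal{I}_2$, where the feedback of $z_x(s)$ justifies building $\|z_x\|_{L^1_1}$ into $\Theta$ in the first place, and $\partial_x \mathcal{I}_3$, whose coefficient has only mild algebraic decay; I expect $\partial_x \mathcal{I}_2$ to be the main obstacle.

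For $\partial_x \mathcal{I}_2$ on the intermediate range $[1, t-1]$, using $\|z_x(s)/(s+T)\|_{L^\infty_r} \leq C T^{-1/2}(s+T)^{-1-\beta} \Theta(t)$, the key integral is $\int_1^{t-1} (t-s)^{-1/2} (s+T)^{-1-\beta} \, ds$; the scaling $u = (t-s)/(t+T)$ converts it to a Beta-function-type integral bounded by $C (t+T)^{-1/2} T^{-\beta}$, so multiplying by the prefactor $(t+T)^{1/2} T^{1/2} \cdot T^{-1/2}$ yields $C T^{-\beta} = C T^{-1/2 + \mu/2} \leq C T^{-1/2 + 4\mu}$. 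It is essential here that $\Theta$ controls $\|z_x(s)\|_{L^\infty_r}$ at the sharp rate $(s+T)^{-\beta}$; any slower rate would fail to yield an estimate uniform in $t$. For $\partial_x \mathcal{I}_3$, the coefficient $f'(\omega^{-1} \psi(s)) - f'(q_*) = \mathrm{O}((s+T)^{-1/2 + 4 \mu})$ from Lemma \ref{l: front versus psi} produces the integral $\int_0^{t-1} (t-s)^{-1/2} (s+T)^{-2+4\mu} \, ds$, which I will analyze separately in the regimes $t \leq T$ and $t \geq T$. For $t \leq T$ a naive estimate grows like $T^{4\mu}$, but the mean-value refinement $\int_0^{t/2} (s+T)^{-2+4\mu} \, ds \leq C t T^{-2+4\mu}$ extracts an additional factor of $t$ that combines with $t^{1/2} \leq T^{1/2}$ to close; for $t \geq T$, the equivalence $(t+T)^{1/2} \sim t^{1/2}$ is absorbed by the extra $t^{-1/2}$ decay in the integral coming from the outer region $[t/2, t-1]$. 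In both regimes one obtains the desired bound $C T^{-1/2 + 4\mu} \Theta(t)$, completing the proof.
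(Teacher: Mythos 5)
Your proposal is correct and follows essentially the same strategy as the paper: differentiate the variation-of-constants formula, apply the large-time $L^\infty_r \to L^1_1$ and small-time $L^1_1 \to L^1_1$ derivative bounds from Section~\ref{s: linear estimates}, and close the $\mathcal{I}_2$ feedback using the weaker $L^\infty_r$ rate encoded in $\Theta$, which produces the decisive factor $T^{-\beta}$. The choice to split the time integral at $[0,1]\cup[1,t-1]\cup[t-1,t]$ rather than at $t/2$, and the emphasis on $\mathcal{I}_3$ rather than on $\mathcal{I}_N$ --- which the paper singles out as the term forcing $\tilde{\beta}=\tfrac{1}{2}$ in $\Theta$ --- are cosmetic differences; both routes yield the stated bound.
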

\begin{proof}
    The two most important terms in \eqref{e: voc derivative} are $\partial_x \mathcal{I}_N(t)$ and $\partial_x \mathcal{I}_2(t)$. The first one is the closest to critical and thereby determines the optimal choice of $\tilde{\beta}$ in the definition of $\Theta(t)$ as discussed in Section \ref{s: theta heuristics}. The estimates on the term $\partial_x \mathcal{I}_2 (t)$ reveal how we can close our bootstrapping argument by relying on the lack of criticality in this term. 
    
    We focus first on $\partial_x \mathcal{I}_N (t)$, for which we have 
    \begin{align*}
    (t+T)^{1/2} T^{1/2} \| \partial_x \mathcal{I}_N (t) \|_{L^1_1} \leq C (t+T)^{1/2} T^{1/2} \int_0^t \left\| \partial_x e^{\mcl (t-s)} \tilde{N}(z(s), s; T) \right\|_{L^1_1} \, ds,
    \end{align*}
    where 
    \begin{align}
    \tilde{N}(z(s), s; T) = (s+t)^{-3/2} \omega N(\omega^{-1} (s+T)^{3/2} z(s)). 
    \end{align}
    We split the integral into an integral from $0$ to $t/2$ and another from $t/2$ to $t$. For the integral from $0$ to $t/2$, we use Proposition \ref{p: linear derivative estimates} together with the estimate \eqref{e: nonlinearity quadratic norm estimate} to obtain
    \begin{align}
    \int_0^{t/2} \left\| \partial_x e^{\mcl (t-s)} \tilde{N}(z(s), s; T) \right\|_{L^1_1} \, ds \leq C K(B\Theta(t)) \Theta(t)^2 \int_0^{t/2} \frac{1}{(t-s)^{1/2}} \frac{1}{(s+T)^{3/2}} \, ds.
    \end{align}
    Inside the integral, $t-s \sim t$, so that 
    \begin{align*}
    (t+T)^{1/2} T^{1/2} \int_0^{t/2} \frac{1}{(t-s)^{1/2}} \frac{1}{(s+T)^{3/2}} \, ds &\leq C t^{-1/2} (t+T)^{1/2} T^{1/2} \int_0^{t/2} \frac{1}{(s+T)^{3/2}} \, ds \\
    &= C \left( 1 + \frac{T}{t} \right)^{1/2} \int_0^{t/2T} \frac{1}{(1+\tau)^{3/2}} \, d \tau \\
    &\leq C.
    \end{align*}
    Notably, we only get a constant bound here: there is no extra $T$ decay to extract, and so this estimate determines our choice for the form of $\Theta(t)$. Hence
    \begin{align*}
        (t+T)^{1/2} T^{1/2} \int_0^{t/2} \left\| \partial_x e^{\mcl (t-s)} \tilde{N}(z(s), s; T) \right\|_{L^1_1} \, ds \leq C K(B \Theta(t)) \Theta(t)^2. 
    \end{align*}
    For the integral from $t/2$ to $t$, we have by Proposition \ref{p: linear derivative estimates}, Lemma \ref{l: small time derivative estimates}, and the estimate \eqref{e: nonlinearity quadratic norm estimate} on the nonlinearity, 
    \begin{align*}
    \int_{t/2}^t \left\| \partial_x e^{\mcl (t-s)} \tilde{N}(z(s), s; T) \right\|_{L^1_1} \, ds \leq C K(B \Theta(t)) \Theta(t)^2 \int_{t/2}^t \frac{1}{(t-s)^{1/2}} \frac{1}{(s+T)^{3/2}} \, ds.
    \end{align*}
    For the remaining integral, we have the elementary estimate 
    \begin{align*}
    (t+T)^{1/2} T^{1/2} \int_{t/2}^t \frac{1}{(t-s)^{1/2}} \frac{1}{(s+T)^{3/2}} \, ds \leq \frac{C}{t+T} T^{1/2} t^{1/2} \leq C, 
    \end{align*}
    so that 
    \begin{align*}
    (t+T)^{1/2} T^{1/2} \int_{t/2}^t \left\| \partial_x e^{\mcl (t-s)} \tilde{N}(z(s), s; T) \right\|_{L^1_1} \, ds \leq C K(B \Theta(t)) \Theta(t)^2
    \end{align*}
    as well, which completes the estimates for $\partial_x \mathcal{I}_N (t)$. 
    
    For the term involving $\partial_x \mathcal{I}_2 (t)$, we focus on only the integral from $0$ to $t/2$, since the modifications to handle the other integral are similar to the above. For this term, we have by Proposition \ref{p: linear derivative estimates}
    \begin{align*}
    (t+T)^{1/2} T^{1/2} \int_0^{t/2} \left\| \partial_x e^{\mcl(t-s)} \frac{z_x (s)}{s+T} \right\|_{L^1_1} \, ds \leq C (t+T)^{1/2} T^{1/2} \int_0^{t/2} \frac{1}{(t-s)^{1/2}} \frac{\| z_x (s) \|_{L^\infty_r}}{s+T} \, ds. 
    \end{align*}
    We then use the weaker decay of $\| z_x (s) \|_{L^\infty_r}$ built into $\Theta(t)$ to estimate
    \begin{align*}
    (t+T)^{1/2} T^{1/2} \int_0^{t/2} \frac{1}{(t-s)^{1/2}} \frac{\| z_x (s) \|_{L^\infty_r}}{s+T} \, ds \leq C \Theta(t) (t+T)^{1/2} \int_0^{t/2} \frac{1}{(t-s)^{1/2}} \frac{1}{(s+T)^{1 + \beta}} \, ds.
    \end{align*}
    Note that we have also absorbed the factor of $T^{1/2}$ with $\| z_x (s) \|_{L^\infty_r}$, using the construction of $\Theta(t)$. For the remaining integral, we have the elementary estimate
    \begin{align*}
    (t+T)^{1/2} \int_0^{t/2} \frac{1}{(t-s)^{1/2}} \frac{1}{(s+T)^{1 + \beta}} \, ds &\leq \frac{C}{t^{1/2}} (t+T)^{1/2} \int_0^{t/2} \frac{1}{(s+T)^{1+\beta}} \, ds \\
    &= C T^{-\beta} \left( 1 + \frac{T}{t} \right)^{1/2} \int_0^{t/2T} \frac{1}{(1+\tau)^{1+\beta}} \, d \tau \\
    &\leq C T^{-\beta} = \frac{C}{T^{1/2 - \mu/2}} \leq \frac{C}{T^{1/2 - 4\mu}}, 
    \end{align*}
    recalling that $\beta = \frac{1}{2} - \frac{\mu}{2}$. Estimating also the integral from $t/2$ to $t$ with similar arguments to the above, we obtain 
    \begin{align*}
        (t+T)^{1/2} T^{1/2} \| \partial_x \mathcal{I}_2 (t) \|_{L^1_1} \leq \frac{C}{T^{1/2 - 4 \mu}} \Theta(t),
    \end{align*}
    as desired. The estimates on $\partial_x \mathcal{I}_1 (t), \partial_x \mathcal{I}_3 (t), \partial_x \mathcal{I}_R (t)$, and $\partial_x (e^{\mcl t} z_0)$ are similar. 
\end{proof}

\begin{prop}[Large time $L^\infty_r$ estimates on $z_x (t)$] \label{p: z x large time Linf r}
	Let $r = 2 + \mu$ with $0 < \mu < \frac{1}{8}$. There exist constants $C_0, C_1$, and $C_2 > 0$ such that 
	\begin{align}
	(t+T)^{\beta} T^{1/2} \| z_x (t) \|_{L^\infty_r} \leq C_0 \left( T^{3/2} \| z_0 \|_{L^\infty_r} + T^{-1/2 + 4 \mu} R_0 \right) + \frac{C_1}{T^{1/2-4\mu}} \Theta(t) + C_2 K(B \Theta(t)) \Theta(t)^2
	\end{align}
	for all $t \in [1, t_*)$, where $\beta = \frac{3}{2} - \frac{r}{2} = \frac{1}{2} - \frac{\mu}{2}$. 
\end{prop}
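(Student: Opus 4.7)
The proof will follow the template of Propositions \ref{p: z control} and \ref{p: z x large time L11}, estimating each term of the differentiated variation of constants formula \eqref{e: voc derivative} against the components of $\Theta(t)$. The crucial linear input is now the $L^\infty_r \to L^\infty_r$ derivative estimate from Proposition \ref{p: linear derivative estimates}, which gives $\|\partial_x e^{\mcl t}\|_{L^\infty_r \to L^\infty_r} \leq C t^{-\beta}$ for $t \geq 1$, combined with the small-time regularity $\|\partial_x e^{\mcl t}\|_{L^\infty_r \to L^\infty_r} \leq C t^{-1/2m}$ from Lemma \ref{l: small time derivative estimates} for $0 < t < 2$.

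For the initial data term and for $\partial_x \mathcal{I}_1(t)$, $\partial_x \mathcal{I}_3(t)$, $\partial_x \mathcal{I}_R(t)$, the calculations will proceed in close parallel to those for the corresponding terms in Proposition \ref{p: z control}, with $(t-s)^{-3/2}$ in the convolution kernel replaced by $(t-s)^{-\beta}$. In each case I will bound $\|z(s)\|_{L^\infty_{-1}}$ by $\Theta(t)(s+T)^{-3/2}$, split the integral at $s = t/2$, use the substitution $\tau = s/T$ as in Lemma \ref{l: t minus s integral estimate}, and verify that the prefactor $(t+T)^\beta T^{1/2}$ matches precisely the decay produced by the resulting inner integrals, yielding the three summands on the right-hand side of the proposition.

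The most delicate terms are $\partial_x \mathcal{I}_2(t)$ and $\partial_x \mathcal{I}_N(t)$. For $\partial_x \mathcal{I}_2(t)$, substituting $\|z_x(s)\|_{L^\infty_r} \leq \Theta(t) T^{-1/2}(s+T)^{-\beta}$ from the $L^\infty_r$ component of $\Theta$ will produce an integrand proportional to $(t-s)^{-\beta}(s+T)^{-1-\beta}$. Splitting at $s = t/2$ and rescaling, the inner integral behaves like $T^{-\beta}\min(t/T, 1)$; a case split on whether $t \leq T$ or $t \geq T$ shows that the prefactor $(t+T)^\beta / t^\beta$ is absorbed against $\min(t/T, 1)$, leaving the uniform bound $(t+T)^\beta T^{1/2} \cdot (\text{integral}) \leq C T^{-\beta} \leq C T^{-1/2+4\mu}$. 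For $\partial_x \mathcal{I}_N(t)$, the quadratic estimate \eqref{e: nonlinearity quadratic norm estimate} gives integrand $(t-s)^{-\beta}(s+T)^{-3/2}$, and the refined bound $\int_0^{t/2}(s+T)^{-3/2}\,ds \leq C \min(t T^{-3/2}, T^{-1/2})$ together with the split at $s = t/2$ will yield a uniformly bounded contribution, producing the $C K(B \Theta(t))\Theta(t)^2$ piece.

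The main obstacle is that the $L^\infty_r$-to-$L^\infty_r$ linear decay rate $(t-s)^{-\beta}$ available from Proposition \ref{p: linear derivative estimates} is strictly slower than the $(t-s)^{-1/2}$ rate used in Proposition \ref{p: z x large time L11} to control $\|z_x\|_{L^1_1}$, so a naive estimate would lose too much in the inner convolution. The bootstrap closes nonetheless because the $L^\infty_r$ component of $\Theta(t)$ encodes exactly the matching weaker decay rate $(s+T)^{-\beta}$ for $\|z_x(s)\|_{L^\infty_r}$; these matched exponents conspire, via the rescaling $\tau = s/T$, to reproduce the same $T^{-\beta}$ gain as in Proposition \ref{p: z x large time L11}, which suffices since $\beta = \tfrac{1}{2} - \tfrac{\mu}{2} > \tfrac{1}{2} - 4\mu$ for $\mu > 0$. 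Small-time contributions from $s \in [0,1]$ and from $s$ close to $t$, where Lemma \ref{l: small time derivative estimates} must replace Proposition \ref{p: linear derivative estimates}, will be handled by an additional splitting of the integration domain along the lines of Lemma \ref{l: I2 estimates}.
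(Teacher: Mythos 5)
Your proposal is correct and follows the same route as the paper: the $L^\infty_r$-to-$L^\infty_r$ derivative estimate from Proposition \ref{p: linear derivative estimates} with rate $(t-s)^{-\beta}$, the $L^\infty_r$ component of $\Theta(t)$ supplying $\|z_x(s)\|_{L^\infty_r}\leq T^{-1/2}(s+T)^{-\beta}\Theta(t)$, and the extra factor $(s+T)^{-1}$ in $\mathcal{I}_2$ closing the bootstrap. The paper's own proof is deliberately terse (it writes out only $\partial_x\mathcal{I}_2$ and notes that the self-referential $L^\infty_r$ dependence terminates the cascade of norms), whereas you spell out the splitting at $s=t/2$, the rescaling $\tau=s/T$, and the case comparison $t\lessgtr T$; the calculations you outline match those done explicitly in the parallel $L^1_1$ estimate of Proposition \ref{p: z x large time L11}.
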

\begin{proof}
	Since the proof is similar to that of Proposition \ref{p: z x large time L11}, we only point out the crucial feature that allows us to terminate the bootstrapping procedure. By Proposition \ref{p: linear derivative estimates}, we have 
	\begin{align*}
	(t+T)^\beta T^{1/2} \| \partial_x \mathcal{I}_2 (t) \|_{L^\infty_r} \leq C (t+T)^\beta T^{1/2} \int_0^t \frac{1}{(t-s)^\beta} \frac{ \| z_x (s) \|_{L^\infty_r}}{s+T} \, ds. 
	\end{align*}
	Owing to the extra factor of $(s+T)^{-1}$ in the integrand, we can carry out exactly the same argument used to estimate $\| \partial_x \mathcal{I}_2 (t) \|_{L^1_1}$ in the previous proposition here, and thereby obtain 
	\begin{align*}
	    (t+T)^\beta T^{1/2} \| \partial_x \mathcal{I}_2 (t) \|_{L^\infty_r} \leq \frac{C}{T^{1/2 - 4 \mu}} \Theta(t). 
	\end{align*}
\end{proof}

\subsection{Proof of Theorem \ref{t: nonlinear decay}}

The full control of $\Theta(t)$, Proposition \ref{p: theta control}, follows directly from the control of $z(t)$ and $z_x (t)$ in Propositions \ref{p: z control}, \ref{p: z x small time}, \ref{p: z x large time L11}, and \ref{p: z x large time Linf r}.

\begin{proof}[Proof of Theorem \ref{t: nonlinear decay}]
	By Proposition \ref{p: theta control}, for $T$ sufficiently large (so that $C_1/T^{1/2-4\mu} < 1$) there exist constants $\tilde{C}_0$ and $\tilde{C}_2$ so that 
	\begin{align}
	\Theta(t) \leq \tilde{C}_0 (T^{3/2} \| z_0 \|_{L^\infty_r} + T^{-1/2 + 4 \mu} R_0) + \tilde{C}_2 K(B \Theta(t)) \Theta(t)^2. \label{e: Theta control}
	\end{align}
	From the local well-posedness theory, it follows that there exists a constant $C > 0$ such that 
	\begin{align}
	\Theta(t) \leq C T^{3/2} \| z_0 \|_{L^\infty_r} \label{e: Theta 0 estimate}
	\end{align}
	for small times provided the initial data is sufficiently small. 
	Suppose $\Omega_0 := T^{3/2} \| z_0 \|_{L^\infty_r} + T^{-1/2 + 4 \mu} R_0$ is small enough so that
	\begin{align}
	2 C_0 \Omega_0 < 1 \quad \text{ and } \quad 4 C_0 C_2 K(B) \Omega_0 < 1. 
	\end{align}
	We show that 
	\begin{align}
	\Theta(t) \leq 2 C_0 \Omega_0 < 1 \label{e: Theta smallness}
	\end{align}
	for all $t \in (0, t_*)$. By \eqref{e: Theta 0 estimate}, $\Theta(t) \leq \Omega_0 \leq 2 C_0 \Omega_0 < 1$ for $t$ sufficiently small (redefining $C_0$ so that $C_0 > \frac{1}{2}$ if necessary). By construction, $t \mapsto \Theta(t)$ is continuous on $(0, t_*)$. Hence if \eqref{e: Theta smallness} does not hold, then there must be some time $t_1 > 0$ at which $\Theta(t_1) = 2 C_0 \Omega_0$. Considering \eqref{e: Theta control} at time $t_1$, we obtain
	\begin{align*}
	2 C_0 \Omega_0 \leq C_0 \Omega_0 + 4 K(B 2 C_0 \Omega_0 ) C_0^2 C_2 \Omega_0^2 \leq C_0 \Omega_0 (1 + 4  C_0 C_2 K (B) \Omega_0)
	\end{align*}
	since $K$ is non-decreasing and $2 C_0 \Omega_0 < 1$. Since also $4 C_0 C_2 K(B) \Omega_0 < 1$, we conclude
	\begin{align*}
	2 < 1 + 4 C_0 C_2 K(B) \Omega_0 < 2, 
	\end{align*}
	a contradiction. Therefore, $\Theta(t) \leq 2 C_0 \Omega_0$ for all $t \in (0, t_*)$, which implies that $t_* = \infty$ by the local well-posedness theory. This global control on $\Theta(t)$ implies in particular 
	\begin{align*}
	\| z (t) \|_{L^\infty_{-1}} \leq \frac{C}{(t+T)^{3/2}} \Omega_0, 
	\end{align*}
	for some constant $C > 0$, as desired. 
\end{proof} 

\section{Consequences for front propagation --- proof of Theorem \ref{t: main}}\label{s: propagation}
Let $u$ solve the original equation 
\begin{align}
u_t = \mathcal{P}(\partial_y) u + f(u)
\end{align}
with initial data $u_0$. Here we use $y$ for the original stationary variable, since we will consider both the stationary and moving frames in this section. Indeed, we let 
\begin{align}
x = y - c_* t + \frac{3}{2\eta_*} \log(t+T) - \frac{3}{2 \eta_*} \log (T), 
\end{align}
and define $U(x,t) = u(y,t)$, so that $U$ solves the equation in the moving frame with initial data $U_0 (x) := U(x,0) = u_0 (x)$. We then let $v = \omega U$, so that $v$ solves $\NL [v] = 0$, and let $w(x,t) = v(x,t) - \psi(x, t; T)$ be a perturbation to $\psi$, so that $w$ solves \eqref{e: w eqn}. The decay of $z(x,t)$ in Theorem \ref{t: nonlinear decay} translates into the following stability result for $w$. 
\begin{corollary}
	Let $r = 2 + \mu$ with $0 < \mu < \frac{1}{8}$, and let $R_0$ be given by \eqref{e: R0 def}. There exist positive constants $C$ and $\eps$ such that if $w_0 \in L^\infty_r (\R)$ with 
	\begin{align}
	\| w_0  \|_{L^\infty_r} + T^{-1/2 + 4 \mu} R_0 < \eps,
	\end{align}
	then the solution $w(x,t)$ exists for all positive time, and 
	\begin{align}
	\| w(\cdot, t) \|_{L^\infty_{-1}} \leq C \left( \| w_0  \|_{L^\infty_r} + T^{-1/2 + 4 \mu} R_0 \right)
	\end{align}
	for all $t > 0$. 
\end{corollary}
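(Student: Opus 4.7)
The proof will be a direct change-of-variables translation: the Corollary is simply Theorem \ref{t: nonlinear decay} expressed in terms of $w = (t+T)^{3/2} z$ rather than $z$. I would therefore set $z_0 := T^{-3/2} w_0$, so that the rescaling $z(\cdot,t) := (t+T)^{-3/2} w(\cdot, t)$ sends the initial data for $w$ to $z_0$ at $t = 0$. A key algebraic observation is
\begin{equation*}
T^{3/2} \| z_0 \|_{L^\infty_r} = T^{3/2} \cdot T^{-3/2} \| w_0 \|_{L^\infty_r} = \| w_0 \|_{L^\infty_r},
\end{equation*}
so that the smallness hypothesis on $w_0$ stated in the Corollary is exactly the smallness hypothesis \eqref{e: initial data smallness} on $z_0$ required by Theorem \ref{t: nonlinear decay}.

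Next, I would verify that the rescaling $z = (t+T)^{-3/2} w$ converts equation \eqref{e: w eqn} for $w$ into the equation \eqref{e: z eqn1} (equivalently \eqref{e: z eqn}) for $z$, as already shown in the derivation in Section \ref{s: outlook}. Since the conversion factor $(t+T)^{3/2}$ is smooth, positive, and bounded on any finite time interval, local well-posedness of $w$ in $L^\infty_r(\R)$ is equivalent to local well-posedness of $z$ in $L^\infty_r(\R)$, which was recorded in the discussion around \eqref{e: z initial data}. In particular, small $w_0 \in L^\infty_r(\R)$ produces a unique local-in-time solution $w$, and $z := (t+T)^{-3/2} w$ is the corresponding local solution of \eqref{e: z eqn} with $z(0) = z_0$.

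Applying Theorem \ref{t: nonlinear decay} to this $z_0$ then yields global existence of $z$ in $L^\infty_{-1}(\R)$ together with the decay estimate
\begin{equation*}
\| z(\cdot, t) \|_{L^\infty_{-1}} \leq \frac{C}{(t+T)^{3/2}} \bigl( T^{3/2} \| z_0 \|_{L^\infty_r} + T^{-1/2+4\mu} R_0 \bigr) = \frac{C}{(t+T)^{3/2}} \bigl( \| w_0 \|_{L^\infty_r} + T^{-1/2+4\mu} R_0 \bigr)
\end{equation*}
for all $t>0$. Reading this off for $w$ via $\| w(\cdot,t) \|_{L^\infty_{-1}} = (t+T)^{3/2} \| z(\cdot,t) \|_{L^\infty_{-1}}$ immediately gives the desired uniform-in-time bound
\begin{equation*}
\| w(\cdot, t) \|_{L^\infty_{-1}} \leq C \bigl( \| w_0 \|_{L^\infty_r} + T^{-1/2+4\mu} R_0 \bigr),
\end{equation*}
and global existence of $w$ follows from global existence of $z$. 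There is no substantive obstacle beyond verifying the bookkeeping; the real work has been done in establishing Theorem \ref{t: nonlinear decay}, and the purpose of this Corollary is merely to record the result in the natural variable $w = v - \psi$ for the propagation arguments of the next section.
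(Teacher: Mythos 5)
Your proof is correct and takes exactly the same approach as the paper: set $z = (t+T)^{-3/2} w$, note that $T^{3/2}\|z_0\|_{L^\infty_r} = \|w_0\|_{L^\infty_r}$, apply Theorem \ref{t: nonlinear decay}, and undo the scaling. The paper's version is just a more terse statement of the same bookkeeping.
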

\begin{proof}
	With $z(x,t) = (t+T)^{-3/2} w(x,t)$, smallness of $w_0$ implies smallness of $T^{3/2} z_0$, so that the assumptions of Theorem \ref{t: nonlinear decay} are satisfied, and hence 
	\begin{align*}
	\| w(\cdot, t) \|_{L^\infty_{-1}} = (t+T)^{3/2} \| z(\cdot, t) \|_{L^\infty_{-1}} \leq C \left( \| w_0  \|_{L^\infty_r} + T^{-1/2 + 4 \mu} R_0 \right), 
	\end{align*}
	as desired. 
\end{proof}

In other words, provided the initial data is close to $\psi(\cdot, 0; T)$, the solution $U$ in the co-moving frame with the logarithmic shift is well-approximated by $\psi(\cdot, t; T)$ for all times. 
\begin{corollary}\label{c: U versus psi}
	Let $r = 2 + \mu$ with $0 < \mu < \frac{1}{8}$. There exist positive constants $C$ and $\eps$ such that if $U_0$ satisfies 
	\begin{align}
	\| \omega U_0 - \psi(\cdot, 0; T) \|_{L^\infty_r} + T^{-1/2+4\mu} R_0 < \eps, \label{e: U0 smallness condition}
	\end{align}
	then 
	\begin{align}
	\| \omega U(\cdot, t) - \psi(\cdot, t; T) \|_{L^\infty_{-1}} \leq C  \left( \| \omega U_0 - \psi(\cdot, 0; T) \|_{L^\infty_r} + T^{-1/2+4 \mu} R_0 \right)
	\end{align}
	for all $t > 0$. 
\end{corollary}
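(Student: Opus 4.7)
The plan is to observe that Corollary \ref{c: U versus psi} is an essentially tautological reformulation of the previous corollary (the stability statement for $w$) once one unwinds the chain of definitions $w = v - \psi = \omega U - \psi$. No new analysis is required; this corollary is packaged separately only because it is the form convenient for the translation of stability of $w$ into propagation statements about $u$ in the following steps.

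More concretely, first I would note that at $t=0$, the co-moving shift $\frac{3}{2\eta_*}(\log(t+T) - \log T)$ vanishes, so $U(x,0) = u_0(x) = U_0(x)$, and consequently
\begin{equation*}
w_0(x) := w(x,0) = v(x,0) - \psi(x,0;T) = \omega(x) U_0(x) - \psi(x,0;T).
\end{equation*}
Therefore the hypothesis \eqref{e: U0 smallness condition} on $U_0$ is exactly the hypothesis $\|w_0\|_{L^\infty_r} + T^{-1/2+4\mu} R_0 < \eps$ of the preceding corollary, with the \emph{same} $\eps$.

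Second, I would apply the preceding corollary to obtain, for some constant $C>0$,
\begin{equation*}
\|w(\cdot,t)\|_{L^\infty_{-1}} \leq C\!\left( \|w_0\|_{L^\infty_r} + T^{-1/2+4\mu} R_0 \right) \quad \text{for all } t>0,
\end{equation*}
and observe that by construction $w(\cdot,t) = \omega U(\cdot,t) - \psi(\cdot,t;T)$, so substituting back gives exactly the claimed bound. There is no obstacle beyond this bookkeeping; the only care needed is to ensure that the smallness parameter $\eps$ and the constant $C$ are carried over consistently from the previous corollary and that the class of admissible initial data $U_0$ is nonempty (in particular, contains perturbations relevant for Theorem \ref{t: main}), which is guaranteed by choosing $T$ large so that $T^{-1/2+4\mu} R_0$ is small and choosing $U_0$ close to $\omega^{-1}\psi(\cdot,0;T)$ in $L^\infty_r$ with the weight $\omega^{-1}$ absorbing the exponential decay of $\psi(\cdot,0;T)$ in the leading edge.
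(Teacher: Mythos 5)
Your proof is correct and matches the paper's (implicit) argument: the paper states Corollary \ref{c: U versus psi} without a separate proof, prefacing it with ``In other words, \dots'' as a direct restatement of the preceding corollary in the variable $U$ rather than $w$. Your unwinding of $w_0 = v(\cdot,0) - \psi(\cdot,0;T) = \omega U_0 - \psi(\cdot,0;T)$, via the observation that the logarithmic shift vanishes at $t=0$ so that $U_0 = u_0$, is exactly the tacit bookkeeping step, and the constants $C$ and $\eps$ carry over unchanged.
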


\begin{proof}[Proof of Theorem 1]
	Given $\eps > 0$ small enough so that Corollary \ref{c: U versus psi} holds with this choice of $\eps$, fix $T$ large enough so that $T^{-1/2+4\mu} R_0 < \frac{\eps}{4}$. We then define 
	\begin{align}
	\mathcal{U}_\eps = \left\{ U_0 : \omega U_0 \in L^\infty_r (\R) \text{ and } \| \omega U_0 - \psi(\cdot, 0; T) \|_{L^\infty_r} + T^{-1/2 + 4\mu} R_0 < \frac{\eps}{2} \right\}. 
	\end{align}
	For any fixed $T$, $\mathcal{U}_\eps$ is clearly open in the norm $\| U_0 \|_\rho = \| \rho_r \omega U_0 \|_{L^\infty}$. If we define 
	\begin{align}
	 \tilde{U}_0 (x) = \begin{cases}
	 \omega^{-1}(x) \psi(x, 0; T), & x < T^{1/2 + \mu} - x_0, \\
	 0, & x \geq T^{1/2 + \mu} - x_0, 	
	\end{cases}
	\end{align}
	then from the expression \eqref{e: psi plus formula} for $\psi^+$, one sees that 
	\begin{align}
	   \| \omega \tilde{U}_0 - \psi(\cdot, 0; T) \|_{L^\infty_r} \leq C T^{5/2+2\mu} \exp(-c T^{2 \mu})
	\end{align}
	for some constants $C, c > 0$. In particular, for $T$ sufficiently large depending on $\eps$, we have $\tilde{U}_0 \in \mathcal{U}_\eps$, so that Definition \ref{d: selection}, (ii), is satisfied by $\mathcal{U}_\eps$. 
	Recalling that $u(x+\tilde{\sigma}_T (t),t) = U(x, t)$ with 
	\begin{align}
	\tilde{\sigma}_T (t) = c_* t - \frac{3}{2 \eta_*} \log (t+T) + \frac{3}{2 \eta_*} \log (T),
	\end{align}
	we conclude from Corollary \ref{c: U versus psi} that
	\begin{align}
	    \sup_{x \in \R} | \rho_{-1} (x) \omega(x) [u(x + \tilde{\sigma}_T (t), t) - \omega(x)^{-1} \psi(x, t; T) ]| < \frac{\eps}{2}. 
	\end{align}
	for all $u_0 \in \mathcal{U}_\eps$. By Lemma \ref{l: front versus psi}, $\omega^{-1} \psi$ is a good approximation to the critical front, so that by the triangle inequality
	\begin{align}
	\sup_{x \in \R} | \rho_{-1}(x) \omega(x) [u(x + \tilde{\sigma}_T (t), t) - q_* (x) ]| < \frac{3 \eps}{4} \label{e: sigma tilde T estimate}
	\end{align}
	provided $T$ is sufficiently large. Then, if we define 
	\begin{align}
	\sigma(t) = c_* t - \frac{3}{2 \eta_*} \log (t) -  \frac{3}{2 \eta_*}\log \left( \frac{1}{T} \right),
	\end{align}
	we see that for $T$ fixed
	\begin{align}
	   | \tilde{\sigma}_T (t) - \sigma(t) | = \left| \log \left( \frac{1}{1 + t/T} \right) \right| \to 0 \text{ as } t \to \infty. 
	\end{align}
	Hence, since $u$ is smooth by parabolic regularity, for $t$ sufficiently large we can replace $\tilde{\sigma}_T (t)$ by $\sigma(t)$ in \eqref{e: sigma tilde T estimate} to obtain 
	\begin{align}
	 \sup_{x \in \R} | \rho_{-1} (x) \omega(x) [u(x + \sigma (t), t) - q_* (x) ]| < \eps
	\end{align}
	for $t$ sufficiently large, depending on $u_0$ via $T$. This is the refined estimate of Theorem \ref{t: main}, with 
	\begin{align}
	x_\infty (u_0) = - \frac{3}{2 \eta_*} \log \left( \frac{1}{T} \right). 
	\end{align}
	In particular, $q_*$ is a selected front. 

\end{proof}

\section{Robustness of assumptions --- proof of Theorem \ref{t: robustness}}\label{s: robustness}
We now consider an equation
\begin{align}
u_t = \mathcal{P}(\partial_x; \delta) u + f(u; \delta)
\end{align}
where $\mathcal{P} (\partial_x; \delta)$ is an elliptic operator of order $2m$ whose coefficients are smooth in $\delta$, and where $f$ is smooth in both its arguments, with $f(0; \delta) = f(1; \delta) = 0$ for $\delta$ small and $f'(0; \delta) > 0, f'(1; \delta) < 0$ for $\delta$ small. We assume that at $\delta = 0$, Hypotheses \ref{hyp: spreading speed} through \ref{hyp: resonance} are satisfied, and we conclude here that these assumptions hold for $\delta$ small. Our argument is essentially that of \cite{AveryGarenaux} adapted to a general setting, without the additional technical difficulty of regularizing the singular perturbation. 

We write the asymptotic dispersion relation on the right as 
\begin{align}
d(\lambda, \nu; c, \delta) = \mathcal{P}(\nu; \delta) + c \nu + f'(0; \delta) - \lambda. 
\end{align}
Since by assumption Hypothesis \ref{hyp: spreading speed} holds at $\delta = 0$, we have 
\begin{align}
d(0, -\eta_0; c_0, 0) = \partial_\nu d(0, -\eta_0; c_0, 0) = 0 
\end{align}
for some $c_0, \eta_0 > 0$, and there is $\alpha_0 > 0$ such that 
\begin{align}
d(\lambda, \nu - \eta_0; c_0, 0) = \alpha_0 \nu^2 - \lambda + \mathrm{O}(\nu^3). 
\end{align}

\begin{lemma}[Robustness of simple pinched double roots] \label{l: robustness of simple PDR}
	There exists $\delta_0 > 0$ and smooth functions $\eta_* : (-\delta_0, \delta_0) \to \C$ and $c_* : (-\delta_0, \delta_0) \to \R$ such that $\eta_*(0) = \eta_0, c_*(0) = c_0$, and
	\begin{align}
	d(0, -\eta_* (\delta); c_*(\delta), 0) = \partial_\nu d(0, -\eta_*(\delta); c_*(\delta), 0) = 0 \label{e: robustness double root}
	\end{align}
	for all $\delta \in (-\delta_0, \delta_0)$. Furthermore, there is a smooth function $\alpha : (-\delta_0, \delta_0) \to \R_+$ such that $\alpha(0) = \alpha_0$ and for $\lambda$ and $\nu$ small, we have 
	\begin{align}
	d(\lambda, \nu - \eta_*(\delta); c_*(\delta), \delta) = \alpha(\delta) \nu^2 - \lambda + \mathrm{O}(\nu^3), \label{e: robustness dispersion expansion}
	\end{align}
	where the $\mathrm{O}(\nu^3)$ terms depend on $\delta$ as well. As a result, there are two roots $\nu^\pm$ of the dispersion relation $\nu \mapsto d(\gamma^2, \nu - \eta_*(\delta); c_*(\delta), \delta)$ which satisfy 
	\begin{align*}
	\nu^\pm (\gamma; \delta) = \pm\nu_0 (\delta) \gamma + \mathrm{O}(\gamma^2),
	\end{align*}
	for $\gamma$ small $\mcl(\delta)$, where $\nu_0 (\delta) > 0$ for $\delta$ small. 
\end{lemma}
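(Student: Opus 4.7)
The plan is to set up the defining equations for the pinched double root as a system in $(\eta,c)$ and apply the implicit function theorem with parameter $\delta$. Define
\begin{align*}
F : (\eta,c;\delta) \mapsto \bigl( d(0,-\eta;c,\delta),\; \partial_\nu d(0,-\eta;c,\delta) \bigr),
\end{align*}
so that the double root condition is exactly $F(\eta,c;\delta) = 0$. By assumption $F(\eta_0,c_0;0) = 0$. I would compute the Jacobian of $F$ in $(\eta,c)$ at $(\eta_0,c_0;0)$ using the explicit form $d(\lambda,\nu;c,\delta) = \mathcal{P}(\nu;\delta) + c\nu + f'(0;\delta) - \lambda$. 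This yields
\begin{align*}
D_{(\eta,c)} F \big|_{(\eta_0,c_0;0)} = \begin{pmatrix} -\partial_\nu d & \partial_c d \\ -\partial_{\nu\nu} d & \partial_c \partial_\nu d \end{pmatrix}\bigg|_{(\eta_0,c_0;0)} = \begin{pmatrix} 0 & -\eta_0 \\ -2\alpha_0 & 1 \end{pmatrix},
\end{align*}
using $\partial_c d = \nu$, $\partial_c \partial_\nu d = 1$, and $\partial_{\nu\nu} d(0,-\eta_0;c_0,0) = 2\alpha_0$ read off from the assumed expansion at $\delta = 0$. The determinant is $-2\eta_0\alpha_0 \neq 0$, so IFT delivers the smooth curves $\eta_*(\delta)$, $c_*(\delta)$ satisfying \eqref{e: robustness double root}.

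Next, to produce the expansion \eqref{e: robustness dispersion expansion}, I would simply Taylor expand $d(\lambda,\nu - \eta_*(\delta);c_*(\delta),\delta)$ jointly in $(\lambda,\nu)$ around $(0,0)$. The constant and $\nu^1$ coefficients vanish by \eqref{e: robustness double root}, the $\lambda^1$ coefficient is $-1$, and the $\nu^2$ coefficient is
\begin{align*}
\alpha(\delta) := \tfrac{1}{2} \partial_{\nu\nu} d(0, -\eta_*(\delta); c_*(\delta), \delta),
\end{align*}
which is smooth in $\delta$ by smoothness of $\mathcal{P}(\cdot;\delta)$ and of $\eta_*,c_*$. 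Since $\alpha(0) = \alpha_0 > 0$, continuity gives $\alpha(\delta) > 0$ for $\delta$ small. The coefficient of $\lambda \nu^0$ being $-1$ and absence of any $\lambda\nu^k$ mixed terms (thanks to the linear dependence of $d$ on $\lambda$) confirms the form \eqref{e: robustness dispersion expansion}.

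Finally, for the roots $\nu^\pm(\gamma;\delta)$ of $\nu \mapsto d(\gamma^2,\nu-\eta_*(\delta);c_*(\delta),\delta) = 0$, I would rescale $\nu = \gamma \tilde{\nu}$. The expansion \eqref{e: robustness dispersion expansion} turns the equation into
\begin{align*}
\alpha(\delta)\tilde{\nu}^2 - 1 + \mathrm{O}(\gamma \tilde{\nu}^3) = 0,
\end{align*}
after dividing by $\gamma^2$. At $\gamma = 0$ this has the two nondegenerate roots $\tilde{\nu} = \pm 1/\sqrt{\alpha(\delta)}$ (nondegenerate because $2\alpha(\delta)\tilde{\nu} \neq 0$ there), so IFT applied separately near each root produces smooth branches $\tilde{\nu}^\pm(\gamma;\delta)$. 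Unscaling gives $\nu^\pm(\gamma;\delta) = \pm\nu_0(\delta)\gamma + \mathrm{O}(\gamma^2)$ with $\nu_0(\delta) = 1/\sqrt{\alpha(\delta)} > 0$, as claimed.

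The only nontrivial step is the Jacobian computation in the first paragraph; there I rely crucially on $\alpha_0 > 0$ (via $\partial_{\nu\nu} d \neq 0$) to get a nonvanishing off-diagonal entry, which is precisely the \emph{simplicity} of the pinched double root. The rest is bookkeeping Taylor expansions and a second application of IFT after rescaling.
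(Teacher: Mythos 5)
Your proof is correct and follows essentially the same route as the paper: you package the two defining conditions of the simple pinched double root into a map $F$, check that the Jacobian in $(\eta,c)$ (equivalently $(\nu,c)$ in the paper's variables) is nonsingular precisely because $\alpha_0\neq 0$ and $\eta_0\neq 0$, and invoke the implicit function theorem; the expansion and the root branches then follow by Taylor expansion and a rescaling argument. One small remark: the paper records the $(2,2)$ entry of the Jacobian as $0$, whereas you correctly obtain $\partial_c\partial_\nu d=1$; this appears to be a typo in the paper and does not affect invertibility (the determinant is $\mp 2\eta_0\alpha_0\neq 0$ in either convention). Your explicit rescaling $\nu=\gamma\tilde\nu$ to derive the branches $\nu^\pm$ fills in detail the paper only sketches with ``follows from smoothness.''
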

\begin{proof}
	Define $F : \C \times \R^2 \to \C^2$ by 
	\begin{align}
	F(\nu, c; \delta) = \begin{pmatrix}
	d (0, \nu; c, \delta) \\
	\partial_\nu d (0, \nu; c, \delta)
	\end{pmatrix}.
	\end{align}
	The result \eqref{e: robustness double root} is equivalent to $F(-\eta_*(\delta), c_*(\delta); \delta) = 0$, and by assumption, we have $F(-\eta_0, c_0; 0) = 0$. The derivative of $F$ with respect to its first two arguments,
	\begin{align}
	D_{\nu, c} F(-\eta_0, c_0; 0) = \begin{pmatrix}
	0 & - \eta_0 \\
	2 \alpha_0 & 0
	\end{pmatrix},
	\end{align}
    is invertible. Therefore, by the implicit function theorem, for $\delta$ small there exist $\eta_*(\delta)$ and $c_*(\delta)$ smooth in $\delta$ so that $F(-\eta_*(\delta), c_*(\delta); \delta) = 0$, as desired. The expansion \eqref{e: robustness dispersion expansion} follows from this smoothness and the smoothness of the original dispersion relation. For more details on continuity of linear spreading speeds, see \cite{HolzerScheelPointwiseGrowth}. 
\end{proof}
We note that this lemma together with standard spectral perturbation theory implies that the essential spectrum of $\mcl(\delta)$ is marginally stable, touching the imaginary axis only at the origin, i.e. Hypothesis \ref{hyp: spreading speed} is satisfied. 

We now turn to the existence of the critical front. Let $q_0$ denote the critical front at $\delta = 0$ and $\mcl(0)$ the linearization about that front, in the weighted space with weight $\omega_0$, where
\begin{align}
\omega_\delta (x) = \begin{cases}
e^{\eta_*(\delta) x}, & x \geq 1, \\
1, & x \leq -1. 
\end{cases}
\end{align}
Recall from Section \ref{s: resolvent estimates} that our assumptions imply that $\mcl(0)$ is Fredholm with index -1 when considered as an operator from $H^{2m}_{\mathrm{exp}, \eta} (\R) \to L^2_{\mathrm{exp}, \eta}( \R)$ for $\eta> 0$ small. By Hypothesis \ref{hyp: resonance}, the kernel of $\mcl(0)$ is trivial on this space, so it has a one-dimensional cokernel. 

\begin{lemma}\label{l: robustness phi}
	Let $\mcl(0)^* : H^{2m}_{\mathrm{exp}, -\eta} (\R) \subset L^2_{\mathrm{exp}, -\eta} (\R) \to L^2_{\mathrm{exp}, -\eta} (\R)$ be the $L^2$-adjoint of $\mcl(0)$, and let $\ker \mcl(0)^* = \mathrm{span} (\phi)$. Then 
	\begin{align}
	\langle \mcl(0) \chi_+, \phi \rangle \neq 0, 
	\end{align}
	where $\chi_+$ is as defined in Section \ref{s: resolvent estimates}. 
\end{lemma}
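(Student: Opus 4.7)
The plan is to argue by contradiction: assume $\langle \mcl(0) \chi_+, \phi \rangle = 0$ and use the Fredholm alternative to extract a bounded nontrivial solution to $\mcl(0) u = 0$, which would contradict Hypothesis \ref{hyp: resonance}.

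The first step I would take is to verify that $\mcl(0) \chi_+$ belongs to $L^2_{\mathrm{exp}, \eta}(\R)$ for $\eta > 0$ sufficiently small, so that the Fredholm alternative for $\mcl(0)$ applies to it. The key computation is that $\chi_+$ is constant for $x \geq 3$ and for $x \leq -1$, so that $\mcl(0) \chi_+$ vanishes for $x \leq -1$, while for $x \geq 3$ one computes directly
\begin{equation*}
\mcl(0) \chi_+ \,=\, \omega_0 \mathcal{A}(\omega_0^{-1}) \,=\, \mathcal{P}(-\eta_0) + c_0(-\eta_0) + f'(q_0) \,=\, f'(q_0(x)) - f'(0),
\end{equation*}
where the last equality uses the key cancellation $\mathcal{P}(-\eta_0) + c_0(-\eta_0) + f'(0) = d(0, -\eta_0; c_0, 0) = 0$ coming from the pinched double root assumption. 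Since $q_0(x) \to 0$ at rate $xe^{-\eta_0 x}$ by Hypothesis \ref{hyp: front existence}, the function $\mcl(0) \chi_+$ decays exponentially with rate $\eta_0$ at $+\infty$ and is compactly supported on the other side, so it lies in $L^2_{\mathrm{exp}, \eta}(\R)$ for any $0 < \eta < \eta_0$.

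Next I would invoke the Fredholm alternative. Recall $\mcl(0) : H^{2m}_{\mathrm{exp}, \eta}(\R) \to L^2_{\mathrm{exp}, \eta}(\R)$ is Fredholm with trivial kernel (by Hypothesis \ref{hyp: resonance}) and one-dimensional cokernel spanned by $\phi$, where the duality is the unweighted $L^2$ pairing matching $L^2_{\mathrm{exp}, \eta}$ with its dual $L^2_{\mathrm{exp}, -\eta}$. The assumption $\langle \mcl(0)\chi_+, \phi\rangle = 0$ then places $\mcl(0)\chi_+$ in the range of $\mcl(0)$, producing some $w \in H^{2m}_{\mathrm{exp}, \eta}(\R)$ with $\mcl(0) w = \mcl(0) \chi_+$. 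Setting $u := \chi_+ - w$ yields a distributional (and, by elliptic regularity for $\mcl(0)$, classical) solution of $\mcl(0) u = 0$.

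The final step is to verify that $u$ is bounded and nontrivial. Boundedness is clear: $\chi_+ \in L^\infty(\R)$ by construction, while $\omega_\eta w \in H^{2m}(\R) \hookrightarrow L^\infty(\R)$ combined with the uniform positive lower bound on $\omega_\eta$ gives $w \in L^\infty(\R)$. For nontriviality, if $u \equiv 0$ then $\chi_+ = w \in H^{2m}_{\mathrm{exp}, \eta}(\R)$, but $\omega_\eta \chi_+$ grows like $e^{\eta x}$ at $+\infty$, contradicting square integrability. Thus $u$ is a bounded, nontrivial solution of $\mcl(0) u = 0$, contradicting Hypothesis \ref{hyp: resonance}. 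I expect the only mildly delicate point to be the opening computation that $\mcl(0) \chi_+$ actually lies in $L^2_{\mathrm{exp}, \eta}(\R)$; everything else is a standard Fredholm-alternative argument, with the final contradiction pre-arranged by Hypothesis \ref{hyp: resonance}.
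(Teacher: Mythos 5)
Your proof is correct and follows essentially the same route as the paper: the paper likewise builds the candidate bounded solution as $u = w + \beta\chi_+$ via a Lyapunov--Schmidt splitting of $\mcl(0)u = 0$, uses the Fredholm index $-1$ and trivial kernel of $\mcl(0)$ on $H^{2m}_{\mathrm{exp},\eta}(\R)$ together with the cokernel element $\phi$, and derives a contradiction with Hypothesis~\ref{hyp: resonance}. Your explicit computation that $\mcl(0)\chi_+(x) = f'(q_0(x)) - f'(0)$ for $x \geq 3$, resting on the cancellation $d(0,-\eta_0;c_0,0)=0$, is not spelled out in the paper but is a correct and useful verification that $\mcl(0)\chi_+ \in L^2_{\mathrm{exp},\eta}(\R)$, which the Fredholm alternative requires.
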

\begin{proof}
	As in Section \ref{s: resolvent estimates}, we make the ansatz $u = w + \beta \chi_+$ for the resonance equation $\mcl u = 0$, where $w \in H^{2m}_{\mathrm{exp}, \eta} (\R)$ and $\beta \in \R$. We then let $P$ be the orthogonal projection onto the range of $\mcl(0)$ in $L^2_{\mathrm{exp}, \eta}(\R)$, and decompose the resulting equation as 
	\begin{align}
	\begin{cases}
	P \mcl(0) (w + \beta \chi_+) &= 0, \\
	\langle \mcl(0)(w + \beta \chi_+), \phi \rangle &= 0. 
	\end{cases} \label{e: robustness resonance equation}
	\end{align}
	By construction, $P \mcl(0) : H^{2m}_{\mathrm{exp}, \eta} (\R) \to \text{Range}(\mcl(0))$ is invertible, and so this equation has a solution $(w, \beta)$ if and only if $\langle \mcl(0) (w + \beta \chi_+), \phi \rangle = 0$. By definition, 
	\begin{align*}
	\langle \mcl(0) w, \phi \rangle = \langle w, \mcl(0)^* \phi \rangle = 0. 
	\end{align*} 
	However, $\chi_+ \notin H^{2m}_{\mathrm{exp}, \eta} (\R)$ is not localized and so we cannot just pass $\mcl(0)^*$ onto $\phi$ in this term. Hence, \eqref{e: robustness resonance equation} has a solution $(w, \beta) \in H^{2m}_{\mathrm{exp}, \eta} (\R) \times \R$ if and only if $\langle \mcl(0) \chi_+, \phi \rangle = 0$. A solution to this equation would give a bounded solution to $\mcl u = 0$, contradicting Hypothesis \ref{hyp: resonance}, so in particular $\langle \mcl(0) \chi_+, \phi \rangle \neq 0$. 
\end{proof}

We now state and prove the existence of the critical front, which solves 
\begin{align}
\mathcal{P}(\partial_x; \delta) q + c_*(\delta) \partial_x q + f(q; \delta) = 0, \quad q(-\infty) = 1, \quad q(\infty) = 0. \label{e: robustness TW eqn}
\end{align}
By assumption, at $\delta = 0$, there exists a solution $q_0$ to this equation such that 
\begin{align}
q_0 (x) = (a_0 + x)e^{-\eta_0 x} + \mathrm{O}(e^{-(\eta_0 + \eta) x})
\end{align}
for some $a_0 \in \R$ and $\eta > 0$. 
\begin{prop}
	For $\delta$ sufficiently small, there exists a smooth solution $q_*(\cdot; \delta)$ to \eqref{e: robustness TW eqn} such that 
	\begin{align}
	q_* (x; \delta) = (a(\delta) + x) e^{-\eta_*(\delta) x} + \mathrm{O}(e^{-(\eta_0 + \eta) x})
	\end{align}
	for some $\eta > 0$ and $a(\delta)$ depending smoothly on $\delta$. 
\end{prop}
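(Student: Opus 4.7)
The strategy is to perturb from $q_0$ by combining a farfield--core ansatz with the implicit function theorem, in direct analogy with the setup of Section \ref{s: full resolvent}. We work in the weighted coordinate $v = \omega_\delta q$, in which the traveling wave equation \eqref{e: robustness TW eqn} becomes a nonlinear equation $\mathcal{E}(v;\delta) = 0$ whose linearization at the base point $v_0 := \omega_0 q_0$ is precisely the operator $\mcl(0)$ analyzed in Section \ref{s: resolvent estimates}. I will make the ansatz
\begin{equation*}
v(x) = v_0(x) + \beta\,\chi_+(x) + w(x),\qquad w \in H^{2m}_{\mathrm{exp},-\eta,\eta}(\R),\ \beta \in \R,
\end{equation*}
for a fixed small $\eta > 0$. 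The extra far-field mode $\chi_+$ compensates for the Fredholm index $-1$ of $\mcl(0)$ from Lemma \ref{l: full resolvent L fredholm properties}, and is admissible because the pinched double root identity $\mathcal{P}(-\eta_*(0)) - \eta_*(0)c_*(0) + f'(0) = 0$ implies
\begin{equation*}
\mcl(0)\chi_+(x) = f'(q_0(x)) - f'(0) = \mathrm{O}(q_0(x))\qquad\text{for } x \gg 1,
\end{equation*}
so that $\mcl(0)\chi_+ \in L^2_{\mathrm{exp},-\eta,\eta}(\R)$.

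Defining $G(w,\beta;\delta) := \mathcal{E}(v_0 + \beta\chi_+ + w;\delta)$, the smoothness of $\mathcal{P}$, $f$, $c_*$, and $\eta_*$ in $\delta$ (via Lemma \ref{l: robustness of simple PDR}) yields a smooth map
\begin{equation*}
G : H^{2m}_{\mathrm{exp},-\eta,\eta}(\R) \times \R \times (-\delta_0,\delta_0) \longrightarrow L^2_{\mathrm{exp},-\eta,\eta}(\R),\qquad G(0,0;0) = 0.
\end{equation*}
The derivative
\begin{equation*}
D_{(w,\beta)} G(0,0;0)(\tilde w,\tilde\beta) = \mcl(0)\tilde w + \tilde\beta\,\mcl(0)\chi_+
\end{equation*}
is Fredholm of index $0$ by the bordering lemma applied to Lemma \ref{l: full resolvent L fredholm properties}. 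For any $(\tilde w,\tilde\beta)$ in its kernel, $u := \tilde w + \tilde\beta\chi_+$ is a bounded solution of $\mcl(0)u = 0$; Hypothesis \ref{hyp: resonance} then forces $u \equiv 0$, and the exponential decay of $\tilde w$ at $+\infty$ forces $\tilde\beta = 0$ and hence $\tilde w = 0$. Equivalently, Lemma \ref{l: robustness phi} supplies the nonzero pairing $\langle \mcl(0)\chi_+, \phi\rangle \neq 0$ that makes the bordered operator surjective. The derivative is therefore an isomorphism, and the implicit function theorem produces a smooth branch $\delta \mapsto (w(\cdot;\delta), \beta(\delta))$ of solutions with $(w(\cdot;0), \beta(0)) = (0,0)$.

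Setting $q_*(\cdot;\delta) := \omega_\delta^{-1}\bigl[v_0 + \beta(\delta)\chi_+ + w(\cdot;\delta)\bigr]$ recovers a solution of \eqref{e: robustness TW eqn}. Since $v_0(x) = e^{\eta_*(0)x} q_0(x) = (a_0 + x) + \mathrm{O}(e^{-\eta x})$ as $x \to \infty$, $\beta(\delta)\chi_+(x) = \beta(\delta)$ there, and $|w(x;\delta)| \leq C e^{-\eta x}$, we read off
\begin{equation*}
q_*(x;\delta) = \bigl(a(\delta) + x\bigr)e^{-\eta_*(\delta)x} + \mathrm{O}\bigl(e^{-(\eta_*(\delta) + \eta)x}\bigr),\qquad a(\delta) := a_0 + \beta(\delta),
\end{equation*}
with smooth $\delta$-dependence of $a(\delta)$. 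The only real subtlety, and the main potential obstacle, is the $\delta$-dependence of the weight $\omega_\delta$ and of the associated function spaces. This is handled routinely by fixing the reference weight $\omega_0$ throughout, exploiting that $\omega_\delta\omega_0^{-1} - 1$ is a smooth $\delta$-family of bounded multiplication operators on the relevant weighted spaces (uniform equivalence of norms for $\delta$ small), and absorbing the resulting discrepancy into the smooth $\delta$-dependence of $G$.
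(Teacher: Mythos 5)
Your overall strategy --- a farfield--core decomposition in the weighted coordinate plus the implicit function theorem, with invertibility of the bordered linearization secured by Hypothesis \ref{hyp: resonance} and Lemma \ref{l: robustness phi} --- is the same as the paper's. The form of the ansatz differs only superficially: the paper writes the far-field correction as $\chi_+(x)(a+x)e^{-\eta_*(\delta)x}$ in the original variable and sets $v=\omega w$ for the core, while you write $v = v_0 + \beta\chi_+ + w$ in the weighted variable; after unwinding these coincide, with $a = a_0 + \beta$.

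However, your closing paragraph misidentifies why the $\delta$-dependence of the weight is harmless. You assert that $\omega_\delta\omega_0^{-1}-1$ is a smooth family of \emph{bounded} multiplication operators, giving uniform equivalence of weighted norms for $\delta$ small. This is false: $\omega_\delta(x)\omega_0^{-1}(x)=e^{(\eta_*(\delta)-\eta_0)x}$ for $x\geq 1$ grows or decays exponentially for any $\delta\neq 0$ with $\eta_*(\delta)\neq\eta_0$, so the spaces $L^2_{\mathrm{exp},\eta_*(\delta)}$ and $L^2_{\mathrm{exp},\eta_0}$ are \emph{not} uniformly equivalent, no matter how small $\delta$ is. What actually makes $G(w,\beta;\cdot)$ land in $L^2_{\mathrm{exp},-\eta,\eta}(\R)$ and depend smoothly on $\delta$ is the pinched double root property at parameter $\delta$ from Lemma \ref{l: robustness of simple PDR}: the far-field profile $\omega_\delta^{-1}(v_0+\beta\chi_+)\sim (a_0+\beta+x)e^{-\eta_*(\delta)x}$ is annihilated at leading order by the linear part of the traveling wave operator, since the $\delta$-dependent leading-edge operator has the expansion $\alpha(\delta)\partial_{xx}+\mathrm{O}(\partial_x^3)$ (from \eqref{e: robustness dispersion expansion}) and therefore kills affine functions. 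Only quadratic-and-higher residuals survive, decaying like $\langle x\rangle^2 e^{-\eta_*(\delta)x}$, and these lie in $L^2_{\mathrm{exp},-\eta,\eta}(\R)$ for $\eta$ small uniformly in $\delta$. The paper's ansatz makes this cancellation explicit by building the $\delta$-dependent rate $e^{-\eta_*(\delta)x}$ directly into the far-field scaffold; your formulation bakes it in through $\omega_\delta^{-1}$ but then obscures the mechanism by appealing to a norm-equivalence that does not hold.
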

\begin{proof}
	As in \cite{AveryGarenaux}, we make an ansatz 
	\begin{align}
	q (x) = \chi_- (x) + w(x) + \chi_+(x) (a+x) e^{-\eta_*(\delta) x},
	\end{align}
	where we will require $w$ to be exponentially localized. This ansatz captures convergence to $1$ and $0$ at $-\infty$ and $+\infty$ respectively, and in particular the weak exponential decay near $+\infty$ associated to the simple pinched double root. To enforce exponential localization of $w$, we set $v = \omega w$, and require $v \in H^{2m}_{\mathrm{exp}, \eta} (\R)$ for some $\eta > 0$ small. Inserting this ansatz into \eqref{e: robustness TW eqn} leads to an equation $F(v, a; \delta) = 0$, where 
	\begin{align}
	F : H^{2m}_{\mathrm{exp}, \eta} (\R) \times \R \times (-\delta_0, \delta_0) \to L^2_{\mathrm{exp}, \eta} (\R). 
	\end{align}
	By our assumption, there exist $(v_0, a_0) \in H^{2m}_{\mathrm{exp}, \eta} (\R) \times \R$ such that $F(v_0, a_0; 0) = 0$. 
	
	Computing the linearizations, one finds $D_v F(v_0, a_0; 0) = \mcl(0)$ and $D_a F(v_0, a_0; 0) = \mcl(0) \chi_+$. As noted, $\mcl(0)$ is Fredholm  with index -1, so by the Fredholm bordering lemma, the joint linearization $D_{(v,a)} F(v_0, a_0; 0)$ is Fredholm with index 0. It is then invertible provided the range of $D_a F(v_0, a_0; 0)$ is linearly independent from the range of $\mcl(0)$, which is true by Lemma \ref{l: robustness phi}. With the implicit function theorem, we find $v(\cdot; \delta) \in H^{2m}_{\mathrm{exp}, \eta}(\R)$ and $a(\delta) \in \R$ smooth in $\delta$ such that $F(v(\cdot; \delta), a(\delta); \delta) = 0$, and hence, by the form of our ansatz, there is a smooth solution 
	\begin{align}
	q_* (x; \delta) = \chi_- (x) + \omega^{-1} (x) v(x; \delta) + \chi_+ (x) (a(\delta) + x) e^{-\eta_*(\delta) x}
	\end{align}
	to \eqref{e: robustness TW eqn}. Since $v \in H^{2m}_{\mathrm{exp}, \eta} (\R)$, $q_*$ has the desired asymptotics as $x \to \infty$. For more details, albeit in the specific case of the extended Fisher-KPP equation, see \cite{AveryGarenaux}. 
\end{proof}

With the existence of the critical front in hand, we let $\mcl(\delta)$ denote the linearization about the critical front in the weighted space with weight $\omega_\delta$. We now show that there is no resonance for $\mcl(\delta)$ at $\lambda = 0$ for $\delta$ small. As in Section \ref{s: resolvent estimates}, we substitute the ansatz 
\begin{align}
u(x) = w(x) + \beta \chi_+ (x) e^{\nu^-(\gamma; \delta) x}
\end{align}
to the equation $(\mcl(\delta) - \gamma^2) u = g$ for $g \in L^2_{\mathrm{exp}, \eta} (\R)$. Decomposing the resulting equation as in Lemma \ref{l: robustness phi}, we obtain the system 
\begin{align}
\begin{cases}
P (\mcl(\delta) - \gamma^2) (w + \beta \chi_+ e^{\nu^-(\gamma; \delta) \cdot}) &= Pg, \\
\langle (\mcl(\delta) - \gamma^2) (w + \beta \chi_+ e^{\nu^-(\gamma; \delta) \cdot}), \phi \rangle &= \langle g, \phi \rangle,
\end{cases} \label{e: robustness eval problem}
\end{align}
where $P$ is again the orthogonal projection onto the range of $\mcl(0)$, and $\ker \mcl(0)^* = \mathrm{span} (\phi)$. We define 
\begin{align}
\mathcal{F} (w, \beta; \gamma, \delta) = P (\mcl(\delta) - \gamma^2) (w + \beta \chi_+ e^{\nu^-(\gamma; \delta) \cdot}), 
\end{align}
so that the first equation in \eqref{e: robustness eval problem} reads $\mathcal{F} (w, \beta; \gamma, \delta) = P g$. For $g = 0$, we have a trivial solution $F(0, 0; 0, 0) = 0$. Since $D_w F (0, 0; 0, 0) = P\mcl(0)$ is invertible, the implicit function theorem gives a solution $w(\beta; \gamma, \delta) \in H^{2m}_{\mathrm{exp}, \eta} (\R)$ for $\beta, \gamma$, and $\delta$ small which is unique in a neighborhood of the origin. Using this uniqueness and the fact that $\mathcal{F}$ is linear in $w$ and $\beta$, one sees that $w (\beta; \gamma, \delta) = \beta \tilde{w}(\gamma, \delta)$ for some $\tilde{w}(\gamma, \nu) \in H^{2m}_{\mathrm{exp}, \eta} (\R)$. Inserting this solution into the second equation in \eqref{e: robustness eval problem} for $g = 0$, we find an equation
\begin{align}
E(\gamma, \delta) := \langle (\mcl(\delta) - \gamma^2) (\tilde{w}+\chi_+ e^{\nu^-(\gamma; \delta) \cdot}), \phi \rangle = 0. \label{e: robustness E def}
\end{align}
The function $E$ is analytic in $\gamma$ and smooth in $\delta$, and by Lemma \ref{l: robustness phi}, we have $E(0, 0) \neq 0$, so that $E(\gamma, \delta)$ is nonzero for $\gamma, \delta$ in a neighborhood of the origin as well. From the construction of $E$, we see that we have a solution $(w, \beta)$ to \eqref{e: robustness eval problem} if and only if $E(\gamma, \delta) = 0$. One can further show that $(\mcl(\delta) - \gamma^2)$ is invertible if $\gamma$ is to the right of the essential spectrum of $\mcl(\delta)$ and $E(0, 0) \neq 0$, so that the zeros of $E$ precisely detect eigenvalues (and more generally resonances) of $\mcl(\delta)$. This results in the following proposition --- again, see \cite{AveryGarenaux} for more details in the case of the extended Fisher-KPP equation. 
\begin{prop}
	The equation $(\mcl(\delta) - \gamma^2) u = 0$ has a bounded solution if and only if $E(\gamma, \delta) = 0$. In particular, $\mcl(\delta)$ has no eigenvalues in a neighborhood of the origin, and no resonance at $\lambda = 0$ for $\delta$ sufficiently small. 
\end{prop}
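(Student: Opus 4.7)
The plan is to formalize the Lyapunov--Schmidt reduction that has been set up in the discussion preceding the proposition. The far-field/core ansatz $u = w + \beta \chi_+ e^{\nu^-(\gamma;\delta)\cdot}$ with $w \in H^{2m}_{\mathrm{exp},\eta}(\R)$ and $\beta \in \R$ is designed to capture every bounded solution of $(\mcl(\delta) - \gamma^2)u = 0$. First I would verify completeness of this decomposition: in the asymptotic spatial dynamics at $+\infty$, $\nu^-(\gamma;\delta)$ is the only spatial eigenvalue of the companion matrix $M(\gamma;\delta)$ near zero that is not strictly unstable, so by exponential dichotomy any bounded solution must have its slowly-decaying farfield contribution proportional to $e^{\nu^-(\gamma;\delta)x}$, with the remaining modes (strongly stable at $+\infty$ and stable at $-\infty$, the latter coming from Hypothesis \ref{hyp: stable on left}) contributing only an exponentially localized piece $w \in H^{2m}_{\mathrm{exp},\eta}(\R)$ for $\eta > 0$ sufficiently small and uniform in $\delta$.

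With completeness in hand, the existence of a nontrivial bounded solution is equivalent to the existence of nonzero $(w,\beta) \in H^{2m}_{\mathrm{exp},\eta}(\R) \times \R$ solving \eqref{e: robustness eval problem} with $g = 0$. The range equation $P\mathcal{F}(w,\beta;\gamma,\delta) = 0$ has already been solved by the implicit function theorem to yield $w = \beta\tilde{w}(\gamma,\delta)$, exploiting linearity in $(w,\beta)$. Substituting into the bordering equation $\langle \mathcal{F}(w,\beta;\gamma,\delta), \phi\rangle = 0$ reduces the problem to $\beta E(\gamma,\delta) = 0$, so nontrivial solutions exist precisely when $E(\gamma,\delta) = 0$. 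This establishes the first sentence of the proposition. To rule out zeros of $E$ near the origin, I note that $E$ is analytic in $\gamma$ (via analyticity of $\nu^-(\gamma;\delta)$ and of $\tilde{w}$) and smooth in $\delta$, and $E(0,0) = \langle \mcl(0)\chi_+, \phi\rangle \neq 0$ by Lemma \ref{l: robustness phi} (using that $\tilde{w}(0,0) = 0$ since then the range equation reduces to $P\mcl(0)w = 0$ with trivial solution). Continuity then gives $E(\gamma,\delta) \neq 0$ in a neighborhood of the origin, ruling out both a resonance at $\lambda = 0$ (take $\gamma = 0$) and any eigenvalue near the origin (take $\gamma$ small with $\gamma^2$ to the right of the essential spectrum, where nonvanishing of $E$ together with the Fredholm bordering structure yields invertibility of $\mcl(\delta) - \gamma^2$).

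The main obstacle I expect is the first step, namely rigorously justifying completeness of the far-field/core ansatz uniformly in $\delta$. While the structure of the argument parallels the construction of the resolvent in Section \ref{s: resolvent estimates}, one has to be careful that the simple pinched double root at $\nu = -\eta_*(\delta)$ --- which is the source of the slow mode $e^{\nu^-(\gamma;\delta)x}$ --- perturbs smoothly, that no new spatial eigenvalues collide with the imaginary axis for $\delta$ small (guaranteed by Lemma \ref{l: robustness of simple PDR} and the Fredholm continuity of $\mcl(\delta)$), and that the exponential weight $\eta$ can be chosen uniformly to control the strongly stable modes. Once completeness is secured, the remainder of the argument is a direct implicit function theorem plus continuity computation, and the conclusion about Hypothesis \ref{hyp: resonance} follows.
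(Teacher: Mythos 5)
Your overall structure mirrors the paper's: the far-field/core ansatz with a Lyapunov--Schmidt reduction, the observation that linearity gives $w(\beta;\gamma,\delta)=\beta\tilde{w}(\gamma,\delta)$, reduction to the scalar equation $\beta E(\gamma,\delta)=0$, and continuity of $E$ combined with $E(0,0)\neq 0$. The paper compresses much of this into the discussion before the proposition and defers details to the cited reference, so your explicit treatment of completeness of the ansatz (via exponential dichotomy and perturbation of spatial eigenvalues) is a useful elaboration and is consistent with the Fredholm machinery used elsewhere in Section~\ref{s: resolvent estimates}.

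There is, however, one genuine error in your justification of $E(0,0)=\langle \mcl(0)\chi_+,\phi\rangle$. You claim $\tilde{w}(0,0)=0$ because ``the range equation reduces to $P\mcl(0)w=0$.'' This is false: at $(\gamma,\delta)=(0,0)$ the range equation (with $\beta=1$) is $P\mcl(0)\tilde{w}=-P\mcl(0)\chi_+$, and there is no reason for $P\mcl(0)\chi_+$ to vanish --- Lemma~\ref{l: robustness phi} only controls the \emph{cokernel} component $\langle\mcl(0)\chi_+,\phi\rangle$, not the component of $\mcl(0)\chi_+$ lying in the range of $\mcl(0)$. So in general $\tilde{w}(0,0)=-[\mcl(0)]^{-1}P\mcl(0)\chi_+\neq 0$. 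Your conclusion is still correct, but for a different reason: for \emph{any} $\tilde{w}\in H^{2m}_{\mathrm{exp},\eta}(\R)$ one has $\langle\mcl(0)\tilde{w},\phi\rangle=\langle\tilde{w},\mcl(0)^*\phi\rangle=0$, since $\phi\in\ker\mcl(0)^*$ and $\tilde{w}$ is sufficiently localized to integrate by parts freely. Thus the $\tilde{w}$ contribution to $E(0,0)$ drops out by adjointness, not by vanishing of $\tilde{w}$ itself. This is exactly the mechanism used in the paper's proof of Lemma~\ref{l: robustness phi}, and it is the step that makes the bordering function well defined independently of the specific choice of $\tilde{w}(0,0)$.
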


\begin{proof}[Proof of Theorem \ref{t: robustness}]
	We have already shown that Hypotheses \ref{hyp: spreading speed} and \ref{hyp: front existence} hold for $\delta$ small, and that there is no resonance at the origin, nor any eigenvalues bifurcating from the essential spectrum. Since the left Fredholm border is given by the zero sets of an algebraic curve $\lambda \mapsto d^-(\lambda, i k)$ for $k \in \R$, Hypothesis \ref{hyp: stable on left} holds for $\delta$ small as well. The eigenvalue problem away from the essential spectrum is a regular perturbation problem, and so for $\delta$ small there cannot be any unstable eigenvalues and Hypothesis \ref{hyp: resonance} holds in full as well. 
\end{proof}

\section{Examples and discussion}\label{s: discussion}
We discuss examples and limitations of our results. 

\noindent \textbf{Second-order equations.} The simplest application of our results is to the Fisher-KPP equation
\begin{align}
u_t = u_{xx} + f(u),\qquad  f(0) = f(1) = 0,\quad   f'(0) > 0,\quad f'(1) < 0.\label{e: ex scalar}
\end{align}
Spectral stability in the sense of Hypothesis \ref{hyp: resonance} holds when, for instance, $0 < f(u) \leq f'(0) u$ for $u \in (0,1)$ \cite{aronson, Sattinger}. As mentioned in the introduction, front selection results for this equation allow for large classes of initial data such as 
%
compactly supported perturbations of the step function --- the ``diffusive tail'' does not have to be baked into the initial data.  However, these results mostly  restrict to positive solutions. In this regard, we believe that restricting to the class of initial data with well-formed ``diffusive tail'' is not merely a technical limitation. Considering for example  \eqref{e: ex scalar} with the bistable nonlinearity $f(u) = u - u^3$ with an additional negative invasion front connecting $-1$ to $0$, one can ask for a description of selection basins for both positive and negative fronts.  Considering ``step-like'' initial data $u_0$ such that $u_0 (x) \equiv 1$ for $x \leq 0$ and $u_0$ is strongly localized on the right, we expect that the long time behavior is determined by the form of $u_0$ as $x \to \infty$. If the initial data has a \textit{negative} diffusive tail, the solution will develop a roughly stationary \textit{kink} between the two stable states $\pm 1$, while $u = -1$ is spreading into the unstable state $u = 0$. The tail dependence is more dramatic if one considers strongly asymmetric cubics $f(u)=(u-a)(1-u^2)$, with \hl{$a = 1 - \eps$, $\eps$ small,} such that the invasion $-1$ to $a$ is pushed, while the $1$-to-$a$-invasion remains pulled \cite{HadelerRothe} \hl{(see below for further discussion of pushed fronts)}. In this case, not only the selected state in the wake but even the {propagation speed} depend in subtle ways on tail behavior since both pushed and pulled fronts are ``selected''. Again, well-developed Gaussian tails appear to select front speed and  state in the wake in the sense that open classes of initial conditions with such tails converge to the corresponding front. In summary, a description of the boundaries of the mutual basins of attraction is in many ways a question of global dynamics, which to our knowledge has not been explored. 


\noindent \textbf{The extended Fisher-KPP equation.}
The ideas laid out in Section \ref{s: robustness} were previously developed  and used in \cite{AveryGarenaux} to show that Hypotheses \ref{hyp: spreading speed} through \ref{hyp: resonance} hold for the extended Fisher-KPP equation 
\begin{align}
u_t = -\delta^2 u_{xxxx} + u_{xx} + f(u),\label{e:efkpp}
\end{align}
for $\delta$ small.  The perturbation in $\delta$ is singular such that, compared to the analysis in Section \ref{s: robustness}, an additional regularization step is required in \cite{AveryGarenaux} \hl{in order to perturb from the second order Fisher-KPP equation to \eqref{e:efkpp}. Once this regularization step has been carried out, establishing that \eqref{e:efkpp} satisfies Hypotheses \ref{hyp: spreading speed} through \ref{hyp: resonance} for some $\delta > 0$, Theorem \ref{t: robustness} applies to show that all nearby fourth order equations satisfy these assumptions.} Equation \eqref{e:efkpp} is of interest due to its role in describing the behavior of solutions to reaction-diffusion systems near certain higher co-dimension bifurcations \cite{RottschaferDoelman}, and its ability to interpolate between ``simple'' invasion fronts and more complex phenomena \cite{DeeSaarloos}. This example also highlights that our approach is not restricted to second order equations and does not rely on the presence of a comparison principle. The set of fourth order equations to which our results apply is therefore both non-empty and open in the sense of Theorem \ref{t: robustness}. We expect that a perturbation analysis analogous to \cite{AveryGarenaux} would carry over to perturbations of higher order, $-\delta^2 (i\partial_x)^{2m}u$.

\noindent \textbf{Systems of equations.}
We have focused here on a general framework for scalar equations for simplicity and clarity of presentation. However, we expect that our methods can be used to prove analogous results in systems of equations satisfying appropriate versions of our assumptions. In particular, the linear spreading speed analysis extends readily and yields diffusive dynamics in the leading edge: one finds an associated exponential weight and an associated eigenvector, which can be used to construct Gaussian tails. In the case where the linearization in the leading edge is diagonal, this Gaussian tail would be confined to one component in the system, a case which occurs in particular in Lotka-Volterra systems as considered by Faye and Holzer \cite{FayeHolzerLotKaVolterra}, who obtained sharp local stability of pulled fronts in this system.

\noindent \textbf{Pattern-forming systems.} A common exception to our results are oscillatory pulled fronts, with pinched double roots $\lambda_*=i\omega_*$, $\nu_*\in \C\setminus \R$, and selected states in the wake that are not exponentially stable. Both difficulties combine in pattern-forming systems such as the Swift-Hohenberg, the Cahn-Hilliard, or phase-field equations \cite{vanSaarloosReview,colleteckmann,sch,mesuro}. We expect that the key mechanism of front selection, exploited here, through matching of a diffusive tail with the main front profile can be adapted  \cite{vanSaarloosII, vanSaarloosReview}. However, even results on asymptotic stability of fronts appear to be known only for speeds above the linear spreading speed \cite{SchneiderEckmann}. Related but somewhat simpler examples arise in pattern-forming mechanisms with a stationary mode $\lambda_*=0,\nu_*\in\R$, such as the Ginzburg-Landau modulation approximation to Swift-Hohenberg, where stability is known up to the critical speed \cite{EckmannWayne}, or the FitzHugh-Nagumo equation \cite{cartersch}. Our results here do not directly apply to these fronts, despite stationary invasion, due to the diffusive stability of the patterns in the wake, so that Hypothesis \ref{hyp: stable on left} is not satisfied. However, we expect this to be mostly a technical issue, rather than a fundamental obstacle to extending the analysis here to these cases.


\noindent \textbf{Pushed fronts and other instabilities of the front.}
Hypothesis \ref{hyp: resonance} requires absence of unstable eigenvalues. Indeed, as mentioned above, nonlinearities can create instabilities of fronts propagating at the linear speed and lead to the selection of faster \emph{pushed fronts}. Simple explicit examples occur in the cubic family $u_t=u_{xx}+u(u+\delta)(1-\delta-u)$, where fronts connecting $1-\delta$ to $0$ are stable for $1/3<\delta\leq 1/2$ but unstable with a single unstable eigenvalue for $0<\delta< 1/3$. In this latter regime, invasion is faster than the linear spreading, mediated by a steeper, pushed front. \hl{Compared to pulled fronts, it is much simpler to establish selection of pushed fronts, i.e. that pushed fronts attract open classes of steep initial data. Perturbations that cut off the tail of a pushed front are small in a space with an exponential weight that pushes the essential spectrum strictly into the left half plane, allowing selection to be established with a classical stability argument \cite{Sattinger}}. At the transition, in this simple example at $\delta=1/3$, the linearization at the front propagating with the linear spreading speed possesses a resonance at $\lambda=0$, violating Hypothesis  \ref{hyp: resonance}. \hl{In this boundary case, it is conjectured \cite{vanSaarloosReview} that the front still propagates with speed $c_*$, but with the $-\frac{3}{2 \eta_*} \log t$ delay replaced by $-\frac{1}{2 \eta_*} \log t$ due to the stronger effect of the nonlinearity.}

\hl{Beyond pushed fronts, one also observes destabilization against complex eigenvalues or against essential spectrum induced by instabilities in the wake. All of those lead to more complex dynamics, for instance oscillatory invasion despite a zero frequency pinched double root. Specific examples are studied in  \cite{hs14} in simple problems with comparison principles. Generalizing these examples to complex coefficient amplitude equations, one readily finds scenarios where invasion dynamics are periodic or even chaotic, despite predicted simple stationary dynamics from the pinched double root analysis; compare also the complexity of invasion dyanmics in the complex Ginzburg-Landau equation as described for instance in \cite{vanSaarloosReview,aransonkramer,Sherratt10890}. We are not aware of selection results in the presence of oscillatory or chaotic dynamics.}

\noindent \textbf{Necessity of Hypothesis \ref{hyp: spreading speed}.}
While we expect that our assumptions and results hold in open classes of systems of several equations, we caution that there are examples of selected fronts that do not satisfy our assumptions. Spreading in these examples relies on different pointwise instability mechanisms, which may preclude front stability in any fixed exponential weight or even necessitate linear speed selection criteria different from the pinched double root criterion \cite{FayeHolzerScheel}. Moreover, stability analysis and numerical evidence strongly suggest that such selection mechanisms occur in open classes of equations. 

Specifically, such anomalous modes of invasion were first studied in systems of two coupled Fisher-KPP equations \cite{HolzerAnomalous,HolzerAnomalous2,FayeHolzerScheel}, where a pointwise decaying second component accelerates the propagation in the first component through a weakly decaying exponential tail. This mechanism was interpreted in \cite{FayeHolzerScheel} more broadly as a spreading behavior mediated by \emph{resonant couplings}, present in open classes of equations. Such resonances can preclude stability of fronts in exponentially weighted spaces when the pinched double root criterion gives the correct spreading speed: the associated front is shown to be asymptotically stable in a model problem in \cite{FayeHolzerScheelSiemer} and strong numerical evidence indicates that it is selected in our sense. \hl{Resonant couplings} can also give rise to spreading speeds and selected fronts that are not predicted by pinched double roots  but rather by other resonances in the complex dispersion relation. From this perspective, \hl{simple} pinched double roots are branched 1:1 resonances. Unbranched 1:1 resonances are non-generic but occur in \cite{HolzerAnomalous,HolzerAnomalous2}. The phenomena in  \cite{FayeHolzerScheel,FayeHolzerScheelSiemer} are induced by  1:2 resonances. 

Spreading speeds in these examples can be reliably determined from a formal \hl{linear} marginal stability \hl{criterion} for resonant modes \cite{FayeHolzerScheel}. However, even formulating a marginal stability conjecture for selection of nonlinear fronts in these examples is challenging since the invasion process crucially relies on modes that exhibit pointwise temporal decay in the frame with the observed spreading speed. It is then not clear how to formulate linear (or nonlinear) marginal stability near such a time-dependent profile.

\bibliographystyle{abbrv}

\bibliography{references}
\end{document}